\title{Moduli space of rank three logarithmic connections on the projective line with three poles}
\author{Takafumi Matsumoto\footnote{Research Institute for Mathematical Sciences, Kyoto University, Kyoto 606-8512, Japan, Email address: tfmatumt@kurims.kyoto-u.ac.jp}}
\date{}
\theoremstyle{definition}
\newtheorem{definition}{Definition}[section]
\newtheorem{theorem}[definition]{Theorem}
\newtheorem{proposition}[definition]{Proposition}
\newtheorem{lemma}[definition]{Lemma}
\newtheorem{corollary}[definition]{Corollary}
\newtheorem{remark}[definition]{Remark}
\newcommand{\mfm}{\mathfrak{m}}
\newcommand{\mcB}{\mathcal{B}}
\newcommand{\mcC}{\mathcal{C}}
\newcommand{\mcD}{\mathcal{D}}
\newcommand{\mcE}{\mathcal{E}}
\newcommand{\mcF}{\mathcal{F}}
\newcommand{\mcG}{\mathcal{G}}
\newcommand{\mcH}{\mathcal{H}}
\newcommand{\mcL}{\mathcal{L}}
\newcommand{\mcM}{\mathcal{M}}
\newcommand{\mcN}{\mathcal{N}}
\newcommand{\mcO}{\mathcal{O}}
\newcommand{\vb}{\mathrel{} \middle | \mathrel{}}
\newcommand{\App}{\textrm{App}}
\newcommand{\Bun}{\textrm{Bun}}
\newcommand{\elm}{\textrm{elm}}
\newcommand{\Ext}{\textrm{Ext}}
\newcommand{\Grass}{\textrm{Grass}}
\newcommand{\Hom}{\textrm{Hom}}
\newcommand{\id}{\textrm{id}}
\newcommand{\Image}{\textrm{Im}\,}
\newcommand{\Ker}{\textrm{Ker}\,}
\newcommand{\len}{\textrm{length}\,}
\newcommand{\Ompl}{\Omega^1_{\pl}}
\newcommand{\Ompld}{\Omega^1_{\pl}(D(\bm{t}))}
\newcommand{\Opl}{\mathcal{O}_{\pl}}
\newcommand{\PC}{\textrm{PC}}
\newcommand{\pl}{\mathbb{P}^1}
\newcommand{\Proj}{\textrm{Proj}\,}
\newcommand{\Quot}{\textrm{Quot}}
\newcommand{\rank}{\textrm{rank}\,}
\newcommand{\res}{\textrm{res}}
\newcommand{\Spec}{\textrm{Spec}\,}
\newcommand{\Sym}{\textrm{Sym}\,} 
\newcommand{\tr}{\textrm{tr}}
\begin{document}
	\maketitle
	\begin{abstract}
		In this paper, we describe the moduli space of rank three parabolic logarithmic connections on the projective line with three poles for any local exponents. In particular, we show that the family of moduli spaces of rank three parabolic $\phi$-connections on the projective line with three poles is isomorphic to the family of  $A^{(1)*}_2$-surfaces in Sakai's classification of Painlev\'e equations. Through this description, we investigate the relation between the apparent singularities and underlying parabolic bundles.
	\end{abstract}
	\section{Introduction}
	Our aim is two-fold. First, we derive the whole of the $A^{(1)*}_2$-surface in Sakai's classification of Painlev\'e equations from moduli theory. Second, we give an example of the moduli space of parabolic connections with rank $\geq 3$.
	\subsection{The moduli space of meromorphic connections and the Painlev\'e equations}
	
	H. Sakai \cite{Sa1} provided a geometric approach to the Painlev\'e equations and the discrete Painlev\'e equations. 
	He characterized the good compactification of spaces of initial conditions for the Painlev\'e equations as a certain rational projective surface and classified them according to affine root systems. We call a surface corresponding to an affine root system $R$ a $R$-surface and denote it by $S(R)$. In his framework, the discrete Painlev\'e equations are the dynamical systems generated by the action of the translation part of the corresponding affine Weyl group on the family of rational surfaces, and the Painlev\'e equations appear as a limit of the translation part. Each classified surface $S(R)$ is obtained by blowing up the projective plane $\mathbb{P}^2$ at 9 points, 
	including infinitely near ones, and has a unique effective anti-canonical divisor $Y_{S(R)}$. The following is the list of the types of surfaces and the Painlev\'e equations:
	\[
	\begin{minipage}{16cm}
		\centering
		\begin{tabular}{|c|c|c|c|c|c|c|c|c|}\hline
			surface type& $D^{(1)}_4$&$D^{(1)}_5$&$D^{(1)}_6$&$D^{(1)}_7$&$D^{(1)}_8$&$E^{(1)}_6$&$E^{(1)}_7$&$E^{(1)}_8$ \\ \hline
			Painlev\'e equation& $P_{VI}$&$P_{V}$&$P_{III}^{D^{(1)}_6}$&$P_{III}^{D^{(1)}_7}$&$P_{III}^{D^{(1)}_8}$&$P_{IV}$&$P_{II}$&$P_{I}$ \\ \hline
		\end{tabular}
	\end{minipage}
	\]
	Then the space of initial conditions for the Painlev\'e equation coincides with the surface $S(R)\setminus Y_{S(R)}$, where $R$ is the corresponding affine root system. 
	
	One of the important characteristics of the Painlev\'e equations is that they can be derived from the isomonodromic deformations of systems of linear differential equations.
	For example, the Painlev\'e VI equation is the isomonodromic deformation equation of a rank two linear system with four regular singularities. 
	Moduli spaces of meromorphic connections connect the isomonodromic deformation and the space of initial conditions. 
	The moduli spaces are Poisson, and become holomorphic symplectic varieties after fixing the residue data at each pole.
	The equations of the isomonodromic deformations can be geometrically understood as a Hamiltonian vector field on the moduli space of meromorphic connections through the Riemann-Hilbert correspondence. Thus we can regard the moduli space of meromorphic connections as a space of initial conditions of the equation determined by the isomonodromic deformation. 
	
	In Sakai's theory, the (additive) difference Painlev\'e equations are classified into the following eleven surface types:
	\[
	\begin{array}{ccccccccccc}
		A^{(1)**}_0, &A^{(1)*}_1, &A^{(1)*}_2, &D^{(1)}_4, &D^{(1)}_5, &D^{(1)}_6, &D^{(1)}_7,  &D^{(1)}_8, &E^{(1)}_6, &E^{(1)}_7, &E^{(1)}_8
	\end{array}
	\]
	The surfaces of $D^{(1)}_l$ and $E^{(1)}_l$ types are a compactification of the space of initial conditions for the Painlev\'e equations. In particular, the surface $S(R)\setminus Y_{S(R)}$ for $R=D^{(1)}_l, E^{(1)}_l$ is realized as the moduli space of meromorphic connections. This implies that the difference Painlev\'e equations of $D^{(1)}_l$ and $E^{(1)}_l$ types arise from the discrete deformation of rational systems of linear differential equations. In fact, the difference Painlev\'e equations of $D^{(1)}_l$ and $E^{(1)}_l$ types are obtained by Schlesinger transformations of rational systems of linear differential equations, which are rational gauge transformations shifting the exponents at the poles by integers. This naturally leads to the question: can the difference Painlev\'e equations of $A^{(1)}_l$ types be written in the form of the Schlesinger transformations? This problem is posed by Sakai in \cite{Sa2}. P. Boalch \cite{Bo1} found Fuchsian systems, i.e. logarithmic connections on the trivial bundle over $\pl$, corresponding to the type $A^{(1)}_l$ from the perspective of quiver variety and symmetry:
	\[
	\begin{minipage}{16cm}
		\centering
		\begin{tabular}{|c|c|c|c|c|}\hline
			surface type& $A^{(1)**}_0$&$A^{(1)*}_1$&$A^{(1)*}_2$ \\ \hline
			symmetry type& $E^{(1)}_8$&$E^{(1)}_7$&$E^{(1)}_6$ \\ \hline
			spectral type& $33, 222, 111111$&$22, 1111, 1111$&$111, 111, 111$ \\ \hline
		\end{tabular}
	\end{minipage}
	\]
	The moduli spaces of Fuchsian systems corresponding to $A^{(1)**}_0, A^{(1)*}_1$ and $A^{(1)*}_2$ types are identified with the Kronheimer's ALE spaces of $E_8, E_7$ and $E_6$ types, respectively. The $E_r$-type ALE space is obtained by blowing up $\mathbb{P}^2$ at $r$ points on the smooth locus of a cuspidal cubic and removing the strict transform of the cubic. In \cite{Bo1} he also explained how to obtain the surfaces of $A^{(1)**}_0, A^{(1)*}_1$ and $A^{(1)*}_2$ types from the corresponding ALE spaces, that is, how to partially compactify the moduli space of logarithmic connections on the trivial bundle to get the full moduli space of logarithmic connections of degree zero. Hence the surface $S(R)\setminus Y_{S(R)}$ for $R=A^{(1)**}_0, A^{(1)*}_1, A^{(1)*}_2$ is also realized as the moduli space of meromorphic connections. By the way, D. Arinkin and A. Borodin \cite{AB} also pointed out that rank three logarithmic connections over $\pl$ with three poles correspond to $A^{(1)*}_2$-surfaces from the perspective of difference equations and the Mellin transform. For a quiver-theoretic realization of a Zariski open subset of the moduli space of irregular connections, see \cite{Bo2, Do, HY}. 
	
	\subsection{Realization of  $A^{(1)*}_2$-surfaces as the moduli spaces}
	A natural question is whether the effective anti-canonical divisor $Y_{S(R)}$ is also obtained from the moduli theory. M. Inaba, K. Iwasaki and M.-H. Saito \cite{IIS1} introduced the notion of rank two parabolic logarithmic $\phi$-connections and proved that the moduli space of stable rank two parabolic logarithmic $\phi$-connections on $\pl$ with four poles is isomorphic to a $D^{(1)}_4$-surface which is the good compactification of the space of initial conditions for the Painlev\'e VI equations. On the other hand, among the surfaces in Sakai's classification, there are no cases where the whole of the surface has been derived as a moduli space, except for the Painlev\'e VI case. The first purpose of this paper is to derive the whole of the $A^{(1)*}_2$-surface, whose corresponding Fuchsian systems have the lowest rank among the types $A^{(1)}_l$, as the moduli space. To compactify the moduli space, we introduce the notion of parabolic logarithmic $\phi$-connections for arbitrary rank. This is a modification of rank two parabolic $\phi$-connections in \cite{IIS1} (see Remark \ref{phiconndif}). In Section 2 we construct the moduli space of parabolic $\phi$-connections, that is, we prove the following:
	
	\begin{theorem}\label{MT}
		Let $\tilde{M}_{g,n}$ be a smooth algebraic scheme which is a smooth covering of the coarse moduli space of $n$ pointed irreducible smooth projective curves of genus $g$ over $\mathbb{C}$ and take a universal family $(\mcC, \tilde{\bm{t}})=(\mcC, \tilde{t}_1,\ldots,\tilde{t}_n)$ over $\tilde{M}_{g,n}$. Let $\boldsymbol{\alpha}=\{\alpha^{(k)}_{i,j}\}^{k=1,2}_{1\leq i\leq n, 1\leq j \leq r}$ be a parabolic weight.
		\begin{itemize}
			\setlength{\itemsep}{0cm}
			\item[(1)] There exists a relative fine moduli scheme
			\[
			\overline{M^{\boldsymbol{\alpha}}_{\mcC/\tilde{M}_{g,n}}}(\tilde{\bm{t}},r, d)\longrightarrow
			\tilde{M}_{g,n}\times \mcN_{n,r}
			\] 
			of $\boldsymbol{\alpha}$-stable parabolic logarithmic $\phi$-connections of rank $r$ and degree $d$.
			If $\boldsymbol{\alpha}$ is generic, then $\overline{M^{\boldsymbol{\alpha}}_{\mcC/\tilde{M}_{g,n}}}(\tilde{\bm{t}},r,d)$ is projective over $\tilde{M}_{g,n}\times \mcN$. 
			\item[(2)] Assume that $\alpha^{(1)}_{i,j}=\alpha^{(2)}_{i,j}=:\alpha'_{i,j}$ for any $1\leq i\leq n$ and $1\leq j\leq r$. Then the set
			\[
			U_{\text{isom}}:=
			\left\{(E_1,E_2,\phi,\nabla,l^{(1)}_*,l^{(2)}_*) \in \overline{M^{\boldsymbol{\alpha}}_{\mcC/\tilde{M}_{g,n}}}(\tilde{\bm{t}},r,d) \vb \text{$\phi$ is an isomorphism} \right\}
			\]
			is a Zariski open subset of $\overline{M^{\boldsymbol{\alpha}}_{\mcC/\tilde{M}_{g,n}}}(\tilde{\bm{t}},r,d)$ and
			the natural morphism
			\[
			M^{\boldsymbol{\alpha}'}_{\mcC/\tilde{M}_{g,n}}(\tilde{\bm{t}},r,d)\longrightarrow U_{\text{isom}}, \quad (E,\nabla, l_*) \longmapsto (E, E, \id, \nabla, l_*, l_*) 
			\] 
			is an isomorphism, where $\boldsymbol{\alpha}'=\{\alpha'_{i,j}\}^{1\leq i\leq n}_{1\leq j \leq r}$ and $M^{\boldsymbol{\alpha}'}_{\mcC/\tilde{M}_{g,n}}(\tilde{\bm{t}},r,d)$ is a relative moduli space of $\boldsymbol{\alpha}'$-stable parabolic logarithmic connections connections constructed by M. Inaba \cite{In}.
		\end{itemize}
	\end{theorem}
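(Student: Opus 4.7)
My plan is to construct the moduli space as a GIT quotient, following Simpson's strategy as adapted to parabolic connections by Inaba \cite{In} and to rank two $\phi$-connections by Inaba--Iwasaki--Saito \cite{IIS1}. For Part (1), I would first establish boundedness of the family of $\boldsymbol{\alpha}$-stable parabolic logarithmic $\phi$-connections with fixed numerical invariants. After twisting by a sufficiently positive line bundle $\mcO(N)$ so that $E_i(N)$ is generated by global sections with vanishing higher cohomology, I embed the problem into a product of two Quot schemes parametrising the underlying sheaves $E_1$ and $E_2$. The morphism $\phi\colon E_1\to E_2$, the $\phi$-twisted operator $\nabla$ satisfying the modified Leibniz rule, and the two parabolic flags then cut out a locally closed subscheme $R$ on which $\mathrm{GL}(V_1)\times\mathrm{GL}(V_2)$ acts, and I choose a linearisation built from the weights $\boldsymbol{\alpha}$ and the twist $N$. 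The next step is to verify that GIT-stability matches $\boldsymbol{\alpha}$-stability; this numerical criterion has to be tested against pairs of subsheaves $(F_1, F_2)$ compatible with $(\phi,\nabla)$ and with the flags, but the Simpson--Inaba argument extends without essential change. Fineness follows because the stabilisers of stable points coincide with the diagonal scalars and can be rigidified out. Projectivity under generic $\boldsymbol{\alpha}$ is obtained from the valuative criterion: over a discrete valuation ring, elementary transformations at the poles combined with a limit analysis for the possibly degenerating $\phi$ produce a unique stable central fibre, and genericity excludes strictly semistable objects that would otherwise obstruct separatedness.

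For Part (2), openness of $U_{\text{isom}}$ is formal: on a Noetherian base, the locus where a morphism of flat coherent sheaves is an isomorphism is open, since the supports of the kernel and cokernel are closed. To show that the natural morphism from Inaba's moduli space is an isomorphism onto $U_{\text{isom}}$, I would construct the inverse on functors of points by sending $(E_1, E_2, \phi, \nabla, l^{(1)}_*, l^{(2)}_*)$ with $\phi$ invertible to $(E_1, \phi^{-1}\circ\nabla, l^{(1)}_*)$. The assumption $\alpha^{(1)}_{i,j}=\alpha^{(2)}_{i,j}$ ensures that $\boldsymbol{\alpha}$-stability on one side is equivalent to $\boldsymbol{\alpha}'$-stability on the other, because $\phi$ identifies the two flags and so makes all subobject tests for the two notions of stability coincide. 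The two assignments are mutually inverse on $S$-valued points for every scheme $S$, so by Yoneda they define an isomorphism of fine moduli schemes.

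The main obstacle lies in the GIT construction of Part (1): one must simultaneously handle two Quot schemes linked by an arbitrary morphism $\phi$, encode the $\phi$-twisted Leibniz rule, incorporate two sets of parabolic flags, and select a linearisation whose numerical stability condition matches $\boldsymbol{\alpha}$-stability on compatible pairs $(F_1, F_2)$. A secondary difficulty is controlling limits in which $\phi$ degenerates to a non-isomorphism: the limiting flags and the support of $\coker\phi$ must be tracked carefully at each pole, and this is where the extension of the rank two argument of \cite{IIS1} to arbitrary rank requires the most technical work.
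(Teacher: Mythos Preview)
Your overall strategy is correct and close to the paper's, but the paper takes one extra abstraction step that simplifies the GIT construction substantially. Rather than encoding $\phi$ and the $\phi$-twisted operator $\nabla$ separately and imposing the Leibniz rule as an additional condition, the paper packages the pair $(\phi,\nabla)$ into a single left $\mcO_C$-module homomorphism $\Phi\colon \Lambda^1_D\otimes E_1\to E_2$, where $\Lambda^1_D=\mcO_C\oplus(\Omega^1_C(D))^\vee$ carries a twisted bimodule structure. This defines the auxiliary notion of a \emph{parabolic $\Lambda^1_D$-triple} $(E_1,E_2,\Phi,F_*(E_1),F_*(E_2))$. The moduli space of such triples is constructed first: over the product of Quot schemes, $\Phi$ lives in a linear space $\bm{V}=\Spec\Sym\mcH$ (via EGA's representability of the Hom functor), the flags are handled by further Quot schemes, and GIT is carried out on a Grassmannian embedding with an explicit rational polarisation. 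Only afterwards are the residue conditions $(\res_{t_i}\nabla-\nu_{i,j}\phi_{t_i})(l^{(1)}_{i,j})\subset l^{(2)}_{i,j+1}$ and the compatibility $\phi_{t_i}(l^{(1)}_{i,j})\subset l^{(2)}_{i,j}$ imposed, cutting out the $\phi$-connection moduli as a closed subscheme (via Yokogawa's Lemma~4.3). Your direct approach would require building the Leibniz rule and all the residue constraints into the parameter scheme before taking the quotient, which is workable but messier; the paper's detour through $\Lambda^1_D$-triples makes the parameter space linear and pushes the nonlinear conditions to a closed-subscheme step after GIT is already done.

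Two smaller points. For fineness, the paper does not rely solely on rigidifying scalars: it uses elementary transformations to pass to a degree $d'$ coprime to $r$, pulls back the universal family from there, and thus obtains fineness for arbitrary $d$. Your sketch of projectivity via the valuative criterion is correct in spirit; the paper cites the Langton-type semistable reduction for $\Lambda^1_D$-triples (Proposition~5.5 of \cite{IIS1}) rather than arguing with elementary transformations at the poles. Your treatment of Part~(2) is fine and agrees with the paper's discussion.
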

	
	The construction is based on the method of \cite{IIS1} and \cite{In}. We don't know whether the moduli space of parabolic $\phi$-connections is irreducible or not in general. 
	
	In Sakai's theory, $A^{(1)*}_2$-surfaces are approximately parameterized by a six-dimensional affine space $\mathbb{A}^6$ over $\mathbb{C}$ and a natural action of $W(E^{(1)}_6)$ on $\mathbb{A}^6$ lifts to a regular isomorphism between $A^{(1)*}_2$-surfaces. When a point of $\mathbb{A}^6$ does not lie on reflection hyperplanes of reflections in $W(E^{(1)}_6)$, the corresponding $A^{(1)*}_2$-surface is obtained by blowing up $\mathbb{P}^2$ at 9 distinct points. On the other hand, when a point lies on a reflection hyperplane, we have to blow up $\mathbb{P}^2$ at 9 points, including infinitely near ones. Our goal is to derive the family of $A^{(1)*}_2$-surfaces as the family of moduli spaces of parabolic logarithmic $\phi$-connections. Parabolic structures of logarithmic connections play a role in realizing exceptional curves on $A^{(1)*}_2$-surfaces over reflection hyperplanes.

	We state the main theorem. Put
	\[
	T_3:=\left\{(t_1,t_2,t_3)\in (\pl)^3 \vb  \text{$t_i\neq t_j$ for $i\neq j$} \right\}, 
	\]
	\[
	\mcN(\nu_1,\nu_2,\nu_3):=\{(\nu_{i,j})\in \mathbb{C}^9\mid \nu_{i,0}+\nu_{i,1}+\nu_{i,2}=\nu_{i}, 1\leq i\leq 3\}, 
	\]
	where $\nu_1, \nu_2, \nu_3 \in \mathbb{C}$ and $\nu_1+\nu_2+\nu_3\in \mathbb{Z}$. 
	Take $\bm{t}\in T_3$ and $\boldsymbol{\nu}\in \mcN(\nu_1,\nu_2,\nu_3)$. Let $M^{\boldsymbol{\alpha}}_3(\nu_1,\nu_2,\nu_3)\rightarrow T_3\times \mcN(\nu_1,\nu_2,\nu_3)$ (resp. $\overline{M^{\boldsymbol{\alpha}}_3}(\nu_1,\nu_2,\nu_3)\rightarrow T_3\times \mcN(\nu_1,\nu_2,\nu_3)$) be the family of moduli spaces of $\boldsymbol{\alpha}$-stable parabolic connections (resp. $\phi$-connections), whose fiber $M^{\boldsymbol{\alpha}}_3(\bm{t},\boldsymbol{\nu})$ (resp. $\overline{M^{\boldsymbol{\alpha}}_3}(\bm{t},\boldsymbol{\nu})$) at $(\bm{t}, \boldsymbol{\nu})\in T_3\times \mcN(\nu_1,\nu_2,\nu_3)$ is the moduli space of $\boldsymbol{\alpha}$-stable $\boldsymbol{\nu}$-parabolic connections (resp. $\phi$-connections) over $(\pl, \bm{t})$. Let $S$ be the family of $A^{(1)*}_2$-surfaces parametrized by $T_3 \times\mcN(0,0,2)$ defined in section \ref{familysursec}. 
	\begin{theorem}\label{intromt1} (Theorem \ref{maintheorem})
		Take $\boldsymbol{\alpha}=(\alpha_{i,j})_{1\leq i,j\leq 3}$ such that $0<\alpha_{i,j}\ll 1$ for any $1\leq i, j\leq 3$.
		
		\begin{itemize}
			\item[(1)] There exists an isomorphism  $\overline{M^{\boldsymbol{\alpha}}_3}(0,0,2)\longrightarrow S$ over $T_3 \times\mcN(0,0,2)$. 
			In particular, for each $(\bm{t},\boldsymbol{\nu})\in T_3 \times \mcN(0,0,2)$, the fiber $\overline{M^{\boldsymbol{\alpha}}_3}(\bm{t},\boldsymbol{\nu})$ is isomorphic to an $A^{(1)*}_2$-surface. 
			\item[(2)] Let $Y$ be the closed subscheme of $\overline{M^{\boldsymbol{\alpha}}_3}(0,0,2)$ defined by the conditions $\wedge^3\phi=0$. Then $Y$ is reduced, and for each $(\bm{t},\boldsymbol{\nu})\in T_3 \times \mcN(0,0,2)$ the fiber $Y_{(\bm{t},\boldsymbol{\nu})}$ is the anti-canonical divisor of $\overline{M^{\boldsymbol{\alpha}}_3}(\bm{t},\boldsymbol{\nu})$.
		\end{itemize}
	\end{theorem}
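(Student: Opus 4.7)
The plan is to construct a morphism $\Psi\colon\overline{M^{\boldsymbol{\alpha}}_3}(0,0,2)\to S$ of families over $T_3\times\mcN(0,0,2)$ and then verify fiberwise that $\Psi$ is an isomorphism sending $Y$ onto the anti-canonical divisor of $S$. By Theorem~\ref{MT} the source is projective over the base, and an infinitesimal deformation computation gives the expected fiber dimension $2(r^2(g-1)+1)+nr(r-1)=2$ for $r=3,\,g=0,\,n=3$, matching $\dim S(\bm{t},\boldsymbol{\nu})=2$. Furthermore, Theorem~\ref{MT}(2) identifies the open subset $U_{\text{isom}}\subset\overline{M^{\boldsymbol{\alpha}}_3}(0,0,2)$ with Inaba's relative moduli space $M^{\boldsymbol{\alpha}'}_3(0,0,2)$ of honest parabolic logarithmic connections, and its complement is exactly the locus $Y=\{\wedge^3\phi=0\}$ appearing in part~(2).

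I would construct $\Psi|_{U_{\text{isom}}}$ via \emph{apparent singularities}. For each stable $(E,\nabla,l_*)$ whose underlying parabolic bundle is generic, a canonical pair of coordinates $(q,p)$ is extracted from the connection: the location $q$ of the apparent zero of an auxiliary evaluation map and a dual parameter $p$ measuring the failure of flatness there. This assignment gives a birational morphism from $M^{\boldsymbol{\alpha}'}_3(\bm{t},\boldsymbol{\nu})$ to an affine chart of $\mathbb{P}^2$. On the target side, Section~\ref{familysursec} (following Sakai) realizes $S(\bm{t},\boldsymbol{\nu})$ as the blow-up of $\mathbb{P}^2$ at nine points (possibly infinitely near) lying on a cubic, with the strict transform of that cubic giving $Y_S$. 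The main computation is to match the nine base points of the inverse of $(q,p)$ with the nine blow-up centers used in Section~\ref{familysursec}; since the centers are given explicitly in terms of $(\bm{t},\boldsymbol{\nu})$ and the types of non-generic underlying parabolic bundles, this reduces to a case analysis over the Brill--Noether strata of $\Bun$. The outcome is that $\Psi|_{U_{\text{isom}}}$ is an isomorphism onto $S(\bm{t},\boldsymbol{\nu})\setminus Y_S$, consistent with Boalch's and Arinkin--Borodin's identifications of the open symplectic leaf with an $E_6$-ALE space.

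For part~(2) I would extend $\Psi$ to $\overline{M^{\boldsymbol{\alpha}}_3}(0,0,2)$ by projectivity, and then analyze $Y$ through its stratification by the rank of $\phi$. Each stratum is described, via an appropriate elementary transformation, as a moduli space of parabolic connections on an $\elm$-twisted bundle, cut out by the flags $l^{(1)}_*,l^{(2)}_*$; the strata so obtained are rational curves that glue along their mutual degenerations into a cycle of type $A^{(1)*}_2$ matching the components of $Y_S$. Combined with the bijection on $U_{\text{isom}}$, this shows $\Psi$ is a bijection on geometric points. Smoothness of both sides (Theorem~\ref{MT} on the source and construction on the target) together with Zariski's main theorem then upgrade $\Psi$ to an isomorphism. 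Reducedness of $Y$ follows from the transversal vanishing of $\det\phi$ at a generic point of each stratum, via a local computation on the deformation complex.

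The hard part will be the locus in $T_3\times\mcN(0,0,2)$ where $\boldsymbol{\nu}$ lies on a reflection hyperplane of the natural $W(E^{(1)}_6)$-action: there the nine blow-up centers of $S(\bm{t},\boldsymbol{\nu})$ include infinitely near points. The parabolic flags $l^{(1)}_*,l^{(2)}_*$ must then record exactly the tangent directions along which the first-order blow-up centers are attached. My strategy is to prove $\Psi$ flat over the base, so that the isomorphism over the complement of the walls specializes to an isomorphism on each wall; the principal technical input is a careful description of how the parabolic flag data degenerate as $\boldsymbol{\nu}$ approaches a wall, which is precisely where the $\phi$-connection formalism becomes indispensable.
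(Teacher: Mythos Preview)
Your overall strategy---apparent singularities, matching blow-up centers, Zariski's main theorem---is close in spirit to the paper's, but there is a genuine gap at the step ``extend $\Psi$ to $\overline{M^{\boldsymbol{\alpha}}_3}(0,0,2)$ by projectivity.'' Projectivity of the source does not extend a rational map across a codimension-one locus, and $Y=\{\wedge^3\phi=0\}$ is a divisor. Your $(q,p)$-coordinates are defined only on $U_{\text{isom}}$, and you give no mechanism to make sense of the apparent singularity when $\phi$ is not invertible. In fact the paper shows (Proposition~\ref{appfil}) that on the locus $\rank\phi=1$ the filtration $F^{(1)}_1$ needed to define the apparent singularity is \emph{not} unique---it varies in a $\pl$---so no morphism from $\overline{M}$ to the Hirzebruch surface $\mathbb{P}(\Ompld\oplus\Opl)$ can exist. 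Your proposal to describe the strata of $Y$ via elementary transformations is also too vague to substitute for this missing construction.

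The paper resolves this by introducing an auxiliary moduli space $\widehat{M}$ of pairs $(\phi\text{-connection},\,F^{(1)}_1)$, on which the apparent singularity \emph{is} everywhere defined. The morphism $\varphi\colon\widehat{M}\to\mathbb{P}(\Ompld\oplus\Opl)$ is then constructed globally using both a ``$\nabla$-component'' $h_1$ and a ``$\phi$-component'' $h_2$ (Lemma~\ref{iota} shows the pair never vanishes simultaneously, which is where stability is used). Explicit normal forms for $\rank\phi=3,2,1$ (Lemmas~\ref{rk3formlem}--\ref{rk1formlem}) identify the fibers of $\varphi$ over the nine special points, showing $\varphi$ factors through the iterated blow-up $Z$. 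The forgetful map $\PC\colon\widehat{M}\to\overline{M}$ is then the blow-up of the single point where $\rank\phi=1$, and only \emph{after} establishing $\widehat{M}\cong Z$ does one descend to $\overline{M}\cong S$ via Hartogs (the rank-$1$ locus now being a point, hence codimension two). The reducedness of $Y$ and of $Y_{\leq1}$ is checked by writing down explicit one-parameter local families (e.g.\ \eqref{rk2coord}), not by an abstract deformation argument. Your wall-crossing worry is handled the same way: the normal forms are uniform in $\boldsymbol{\nu}$, and Propositions~\ref{allneq}--\ref{alleq} treat the cases $\nu_{i,j}=\nu_{i,j'}$ directly rather than by specialization.
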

	
	Finding a good coordinate on the space of initial conditions is important to describe the difference Painelev\'e equations explicitly. A. Dzhamay, H. Sakai and T. Takenawa \cite{DST} introduced rational parameters of Fuchsian systems corresponding to the type $A^{(1)*}_2$, which provide a good coordinate on a Zariski open subset of a $A^{(1)*}_2$-surface. They regard an $A^{(1)*}_2$-surface as the surface obtained by blowing up $\pl \times \pl$ at 8 points and gave an explicit correspondence between Fuchsian systems and points on a Zariski open subset of $\pl \times \pl$. In \cite{DT} Dzhamay and Takenawa gave a more detailed exposition of the $A^{(1)*}_2$ case.
	
	To show Theorem \ref{intromt1}, we provide normal forms of $\boldsymbol{\alpha}$-stable rank three parabolic $\phi$-connections over $\pl$ with three poles by using the apparent singularity and its dual parameter (see subsection \ref{nfsec}). The normal forms give us the explicit correspondence between stable parabolic logarithmic $\phi$-connections and points on the whole of the $A^{(1)*}_2$-surface, which provide a coordinate on an $A^{(1)*}_2$-surface (see subsection \ref{expcorr}). Unlike Dzhamay-Sakai-Takenawa, we regard it as the surface obtained by blowing up $\mathbb{P}^2$ at 9 points. The relation between our coordinate and their coordinate is not made. 
	\begin{figure}
		\centering
		\includegraphics[width=9cm]{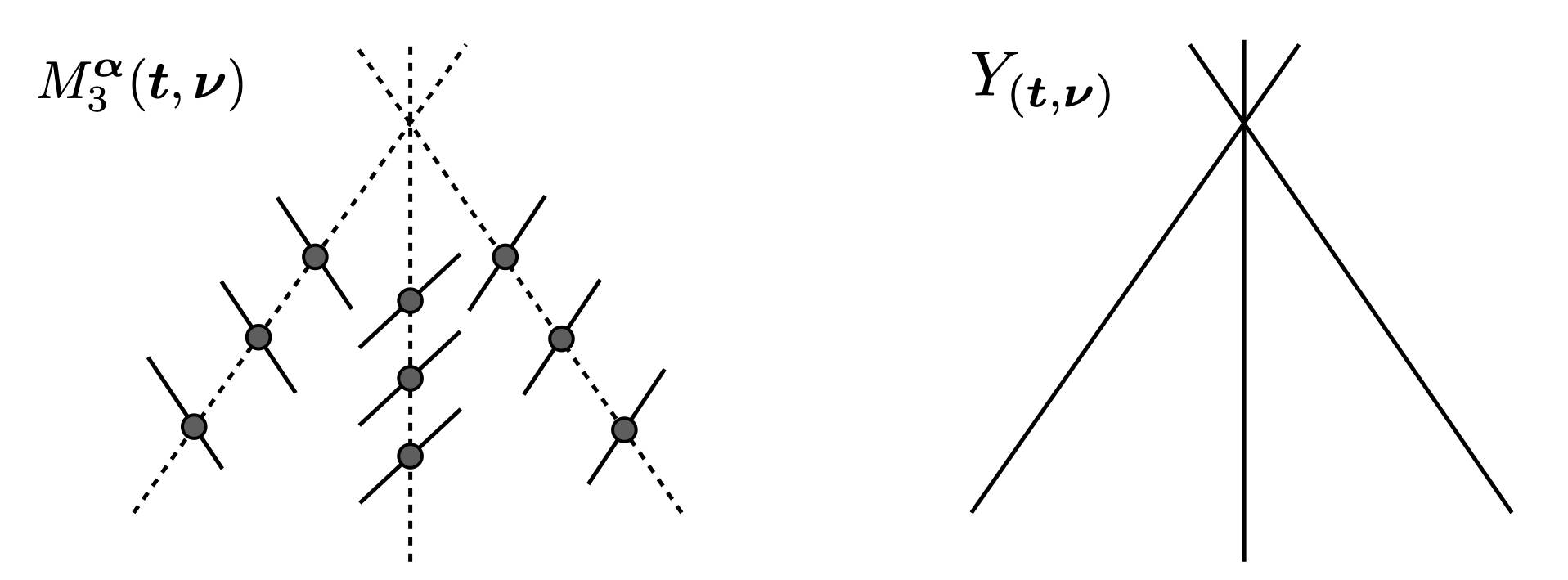}
	\end{figure}
	
	\subsection{Moduli space of parabolic bundles and parabolic connections}
	The moduli space of meromorphic connections has the natural symplectic structure. Giving a Darboux coordinate on the moduli space is important for studying the isomonodromic deformations. One of the methods of introducing a Darboux coordinate on the moduli space of logarithmic connections is by using apparent singularities and dual parameters (for example \cite{AL, DM, IIS2, KS, Ob}). This method is extended to the case of rank two irregular connections over $\pl$ by Diarra-Loray \cite{DL}. 
	
	In the case of rank two parabolic logarithmic connections, it is known that the apparent singularities and underlying parabolic bundles provide a Darboux coordinate on the moduli space of parabolic connections. Let $L$ be a line bundle of degree $r(g-1)+1$ over an irreducible smooth curve $C$ of genus $g$ and $\nabla_L$ be a logarithmic connection on $L$ with poles at $t_1, \ldots, t_n$. We wright by $M^{\boldsymbol{\alpha}}(L,\nabla_L)$ the moduli space of $\boldsymbol{\alpha}$-stable $\boldsymbol{\nu}$-parabolic connections of rank $r$ over $(C, t_1, \ldots, t_n)$ with the trace connection $(L,\nabla_L)$. The moduli space $M^{\boldsymbol{\alpha}}(L,\nabla_L)$ has two rational maps, the apparent map and the bundle map. The apparent map $\App \colon M^{\boldsymbol{\alpha}}(L,\nabla_L)\dashrightarrow \mathbb{P}^N$ was defined in \cite{SS}, where $N$ is the half of the dimension of $M^{\boldsymbol{\alpha}}(L,\nabla_L)$. It is a geometric interpretation of the apparent singularities of systems of linear differential equations. Let $P^{\boldsymbol{\alpha}}(L)$ be the moduli space of $\boldsymbol{\alpha}$-stable parabolic bundles with determinant $L$. The bundle map $\Bun \colon M^{\boldsymbol{\alpha}}(L,\nabla_L)\dashrightarrow P^{\boldsymbol{\alpha}}(L)$ is the map forgetting parabolic structures. We consider the rational map
	\[
	\App \times \Bun \colon M^{\boldsymbol{\alpha}}(L,\nabla_L)\dashrightarrow \mathbb{P}^N\times P^{\boldsymbol{\alpha}}(L).
	\]
	When $r=2$, $\App \times \Bun$ is birational, and both $\App$ and $\Bun$ are Lagrangian fibrations on a Zariski open subset. Thus $\App \times \Bun$ provides a Darboux coordinate on $M^{\boldsymbol{\alpha}}(L,\nabla_L)$. These results were proved by Loray-Saito \cite{LS} when $g=0$, by Fassarella-Loray \cite{FL} and Fassarella-Loray-Muniz \cite{FLM} when $g=1$, and by the author \cite{Ma} when $g\geq 2$. $P^{\boldsymbol{\alpha}}(L)$ is birational to $\mathbb{P}^N$. It follows, for example, from the fact that a generic $\boldsymbol{\alpha}$-parabolic bundle is obtained from an extension of $L$ by $\mcO_C$, which is a key point of the proof of the above results. In particular, we can give a Darboux coordinate on $M^{\boldsymbol{\alpha}}(L,\nabla_L)$ by using a coordinate on $\mathbb{P}^N\times\mathbb{P}^N$. These results are extended to the case of rank two irregular connections by Komyo-Loray-Saito-Szab\'{o} \cite{KLSS}. 
	
	For the case $r\geq 3$, it is not known whether or not $\App \times \Bun$ gives a Darbouex coordinate on a Zariski open subset of the moduli space. The second purpose of this paper is to give an example in which $\App \times \Bun$ is not birational. This implies that $\App \times \Bun$ does not provide a Darbouex coordinate on a Zariski open subset of the moduli space in general. The apparent map $\App$ is constructed by using the filtration of the underlying bundle by subbundles. When $r=2$, the construction is simple, and the relation between the apparent singularities and parabolic bundles can be relatively easily calculated by using the \v{C}ech cohomology.
	On the other hand, when $r\geq 3$, the construction is complicated, and the computation by using the \v{C}ech cohomology is hard. Hence the relation is unclear. In Section 4, we study the simplest case among higher rank cases in another way, that is, we investigate the moduli space of rank three parabolic logarithmic connections over $\pl$ with three poles by determining stable parabolic bundles and writing down a parabolic logarithmic connection and a parabolic Higgs field on any stable parabolic bundle. 
	
	The notion of $\lambda$-connections is the interpolation of Higgs bundles and connections. The moduli space of $\lambda$-connections has a fibration over $\mathbb{C}$ such that the fibers over 0 and 1 are the moduli spaces of Higgs bundles and connections, respectively. The moduli spaces of Higgs bundles and connections are diffeomorphic to each other, but their complex structures are not equivalent. The moduli spaces of $\lambda$-connections can be seen as twistor spaces of suitable hyperk\"ahler manifolds.
	
	Let $(E,l_*)$ be a parabolic bundle and $\nabla$ be a $\boldsymbol{\nu}$-logarithmic connection over $(E,l_*)$. All $\lambda\boldsymbol{\nu}$-logarithmic $\lambda$-connections over $(E,l_*)$ are of the form $\lambda\nabla+\Phi$, where $\Phi$ is a parabolic Higgs field over $(E,l_*)$. The space of all isomorphism classes of $\lambda\boldsymbol{\nu}$-logarithmic $\lambda$-connections over $(E,l_*)$ is $\mathbb{P}(\mathbb{C}\nabla\oplus H)$
	and it can be regarded as a compactification of the space of all $\boldsymbol{\nu}$-logarithmic connections over $(E,l_*)$. Here $H$ is the space of all parabolic Higgs fields over $(E,l_*)$. 
	
	Let $P^{\boldsymbol{\alpha}}(-2)$ be the moduli space of rank three $\boldsymbol{\alpha}$-stable parabolic bundles with degree $-2$ over $(\pl,\bm{t})$ and put
	\[
	M^{\boldsymbol{\alpha}}_3(\bm{t},\boldsymbol{\nu})^0:=\{(E, \nabla, l_*)\in M^{\boldsymbol{\alpha}}_3(\bm{t},\boldsymbol{\nu}) \mid (E, l_*) \in P^{\boldsymbol{\alpha}}(-2)\}. 
	\]
	When $P^{\boldsymbol{\alpha}}(-2)$ is nonempty, $M^{\boldsymbol{\alpha}}_3(\bm{t},\boldsymbol{\nu})^0$ is a Zariski open subset of $M^{\boldsymbol{\alpha}}_3(\bm{t},\boldsymbol{\nu})$. There is a natural $\mathbb{C}^*$ action on the moduli space of $\lambda$-connections over $\boldsymbol{\alpha}$-stable parabolic bundles, and the quotient $\overline{M^w_3(\bm{t},\boldsymbol{\nu})^0}$ is a compactification of $M^{\boldsymbol{\alpha}}_3(\bm{t},\boldsymbol{\nu})$. 
	
	\begin{theorem}\label{introthmtwo}(Theorem \ref{pbthm})
		For a special weight $\boldsymbol{\alpha}$, 
		we have 
		\[
		\overline{M^{\boldsymbol{\alpha}}_3(\bm{t},\boldsymbol{\nu})^0}\cong 
		\left\{
		\begin{array}{lll}
			\pl\times \pl &\nu_{1,0}+\nu_{2,0}+\nu_{3,0}\neq 0\\
			\mathbb{P}(\Opl\oplus \Opl(-2)) &\nu_{1,0}+\nu_{2,0}+\nu_{3,0}= 0.
		\end{array}
		\right.
		\]
	\end{theorem}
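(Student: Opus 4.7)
The plan is to exploit the $\lambda$-connection description of the compactification recalled just before the statement: over each stable parabolic bundle $(E,l_*)\in P^{\boldsymbol{\alpha}}(-2)$ the fiber of the forgetful map $(\lambda,\nabla_\lambda,E,l_*)\mapsto (E,l_*)$ is $\mathbb{P}(\mathbb{C}\nabla\oplus H_{(E,l_*)})$, where $H_{(E,l_*)}$ denotes the space of parabolic Higgs fields on $(E,l_*)$ and $\nabla$ is any chosen $\boldsymbol{\nu}$-parabolic logarithmic connection on $(E,l_*)$. So I would first describe $P^{\boldsymbol{\alpha}}(-2)$ explicitly, then determine each compactified fiber, and finally identify the total space as a $\pl$-bundle over $\pl$ and read off its splitting type.

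For the first two steps I would appeal to the explicit analysis of rank three stable parabolic bundles on $(\pl,\bm{t})$ of degree $-2$ carried out in Section~4. The special weight $\boldsymbol{\alpha}$ should be chosen so that $P^{\boldsymbol{\alpha}}(-2)\cong\pl$, realized by an explicit one-parameter family of parabolic flags on a fixed underlying vector bundle of the form $\Opl\oplus\Opl(-1)\oplus\Opl(-1)$ (or another splitting compatible with the stability condition). A parabolic Riemann--Roch computation on $\pl$ combined with stability to kill trace-type obstructions should give $\dim_{\mathbb{C}} H_{(E,l_*)}=1$ for every stable parabolic bundle, matching the expected count $\dim M^{\boldsymbol{\alpha}}_3(\bm{t},\boldsymbol{\nu})-\dim P^{\boldsymbol{\alpha}}(-2)=2-1=1$. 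Therefore each compactified fiber is a $\pl$.

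Assembling the fibers yields $\overline{M^{\boldsymbol{\alpha}}_3(\bm{t},\boldsymbol{\nu})^0}\cong\mathbb{P}(V)$ for a rank two vector bundle $V$ on $P^{\boldsymbol{\alpha}}(-2)\cong\pl$. By Grothendieck's theorem, $V\cong\Opl(a)\oplus\Opl(b)$ with $a\geq b$, and $\mathbb{P}(V)$ is determined by $a-b$; the two candidate answers $\pl\times\pl$ and $\mathbb{P}(\Opl\oplus\Opl(-2))$ correspond precisely to $a-b=0$ and $a-b=2$. The key observation is that the subline $H_{(E,l_*)}\subset \mathbb{C}\nabla\oplus H_{(E,l_*)}$ defines a canonical line subbundle $L_H\subset V$ over $\pl$, which cuts out the Higgs section $\{\lambda=0\}$ of $\mathbb{P}(V)\to\pl$, whose self-intersection equals $\deg V-2\deg L_H$.

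The main obstacle is computing $\deg L_H$ in terms of the residue data. My plan is to use the explicit normal forms for parabolic connections and parabolic Higgs fields on the family parametrizing $P^{\boldsymbol{\alpha}}(-2)$ established in Section~4: one writes a family $\Phi_s$ of parabolic Higgs fields spanning $L_H$ over a dense open set and reads off its zero divisor on $\pl$. The condition $\nu_{1,0}+\nu_{2,0}+\nu_{3,0}=0$ should correspond to a Fuchs-type resonance forcing the existence of a globally nontrivial compatible sub-connection preserved by the parabolic flags, which produces an extra zero of $\Phi_s$ and twists $L_H$ by $\Opl(-2)$; this yields self-intersection $-2$ for the Higgs section and $V\cong\Opl\oplus\Opl(-2)$. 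When the sum is nonzero no such global obstruction exists, $L_H$ is trivial, and $\mathbb{P}(V)\cong\pl\times\pl$. Carrying out this last computation with the Section~4 normal forms is the most technical part of the argument.
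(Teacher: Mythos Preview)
Your overall architecture is the same as the paper's: establish $P^{\boldsymbol{\alpha}}(-2)\cong\pl$, show each fiber is $\pl$, and realize the compactification as $\mathbb{P}(V)$ for a rank two bundle $V$ on $\pl$. The gap is in the last step, where you propose to distinguish the two cases by $\deg L_H$ (equivalently, by the self-intersection of the Higgs section). This invariant does \emph{not} depend on $\nu_{1,0}+\nu_{2,0}+\nu_{3,0}$.

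Concretely, in the paper's explicit two-chart description one finds $\Phi_\infty=a^{-2}\Phi_0$ (up to gauge), so $L_H\cong\Opl(-2)$ in \emph{both} cases; likewise $V/L_H\cong\Opl$ in both cases, and the Higgs section always has self-intersection $+2$. What actually changes with $\nu$ is the extension class of
\[
0\longrightarrow L_H\longrightarrow V\longrightarrow V/L_H\longrightarrow 0
\]
in $\mathrm{Ext}^1(\Opl,\Opl(-2))\cong\mathbb{C}$. The paper computes this by writing down an explicit connection $\nabla_0(a)$ over one chart and $\nabla_\infty(b)$ over the other and comparing them on the overlap: the full transition matrix is
\[
(\nabla_\infty,\Phi_\infty)=(\nabla_0,\Phi_0)
\begin{pmatrix}1&0\\ -(\nu_{1,0}+\nu_{2,0}+\nu_{3,0})\,a^{-1}&a^{-2}\end{pmatrix},
\]
so the off-diagonal term $-(\nu_{1,0}+\nu_{2,0}+\nu_{3,0})a^{-1}$ \emph{is} the extension class. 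When it vanishes $V\cong\Opl\oplus\Opl(-2)$; when it is nonzero the nontrivial extension forces $V\cong\Opl(-1)^{\oplus 2}$, hence $\mathbb{P}(V)\cong\pl\times\pl$. Your heuristic that the resonance ``produces an extra zero of $\Phi_s$ and twists $L_H$'' is therefore backwards: the Higgs line is fixed, and what you must compute is how the \emph{connection} section fails to patch globally modulo $L_H$. To salvage your approach you would need to write explicit local sections $\nabla_0,\nabla_\infty$ of the affine bundle of connections and compare them --- which is exactly the paper's computation.
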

	Let $V_0$ be a Zariski open subset of $P^{\boldsymbol{\alpha}}(-2)$ defined in the subsection \ref{pbapp}. The following shows that $\App\times \Bun$ is not birational in general.
	\begin{corollary} (Proposition \ref{appbun})
		Assume that $\nu_{1,0}+\nu_{2,0}+\nu_{3,0}\neq 0$. 
		Then for a special weight $\boldsymbol{\alpha}$, the morphism
		\[
		\App \times \Bun \colon \Bun^{-1}(V_0)\longrightarrow \pl\times V_0
		\]
		is finite and its generic fiber consists of three points. 
	\end{corollary}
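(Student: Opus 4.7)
The plan is to exploit the explicit compactification $\overline{M^{\boldsymbol{\alpha}}_3(\bm{t},\boldsymbol{\nu})^0}\cong \pl\times \pl$ given by Theorem \ref{introthmtwo}. A standard dimension count gives $\dim P^{\boldsymbol{\alpha}}(-2)=r^2(g-1)+1+n\binom{r}{2}=1$ for $r=3$, $g=0$, $n=3$, so $V_0$ is a curve and both $\Bun^{-1}(V_0)$ and $\pl\times V_0$ are two-dimensional, making finiteness plausible on dimensional grounds.

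The first step is to verify that the bundle map $\Bun$ extends to $\overline{M^{\boldsymbol{\alpha}}_3(\bm{t},\boldsymbol{\nu})^0}$ as the projection $\pl\times\pl\to\pl$ onto the factor identified with (the compactification of) $P^{\boldsymbol{\alpha}}(-2)$; this is natural from the construction of $\overline{M^{\boldsymbol{\alpha}}_3(\bm{t},\boldsymbol{\nu})^0}$ as a projectivized $\lambda$-connection bundle over the moduli of parabolic bundles. I would then extend the apparent map $\App$ to a rational map $\overline{\App}\colon \pl\times\pl\dashrightarrow \pl$ and verify that it is regular on $\overline{\Bun}^{-1}(V_0)=\pl\times V_0$, with $V_0$ chosen precisely so that this regularity holds. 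This exhibits $\App\times\Bun$ on $\Bun^{-1}(V_0)$ as a fiberwise morphism $\pl\times V_0\to\pl\times V_0$ over $V_0$, reducing the problem to computing the degree of the restriction of $\overline{\App}$ to a generic fiber of $\overline{\Bun}$.

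To establish the fiberwise degree, I would use the normal forms of subsection \ref{nfsec} to write, on a generic fiber $\Bun^{-1}(E,l_*)\cong\mathbb{A}^1$, an explicit one-parameter family $\nabla_0+t\Phi$ of $\boldsymbol{\nu}$-parabolic connections on the fixed parabolic bundle $(E,l_*)\in V_0$, and then compute the apparent singularity as a rational function of $t$. The expected degree three should come from the rank: for a rank-three $\lambda$-connection, the apparent singularity divisor is cut out by a relation whose degree in the $\lambda$-connection coefficients equals the rank. Once this degree is established, finiteness follows from properness of the $\pl$-fibration $\overline{\Bun}^{-1}(V_0)\to V_0$ together with the finite fiberwise degree of $\overline{\App}$.

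The main obstacle will be the explicit degree computation on a fiber of $\Bun$. The apparent map is built from the second fundamental form associated to a subbundle filtration of the underlying parabolic bundle, and on the one-parameter family $\nabla_0+t\Phi$ one must verify non-constancy, to obtain dominance of $\App\times\Bun$, and that the resulting rational function has degree exactly three. Both features are natural for rank three but require explicit calculation in the normal-form coordinates of subsection \ref{nfsec}.
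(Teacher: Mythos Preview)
Your strategy is essentially the same as the paper's: fix a $w$-stable parabolic bundle, parametrize the fiber of $\Bun$ as a pencil $\mu\nabla_0+\lambda\Phi_0$, compute the apparent singularity as an explicit function of $(\mu:\lambda)$, and read off the degree. The paper carries this out verbatim, obtaining two homogeneous cubics $f_1(a;\mu,\lambda),\,f_2(a;\mu,\lambda)$ in $(\mu,\lambda)$ such that $\App(\mu\nabla_0+\lambda\Phi_0)=(f_1+f_2:t_1f_1+t_2f_2)$, whence the fiberwise degree is three; finiteness then follows from properness exactly as you say.

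Two small corrections. First, the normal forms you want are not those of subsection \ref{nfsec}: those are adapted to the $(q,p)$-coordinates on the $\phi$-connection side and do not fix the underlying parabolic bundle. The relevant explicit data are the $\nabla_0(a)$ and $\Phi_0(a)$ written down in the proof of Theorem \ref{pbthm}, which are already parametrized by the coordinate $a$ on $P^w(-2)$ and make the fiberwise computation immediate. Second, the degree-three outcome is not obtained by a general ``rank equals degree'' principle but by direct calculation: one computes $(\mu\nabla_0+\lambda\Phi_0)\,{}^t(1,0,0)$ to find $F_1$, then applies $\mu\nabla_0+\lambda\Phi_0$ to a generator of $F_1/F_2$ and reduces modulo $F_1$; the resulting expression in $z$ is linear with coefficients cubic in $(\mu,\lambda)$. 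Your compactification step (extending $\App$ to $\pl\times V_0$) is fine and matches how the paper defines $\App$ for $\lambda$-connections over $V_0$, but note that $\App$ genuinely fails to extend over $P^w(-2)\setminus V_0$, as the required filtration is not unique for Higgs fields there.
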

	\subsection{Outline of this paper}
	Section 2 is devoted to the construction of the moduli space of parabolic $\phi$-connections. 
	First, we recall the basic definitions and facts of parabolic connections. Second, we introduce parabolic $\phi$-connections and define the moduli functor of parabolic $\phi$-connections. Third, we defined the elementary transformations of parabolic $\phi$-connections. Fourth, we introduce parabolic $\Lambda^1_D$-triples. Finally, we construct the moduli space of parabolic $\Lambda^1_D$-triples and construct the moduli space of parabolic $\phi$-connections as a closed subscheme of the moduli space of parabolic $\Lambda^1_D$-triples.
	
	In section 3, we will prove Theorem \ref{intromt1}. First, we define the apparent singularity of parabolic $\phi$-connections by using a filtration by subbundles. We can see that the apparent singularity of parabolic $\phi$-connections with $\rank \phi =1$ is not uniquely determined. So we consider pairs of a parabolic $\phi$-connection and a subbundle. Then the apparent map is defined on the moduli space $\widehat{M^{\boldsymbol{\alpha}}_3}(\bm{t},\boldsymbol{\nu})$ of such pairs. Second, we define a morphism $\varphi \colon \widehat{M^{\boldsymbol{\alpha}}_3}(\bm{t},\boldsymbol{\nu}) \rightarrow \mathbb{P}(\Ompld\oplus\Opl)$. Third, we provide a normal form of parabolic $\phi$-connections. By using this form we prove the smoothness of $\overline{M^{\boldsymbol{\alpha}}_3}(\bm{t},\boldsymbol{\nu})$. Fourth, we prove Theorem \ref{intromt1}. We prove that the forgetful map $\widehat{M^{\boldsymbol{\alpha}}_3}(\bm{t},\boldsymbol{\nu}) \rightarrow \overline{M^{\boldsymbol{\alpha}}_3}(\bm{t},\boldsymbol{\nu})$ is a blow-up at a point and $\varphi$ is a blow-up at 9 points.
	
	Section 4 is devoted to studying the geometry of the moduli space of parabolic bundles and parabolic connections. First, we consider the moduli space of $w$-stable parabolic bundles. We determine the type of $w$-stable parabolic bundles and investigate a wall-crossing phenomenon. Second, we show Theorem \ref{introthmtwo} by writing down a $\boldsymbol{\nu}$-parabolic connection and a parabolic Higgs field. Moreover, we investigate the relation between two moduli spaces $\overline{M^{\boldsymbol{\alpha}}_3}(\bm{t},\boldsymbol{\nu})$ and $\overline{M^w_3(\bm{t},\boldsymbol{\nu})^0}$. Finally, we study the morphism $\App\times \Bun$.
	
	In appendices, we provide proofs of some propositions. These proofs require complicated computations.
	
	\section{Construction of moduli space of parabolic $\phi$-connections}
	In this section we construct the moduli space of parabolic $\phi$-connections. The construction is based on \cite{IIS1} and \cite{In}. 
	\subsection{Parabolic connections}
	Let $C$ be an irreducible smooth projective curve over $\mathbb{C}$ and $\bm{t}=(t_i)_{1\leq i\leq n}$ be a set of $n$ distinct points of $C$.
	Put $D(\bm{t})=t_1+\cdots +t_n$ and take $\boldsymbol{\nu}=(\nu_{i,j})^{1\leq i\leq n}_{0\leq j \leq r-1} \in \mathbb{C}^{rn}$.
	\begin{definition}
		A $\boldsymbol{\nu}$-parabolic connection of rank $r$ and degree $d$ is a collection $(E,\nabla,l_*=\{l_{i,*}\}_{1 \leq i\leq n})$ consisting of the following data:
		\begin{itemize}
			\setlength{\itemsep}{0cm}
			\item[(1)] $E$ is a vector bundle on $C$ of rank $r$ and degree $d$,
			\item[(2)] $\nabla \colon E \rightarrow E \otimes \Omega_C^1(D(\bm{t}))$ is a logarithmic connection, i.e. $\nabla(fs)=s\otimes df + f \nabla(s)$ for any $f \in \mcO_C, s \in E$, and 
			\item[(3)] $l_{i,*}$ is a filtration $E|_{t_i}=l_{i,0} \supsetneq \cdots \supsetneq  l_{i,r-1} \supsetneq l_{i,r}=\{0\}$ satisfying $(\res_{t_i}(\nabla)-\nu_{i,j}\id )({l_{i,j}})\subset l_{i,j+1}$ for $1\leq i\leq n$ and $0\leq j\leq r-1$.
		\end{itemize}
	\end{definition}
	\begin{proposition} \label{fuchs}(Fuchs relation)
		Let $(E,\nabla,l_*)$ be a $\boldsymbol{\nu}$-parabolic connection of rank $r$ and degree $d$. Then we have 
		\[
		\sum_{i=1}^n\sum_{j=0}^{r-1}\nu_{i,j} +d=0.
		\]
	\end{proposition}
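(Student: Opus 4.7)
The plan is to reduce the statement to the classical residue theorem for a logarithmic connection on a line bundle by passing to the determinant.

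First, I would form the determinant connection $\det\nabla\colon \det E\to \det E\otimes\Omega^1_C(D(\bm{t}))$, which is a logarithmic connection on the line bundle $\det E$ of degree $d$. Since taking the determinant is compatible with taking residues, one has $\res_{t_i}(\det\nabla)=\tr(\res_{t_i}(\nabla))$ for each $i$.

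Second, I would use the parabolic condition to compute $\tr(\res_{t_i}(\nabla))$. The flag
\[
E|_{t_i}=l_{i,0}\supsetneq l_{i,1}\supsetneq\cdots\supsetneq l_{i,r-1}\supsetneq l_{i,r}=0
\]
is preserved by $\res_{t_i}(\nabla)$ (because $(\res_{t_i}(\nabla)-\nu_{i,j}\id)(l_{i,j})\subset l_{i,j+1}\subset l_{i,j}$), and the endomorphism induced on each one-dimensional successive quotient $l_{i,j}/l_{i,j+1}$ is multiplication by $\nu_{i,j}$. Consequently the trace of $\res_{t_i}(\nabla)$ equals $\sum_{j=0}^{r-1}\nu_{i,j}$.

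Third, I would invoke the residue theorem for logarithmic connections on a line bundle: for any logarithmic connection $\nabla_L$ on a line bundle $L$ with polar divisor contained in $D(\bm{t})$,
\[
\deg L+\sum_{i=1}^n\res_{t_i}(\nabla_L)=0.
\]
This is a standard consequence of the fact that, in any local trivialization near $t_i$, $\nabla_L=d+\omega$ with $\omega$ having a simple pole, so the difference between $\nabla_L$ and a $C^\infty$ connection of known degree is a global meromorphic $1$-form whose residue sum vanishes by the usual residue theorem on $C$. Applying this to $L=\det E$ and $\nabla_L=\det\nabla$ and substituting the trace computation from the previous step yields the asserted identity. The only mildly delicate point is the sign convention in the residue theorem; I would pin this down by checking it on the trivial example of the connection $d+\lambda\,dz/z$ on $\mcO_C$, where both sides manifestly vanish only if the sign is as stated.
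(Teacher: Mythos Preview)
Your argument is correct and is the standard proof of the Fuchs relation: pass to the determinant line bundle, identify $\res_{t_i}(\det\nabla)$ with $\tr(\res_{t_i}\nabla)=\sum_{j=0}^{r-1}\nu_{i,j}$ using the flag, and apply the residue theorem for a logarithmic connection on a line bundle. The paper itself states Proposition~\ref{fuchs} without proof, treating it as a classical fact, so there is no alternative approach to compare against; your write-up simply supplies the omitted standard argument. One minor quibble: your closing sign check via $d+\lambda\,dz/z$ on $\mcO_C$ is not quite a self-contained example (on $\pl$ this form also has a pole at $\infty$, and on a higher-genus curve there is no global coordinate $z$), but this does not affect the validity of the main argument.
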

	We put 
	\[
	\mcN_{n,r}(d):=\left\{(\nu_{i,j})^{1\leq i\leq n}_{0\leq j \leq r-1}\in \mathbb{C}^{rn}\vb  \sum_{i=1}^n\sum_{j=0}^{r-1}\nu_{i,j} +d=0\right\}.
	\]
	Let us fix $\boldsymbol{\nu}=(\nu_{i,j})^{1\leq i\leq n}_{0\leq j \leq r-1} \in \mcN_{n,r}(d)$.
	\begin{definition}
		We say that two $\boldsymbol{\nu}$-parabolic connections $(E,\nabla,l_*), (E,\nabla',l'_*)$
		are isomorphic to each other if there is an isomorphisms $\sigma \colon E \overset{\sim}{\longrightarrow} E'$ such that the diagram
		\[
		\begin{tikzcd}
			E \arrow[r,"\nabla"] \arrow[d, "\sigma"']&E \otimes\Omega_C^1(D(\bm{t}))\arrow[d, "\sigma \otimes \id "] \\
			E' \arrow[r,"\nabla'"]&E' \otimes \Omega_C^1(D(\bm{t}))
		\end{tikzcd}
		\]
		is commutative and $\sigma_{t_i}(l_{i,j})=l'_{i,j}$ for $1\leq i\leq n$ and $1\leq j\leq r-1$.
	\end{definition}
	Let $\boldsymbol{\alpha}=\{\alpha_{i,j}\}^{1 \leq i \leq n}_{1 \leq j \leq r}$ be a set of rational numbers satisfying $0 < \alpha_{i,1} <\cdots <\alpha_{i,r} < 1$ for each $i=1,\ldots,n$ and $\alpha_{i,j}\neq \alpha_{i',j'}$ for $(i,j)\neq (i',j')$. We call $\boldsymbol{\alpha}$ a parabolic weight
	\begin{definition}
		A $\boldsymbol{\nu}$-parabolic connection $(E,\nabla,l_*)$ is said to be $\boldsymbol{\alpha}$-stable if for any nonzero subbundle $F \subsetneq E$, the inequality 
		\[
		\frac{\deg F + \sum_{i=1}^{n} \sum_{j=1}^{r} \alpha_{i,j} \dim ((F|_{t_i} \cap l_{i,j-1})/(F|_{t_i} \cap l_{i,j}))}{\rank F}<\frac{\deg E + \sum_{i=1}^{n} \sum_{j=1}^{r} \alpha_{i,j}}{\rank E}
		\]
		holds.
	\end{definition}
	Let $\tilde{M}_{g,n}$ be a smooth algebraic scheme which is a smooth covering of the coarse moduli space of $n$ pointed irreducible smooth projective curves of genus $g$ over $\mathbb{C}$ and take a universal family $(\mcC, \tilde{\bm{t}})=(\mcC, \tilde{t}_1,\ldots,\tilde{t}_n)$ over $\tilde{M}_{g,n}$. 
	\begin{theorem}(Theorem 2.1 \cite{In})
		There exists a relative fine moduli scheme
		\[
		M^{\boldsymbol{\alpha}}_{\mcC/\tilde{M}_{g,n}}(\tilde{\bm{t}},r,d)\longrightarrow \tilde{M}_{g,n}\times \mcN_{n,r}(d)
		\]
		of $\boldsymbol{\alpha}$-stable parabolic connections of rank $r$ and degree $d$, which is smooth and quasi-projective. The fiber $M^{\boldsymbol{\alpha}}_{\mcC_x}(\tilde{t}_x,\boldsymbol{\nu})$ over $(x,\boldsymbol{\nu}) \in \tilde{M}_{g,n}\times \mcN_{n,r}(d)$ is the moduli space of $\boldsymbol{\alpha}$-stable $\boldsymbol{\nu}$-parabolic connections over $(\mcC_x,\tilde{t}_x)$ whose dimension is $2r^2(g-1)+nr(r-1)+2$. 
	\end{theorem}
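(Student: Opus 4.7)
The plan is to follow Simpson's construction of moduli of $\Lambda$-modules, adapted to the parabolic logarithmic setting: take $\Lambda = \Lambda^1_{D(\tilde{\bm{t}})}$ to be the sheaf of rings of logarithmic differential operators of order $\leq 1$ with poles in $D(\tilde{\bm{t}})$, so that a $\boldsymbol{\nu}$-parabolic connection is essentially a $\Lambda$-module together with a parabolic structure subject to the residue constraints. First I would establish relative boundedness of the family of $\boldsymbol{\alpha}$-stable $\boldsymbol{\nu}$-parabolic connections; this reduces to boundedness of the underlying parabolic bundles, since the slopes of subsheaves are controlled by $\boldsymbol{\alpha}$, $r$, and $d$ through the stability inequality, which in turn bounds the Harder--Narasimhan polygon of $E$.

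Given boundedness, I choose $N \gg 0$ so that every $E(N)$ is globally generated with $H^1 = 0$ and fixed Hilbert polynomial $P$. I form the relative Quot scheme of quotients $\mathcal{O}_\mcC^{P(N)} \twoheadrightarrow E(N)$, pull back the flag bundle at each section $\tilde t_i$ to add the parabolic filtration, and then impose the connection data: the set of $\boldsymbol{\nu}$-logarithmic connections on a given parabolic bundle is the fiber of a linear map of coherent sheaves, a torsor over $H^0(C, \mathrm{End}^{\mathrm{par,nilp}}(E)\otimes \Omega^1_C(D(\tilde{\bm{t}})))$ when nonempty, and nonemptiness on each fiber is exactly the Fuchs relation of Proposition \ref{fuchs}. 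This gives a parameter scheme $\mathcal{R}$ over $\tilde{M}_{g,n}\times \mcN_{n,r}(d)$ carrying a natural $\mathrm{GL}_{P(N)}$-action. I would then take the GIT quotient of the open locus $\mathcal{R}^{\boldsymbol{\alpha}\text{-st}}$ with a linearization encoding $\boldsymbol{\alpha}$. Since stable parabolic connections are simple, the quotient is geometric, and after twisting by a suitable line bundle on the quotient one obtains a fine moduli scheme with a universal family.

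Smoothness and the dimension follow from deformation theory: the Zariski tangent space at $(E,\nabla,l_*)$ is $\mathbb{H}^1$ of the two-term complex
\[
\mathrm{End}^{\mathrm{par}}(E)\xrightarrow{[\nabla,\cdot]} \mathrm{End}^{\mathrm{par,nilp}}(E)\otimes \Omega^1_C(D(\tilde{\bm{t}})),
\]
and the obstructions lie in $\mathbb{H}^2$. Parabolic Serre duality identifies $\mathbb{H}^2$ with the dual of the $\mathbb{H}^0$ of a corresponding dual complex; its trace-free part vanishes by $\boldsymbol{\alpha}$-stability, and the trace part is rigid since $\boldsymbol{\nu}$ and hence $d$ are fixed, so all obstructions vanish. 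A Riemann--Roch computation for the two terms gives $\chi(C^0) - \chi(C^1) = -2r^2(g-1) - nr(r-1)$, and together with $\dim \mathbb{H}^0 = \dim \mathbb{H}^2 = 1$ this yields $\dim \mathbb{H}^1 = 2r^2(g-1) + nr(r-1) + 2$.

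The main obstacle, which is the technical heart of Inaba's argument, is matching the GIT numerical stability on the auxiliary moduli of parabolic $\Lambda^1_D$-triples (with weights encoding $\boldsymbol{\alpha}$) to the slope-stability condition in the statement: one needs to choose a polarization, and carry out a delicate Hilbert--Mumford analysis of one-parameter subgroups that respect the parabolic flags at the marked points, so that destabilizing subgroups correspond exactly to subsheaves violating the parabolic slope inequality. The proof of boundedness relative to the whole base $\tilde{M}_{g,n}\times \mcN_{n,r}(d)$ (as opposed to pointwise) is a secondary technical point that must also be addressed to make the construction work in families.
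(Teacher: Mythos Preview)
The paper does not prove this theorem: it is quoted verbatim as Theorem 2.1 of \cite{In} and used as background. What the paper \emph{does} construct in Section 2 is the moduli space of parabolic $\phi$-connections (Theorem \ref{MT}), and that construction---via parabolic $\Lambda^1_D$-triples, Quot schemes, and GIT---is indeed close in spirit to your outline, which is a fair summary of Inaba's method.

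Your sketch is broadly correct as a description of the strategy in \cite{In}. A few remarks on accuracy: the connection data is not parametrized as a torsor over $H^0$ at the level of the parameter scheme, but rather one builds an affine bundle $\mathbf{V}=\Spec\Sym\mathcal{H}$ over the Quot-type base using the representability statement of Proposition \ref{AK}, exactly as the paper does for $\phi$-connections. Also, in the obstruction argument the trace part does not vanish ``because $\boldsymbol{\nu}$ is fixed''; rather the relevant $\mathbb{H}^2$ is dual to the space of endomorphisms of the stable object, which is one-dimensional (scalars), and this scalar direction is absorbed when one works relative to $\mcN_{n,r}(d)$, so obstructions to deforming within a fixed fiber vanish. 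Finally, the fineness you claim by ``twisting by a suitable line bundle'' is in fact obtained in \cite{In} (and in this paper for $\phi$-connections) by reducing via elementary transformations to the case $\gcd(r,d)=1$, where the standard descent argument applies; you should mention this mechanism rather than an unspecified twist.
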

	
	\subsection{Parabolic $\phi$-connections}
	In this subsection, we introduce the notion of parabolic $\phi$-connections. 
	\begin{definition}\label{phiconn}
		For $\boldsymbol{\nu} \in \mcN_{n,r}(d)$, a $\boldsymbol{\nu}$-parabolic $\phi$-connection of rank $r$ and degree $d$ over $(C,\bm{t})$ is a collection $(E_1,E_2,\phi,\nabla, l^{(1)}_*=\{l^{(1)}_{i,*}\}_{1\leq i\leq n}, l^{(2)}_*=\{l^{(2)}_{j,*}\}_{1\leq j\leq n})$ consisting of the following data:
		\begin{itemize}
			\setlength{\itemsep}{0cm}
			\item[(1)] $E_1$ and $E_2$ are vector bundles on $C$ of rank $r$ and degree $d$,
			\item[(2)] $l^{(k)}_{i,*}$ is a filtration $E_k|_{t_i}=l^{(k)}_{i,0} \supsetneq l^{(k)}_{i,1} \supsetneq  \cdots \supsetneq l^{(k)}_{i,r}=\{0\}$ for $k=1,2$ and $i=1,\ldots,n$,
			\item[(3)] $\phi \colon E_1 \rightarrow E_2$ is a homomorphism such that $\phi_{t_i}(l^{(1)}_{i,j}) \subset l^{(2)}_{i,j}$ for any $1\leq i\leq n$ and $1\leq j\leq r-1$, where $\phi_{t_i}$ is a $\mathbb{C}$-linear homomorphism induced by $\phi$, and 
			\item[(4)] $\nabla \colon E_1 \rightarrow E_2 \otimes \Omega^1_C(D(\bm{t}))$ is a logarithmic $\phi$-connection, i.e. $\nabla(fs)=\phi(s)\otimes df + f \nabla(s)$ for any $f \in \mathcal{O}_C, s \in E_1$, and $\nabla$ satisfies $(\res_{t_i}\nabla-\nu_{i,j}\phi_{t_i})(l^{(1)}_{i,j}) \subset l^{(2)}_{i,j+1}$ for any $1\leq i\leq n$ and $0\leq j\leq r-1$. 
		\end{itemize}
	\end{definition}
	
	\begin{definition}
		We say that two $\boldsymbol{\nu}$-parabolic $\phi$-connections $(E_1,E_2,\phi,\nabla, l^{(1)}_*, l^{(2)}_*), (E'_1,E'_2,\phi',\nabla', l'^{(1)}_*, l'^{(2)}_*)$
		are isomorphic to each other if there are isomorphisms $\sigma_1 \colon E_1 \overset{\sim}{\longrightarrow} E'_1$ and $\sigma_2 \colon E_2 \overset{\sim}{\longrightarrow} E'_2$ such that the diagrams
		\[
		\begin{tikzcd}
			E_1 \arrow[r,"\phi"] \arrow[d, "\sigma_1"']&E_2\arrow[d, "\sigma_2"] &&E_1 \arrow[r,"\nabla"] \arrow[d, "\sigma_1"']&E_2 \otimes  \Omega_C^1(D)\arrow[d, "\sigma_2 \otimes \id "]\\
			E'_1 \arrow[r,"\phi'"]&E'_2 &&E'_1 \arrow[r,"\nabla'"]&E'_2 \otimes \Omega_C^1(D)
		\end{tikzcd}
		\]
		commute and $(\sigma_k)_{t_i}(l^{(k)}_{i,j})=l'^{(k)}_{i,j}$ for $k=1,2$, $1\leq i\leq n$ and $0\leq j\leq r-1$.
	\end{definition}
	
	For a $\boldsymbol{\nu}$-parabolic connection $(E,\nabla,l_*)$, the collection $(E,E,\id,\nabla,l_*,l_*)$ is a $\boldsymbol{\nu}$-parabolic $\phi$-connection. It is easy to see that a $\boldsymbol{\nu}$-parabolic $\phi$-connection whose $\phi$ is an isomorphism is isomorphic to a $\boldsymbol{\nu}$-parabolic $\phi$-connection induced by a $\boldsymbol{\nu}$-parabolic connection. This implies that the moduli space of parabolic connections is a Zariski open subset of the moduli space of parabolic $\phi$-connections and that the locus of parabolic $\phi$-connections whose $\phi$ is not an isomorphism appears as the boundary of the moduli space of parabolic connections.
	
	\begin{remark}\label{phiconndif}
		The notion of rank two parabolic $\phi$-connections was introduced by Inaba, Iwasaki, and Saito (see Definition 2.5. in \cite{IIS1}), slightly different from the present definition. The difference lies in whether or not parabolic structures $l^{(2)}_*$ of $E_2$ are considered. In general we can not canonically obtain parabolic $\phi$-connections in the sense of this paper from parabolic $\phi$-connections in that of \cite{IIS1}. For example, let $(E,\{l_i\}_{1\leq i\leq n})$ be a rank 2 parabolic bundle over $(C,(t_1,\ldots,t_n))$ with the determinant $L$ and $\Phi \colon E\rightarrow E \otimes \Omega_C^1(t_1+\cdots +t_n)$ be a parabolic Higgs bundle of rank 2. Let us fix an isomorphism $\varphi \colon \wedge^2 E\overset{\sim}{\rightarrow}L$. We put $E_1=E_2=E$ and $l^{(1)}_i=l_i$ for $1\leq i\leq n$. Take a point $t_{n+1} \in C\setminus \{t_1,\ldots,t_n\}$. Let $l^{(1)}_{n+1}\subset E|_{t_{n+1}}$ be a one dimensional subspace and $\Psi$ be the composite 
		\[
		E\overset{\Phi}{\rightarrow} E \otimes \Omega_C^1(t_1+\cdots +t_n) \rightarrow E \otimes \Omega_C^1(t_1+\cdots +t_n+t_{n+1}).
		\]
		Then $(E_1,E_2,0,\Psi,\varphi,\{l^{(i)}\}_{1\leq i\leq n+1})$ becomes a parabolic $\phi$-connection in the sense of \cite{IIS1}. However $l^{(2)}_{n+1} \subset E_2|_{t_{n+1}}$ is not uniquely determined by $(E_1,E_2,0,\Psi,\varphi,\{l^{(i)}\}_{1\leq i\leq n+1})$.
		
		We require that a parabolic $\phi$-connection whose $\phi$ is an isomorphism is isomorphic to a parabolic $\phi$-connection induced by a parabolic connection.  
		In the case $r=2$, a one dimensional subspace $l^{(1)}_{i,1}(=l^{(1)}_{i,r-1})$ of $E_1|_{t_i}$ is constrained by the condition $(\res_{t_i}(\nabla)-\nu_{i,j}\phi_{t_i} )({l^{(1)}_{i,1}})=0$. In particular, a parabolic structure of $E_2$ may not be required. When $r\geq 3$, we have to impose $l^{(1)}_{i,1}, \ldots, l^{(1)}_{i,r-2}$ the condition such that a parabolic $\phi$-connection comes from a parabolic connection when $\phi$ is an isomorphism. For this reason, we introduce $l^{(2)}_{i,*}$ and the condition (3) of Definition \ref{phiconn}. 
	\end{remark}
	Let $\gamma$ be a positive integer. Through this section we assume that $\gamma$ is sufficiently large. Take a set of rational numbers $\boldsymbol{\alpha}=\{\alpha^{(k)}_{i,j}\}^{k=1,2}_{1\leq i\leq n, 1\leq j\leq r}$ satisfying  $0 \leq \alpha^{(k)}_{i,1} <\cdots<\alpha^{(k)}_{i,r} <1$ for $k=1,2$ and $i=1,\ldots,n$, and  $\alpha^{(k)}_{i,j}\neq \alpha^{(k)}_{i',j'}$ for $(i,j)\neq (i',j')$.
	\begin{definition}
		A $\boldsymbol{\nu}$-parabolic $\phi$-connection $(E_1,E_2,\phi,\nabla, l^{(1)}_*, l^{(2)}_*)$ is $\boldsymbol{\alpha}$-stable (resp. $\boldsymbol{\alpha}$-semistable) if for any subbundles $F_1 \subseteq E_1, F_2 \subseteq E_2, (F_1,F_2)\neq (0,0)$ satisfying $\phi(F_1)\subset F_2$ and $\nabla(F_1) \subset F_2 \otimes \Omega^1_C(D(\bm{t}))$,  the inequality
		\begin{align*}
			&\frac{\deg F_1+\deg F_2(-\gamma)+\sum_{i=1}^{n}\sum_{j=1}^{r}\alpha^{(1)}_{i,j}d^{(1)}_{i,j}(F_1)+\sum_{i=1}^{n}\sum_{j=1}^{r}\alpha^{(2)}_{i,j}d^{(2)}_{i,j}(F_2)}{\rank F_1+\rank F_2}\\
			&\underset{(\text{resp.}\;\leq)}{<}\frac{\deg E_1+\deg E_2(-\gamma)+\sum_{i=1}^{n}\sum_{j=1}^{r}\alpha^{(1)}_{i,j}d^{(1)}_{i,j}(E_1)+\sum_{i=1}^{n}\sum_{j=1}^{r}\alpha^{(2)}_{i,j}d^{(2)}_{i,j}(E_2)}{\rank E_1+\rank E_2}
		\end{align*}
		holds, where $d^{(k)}_{i,j}(F)=\dim (F|_{t_i}\cap l^{(k)}_{i,j-1})/(F|_{t_i}\cap l^{(k)}_{i,j})$ for a subbundle $F \subset E_k$ and for $k=1,2$.
	\end{definition}
	Take a universal family $(\mcC, \tilde{\bm{t}})=(\mcC, \tilde{t}_1,\ldots,\tilde{t}_n)$ over $\tilde{M}_{g,n}$ and put $D=\tilde{t}_1+\cdots +\tilde{t}_n$ . Then $D$ is an effective Cartier divisor flat over $\tilde{M}_{g,n}$. For simplicity of notation, we use the same character $D$ to denote the pullback of $D$ by the projection $\mcC\times \mcN \rightarrow \mcC$, where $\mcN:=\mcN_{n,r}(d)$. Let $\tilde{\nu}_{i,j} \subset \mathbb{C}\times \tilde{M}_{g,n}\times \mcN$ be the section defined by
	\[
	\tilde{M}_{g,n} \times \mcN \hookrightarrow \mathbb{C} \times \tilde{M}_{g,n} \times \mcN; \quad (x, (\nu_{k,l})^{1\leq k\leq n}_{0\leq l \leq r-1}) \mapsto (\nu_{i,j}, x, (\nu_{k,l})^{1\leq k\leq n}_{0\leq l \leq r-1}) .
	\]
	\begin{definition}
		We define the moduli functor $\overline{\mcM^{\boldsymbol{\alpha}}_{\mcC/\tilde{M}_{g,n}}}(\tilde{\bm{t}},r,d)$ from the category of locally noetherian schemes over $\tilde{M}_{g,n}\times \mcN$ to the category of sets by
		\[
		\overline{\mcM^{\boldsymbol{\alpha}}_{\mcC/\tilde{M}_{g,n}}}(\tilde{\bm{t}},r,d)(S):=\{(E_1,E_2,\phi,\nabla,l^{(1)}_*,l^{(2)}_*)\}/\sim,
		\]
		where $S$ is a locally noetherian scheme over $\tilde{M}_{g,n}\times \mcN$ and
		\begin{itemize}
			\setlength{\itemsep}{0cm}
			\item[(1)]$E_1,E_2$ are vector bundles on $(\mcC\times \mcN)_S:=(\mcC\times \mcN)\times_{\tilde{M}_{g,n}\times \mcN} S$ such that for any geometric point $s$ of $S$, $\rank (E_1)_s=\rank (E_2)_s =r$ and $\deg (E_1)_s=\deg (E_2)_s=d$,
			\item[(2)] for each $k=1,2$, $E_k|_{(\tilde{t}_i)_S}=l^{(k)}_{i,0} \supsetneq \cdots \supsetneq l^{(k)}_{i,r-1} \supsetneq l^{(k)}_{i,r}=0$ is a filtration  by subbundles,
			\item[(3)] $\phi \colon E_1\rightarrow E_2$ is a homomorphism such that $\phi_{(\tilde{t}_i)_S}(l^{(1)}_{i,j})\subset l^{(2)}_{i,j}$ for each $k=1,2$, $1\leq i\leq n$ and $1\leq j\leq r-1$,
			\item[(4)] $\nabla \colon  E_1\rightarrow E_2\otimes \Omega^1_ {(\mcC\times \mcN)_S/S}(D_S)$ is a relative logarithmic $\phi$-connection such that $(\res_{(\tilde{t}_i)_S}\nabla -(\tilde{\nu}_{i,j})_S\phi_{(\tilde{t}_i)_S})(l^{(1)}_{i,j})\subset l^{(2)}_{i,j+1}$ for each $k=1,2$, $1\leq i\leq n$ and $0\leq j\leq r-1$,
			\item[(5)] for any geometric point $s$ of $S$, the parabolic $\phi$-connection $((E_1)_s, (E_2)_s, \phi_s, \nabla_s,(l^{(1)}_*)_s,(l^{(2)}_*)_s)$ is $\boldsymbol{\alpha}$-stable. 
		\end{itemize}
	\end{definition}
	
	\subsection{Elementary transformations of parabolic $\phi$-connections}\label{eletr}
	Let $(E_1,E_2,\phi,\nabla,l^{(1)}_*,l^{(2)}_*)$ be a $\boldsymbol{\nu}$-parabolic $\phi$-connection of rank $r$ and degree $d$ over $(C,\bm{t})$. We construct a new parabolic $\phi$-connection as follows. Let us fix integers $1\leq p\leq n$ and $0\leq q\leq r$. Put $E'_k:=\ker (E_k\rightarrow E_k|_{t_p}/l^{(k)}_{p,q})$ for $k=1,2$.
	Then $E'_k$ is a locally free sheaf of rank $r$ and degree $d-q$, and we have $\phi(E'_1)\subset E'_2$ and $\nabla(E'_1)\subset E'_2\otimes \Omega_C^1(D(\bm{t}))$. Let $\phi'\colon E'_1\rightarrow E'_2$ and $\nabla'\colon E'_1\rightarrow E'_2\otimes \Omega_C^1(D(\bm{t}))$ be the restrictions of $\phi$ and $\nabla$, respectively. Let $l^{(k)}_{p,j}(-t_p)$ be the subspace of $E_k(-t_p)|_{t_p}$ induced by $l^{(k)}_{p,j}\subset E_k|_{t_p}$. The surjection $E'_k \overset{}{\rightarrow } l^{(k)}_{p,q}$ induces an exact sequence 
	\[
	0 \longrightarrow l^{(k)}_{p,q}(-t_p)\longrightarrow E_k(-t_p)|_{t_p} \overset{\iota^{(k)}}{\longrightarrow }E'_k|_{t_p} \overset{\pi^{(k)}}{\longrightarrow } l^{(k)}_{p,q} \longrightarrow 0.
	\]
	Put
	\[
	l'^{(k)}_{i,j}:=\left\{
	\begin{array}{ll}
		l^{(k)}_{i,j}&i\neq p \\
		(\pi^{(k)})^{-1}(l^{(k)}_{p,q+j})& i=p, \; 0 \leq j \leq r-q \\
		\iota^{(k)}(l^{(k)}_{p,j-r+q}(-t_p)) & i=p,\; r-q \leq j \leq r,
	\end{array}
	\right.
	\]
	\[
	\nu'_{i,j}:=\left\{
	\begin{array}{ll}
		\nu_{i,j} &i\neq p\\
		\nu_{i,q+j}& i=p, \; 0 \leq j \leq r-q-1 \\
		\nu_{i,j-r+q}+1 & i=p, \; r-q \leq j \leq r-1.
	\end{array}
	\right.
	\]
	Then we can see that $(E'_1,E'_2,\phi',\nabla',l'^{(1)}_*,l'^{(2)}_*)$ is a $\boldsymbol{\nu}'$-parabolic $\phi$-connection of rank $r$ and degree $d-q$ over $(C,\bm{t})$. Put
	\[
	\elm^{(k)}_{p,q}(E_1,E_2,\phi,\nabla,l^{(1)}_*,l^{(2)}_*):=(E'_1,E'_2,\phi',\nabla',l'^{(1)}_*,l'^{(2)}_*).
	\]
	$\elm^{(k)}_{p,q}$ induces a morphism of functors
	\[
	\elm^{(k)}_{p,q}\colon \overline{\mcM^{\boldsymbol{\alpha}}_{\mcC/\tilde{M}_{g,n}}}(\tilde{\bm{t}},r,d)\longrightarrow \overline{\mcM^{\boldsymbol{\alpha}'}_{\mcC/\tilde{M}_{g,n}}}(\tilde{\bm{t}},r,d-q),
	\]
	where $\boldsymbol{\alpha}'$ is a suitable parabolic weight. 
	Let $b^{(k)}_{p}$ be a morphism of functors defined by tensoring with $(\mcO_C(t_p),d)$, i.e.
	\[
	b^{(k)}_{p}\colon \overline{\mcM^{\boldsymbol{\alpha}}_{\mcC/\tilde{M}_{g,n}}}(\tilde{\bm{t}},r,d)\longrightarrow \overline{\mcM^{\boldsymbol{\alpha}}_{\mcC/\tilde{M}_{g,n}}}(\tilde{\bm{t}},r,d+r),\; \bm{E}\longmapsto\bm{E}\otimes (\mcO_C(t_p),d).
	\]
	Then we can see that 
	\[
	b^{(k)}_p\circ \elm^{(k)}_{p,r-q}\circ \elm^{(k)}_{p,q}=\id, \quad \elm^{(k)}_{p,q}\circ b^{(k)}_p\circ \elm^{(k)}_{p,r-q}=\id.
	\]
	So $\elm^{(k)}_{p,q}$ is an isomorphism. Hence we can freely change degree. This is important to prove that the moduli space of stable parabolic $\phi$-connections is fine.
	
	\subsection{Parabolic $\Lambda^1_{D}$-triple}
	Let $D$ be an effective Cartier divisor on $C$. We define an $\mcO_C$-bimodule structure on $\Lambda^1_{D}=\mcO_C \oplus (\Omega_C^1(D))^\vee$ by
	\begin{align*}
		(a,v)f:=(fa+\langle v,df\rangle,fv) , \; f(a,v):=(fa,fv)
	\end{align*}
	for $a,f \in \mcO_C$ and $v\in (\Omega_C^1(D))^\vee$, where $\langle\;,\;\rangle\colon (\Omega_C^1(D))^\vee \times \Omega_C^1(D) \rightarrow \mcO_C$ is the canonical pairing. 
	Let $\phi\colon E_1 \rightarrow E_2$ be a homomorphism of vector bundles on $C$ and $\nabla\colon E_1 \rightarrow E_2\otimes \Omega_C^1(D)$  be a $\phi$-connection. We define $\Phi \colon \Lambda^1_D \otimes_{\mcO_X}E_1 \rightarrow E_2$ by $\Phi((a,v)\otimes s)=a\phi(s)+\langle v,\nabla s\rangle$. Then we can easily see that $\Phi$ becomes a left $\mcO_C$-homomorphism. Conversely, let $\Phi \colon \Lambda^1_D \otimes_{\mcO_X}E_1 \rightarrow E_2$ be a left $\mcO_C$-homomorphism. We define a homomorphism $\phi \colon E_1\rightarrow E_2$ by $\phi(s)=\Phi((1,0)\otimes s)$. Let $\nabla\colon E_1 \rightarrow E_2\otimes \Omega_C^1(D)$ be a map satisfying $\Phi((0,v)\otimes s)=\langle v,\nabla s\rangle$ for any $v\in (\Omega_C^1(D))^\vee$ and $s\in E_1$. Then $\nabla$ is uniquely determined and $\nabla$ becomes a $\phi$-connection. The above correspondence is inverse to each other.
	\begin{definition}
		A parabolic $\Lambda^1_D$-triple is a collection $(E_1,E_2,\Phi, F_*(E_1), F_*(E_2))$ consisting of the following data:
		\begin{itemize}
			\setlength{\itemsep}{0cm}
			\item[(1)] $E_1$ and $E_2$ are vector bundles on $C$ of rank $r$ and degree $d$.
			\item[(2)] $F_*(E_k)$ is a filtration $E_k=F_1(E_k) \supset F_2(E_k) \supset  \cdots \supset F_{l_i}(E_k)\supset F_{l_i+1}(E_k)=E_k(-D)$ for $k=1,2$.
			\item[(3)] $\Phi \colon \Lambda^1_D \otimes_{\mcO_X}E_1 \rightarrow E_2$ is a left $\mcO_C$-homomorphism. 
		\end{itemize}
	\end{definition}
	\begin{remark}
		A parabolic $\Lambda^1_D$-triple in \cite{IIS1} is a collection $(E_1,E_2,\Phi, F_*(E_1))$ consisting of vector bundles $E_1, E_2$, a left $\mcO_C$-homomorphism $\Phi \colon \Lambda^1_D \otimes E_1 \rightarrow E_2$ and a filtration $F_*(E_1)$ of $E_1$. So forgetting a filtration $F_*(E_2)$ of a present parabolic $\Lambda^1_D$-triple $(E_1,E_2,\Phi,F_*(E_1),F_*(E_2))$, we obtain a parabolic $\Lambda^1_D$-triple $(E_1,E_2,\Phi, F_*(E_1))$ in their sense.
	\end{remark}
	\begin{definition}
		A parabolic $\Lambda^1_D$-triple $(E'_1,E'_2,\Phi',F_*(E'_1),F_*(E'_2))$ is said to be a parabolic $\Lambda^1_D$-subtriple of $(E_1,E_2,\Phi,F_*(E_1),F_*(E_2))$ if $E'_1 \subset E_1$, $E'_2 \subset E_2$, $\Phi'=\Phi|_{\Lambda^1_D \otimes_{\mcO_X} E'_1} $, $F_i(E'_1) \subset F_i(E_1)$ and $F_i(E'_2) \subset F_i(E_2)$.
	\end{definition}
	For each $k=1,2$, let $\boldsymbol{\beta}^{(k)}=\{\beta^{(k)}_i\}_{1\leq i\leq l_k}$ be a collection of rational numbers with  $0\leq \beta^{(k)}_1<\cdots<\beta^{(k)}_{l_k}<1$.
	
	For a parabolic $\Lambda^1_D$-triple $(E_1,E_2,\Phi,F_*(E_1),F_*(E_2))$, we put 
	\begin{align*}
		&\mu_{\boldsymbol{\beta}}((E_1,E_2,\Phi,F_*(E_1),F_*(E_2)))\\
		:=&\frac{\deg E_1(-D)+\deg E_2(-D)-\gamma \deg \mcO_X(1)\rank E_2}{\rank E_1+\rank E_2}\\
		&\quad+\frac{\sum_{i=1}^{l_1}\beta^{(1)}_{i}\,\len F_i(E_1)/ F_{i+1}(E_1)+\sum_{i=1}^{l_2}\beta^{(2)}_{i}\,\len F_i(E_2)/ F_{i+1}(E_2)}{\rank E_1+\rank E_2}.
	\end{align*}
	\begin{definition}
		A parabolic $\Lambda^1_D$-triple $(E_1,E_2,\Phi,F_*(E_1),F_*(E_2))$ is ${\boldsymbol{\beta}}$-stable if for any nonzero proper parabolic subtriple $(E'_1,E'_2,\Phi',F_*(E'_1),F_*(E'_2))$ of $(E_1,E_2,\Phi,F_*(E_1),F_*(E_2))$, the inequality
		\[
		\mu_{\boldsymbol{\beta}}((E'_1,E'_2,\Phi',F_*(E'_1),F_*(E'_2)))< \mu_{\boldsymbol{\beta}}((E_1,E_2,\Phi,F_*(E_1),F_*(E_2)))
		\]
		holds.
	\end{definition}
	Let $S$ be a connected noetherian scheme and $\pi_S \colon X\rightarrow S$ be a smooth projective morphism whose geometric fibers are irreducible smooth curves of genus $g$. Let $D \subset X$ be a relative effective Cartier divisor for $\pi_S$. 
	\begin{definition}
		We define the moduli functor $\overline{\mcM^{D,\boldsymbol{\beta}}_{X/S}}(r,d,\bm{d}_1=\{d^{(1)}_i\}_{2\leq i\leq l_1},\bm{d}_2=\{d^{(2)}_i\}_{2\leq i\leq l_2})$ of the category of locally noetherian schemes over $S$ to the category of sets by
		\[
		\overline{\mcM^{D,\boldsymbol{\beta}}_{X/S}}(r,d,\bm{d}_1,\bm{d}_2)(T):=\{(E_1,E_2,\Phi,F_*(E_1),F_*(E_2))\}/\sim
		\]
		where $T$ is a locally noetherian scheme over $S$ and
		\begin{itemize}
			\setlength{\itemsep}{0cm}
			\item[(1)]$E_1,E_2$ are vector bundles on $X\times_S T$ such that for any geometric point $s$ of $T$, $\rank (E_1)_s=\rank (E_2)_s =r$ and $\deg (E_1)_s=\deg (E_2)_s=d$,
			\item[(2)] $\Phi \colon \Lambda^1_{D/S} \otimes E_1 \rightarrow E_2$ is a homomorphism of left $\mcO_{X\times_S T}$-modules,
			\item[(3)] For each $k=1,2$, $E_k=F_1(E_k) \supset \cdots \supset F_{l_k}(E_k) \supset F_{l_k+1}(E_k)=E_k(-D_T)$ is a filtration of $E_1$ by coherent subsheaves such that each $E_k/F_{i}(E_k)$ is flat over $T$ and for any geometric point $s$ of $T$ and $2\leq i \leq l_k$, $\len (E_k/F_{i}(E_k))_s=d^{(k)}_i$,
			\item[(4)] for any geometric point $s$ of $T$, the parabolic $\Lambda^1_{D_s}$-triple $((E_1)_s, (E_2)_s, \Phi_s,F_*(E_1)_s,F_*(E_2)_s)$ is $\boldsymbol{\beta}$-stable. 
		\end{itemize}
	\end{definition}
	\subsection{Construction of moduli spaces}

	We introduce propositions and a lemma.
	\begin{proposition}\label{bound}
		The family of geometric points of $\overline{\mcM^{D,\boldsymbol{\beta},\gamma}_{X/S}}(r,d,\bm{d}_1,\bm{d}_2)$ is bounded.
	\end{proposition}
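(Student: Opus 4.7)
The plan is to follow the boundedness strategy for parabolic $\Lambda^1_D$-triples developed in \cite{IIS1}, adapted to accommodate the additional second filtration $F_*(E_2)$ in the present definition. The main reduction is to bound the underlying pair of vector bundles $(E_1,E_2)$ on each geometric fiber: once this is done, the filtrations $F_*(E_k)$, having prescribed quotient lengths $\{d^{(k)}_i\}$ and common terminal quotient $E_k/E_k(-D)$, are parameterized by a family of relative Quot schemes of finite type; and $\Phi$ is a global section of the coherent sheaf $\bhom(\Lambda^1_D\otimes E_1, E_2)$, so varies in a bounded family as soon as $(E_1,E_2)$ does.

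The heart of the proof is bounding $\mu_{\max}(E_1)$ and $\mu_{\max}(E_2)$ uniformly in terms of the fixed data $(r,d,\boldsymbol{\beta},\gamma,\bm{d}_1,\bm{d}_2)$; throughout, parabolic contributions to $\mu_{\boldsymbol{\beta}}$ are absorbed into a universal $O(1)$ since $0\le\beta^{(k)}_i<1$ and the filtration lengths are bounded by $r\cdot\deg D$. First I would bound $\mu_{\max}(E_2)$: for any saturated subsheaf $F\subset E_2$, the datum $(0,F,0,\{0\},F_*(E_2)\cap F)$ is a parabolic $\Lambda^1_D$-subtriple, since $\Phi(\Lambda^1_D\otimes 0)\subset F$ is automatic. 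The $\boldsymbol{\beta}$-stability inequality becomes
\[
\frac{\deg F(-D)-\gamma\deg\mcO_X(1)\rank F + O(1)}{\rank F} < \mu_{\boldsymbol{\beta}}(E_1,E_2,\Phi,F_*(E_1),F_*(E_2)),
\]
with the right-hand side a fixed rational number, which bounds $\mu(F)$ above.

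Next I would bound $\mu_{\max}(E_1)$: for a saturated subsheaf $F\subset E_1$, set $G\subset E_2$ to be the saturation of $\Phi(\Lambda^1_D\otimes F)$. Then $(F,G,\Phi|_{\Lambda^1_D\otimes F},F_*(E_1)\cap F,F_*(E_2)\cap G)$ is a parabolic $\Lambda^1_D$-subtriple. Because $\Lambda^1_D$ is locally free of rank two, $\rank G\le 2\rank F$; and both $\deg G\le \mu_{\max}(E_2)\cdot\rank G$ and $\deg G\ge \deg(\Lambda^1_D\otimes F)-\mu_{\max}(\Lambda^1_D\otimes F)\cdot\rank(\ker)$ are controlled uniformly in terms of $\rank F$, using the already-established bound on $\mu_{\max}(E_2)$ and the fact that $\Lambda^1_D$ has bounded $\mu_{\max}$. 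Substituting into the $\boldsymbol{\beta}$-stability inequality for this subtriple and isolating $\deg F$ then yields an upper bound on $\mu(F)$, provided $\gamma$ is sufficiently large so that the $\gamma$-contribution $\gamma\deg\mcO_X(1)(\rank G-\rank F)$ has the correct sign on the range of possible $\rank G$. This is where the hypothesis ``$\gamma$ sufficiently large'' is consumed.

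With $\mu_{\max}(E_1),\mu_{\max}(E_2)$ bounded and ranks and degrees fixed, Grothendieck's boundedness theorem (or its extension by Maruyama) shows that the family of such pairs $(E_1,E_2)$ is bounded, completing the argument together with the first paragraph. The principal obstacle is the second displayed step: a subsheaf of $E_1$ is not itself a subtriple, so one must manufacture an accompanying subsheaf of $E_2$ through the homomorphism $\Phi$, and the ensuing inequality is useful only after the bound on $\mu_{\max}(E_2)$ has been installed and only when $\gamma$ is large enough to control the sign of the term $\gamma(\rank G-\rank F)$ in the resulting estimate; this forces the $E_2$-bound and the $E_1$-bound to be obtained in sequence.
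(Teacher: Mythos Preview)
Your strategy is the right one and matches what \cite{IIS1} does (the paper gives no independent argument here, citing only Proposition~5.1 of \cite{IIS1}; the extra filtration $F_*(E_2)$ contributes uniformly bounded parabolic terms and, once $E_2$ is bounded, is parameterized by a finite-type Quot scheme, so that proof carries over verbatim). However, two steps in your execution need correction. First, the lower bound $\deg G \ge \deg(\Lambda^1_D\otimes F) - \mu_{\max}(\Lambda^1_D\otimes F)\cdot\rank(\ker)$ is not ``controlled uniformly in terms of $\rank F$'' as you assert: both $\deg(\Lambda^1_D\otimes F)$ and $\mu_{\max}(\Lambda^1_D\otimes F)$ involve $\mu(F)$, the very quantity you are bounding, so as written the step is circular. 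It can be salvaged---after substitution the $\mu(F)$-dependent terms from the two sides of the stability inequality combine into a single $\mu(F)(\rank F+\rank G)$ and one divides through---but that is not what you wrote. A cleaner fix: once $\mu_{\max}(E_2)$ is bounded the family of $E_2$'s is bounded, hence so is the degree of any saturated subsheaf $G\subset E_2$ of given rank, independently of $F$. Simpler still is to test against the subtriple $(F,E_2)$, which is proper for every $F\subsetneq E_1$ and yields $\mu(F) < d/r + \gamma\deg\mcO_X(1)\,(r-\rank F)/(2\rank F) + O(1)$ directly, without ever introducing $G$.

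Second, your claim that the hypothesis ``$\gamma$ sufficiently large'' is consumed here, to control the sign of $\gamma(\rank G-\rank F)$, is mistaken. Each of the bounds above depends on $\gamma$ but is a valid bound for \emph{every} fixed $\gamma>0$; no largeness is required for Proposition~\ref{bound}. The paper's standing hypothesis $\gamma\gg 0$ is used elsewhere (for instance in Lemma~\ref{stablem1}, to make subpairs with $\rank F_1>\rank F_2$ automatically destabilizing), not to secure boundedness.
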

	\begin{proof}
		See Proposition 5.1 in \cite{IIS1}.
	\end{proof}
	\begin{proposition}\label{H0}
		Put $\beta^{(1)}_{l_1+1}=\beta^{(2)}_{l_2+1}=1$ and $\epsilon^{(k)}_i=\beta^{(k)}_{i+1}-\beta^{(k)}_i$ for $k=1,2$ and $1\leq i\leq l_k$. There exists an integer $m_0$ such that for any geometric point $(E_1,E_2,\Phi,F_*(E_1),F_*(E_2))$ of $\overline{\mcM^{D,\boldsymbol{\beta}}_{X/S}}(r,d,\bm{d}_1,\bm{d}_2)(K)$, the inequality 
		\begin{align*}
			&\frac{\beta^{(1)}_1 h^0(E'_1(m))+\beta^{(2)}_1 h^0(E'_2(m-\gamma))+\sum_{i=1}^{l_1}\epsilon^{(1)}_i h^0(F_{i+1}(E'_1)(m)))+\sum_{i=1}^{l_2}\epsilon^{(2)}_i h^0(F_{i+1}(E'_2)(m-\gamma)))}{\rank E'_1+ \rank E'_2}\\
			&< \frac{\beta^{(1)}_1h^0(E_1(m))+\beta^{(2)}_1 h^0(E_2(m-\gamma))+\sum_{i=1}^{l_1}\epsilon^{(1)}_i h^0(F_{i+1}(E_1)(m)))+\sum_{i=1}^{l_2}\epsilon^{(2)}_i h^0(F_{i+1}(E_2)(m-\gamma)))}{\rank E_1+ \rank E_2}
		\end{align*}
		holds for any proper nonzero parabolic $\Lambda^1_{D_K}$-subtriple $(E'_1,E'_2,\Phi,F_*(E'_1),F_*(E'_2))$ and any integer $m\geq m_0$.
	\end{proposition}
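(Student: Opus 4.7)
The plan is to follow the standard Maruyama--Simpson strategy for comparing a cohomological inequality with the intrinsic slope stability condition. The idea is to translate the displayed inequality on $h^0$-dimensions into an inequality on Euler characteristics via Serre vanishing for large $m$, and then extract the $\mu_{\boldsymbol{\beta}}$-comparison from the linear-in-$m$ expansions.

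First, by Proposition \ref{bound}, the family of full parabolic $\Lambda^1_D$-triples is bounded, giving a uniform bound on the Castelnuovo--Mumford regularity of each $F_i(E_k)$. I also need boundedness for the set of all proper nonzero sub-triples $(E'_1,E'_2,\Phi',F_*(E'_1),F_*(E'_2))$. My strategy is to argue that if $\deg E'_1$ or $\deg E'_2$ is sufficiently negative (with $\rank E'_k \leq r$), then for any fixed $m$ in the eventual range we get $h^0(F_i(E'_k)(m))=h^0(F_i(E'_k)(m-\gamma))=0$, so the left-hand side vanishes while the right-hand side is manifestly positive, making the inequality automatic. Hence only sub-triples with $\deg E'_k$ bounded below need be considered; combined with $\rank E'_k \leq r$ and a Grothendieck-type lemma on subsheaves of a bounded family with bounded slope, this yields a bounded family of candidate sub-triples.

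Next, I choose $m_0$ large enough that for all $m \geq m_0$ and all sheaves $F_i(E_k)$, $F_i(E'_k)$ (and their twists by $-\gamma$) from the bounded families above, higher cohomology vanishes, so $h^0$ equals $\chi$. By Riemann--Roch on $C$, each $\chi(F(m))$ is a linear polynomial in $m$ whose coefficients are determined by $\rank F$ and $\deg F$. Substituting and using $\rank F_i(E_k) = \rank E_k$ together with $\deg F_{i+1}(E_k) = \deg E_k - \len(E_k/F_{i+1}(E_k))$, both the numerator on the left and on the right become explicit affine functions of $m$ with coefficients expressed in terms of the parabolic data. After dividing by the respective total ranks, the coefficient of $m$ agrees on both sides (it depends only on $\beta^{(1)}_1+\beta^{(2)}_1+\sum_i \epsilon^{(1)}_i+\sum_i \epsilon^{(2)}_i$ and is independent of whether we use the sub-triple or the full triple), so the $m$-terms cancel.

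What remains is a comparison of the constant terms, and a direct bookkeeping shows that this is precisely
\[
\mu_{\boldsymbol{\beta}}(E'_1,E'_2,\Phi',F_*(E'_1),F_*(E'_2)) < \mu_{\boldsymbol{\beta}}(E_1,E_2,\Phi,F_*(E_1),F_*(E_2)),
\]
which holds by the assumed $\boldsymbol{\beta}$-stability of the full triple. The main technical obstacle is the boundedness claim for the family of candidate sub-triples; the non-trivial point is to argue uniformly in the base that only sub-triples within a bounded degree range can possibly violate the inequality, which is the classical Grothendieck--Maruyama boundedness result adapted to the $\Lambda^1_D$-triple setting. Once that is in place, the choice of a single $m_0$ making Serre vanishing uniform across both the bounded family of full triples and the bounded family of candidate sub-triples completes the argument.
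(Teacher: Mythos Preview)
The paper's own proof is just a citation to Proposition~5.2 of \cite{IIS1}, which in turn follows Simpson's argument; your outline is precisely that standard strategy, and the observation that the $m$-linear terms cancel (because $\beta^{(k)}_1+\sum_i\epsilon^{(k)}_i=1$ and $\rank F_{i+1}(E'_k)=\rank E'_k$) so that the comparison reduces to the $\mu_{\boldsymbol{\beta}}$-inequality is correct.

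The gap is in your disposal of subtriples with very negative degree. You assert that for such subtriples $h^0(F_i(E'_k)(m))=0$ ``for any fixed $m$ in the eventual range'', so the left side vanishes. But the proposition requires the inequality for \emph{every} $m\geq m_0$, and for any nonzero $E'_k$ the number $h^0(E'_k(m))$ is eventually positive as $m\to\infty$; the degree threshold below which $h^0$ vanishes depends on $m$, so no single threshold lets you discard these subtriples once and for all. Your boundedness step is therefore circular: the bounded family you want to isolate depends on the very $m_0$ you are trying to construct. The fix used in \cite{IIS1} (following Simpson) replaces the crude vanishing by a Le~Potier--Simpson estimate: since $E'_k\subset E_k$ with the $E_k$ in a bounded family, $\mu_{\max}(E'_k)$ is uniformly bounded above, yielding a uniform bound $h^0(E'_k(m))/\rank E'_k\leq m\,d_X+C$. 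One then splits according to whether the parabolic slope of the subtriple is within a fixed margin of that of the ambient triple (bounded family, Serre vanishing, $h^0=\chi$) or well below it (the Le~Potier bound already forces the strict inequality), and only this dichotomy produces a single $m_0$ valid for all subtriples simultaneously.
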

	\begin{proof}
		See Proposition 5.2 in \cite{IIS1}.
	\end{proof}
	\begin{proposition}\label{open}
		Let $T$ be a noetherian scheme over $S$ and $(E_1,E_2,\Phi,F_*(E_1),F_*(E_2))$ be a flat family of parabolic $\Lambda^1_{D_T/T}$-triples on $X\times_ST$ over $T$. Then there exists an open subscheme $T^s$ of $T$ such that 
		\[
		T^s(K)=\{s \in T(K) \mid \text{$(E_1,E_2,\Phi,F_*(E_1),F_*(E_2)) \otimes k(s)$ is $\boldsymbol{\beta}$-stable.}\}
		\]
		for any algebraically closed field $K$.
	\end{proposition}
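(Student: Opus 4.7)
The plan is to prove openness of $\boldsymbol{\beta}$-stability by exhibiting the non-stable locus of $T$ as the image, under a proper projection, of a scheme parametrizing potentially destabilizing parabolic subtriples. This is the classical Maruyama--Simpson strategy, now executed for the parabolic $\Lambda^1_D$-triple setting as in \cite{IIS1}.

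First, I would reformulate the stability condition numerically. By Proposition \ref{H0}, there exists $m_0$ such that $\boldsymbol{\beta}$-stability of a fiber $(E_1,E_2,\Phi,F_*(E_1),F_*(E_2))\otimes k(s)$ is equivalent, for every fixed $m\geq m_0$, to the displayed $h^0$-inequality for all proper nonzero parabolic $\Lambda^1_{D_s}$-subtriples. Combined with Proposition \ref{bound}, the family of such subtriples arising as possible destabilizers, across all geometric points $s$ of $T$, is bounded: their ranks $(r'_1,r'_2)$ and parabolic jump lengths are bounded by those of $(E_1,E_2)$, and their degrees lie in a finite set (once we fix the numerical type we wish to parametrize). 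Hence only finitely many numerical types $\tau=(r'_1,r'_2,d'_1,d'_2,\{d'^{(k)}_i\})$ of potential destabilizing subtriples need to be considered.

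Second, for each fixed type $\tau$ I would build a projective $T$-scheme parametrizing such subtriples. Concretely, I form the fiber product of relative Quot schemes over $T$
\[
Q_\tau := \Quot(E_1/XT)\times_T \Quot(E_2/XT)\times_T\prod_{k=1,2}\prod_{i=2}^{l_k}\Quot((E_k/F_{i+1}(E_k))/XT),
\]
each factor taken with the Hilbert polynomial determined by $\tau$, so that $Q_\tau$ parametrizes tuples of quotients $E_k\twoheadrightarrow E_k/E'_k$ and $E_k\twoheadrightarrow E_k/F_i(E'_k)$. Each factor is projective over $T$, so $Q_\tau$ is projective over $T$. Inside $Q_\tau$ the compatibility conditions cut out a closed subscheme $Z_\tau$: namely the inclusions $F_{i+1}(E'_k)\subset F_i(E'_k)\subset F_i(E_k)$ are closed conditions comparing kernels of universal quotient maps, and the condition $\Phi(\Lambda^1_D\otimes E'_1)\subset E'_2$ is the vanishing of the composite
\[
\Lambda^1_D\otimes E'_1\hookrightarrow \Lambda^1_D\otimes E_1\xrightarrow{\Phi} E_2\twoheadrightarrow E_2/E'_2,
\]
which is a closed condition on the universal family over $Q_\tau$. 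Thus $Z_\tau$ is proper over $T$.

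Third, I would cut out on $Z_\tau$ the further closed condition that the displayed $h^0$-inequality of Proposition \ref{H0} is \emph{violated} at the chosen $m\geq m_0$; by semicontinuity of cohomology combined with a finite stratification refining the locally constant loci of $h^0(F_{i+1}(E'_k)(m-\epsilon\gamma))$, the unstable locus of each type is itself the image under a proper morphism of a closed subscheme of $Z_\tau$. Since there are only finitely many types $\tau$ to consider, the full $\boldsymbol{\beta}$-unstable locus of $T$ is a finite union of closed subsets, and its complement is the desired open subscheme $T^s$.

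The main obstacle will be organizing the numerical bookkeeping: one must verify that the boundedness in Proposition \ref{bound} genuinely constrains degrees and parabolic jump data of subtriples (not merely of stable objects), and that stratifying by locally constant $h^0$ to apply Proposition \ref{H0} fiberwise is compatible with the properness of $Z_\tau\to T$. Both points are handled by standard flattening stratifications, but they are the step where care is required.
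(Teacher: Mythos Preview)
The paper does not give its own proof here; it simply cites Proposition~5.3 of \cite{IIS1}. Your sketch follows the standard Maruyama--Simpson strategy (parametrize candidate destabilizing subobjects by relative Quot schemes, cut out the compatibility and slope-violation conditions as closed loci, and push down under a proper map), which is precisely what the cited reference does, adapted to the parabolic $\Lambda^1_D$-triple setting. So in substance you are reproducing the argument the paper defers to.

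Two small points of care. First, Proposition~\ref{H0} as stated only gives the forward implication (stable $\Rightarrow$ $h^0$-inequality), not the equivalence you invoke; the converse for bounded families is standard via Riemann--Roch and cohomology vanishing, but you should either supply that step or, more simply, work directly with the slope inequality rather than its $h^0$ translation, since degrees are locally constant in flat families and the relevant intersection dimensions are semicontinuous. Second, as you correctly flag, Proposition~\ref{bound} bounds the stable objects themselves, not their subtriples; the boundedness you actually need is that of \emph{potentially destabilizing} subtriples over all fibers, which follows from a Grothendieck-type lemma bounding subsheaves of bounded slope inside a bounded family. Both are routine, but neither is literally what the cited propositions say.
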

	\begin{proof}
		See Proposition 5.3 in \cite{IIS1}.
	\end{proof}
	\begin{proposition}\label{AK}(EGA III (7.7.8), (7.7.9) or \cite{AK} (1.1))
		Let $f \colon X\rightarrow S$ be a proper morphism of noetherian schemes, and let $I$ and $F$ be two coherent $\mcO_X$-modules with $F$ flat over $S$. Then there exist a coherent $\mcO_S$ module $H(I,F)$ and an element $h(I,F)$ of $\Hom_X(I,F\otimes_S H(I,F))$ which represents the functor 
		\[
		M \longmapsto \Hom_X(I,F\otimes_{\mcO_S} M)
		\] 
		defined on the category of quasi-coherent $\mcO_S$-modules $M$, and the formation of the pair commutes with base change; in other words, the Yoneda map defined by $h(I,F)$
		\[
		y\colon \Hom_T(H(I,F)_T,M)\longmapsto \Hom_{X_T}(I_T,F \otimes_{\mcO_S} M)
		\]
		is an isomorphism for every $S$-scheme $T$ and every quasi-coherent $\mcO_T$-module $M$.
	\end{proposition}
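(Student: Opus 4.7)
The plan is to follow the classical proof of EGA III \S7 or Altman--Kleiman, reducing the representability question to a computation on the base via cohomology and base change, and then solving it by a direct construction of the coherent sheaf $H(I, F)$.

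The question is local on $S$, so I would assume $S = \Spec A$ is affine and fix an $f$-ample line bundle $\mcO_X(1)$. By Serre vanishing combined with the cohomology-and-base-change theorem (applicable because $F$ is flat over $S$), there exists an integer $N$ such that for every $n \geq N$ one has $R^i f_*(F(n)) = 0$ for $i > 0$ while $f_* F(n)$ is a finitely generated $A$-module whose formation commutes with arbitrary base change on $S$. Next, choose a presentation of $I$ by sufficiently negative twists,
\[
\mcO_X(-n_1)^{\oplus a_1} \xrightarrow{\alpha} \mcO_X(-n_0)^{\oplus a_0} \longrightarrow I \longrightarrow 0,
\]
with $n_0, n_1 \geq N$. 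Applying $\Hom_X(-, F \otimes_A M)$ to this presentation produces the left exact sequence
\[
0 \longrightarrow \Hom_X(I, F \otimes_A M) \longrightarrow f_* F(n_0)^{\oplus a_0}\otimes_A M \xrightarrow{\beta_M} f_* F(n_1)^{\oplus a_1}\otimes_A M,
\]
where we have used $\Hom_X(\mcO_X(-n_k), F \otimes_A M) = f_*(F(n_k)\otimes_A M) = f_* F(n_k) \otimes_A M$ by the choice of the $n_k$.

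Setting $P := f_* F(n_0)^{\oplus a_0}$ and $Q := f_* F(n_1)^{\oplus a_1}$, it remains to show that the functor $T(M) := \ker(P \otimes_A M \xrightarrow{\beta} Q \otimes_A M)$ on quasi-coherent $A$-modules is representable by $\Hom_A(H, -)$ for a coherent $A$-module $H$. To construct $H$ explicitly, I would pick finite free presentations $A^{p_1} \xrightarrow{\rho} A^{p_0} \twoheadrightarrow P$ and $A^{q_1} \xrightarrow{\sigma} A^{q_0} \twoheadrightarrow Q$, lift $\beta$ to $\beta_0 \colon A^{p_0} \to A^{q_0}$, and identify $T(M)$ with the degree-$0$ cohomology of the three-term complex $A^{p_1} \xrightarrow{(\rho,\,0)} A^{p_0}\oplus A^{q_1} \xrightarrow{(\beta_0,\,-\sigma)} A^{q_0}$ tensored with $M$. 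Using the canonical duality $A^n \otimes_A M = \Hom_A(A^n, M)$ for free modules together with the transpose of this three-term complex, one produces a finitely generated $A$-module $H$ with $\Hom_A(H, M) \cong T(M)$ natural in $M$; the universal element $h(I, F)$ is obtained by evaluating this isomorphism at $\id_H$.

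The main obstacle is the careful verification that $H$ is coherent, independent of the chosen presentations up to canonical isomorphism, and yields the Yoneda isomorphism for \emph{all} quasi-coherent $M$ (not merely for finitely presented ones); this technical core of the argument is precisely what is done in EGA III (7.7.8), (7.7.9) and in Altman--Kleiman (1.1). Once $H$ is built, base change is transparent: for any $S$-scheme $T = \Spec A' \to S$, the presentation of $I$ pulls back to a presentation of $I_T$ on $X_T$, the direct images $f_* F(n_k)$ base-change correctly by the choice of the $n_k$, and the construction of $H$ is manifestly compatible with $- \otimes_A A'$, giving a canonical isomorphism $H(I, F) \otimes_A A' \cong H(I_T, F_T)$. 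Gluing over an affine cover of $S$ extends the conclusion to any noetherian base.
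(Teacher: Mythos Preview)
The paper does not give its own proof of this proposition; it simply records the statement and cites EGA III (7.7.8)--(7.7.9) and Altman--Kleiman. Your overall strategy---localize to an affine base, present $I$ by direct sums of sufficiently negative twists $\mcO_X(-n_k)$, and reduce the representability to a linear-algebra problem on the base---is exactly the classical one and is correct.

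There is, however, a genuine gap in your explicit construction of $H$. First, the three-term sequence you write is not a complex: the composite $(\beta_0,-\sigma)\circ(\rho,0)=\beta_0\rho$ lands in $\mathrm{im}\,\sigma=\ker(A^{q_0}\to Q)$ but need not vanish. More importantly, even after the obvious repair (replacing $(\rho,0)$ by $(\rho,\tau)$ with $\sigma\tau=\beta_0\rho$), the middle cohomology of the complex tensored with $M$ only surjects onto $T(M)=\ker(P\otimes_A M\to Q\otimes_A M)$, with kernel $\ker(\sigma\otimes 1_M)/\tau(\ker(\rho\otimes 1_M))$. In fact $T$ is \emph{not} representable for arbitrary finitely generated $P,Q$: with $A=k[t]$, $P=A$, $Q=A/(t)$ and $\beta$ the quotient map one gets $T(M)=tM$, and comparing $\Hom_A(H,A)\cong A$ with $\Hom_A(H,A/(t))=0$ rules out any finitely generated $H$.

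The missing ingredient is that in your situation $P=(f_*F(n_0))^{\oplus a_0}$ and $Q=(f_*F(n_1))^{\oplus a_1}$ are \emph{locally free}, not merely finitely generated: since $F$ is $S$-flat and $R^if_*F(n_k)=0$ for $i>0$, cohomology-and-base-change forces each $f_*F(n_k)$ to be locally free and gives $f_*(F(n_k)\otimes_A M)\cong f_*F(n_k)\otimes_A M$ for every $M$. Once $P,Q$ are projective one has $P\otimes_A M=\Hom_A(P^\vee,M)$ canonically, whence
\[
T(M)=\ker\bigl(\Hom_A(P^\vee,M)\to\Hom_A(Q^\vee,M)\bigr)=\Hom_A(\coker\beta^\vee,M),
\]
so $H=\coker(\beta^\vee\colon Q^\vee\to P^\vee)$ together with its tautological element represents the functor, and compatibility with base change is immediate from the formation of duals and cokernels. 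This is essentially how the cited references proceed.
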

	\begin{lemma} \label{Yo}(Lemma 4.3 \cite{Yo})
		Let $f \colon X\rightarrow S$ be a proper morphism of noetherian schemes and let $\phi \colon I\rightarrow F$ be an $\mcO_X$-homomorphism of coherent $\mcO_S$-modules with $F$ flat over $S$. Then there exists a unique closed subscheme $Z$ of $S$ such that for all morphism $g \colon T \rightarrow S$, $g^*(\phi)=0$ if and only if $g$ factors through $Z$.
	\end{lemma}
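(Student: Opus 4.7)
The plan is to apply Proposition \ref{AK} to translate the question about vanishing of the sheaf homomorphism $\phi$ into one about vanishing of an $\mcO_S$-linear map on a coherent sheaf, and then take $Z$ to be the vanishing scheme of the image of that map.

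Concretely, I first apply Proposition \ref{AK} to the pair $(I,F)$ to obtain a coherent $\mcO_S$-module $H:=H(I,F)$ together with a Yoneda isomorphism
\[
y\colon \Hom_T(H_T,M)\overset{\sim}{\longrightarrow} \Hom_{X_T}(I_T,F\otimes_{\mcO_S} M)
\]
functorial in the $S$-scheme $T$ and the quasi-coherent $\mcO_T$-module $M$. Setting $T=S$ and $M=\mcO_S$, the given morphism $\phi \in \Hom_X(I,F)$ corresponds under $y^{-1}$ to a unique $\mcO_S$-linear map $\tilde\phi\colon H\to \mcO_S$. I let $\mcI \subset \mcO_S$ be the image of $\tilde\phi$, which is a coherent ideal sheaf because $H$ is coherent, and take $Z$ to be the closed subscheme of $S$ defined by $\mcI$.

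To check the universal property, let $g\colon T\to S$ be any morphism. Naturality of the Yoneda isomorphism in $T$ identifies $g^*\phi \in \Hom_{X_T}(I_T,F_T)$ with the pullback $g^*\tilde\phi\colon g^*H\to \mcO_T$. The latter is the zero map if and only if the pulled-back ideal $g^{-1}(\mcI)\cdot \mcO_T$ vanishes, which by definition of $Z$ is equivalent to $g$ factoring through $Z$. Uniqueness is then formal: any other closed subscheme $Z'\subset S$ with the same property must contain $Z$ and be contained in $Z$, by applying the universal property to the inclusions of $Z$ and of $Z'$.

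The only step requiring care is the base-change compatibility in the previous paragraph, namely the identification of $g^*\phi$ with $g^*\tilde\phi$ under the Yoneda map; but this is precisely the last clause of Proposition \ref{AK}, so beyond organizing these formal manipulations I do not expect a genuine obstacle.
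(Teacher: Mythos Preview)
The paper does not supply its own proof of this lemma; it simply cites Lemma~4.3 of \cite{Yo}. Your argument is correct and is precisely the standard proof: use the representing sheaf $H(I,F)$ of Proposition~\ref{AK}, translate $\phi$ into an $\mcO_S$-linear map $\tilde\phi\colon H(I,F)\to\mcO_S$, and define $Z$ by the image ideal, the base-change clause of Proposition~\ref{AK} giving exactly the needed identification of $g^*\phi$ with $g^*\tilde\phi$. This is essentially what Yokogawa does, so there is nothing to add.
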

	We construct the moduli space of parabolic $\Lambda_D^1$-triples. Let $S$ be a connected noetherian scheme and $\pi_S \colon X\rightarrow S$ be a smooth projective morphism whose geometric fibers are irreducible smooth curves of genus $g$. Let $D \subset X$ be a relative effective Cartier divisor for $\pi_S$. Let $P(m)=rd_Xm+d+r(1-g)$ where $d_X =\deg \mcO_{X_s}(1)$ for $s \in S$. We take an integer $m_0$ in Proposition \ref{H0}. We may assume that for any $m \geq m_0$, $h^k(F_i(E_1)(m))=h^k(F_j(E_2)(m-\gamma))=0$ for $k>0$, $1\leq i\leq  l_1+1$, $1\leq j\leq l_2+1$, and $F_i(E_1)(m_0), F_j(E_2)(m_0-\gamma)$ are generated by their global sections for any geometric point $(E_1,E_2,\Phi,F_*(E_1),F_*(E_2))$ of  $\overline{\mcM^{D,\boldsymbol{\beta}}_{X/S}}(r,d,\bm{d}_1,\bm{d}_2)$ by Proposition \ref{bound}. Put $n_1=P(m_0)$ and $n_2=P(m_0-\gamma)$. Let $V_1, V_2$ be free $\mcO_S$-modules of rank $n_1,n_2$, respectively. Let $Q^{(1)}$ be the Quot-scheme $\Quot^{P}_{V_1\otimes \mcO_S(-m_0)/X/S}$ and $V_1 \otimes  \mcO_{X_{Q^{(1)}}}(-m_0) \rightarrow \mcE_1$ be the universal quotient sheaf. Let $Q^{(2)}=\Quot^{P}_{V_2\otimes \mcO_S(-m_0+\gamma)/X/S}$ and $V_2 \otimes  \mcO_{X_{Q^{(2)}}}(-m_0+\gamma) \rightarrow \mcE_2$ be the universal quotient sheaf. Put $d^{(1)}_{l_1+1}=d^{(2)}_{l_2+1}=rn$.  For $k=1,2$ and $2\leq i\leq l_k+1$, let $Q^{(k)}_i:=\Quot^{d^{(k)}_{i}}_{\mcE_k/X_{Q^{(k)}}/Q^{(k)}}$ and  $F_i(\mcE_k) \subset \mcE_k$ be the universal subsheaf. We define $Q$ as the maximal closed subscheme of 
	\[
	Q^{(1)}_2 \times_{Q^{(1)}}\cdots  \times_{Q^{(1)}}Q^{(1)}_{l_1+1} \times Q^{(2)}_2 \times_{Q^{(2)}}\cdots  \times_{Q^{(2)}} Q^{(2)}_{l_2+1}
	\]
	such that there exist filtrations
	\[
	(\mcE_1)_Q\otimes \mcO_{X_Q}(-D_Q) =F_{l_1+1}(\mcE_1)_Q\subset F_{l_1}(\mcE_1)_Q\subset\cdots\subset F_{2}(\mcE_1)_Q\subset F_{1}(\mcE_1)_Q:=(\mcE_1)_Q 
	\]
	and
	\[
	(\mcE_2)_Q\otimes \mcO_{X_Q}(-D_Q)= F_{l_2+1}(\mcE_2)_Q\subset F_{l_2}(\mcE_2)_Q\subset\cdots\subset F_{2}(\mcE_2)_Q\subset F_{1}(\mcE_2)_Q:=(\mcE_2)_Q.
	\]
	By Proposition \ref{AK} there exists a coherent sheaf $\mcH$ on $Q$ such that  for any noetherian scheme $T$ over $Q$ and for any quasi-coherent $\mcO_T$-module $\mcF$, there exists a functorial isomorphism 
	\[
	\Hom_{T}(\mcH_T,\mcF) \cong \Hom_{X_T}(\Lambda^1_{D/S}\otimes_{\mcO_X}(\mcE_1)_T,(\mcE_2)_T\otimes_{\mcO_T} \mcF).
	\]
	Let $\bm{V}=\Spec \Sym_{\mcO_Q}(\mcH)$, where $\Sym_{\mcO_Q} (\mcH)$ is the symmetric algebra of $\mcH$ on $Q$. Then the homomorphism
	\[
	\tilde{\Phi} \colon \Lambda^1_{D/S}\otimes_{\mcO_X}(\mcE_1)_{\bm{V}} \longrightarrow (\mcE_2)_{\bm{V}}
	\]
	corresponding to the natural homomorphism $\mcH_{\bm{V}}\rightarrow \mcO_{\bm{V}}$ is the universal homomorphism. Put
	\[
	R^s:=
	\left\{ s \in \bm{V} \vb
	\begin{minipage}{10cm}
		$(V_1)_s \rightarrow H^0((\mcE_1)_s(m_0)), (V_2)_s \rightarrow H^0((\mcE_2)_s(m_0-\gamma))$ are isomorphisms, and $((\mcE_1)_s, (\mcE_2)_s, \tilde{\Phi}_s,F_*(\mcE_1)_s, F_*(\mcE_2)_s)$ is $\boldsymbol{\beta}$-stable
	\end{minipage}
	\right\}.
	\]
	By Proposition \ref{open}, $R^s$ is an open subscheme of $\bm{V}$. For $y \in R^s$ and vector subspaces $V'_1 \subset V_1$ and $V'_2 \subset V_2$, let $E'_1(V'_1,V'_2,y)$ be the image of $V'_1\otimes \mcO_X(-m_0) \rightarrow (\mcE_1)_y$ and $E'_2(V'_1,V'_2,y)$ be the image of $\Lambda^1_{D/S}\otimes V'_1\otimes \mcO_X(-m_0) \oplus V'_2 \otimes \mcO_X(-m_0+\gamma) \rightarrow (\mcE_2)_y$. Since the family
	\[
	\mcF =\{(E(V'_1,V'_2,y)_1,E(V'_1,V'_2,y)_2) \mid y \in R^s, V'_1 \subset V_1, V'_2 \subset V_2\}
	\]
	is bounded, there exists an integer $m_1\geq m_0$ such that for all $m \geq m_1$ and all members $(E(V'_1,V'_2,y)_1,E(V'_1,V'_2,y)_2) \in \mcF$, 
	\[
	V'_1\otimes H^0(\mcO_{X_y}(m)) \rightarrow H^0(E(V'_1,V'_2,y)_1(m+m_0))
	\]
	and
	\[
	V'_1\otimes H^0(\mcO_{X_y}(m_0+m-\gamma)\otimes \Lambda^1_{D_y} \otimes \mcO_{X_y}(-m_0)) \oplus V'_2 \otimes H^0(\mcO_{X_y}(m))\rightarrow H^0(E(V'_1,V'_2,y)_2(m_0+m-\gamma))
	\]
	are surjective, $H^i(\mcO_{X_y}(m_0+m-\gamma)\otimes \Lambda^1_{D_y}\otimes \mcO_{X_y}(-m_0))=0, H^i(\mcO_{X_y}(m))=0$ for $i>0$,  and the inequality
	\begin{equation}\label{stableineq}
		\begin{split}
			&(r'_1+r'_2)d_X\biggl\{h^0(E_1(m_0))+h^0(E_2(m_0-\gamma)) -\sum_{i=1}^{l_1}\epsilon^{(1)}_id^{(1)}_{i+1}-\sum_{j=1}^{l_2}\epsilon^{(2)}_jd^{(2)}_{j+1}\biggr\}\\
			&\quad-2rd_X\biggl\{ h^0(E'_1(m_0))+h^0(E'_2(m_0-\gamma)) -\sum_{i=1}^{l_1}\epsilon^{(1)}_i\Big(h^0(E'_1(m_0))-h^0(F_{i+1}(E'_1)(m_0))\Big)\\
			&\qquad \qquad \qquad-\sum_{j=1}^{l_2}\epsilon^{(2)}_j\Big(h^0(E'_2(m_0-\gamma))-h^0(F_{j+1}(E'_2)(m_0-\gamma))\Big)\biggr\}\\
			>&m^{-1}\Big(\dim V_1+\dim V_2 -\sum_{i=1}^{l_1}\epsilon^{(1)}_id^{(1)}_{i+1}-\sum_{j=1}^{l_2}\epsilon^{(2)}_jd^{(2)}_{j+1}\Big)\Big(\dim V'_1+\dim V'_2-\chi(E'_1(m_0))-\chi(E'_2(m_0-\gamma))\Big)
		\end{split}
	\end{equation}
	holds for $(0,0) \subsetneq (V'_1,V'_2) \subsetneq ((V_1)_y,(V_2)_y)$, where $E'_k=E(V'_1,V'_2,y)_k$ and $F_{i+1}(E'_k)=E'_k \cap F_{i+1}(\mcE_k)_y$ for $k=1,2$ and $1\leq i\leq l_k$. We note that the left hand side of (\ref{stableineq}) is positive since $m_0$ is an integer in Proposition \ref{H0}.
	The composite
	\[
	V_1 \otimes \Lambda^1_{D/S} \otimes \mcO_{X_{R^s}}(-m_0) \longrightarrow \Lambda^1_{D/S} \otimes (\mcE_1)_{R^s}  \overset{\tilde{\Phi}}{\longrightarrow} (\mcE_2)_{R^s}
	\]
	induces a homomorphism
	\[
	V_1\otimes W_1 \otimes \mcO_{R^s} \longrightarrow (\pi_{R^s})_*(\mcE_2(m_0+m_1-\gamma)_{R^s}),
	\]
	where $W_1=(\pi_S)_*(\mcO_X(m_0+m_1 -\gamma) \otimes \Lambda^1_{D/S} \otimes \mcO_X(-m_0))$ and $\pi_{R^s}\colon X_{R^s}:=X\times_SR^s\rightarrow R^s$ be the projection, and the quotient $V_2 \otimes \mcO_{X_{R^s}}(-m_0+\gamma) \rightarrow (\mcE_2)_{R^s}$ induces a homomorphism
	\[
	V_2\otimes W_2 \otimes \mcO_{R^s} \longrightarrow (\pi_{R^s})_*(\mcE_2(m_0+m_1-\gamma)_{R^s})
	\]
	where $W_2=(\pi_S)_*(\mcO_X(m_1))$. These homomorphisms induce a quotient bundle
	\begin{equation}\label{quot2}
		(V_1 \otimes W_1 \oplus V_2 \otimes W_2)\otimes \mcO_{R^s}\longrightarrow (\pi_{R^s})_*(\mcE_2(m_0+m_1-\gamma)_{R^s}).
	\end{equation}
	Taking $m_1$ sufficiently large, we obtain the surjectivity of this homomorphism and the canonical homomorphism
	\begin{equation}\label{quot1}
		V_1 \otimes W_2 \otimes \mcO_{R^s} \longrightarrow (\pi_{R^s})_*(\mcE_1(m_0+m_1)_{R^s}).
	\end{equation}
	The canonical homomorphisms
	\begin{equation}\label{quot1i}
		V_1 \otimes \mcO_{R^s} \longrightarrow (\pi_{R^s})_*((\mcE_1/F_i(\mcE_1))(m_0)_{R^s}),
	\end{equation}
	\begin{equation}\label{quot2i}
		V_2 \otimes \mcO_{R^s} \longrightarrow (\pi_{R^s})_*((\mcE_2/F_i(\mcE_2))(m_0-\gamma)_{R^s})
	\end{equation}
	are surjective. Indeed, set
	\[
	\mcG_1=\ker (V_1 \otimes \mcO_{X_{R^s}}(-m_0) \rightarrow (\mcE_1)_{R^s}),
	\] 
	\[
	\mcG^{(1)}_i=\ker (V_1 \otimes \mcO_{X_{R^s}}(-m_0) \rightarrow (\mcE_1/F_i(\mcE_1))_{R^s}).
	\] 
	Then we obtain a commutative diagram
	\[
	\begin{tikzcd}
		V_1\otimes \mcO_{R^s} \ar[d,"="] \ar[r] &(\pi_{R^s})_*(\mcE_1(m_0))_{R^s}\ar[r,"\delta"]  \ar[d]& R^1\pi_{R^s*}(\mcG_1(m_0)) \ar[d]\\
		V_1\otimes \mcO_{R^s} \ar[r] &(\pi_{R^s})_*(\mcE_1/F_i(\mcE_1)(m_0))_{R^s}\ar[r]&R^1\pi_{R^s*} (\mcG^{(1)}_i(m_0))
	\end{tikzcd}.
	\]
	Since $H^1(F_i(\mcE_1)_y(m_0))=0$ and $V_1 \cong H^0((\mcE_1)_y(m_0))$ for any $y \in R^s$, the middle homomorphism is surjective and $\delta=0$. So the homomorphism $V_1\otimes \mcO_{R^s} \rightarrow (\pi_{R^s})_*(\mcE_1/F_i(\mcE_1)(m_0))_{R^s}$ is surjective. Similarly, we obtain the surjectivity of the homomorphism $V_2 \otimes \mcO_{R^s} \rightarrow (\pi_{R^s})_*(\mcE_2/F_i(\mcE_2)(m_0-\gamma)_{R^s})$. 
	The quotients (\ref{quot2}), (\ref{quot1}), (\ref{quot1i}) and (\ref{quot2i}) determine a morphism
	\[
	\iota\colon R^s \longrightarrow \Grass_{r_2}(V_1\otimes W_1 \oplus V_2 \otimes W_2) \times \Grass_{r_1}(V_1 \otimes W_2) \times \prod_{i=1}^{l_1} \Grass_{d^{(1)}_{i+1}}(V_1) \times \prod_{i=1}^{l_2} \Grass_{d^{(2)}_{i+1}}(V_2),
	\]
	where $r_1=h^0(\mcE_1(m_0+m_1)_y), r_2=h^0(\mcE_2(m_0+m_1-\gamma)_y)$ for any $y \in R^s$. We can see that $\iota$ is a closed immersion.
	
	Let $G:=(GL(V_1)\times_S GL(V_2))/(\bm{G}_m \times S)$. Here $\bm{G}_m \times S$ is the subgroup of  $GL(V_1) \times_S GL(V_2)$ consisting of  all scalar matrices. The group $G$ acts canonically on $R^s$ and on $\Grass_{r_2}(V_1\otimes W_1 \oplus V_2 \otimes W_2) \times \Grass_{r_1}(V_1 \otimes W_2) \times \prod_{i=1}^{l_1} \Grass_{d^{(1)}_{i+1}}(V_1) \times \prod_{i=1}^{l_2} \Grass_{d^{(2)}_{i+1}}(V_2)$. We can see that $\iota$ is a $G$-equivariant immersion. Let $\mcO_{\Grass_{r_2}(V_1\otimes W_1 \oplus V_2 \otimes W_2)}(1)$, $\mcO_{\Grass_{r_1}(V_1 \otimes W_2)}(1)$, $\mcO_{\Grass_{d^{(1)}_i}(V_1)}(1)$, $\mcO_{\Grass_{d^{(2)}_i}(V_2)}(1)$ be the $S$-ample line bundle on $\Grass_{r_2}(V_1\otimes W_1 \oplus V_2 \otimes W_2)$, $\Grass_{r_1}(V_1 \otimes W_2)$, $\Grass_{d^{(1)}_i}(V_1)$, $\Grass_{d^{(2)}_i}(V_2)$,  respectively,  induced by Pl\"ucker embedding. For $i=1,\ldots, l_1$ and $j=1,\ldots, l_2$,  we define positive rational numbers $\xi,  \xi^{(1)}_i, \xi^{(2)}_j$ by 
	\begin{equation}\label{xidef}
		\xi=P(m_0)+P(m_0-\gamma)-\sum_{i=1}^{l_1}\epsilon^{(1)}_id^{(1)}_{i+1}-\sum_{j=1}^{l_2}\epsilon^{(2)}_jd^{(2)}_{j+1}, \quad\xi^{(1)}_i=2rd_Xm_1\epsilon^{(1)}_i, \quad \xi^{(2)}_i=2rd_Xm_1\epsilon^{(2)}_i.
	\end{equation}
	Put
	\[
	L:=\iota^*\Big(\mcO_{\Grass_{r_2}(V_1\otimes W_1 \oplus V_2 \otimes W_2)}(\xi)\otimes \mcO_{\Grass_{r_1}(V_1 \otimes W_2)}(\xi)\otimes \bigotimes_{i=1}^{l_1}\mcO_{\Grass_{d^{(1)}_{i+1}}(V_1)}(\xi^{(1)}_i) \otimes \bigotimes_{j=1}^{l_2}\mcO_{\Grass_{d^{(2)}_{j+1}}(V_2)}(\xi^{(2)}_j) \Big). 
	\]
	Then $L$ is a $\mathbb{Q}$-line bundle on $R^s$ and for some positive integer $N$, $L^{\otimes N}$ becomes a $G$ -linearized $S$-ample line bundle on $R^s$.
	\begin{proposition}\label{prostab}
		All points of $R^s$ are properly stable with respect to the action of $G$ and the $G$-linearized $S$-ample line bundle $L^{\otimes N}$.
	\end{proposition}
	
	By Proposition \ref{prostab}, there exists a geometric quotient $R^s/G$. 
	\begin{theorem}
		$\overline{M^{D,\boldsymbol{\beta}}_{X/S}}(r,d,\bm{d}_1,\bm{d}_2):=R^s/G$ is a coarse moduli scheme of $\overline{\mcM^{D,\boldsymbol{\beta}}_{X/S}}(r,d,\bm{d}_1,\bm{d}_2)$.
	\end{theorem}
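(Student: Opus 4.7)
The plan is to apply the standard GIT construction to the parameter scheme $R^s$. By Proposition \ref{prostab}, every point of $R^s$ is properly stable for the $G$-action linearized by $L^{\otimes N}$, so Mumford's theorem yields the geometric quotient $\pi \colon R^s \to R^s/G$ as a separated $S$-scheme of finite type. It remains to verify that $R^s/G$ corepresents the moduli functor $\overline{\mcM^{D,\boldsymbol{\beta}}_{X/S}}(r,d,\bm{d}_1,\bm{d}_2)$.

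First I would construct a natural transformation $\eta \colon \overline{\mcM^{D,\boldsymbol{\beta}}_{X/S}}(r,d,\bm{d}_1,\bm{d}_2) \to h_{R^s/G}$. Given a $T$-valued point $(E_1,E_2,\Phi,F_*(E_1),F_*(E_2))$, the choice of $m_0$ guarantees that each $(\pi_T)_*(E_k(m_0-\delta_{k,2}\gamma))$ is locally free of rank $n_k$ with vanishing higher direct images, and that $E_k(m_0-\delta_{k,2}\gamma)$ is generated by its global sections. Consider the frame bundle $P\to T$ parametrizing pairs of isomorphisms $V_1\otimes\mcO_T\cong(\pi_T)_*(E_1(m_0))$ and $V_2\otimes\mcO_T\cong(\pi_T)_*(E_2(m_0-\gamma))$; this is a $(GL(V_1)\times_S GL(V_2))$-torsor. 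The pullback of the family to $P$ acquires tautological surjections from $V_k\otimes\mcO_{X_P}(-m_0+\delta_{k,2}\gamma)$ together with the filtrations and the homomorphism $\Phi$, hence is classified by a $(GL(V_1)\times_S GL(V_2))$-equivariant morphism $P\to R^s$. Since the diagonal $\bm{G}_m$ acts trivially on $R^s$, this descends to a $G$-equivariant morphism, and composition with $\pi$ descends further to a morphism $T\to R^s/G$ that is independent of the local trivializations and invariant under the equivalence relation on the functor. Compatibility with base change is immediate.

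Next I would verify that $\eta$ induces a bijection on geometric points and satisfies the universal property. For $K$ algebraically closed, surjectivity of $\overline{\mcM}(K)/{\sim}\to(R^s/G)(K)$ follows by pulling back the tautological $R^s$-family (the restriction to $R^s$ of $(\mcE_1,\mcE_2,\tilde{\Phi},F_*(\mcE_1),F_*(\mcE_2))$) along any lift $\Spec K\to R^s$ of a given $K$-point of $R^s/G$. Injectivity holds since two $K$-points of $R^s$ define isomorphic parabolic $\Lambda^1_D$-triples precisely when they lie in a common $G$-orbit — the group $G$ is by construction the group of automorphisms of the chosen trivializations of $V_1,V_2$ modulo the scalars that act trivially on both quotient data. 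For the universal property, given any natural transformation $\eta'\colon \overline{\mcM}\to h_Y$, apply $\eta'$ to the tautological $R^s$-family to obtain a morphism $R^s\to Y$; this morphism is $G$-invariant because $G$-translated families are isomorphic as parabolic $\Lambda^1_D$-triples, so it descends uniquely through the geometric quotient to a morphism $R^s/G\to Y$ compatible with $\eta$ and $\eta'$.

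The main obstacle will be the descent step in the construction of $\eta$: checking that the $G$-equivariant morphism $P\to R^s$ produced from a family on $T$ descends along the torsor $P\to T$ and along $\pi\colon R^s\to R^s/G$ to a well-defined morphism $T\to R^s/G$ that respects the equivalence relation on $\overline{\mcM}$. This relies on $R^s\to R^s/G$ being a geometric quotient in a sufficiently strong sense, a categorical quotient in the category of $S$-schemes, which is guaranteed by the proper stability provided by Proposition \ref{prostab}. A secondary subtlety is the verification that the map $\eta$ is compatible with pullbacks along arbitrary $S$-morphisms $T'\to T$, but this reduces to the functoriality of the pushforwards $(\pi_T)_*(E_k(m_0-\delta_{k,2}\gamma))$, which is ensured by the cohomological vanishing built into the choice of $m_0$ and $m_1$.
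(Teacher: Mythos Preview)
Your proposal is correct and follows the standard GIT argument for identifying a geometric quotient as a coarse moduli space. The paper itself does not supply a proof of this theorem: it simply asserts the result after establishing proper stability in Proposition~\ref{prostab}, treating the passage from geometric quotient to coarse moduli scheme as routine following the method of \cite{IIS1} and \cite{In}. Your write-up makes explicit exactly the steps those references carry out---the frame-bundle construction of the classifying map, bijectivity on geometric points via the orbit--isomorphism-class correspondence, and the universal property via descent of the tautological family---so there is nothing to compare beyond noting that you have spelled out what the paper leaves implicit.
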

	
	\begin{lemma}\label{scalar}
		Take any geometric point $(E_1,E_2,\Phi,F_*(E_1),F_*(E_2)) \in \overline{\mcM^{D,\boldsymbol{\beta}}_{X/S}}(r,d,\bm{d}_1,\bm{d}_2)(K)$. Then for any endomorphisms $f_1 \colon E_1 \rightarrow E_1, f_2\colon E_2 \rightarrow E_2$ satisfying $\Phi\circ (1\otimes f_1)=f_2\circ \Phi$, $f_1(F_{j+1}(E_1))\subset F_{j+1}(E_1)\;(1\leq j\leq l_1)$ and $f_2(F_{j+1}(E_2))\subset F_{j+1}(E_2)\;(1\leq j\leq l_2)$, there exists $c \in K$ such that $(f_1,f_2)=(c\cdot\id_{E_1}, c \cdot \id_{E_2})$.
	\end{lemma}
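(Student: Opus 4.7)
The plan is a Schur-type simplicity argument applied to the endomorphism algebra of the parabolic $\Lambda^1_D$-triple. First, I would produce an eigenvalue $c\in K$ of $f_1$: since $X_K$ is proper and connected, $H^0(X_K,\mcO_{X_K})=K$, so the coefficients of the characteristic polynomial $\det(t\,\id_{E_1}-f_1)$---viewed via $\tr(\Lambda^i f_1)\in H^0(X_K,\mcO_{X_K})$---lie in $K$, and algebraic closedness of $K$ furnishes a root $c$. Replacing $(f_1,f_2)$ by $(f_1-c\,\id_{E_1}, f_2-c\,\id_{E_2})$, which continues to satisfy all the compatibility and filtration hypotheses, it suffices to show $(f_1,f_2)=(0,0)$ under the extra assumption that $\Ker f_1\neq 0$ as a subsheaf of $E_1$.

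Second, I would build two subtriples from the kernel and image of $(f_1,f_2)$. Set $K_k:=\Ker f_k\subset E_k$ and $I_k:=f_k(E_k)\subset E_k$, with the induced filtrations $F_j(K_k):=K_k\cap F_j(E_k)$ and $F_j(I_k):=I_k\cap F_j(E_k)$. The intertwining $\Phi\circ (1\otimes f_1)=f_2\circ \Phi$ gives $\Phi(\Lambda^1_D\otimes K_1)\subset K_2$ and $\Phi(\Lambda^1_D\otimes I_1)\subset I_2$, so $K_\bullet:=(K_1,K_2,\Phi|_{\Lambda^1_D\otimes K_1},F_*(K_1),F_*(K_2))$ and $I_\bullet:=(I_1,I_2,\Phi|_{\Lambda^1_D\otimes I_1},F_*(I_1),F_*(I_2))$ are parabolic $\Lambda^1_D$-subtriples of $E_\bullet:=(E_1,E_2,\Phi,F_*(E_1),F_*(E_2))$. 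Assuming $(f_1,f_2)\neq(0,0)$, both are nonzero and proper (the kernel is nonzero by Step~1 and proper since some $f_k\neq 0$; the image is nonzero since some $f_k\neq 0$ and proper since $\Ker f_1\neq 0$ forces a strict rank drop $I_1\subsetneq E_1$). By $\boldsymbol{\beta}$-stability,
\[
\mu_{\boldsymbol{\beta}}(K_\bullet)<\mu_{\boldsymbol{\beta}}(E_\bullet), \qquad \mu_{\boldsymbol{\beta}}(I_\bullet)<\mu_{\boldsymbol{\beta}}(E_\bullet).
\]

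On the other hand, equipping $E_k/K_k$ with the quotient filtration $\overline{F}_j(E_k/K_k):=(F_j(E_k)+K_k)/K_k$, the short exact sequences $0\to K_k\to E_k\to E_k/K_k\to 0$ induce (via the snake lemma on the filtration diagram) additivity of both the numerator and denominator of $\mu_{\boldsymbol{\beta}}$, so $\mu_{\boldsymbol{\beta}}(E_\bullet)$ is a weighted average of $\mu_{\boldsymbol{\beta}}(K_\bullet)$ and $\mu_{\boldsymbol{\beta}}(E_\bullet/K_\bullet)$; the strict inequality on $K_\bullet$ then forces $\mu_{\boldsymbol{\beta}}(E_\bullet/K_\bullet)>\mu_{\boldsymbol{\beta}}(E_\bullet)$. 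Finally, the sheaf isomorphism $E_\bullet/K_\bullet\xrightarrow{\sim}I_\bullet$ induced by $(f_1,f_2)$ sends the quotient filtration into the intersection filtration, $f_k(\overline{F}_j(E_k/K_k))\subset F_j(I_k)$; because the weights $\beta^{(k)}_j$ are strictly increasing, the parabolic degree is a monotone function of filtration size (a summation-by-parts computation), yielding $\mu_{\boldsymbol{\beta}}(E_\bullet/K_\bullet)\leq \mu_{\boldsymbol{\beta}}(I_\bullet)$. Chaining produces $\mu_{\boldsymbol{\beta}}(E_\bullet)<\mu_{\boldsymbol{\beta}}(I_\bullet)<\mu_{\boldsymbol{\beta}}(E_\bullet)$, a contradiction.

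The main obstacle I anticipate is the parabolic-degree bookkeeping in the last paragraph: the additivity of the slope numerator and denominator along the kernel/quotient sequence (via the snake lemma applied to the filtration diagram) and the monotone comparison $\mu_{\boldsymbol{\beta}}(E_\bullet/K_\bullet)\leq \mu_{\boldsymbol{\beta}}(I_\bullet)$ between the quotient and intersection parabolic structures on the common underlying sheaf $E_\bullet/K_\bullet\cong I_\bullet$. A secondary technicality is that $K_k$ and $I_k$ need not a priori be locally free, but on the smooth curve $X_K$ torsion-freeness is automatic and suffices for the slope formalism to apply without modification.
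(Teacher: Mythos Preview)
Your strategy is exactly the standard Schur-lemma argument the paper invokes (it simply cites \cite{IIS1}, Lemma~5.1), and everything up to the final comparison is fine. The issue is the monotonicity step: you endow $I_k$ with the intersection filtration $G_j=I_k\cap F_j(E_k)$, note that the image of the quotient filtration $H_j=f_k(F_j(E_k))$ sits inside it, and then claim $\mu_{\boldsymbol{\beta}}(E_\bullet/K_\bullet)\le \mu_{\boldsymbol{\beta}}(I_\bullet)$. But summation by parts gives
\[
\sum_{i}\beta^{(k)}_i\bigl(\len(G_i/G_{i+1})-\len(H_i/H_{i+1})\bigr)=\sum_{i=2}^{l_k}(\beta^{(k)}_i-\beta^{(k)}_{i-1})\,\len(G_i/H_i)\;-\;\beta^{(k)}_{l_k}\,\len(G_{l_k+1}/H_{l_k+1}),
\]
and the boundary term can be strictly negative: since $I_k=f_k(E_k)$ need not be saturated in $E_k$, one may have $G_{l_k+1}=I_k\cap E_k(-D)\supsetneq I_k(-D)=H_{l_k+1}$. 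So the claimed inequality can fail (and with this discrepancy $I_\bullet$ is not even a parabolic $\Lambda^1_D$-triple in the paper's sense, which requires $F_{l_k+1}(E'_k)=E'_k(-D)$).

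The fix is to drop this step entirely: equip $I_k$ directly with the \emph{image} filtration $H_j=f_k(F_j(E_k))$. Since $f_k(F_j(E_k))\subset F_j(E_k)$ by hypothesis and $H_{l_k+1}=f_k(E_k(-D))=I_k(-D)$, this already makes $I_\bullet$ a genuine subtriple, and the isomorphism $E_k/K_k\xrightarrow{\sim} I_k$ carries the quotient filtration \emph{exactly} onto $H_*$. Hence $\mu_{\boldsymbol{\beta}}(E_\bullet/K_\bullet)=\mu_{\boldsymbol{\beta}}(I_\bullet)$ on the nose, and stability applied to this $I_\bullet$ gives the contradiction $\mu_{\boldsymbol{\beta}}(E_\bullet)<\mu_{\boldsymbol{\beta}}(E_\bullet/K_\bullet)=\mu_{\boldsymbol{\beta}}(I_\bullet)<\mu_{\boldsymbol{\beta}}(E_\bullet)$ directly. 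With this correction your argument is complete.
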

	\begin{proof}
		See Lemma 5.1 in \cite{IIS1}.
	\end{proof}
	\begin{proposition}\label{proj}
		Let $R$ be a discrete valuation ring over $S$ with the residue field $k=R/\mfm$ and the quotient field $K$. Let $(E_1,E_2,\Phi,F_*(E_1), F_*(E_2))$ be a semistable parabolic $\Lambda^1_{D_K}$-triple on $X_K$. Then there exists a flat family $(\tilde{E_1},\tilde{E_2},\tilde{\Phi},F_*(\tilde{E_1}), F_*(\tilde{E_2}))$ of  parabolic $\Lambda^1_{D_R}$-triples on $X_R$ over $R$ such that $(E_1,E_2,\Phi,F_*(E_1), F_*(E_2)) \cong (\tilde{E_1},\tilde{E_2},\tilde{\Phi},F_*(\tilde{E_1}), F_*(\tilde{E_2}))\otimes_R K$ and $(\tilde{E_1},\tilde{E_2},\tilde{\Phi},F_*(\tilde{E_1}), F_*(\tilde{E_2}))\otimes_R k$ is semistable.
	\end{proposition}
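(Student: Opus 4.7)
The plan is to adapt Langton's method of semistable reduction to parabolic $\Lambda^1_D$-triples, following the strategy used for the analogous statement in \cite{IIS1}. I would first produce an a priori unstable extension over $R$, then iteratively correct its special fiber by elementary modifications until semistability is achieved.

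For the initial extension, I use that $X_R \to \Spec R$ is proper and flat, so any coherent sheaf on $X_K$ extends to an $R$-flat coherent sheaf on $X_R$; after replacing by double duals on the special fiber (which agree with the original data on the generic fiber), I may assume $\tilde{E}_1, \tilde{E}_2$ are locally free. Since $\tilde{E}_2$ is $R$-torsion-free, the generic homomorphism $\Phi$ extends uniquely to a left $\mcO_{X_R}$-homomorphism $\tilde{\Phi}\colon \Lambda^1_{D_R/R}\otimes \tilde{E}_1 \to \tilde{E}_2$. The filtrations $F_*(E_k)$ extend by taking schematic closures inside $\tilde{E}_k$, and a further adjustment ensures that each graded piece is $R$-flat with the prescribed length on every fiber. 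If the resulting special fiber happens to be $\boldsymbol{\beta}$-semistable, the proof concludes.

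Otherwise, let $(F_1, F_2, \Psi, F_*(F_1), F_*(F_2))$ denote the maximal $\boldsymbol{\beta}$-destabilizing parabolic subtriple of the special fiber, whose existence and uniqueness follow from a standard sum-of-subobjects argument (the sum of two destabilizing subtriples has strictly larger $\mu_{\boldsymbol{\beta}}$ unless one is contained in the other). Define
\[
\tilde{E}'_k := \ker\bigl(\tilde{E}_k \twoheadrightarrow \tilde{E}_k\otimes_R k \twoheadrightarrow (\tilde{E}_k\otimes_R k)/F_k\bigr) \qquad (k=1,2),
\]
which inherit a compatible $\tilde{\Phi}'$ and filtrations, yielding a new flat family of parabolic $\Lambda^1_{D_R}$-triples with generic fiber unchanged up to tensoring by a power of the uniformizer. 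Iterating this procedure produces a sequence of flat families over $R$.

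The main obstacle, as always in Langton's argument, is to prove that this sequence of modifications terminates after finitely many steps. The plan is to examine the maximal destabilizing subtriples $G^{(n)}_\bullet$ of consecutive special fibers, pass to a limit object inside a suitable ambient $\mcO_{X_R}$-module, and exploit the boundedness statement of Proposition \ref{bound} to conclude that the sequence of parabolic slopes $\mu_{\boldsymbol{\beta}}(G^{(n)}_\bullet)$ is eventually constant; once the slope stabilizes, a rank-drop argument parallel to the one used in \cite{IIS1} for the rank two case shows that the destabilizing ranks must strictly decrease, forcing the process to terminate. At that point the resulting family has the original generic fiber and a $\boldsymbol{\beta}$-semistable special fiber, as required.
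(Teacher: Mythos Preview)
Your approach is correct and matches the paper's: the paper does not give an independent proof but simply refers to Proposition~5.5 of \cite{IIS1}, which is precisely the Langton-type argument you outline. Your sketch in fact supplies more detail than the paper itself, and the adaptation from the setting of \cite{IIS1} (filtration only on $E_1$) to the present one (filtrations on both $E_1$ and $E_2$) is straightforward, as you implicitly indicate.
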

	\begin{proof}
		See Proposition 5.5 in \cite{IIS1}.
	\end{proof}
	\begin{proof}[Proof of Theorem \ref{MT}]
		Put $l_1=l_2=rn$ and $d^{(1)}_{i}=d^{(2)}_{i}=i-1$ for $2\leq i\leq rn+1$. Put $\{\beta^{(k)}_i\}_{1\leq i\leq rn}=\{\alpha^{(k)}_{i,j}\}^{1\leq i\leq n}_{1\leq j \leq r}$ for each $k=1,2$. For a parabolic $\phi$-connection $(E_1,E_2,\phi,\nabla,l^{(1)}_*,l^{(2)}_*)$ over $(C,\bm{t})$, we define a parabolic $\Lambda^1_{D}$-triple $(E_1,E_2,\Phi,F_*(E_1), F_*(E_2))$ as follows: Let $\Phi \colon \Lambda^1_D\otimes E_1 \rightarrow E_2$ be a left $\mcO_C$-homomorphism induced by $\phi$ and $\nabla$. For each $1\leq p\leq rn$, there exists a unique pair of integers $(i,j)$ such that $1\leq i\leq n$, $1\leq j\leq r$ and $\beta^{(1)}_p=\alpha^{(1)}_{i,j}$. Then we put $F_1(E_1):=E_1$ and  $F_{p+1}(E_1):=\ker (F_{p}(E_1)\rightarrow E_1|_{t_{i}}/l^{(1)}_{i,j})$. In a similar way we define $F_{p}(E_2)$ for $1\leq p\leq rn+1$. By the definition of the stability  we can see that $(E_1,E_2,\phi,\nabla,l^{(1)}_*,l^{(2)}_*)$ is $\boldsymbol{\alpha}$-stable if and only if $(E_1,E_2,\Phi,F_*(E_1), F_*(E_2))$  is $\boldsymbol{\beta}$-stable. The above correspondence determines a morphism of functors
		\[
		\iota \colon \overline{\mcM^{\boldsymbol{\alpha}}_{\mcC/\tilde{M}_{g,n}}}(\tilde{\bm{t}},r,d)\longrightarrow \overline{\mcM^{D,\boldsymbol{\beta}}_{\mcC\times \mcN/\tilde{M}_{g,n}\times \mcN}}(r,d,\bm{d}_1,\bm{d}_2).
		\]
		We can see that $\iota$ is a closed immersion by Lemma \ref{Yo}. So there exists a closed subscheme $Z \subset R^s$ such that 
		\[
		h_Z=h_{R^s}\times_{\overline{\mcM^{D,\boldsymbol{\beta}}_{\mcC\times \mcN/\tilde{M}_{g,n}\times \mcN}}(r,d,\bm{d}_1,\bm{d}_2)}\overline{\mcM^{\boldsymbol{\alpha}}_{\mcC/\tilde{M}_{g,n}}}(\tilde{\bm{t}},r,d),
		\]
		where $h_Z=\Hom_{\tilde{M}_{g,n} \times \mcN}(-,Z)$. $Z$ is invariant by the action of $G$. By Lemma \ref{scalar}, the quotient $R^s\rightarrow \overline{M^{D,\boldsymbol{\beta}}_{\mcC\times \mcN/\tilde{M}_{g,n}\times \mcN}}(r,d,\bm{d}_1,\bm{d}_2)$ is a principal $G$-bundle. So $Z/G$ is a closed subscheme of $\overline{M^{D,\boldsymbol{\beta}}_{\mcC\times \mcN/\tilde{M}_{g,n}\times \mcN}}(r,d,\bm{d}_1,\bm{d}_2)$ which is just the coarse moduli scheme of $\overline{M^{\boldsymbol{\alpha}}_{\mcC/\tilde{M}_{g,n}}}(\tilde{\bm{t}},r,d)$. 
		
		When $r$ and $d$ are coprime, we can see that  $\overline{M^{\boldsymbol{\alpha}}_{\mcC/\tilde{M}_{g,n}}}(\tilde{\bm{t}},r,d)$ is fine by Lemma \ref{scalar} and the standard argument. For general $d$, there is an isomorphism $\sigma \colon \overline{M^{\boldsymbol{\alpha}}_{\mcC/\tilde{M}_{g,n}}}(\tilde{\bm{t}},r,d)\rightarrow \overline{M^{\boldsymbol{\alpha}'}_{\mcC/\tilde{M}_{g,n}}}(\tilde{\bm{t}},r,d')$ induced an elementary transformation, where $r$ and $d'$ are coprime. Then we obtain a universal family over $\overline{M^{\boldsymbol{\alpha}}_{\mcC/\tilde{M}_{g,n}}}(\tilde{\bm{t}},r,d)\times_{\tilde{M}_{g,n}\times \mcN}(\mcC\times \mcN)$ by pulling back a universal family over $\overline{M^{\boldsymbol{\alpha'}}_{\mcC/\tilde{M}_{g,n}}}(\tilde{\bm{t}},r,d')\times_{\tilde{M}_{g,n}\times \mcN}(\mcC\times \mcN)$ through $\sigma$. So $\overline{M^{\boldsymbol{\alpha}}_{\mcC/\tilde{M}_{g,n}}}(\tilde{\bm{t}},r,d)$ is fine for arbitrary $d$.
		
		It follows from Proposition \ref{proj} that  $\overline{M^{\boldsymbol{\alpha}}_{\mcC/\tilde{M}_{g,n}}}(\tilde{\bm{t}},r,d)\rightarrow \tilde{M}_{g,n}\times \mcN$ is projective for generic $\boldsymbol{\alpha}$.
	\end{proof}
	\section{Explicit description of moduli spaces of parabolic logarithmic connections}\label{secdes}
	In this section, we describe the moduli space of rank 3 parabolic logarithmic connections on $\pl$ with 3 poles. Through this section, we may assume that  $\boldsymbol{\alpha}=(\alpha_{i,j})_{1\leq i,j\leq 3}$ and $\gamma$ satisfies $0<\alpha_{i,j}\ll 1$ for any $1\leq i, j\leq 3$ and $\gamma \gg 0$. We put $\mcN:=\mcN(0,0,2)$. 
	\subsection{The family of $A^{(1)*}_2$-surfaces and main theorem}\label{familysursec}
	In this subsection, we construct a family of  $A^{(1)*}_2$-surfaces parameterized by $T_3\times \mcN$ and state the main theorem.
	
	Let $\tilde{t}_i \subset \pl \times T_3 \times \mcN$ be the section defined by 
	\[
	T_3 \times \mcN \hookrightarrow \pl \times T_3 \times \mcN; \quad ((t_j)_{1\leq j\leq 3}, (\nu_{m,n})^{1\leq m\leq 3}_{0\leq n \leq 2}) \mapsto (t_i, (t_j)_{1\leq j\leq 3}, (\nu_{m,n})^{1\leq n\leq 3}_{0\leq n \leq 2}) 
	\]
	for $i=1,2,3$ and $D(\tilde{\bm{t}})=\tilde{t}_1+\tilde{t}_2+\tilde{t}_3$ be a relative effective Cartier divisor for the projection  $\pl \times T_3 \times \mcN \rightarrow T_3 \times \mcN$. Put 
	\[
	\mcE:=\Omega_{\pl\times T_3 \times \mcN/T_3\times \mcN}^1(D(\tilde{\bm{t}}))\oplus \mcO_{\pl\times T_3 \times \mcN}.
	\]
	Let 
	\[
	\pi \colon \mathbb{P}(\mcE) \longrightarrow \pl\times T_3 \times \mcN
	\]
	be the projection, where $\mathbb{P}(\mcE):=\Proj \Sym (\mcE^\vee)$. We note that for each $x\in T_3 \times \mcN$, there is an isomorphism $(\Omega_{\pl\times T_3 \times \mcN/T_3\times \mcN}^1(D(\tilde{\bm{t}})))_x\cong \Omega_{\pl}^1(D(\tilde{\bm{t}})_x)\cong \Opl(1)$ and so $\mathbb{P}(\mcE_x)$ is a Hirzebruch surface of degree 1. Let $\tilde{D}_0 \subset \mathbb{P}(\mcE)$ be the section over $\pl\times T_3 \times \mcN$ defined by the injection $\Omega_{\pl\times T_3 \times \mcN/T_3\times \mcN}^1(D(\tilde{\bm{t}}))\hookrightarrow\mcE$ and $\tilde{D}_i \subset \mathbb{P}(\mcE)$ be the inverse image of $\tilde{t}_i$. Put $\mcL=\mcO_{\mathbb{P}(\mcE)}(\tilde{D}_0+\tilde{D}_1)$. Let 
	\[
	\varpi \colon\mathbb{P}(\mcE)\overset{\pi}{\longrightarrow} \pl\times T_3 \times \mcN\longrightarrow T_3\times \mcN
	\]
	be the projection and take a closed point $x\in T_3\times \mcN$. Since $\tilde{D}_0$ and $\tilde{D}_1$ are flat over $T_3\times \mcN$, $(\tilde{D}_0)_x$ and $(\tilde{D}_1)_x$ are effective Cartier divisors on $\mathbb{P}(\mcE_x)$, and so $\mcL_x\cong \mcO_{\mathbb{P}(\mcE_x)}((\tilde{D}_0)_x+ (\tilde{D}_1)_x)$. The section  $(\tilde{D}_0)_x\subset \mathbb{P}(\mcE_x)$ is a $(-1)$-curve by definition, so we get a morphism $f\colon \mathbb{P}(\mcE_x)\rightarrow \mathbb{P}^2$ by contracting $(\tilde{D}_0)_x$. By the projection formula $R^if_*\mcL_x\cong \mcO_{\mathbb{P}^2}(1)\otimes R^if_*\mcO_{\mathbb{P}(\mcE_x)}$, we have $H^i(\mathbb{P}(\mcE_x), \mcL_x)\cong H^i(\mathbb{P}^2, \mcO_{\mathbb{P}^2}(1))=0$ for any $i>0$, which leads to $\dim H^0(\mathbb{P}(\mcE_x), \mcL_x)=3$ by Riemann-Roch theorem. Hence $\varpi_*\mcL$ is a rank 3 locally free sheaf on $T_3\times \mcN$. Since $\mcL_x$ is generated by global section, the canonical homomorphism $\varpi^*\varpi_*\mcL \rightarrow \mcL$ is surjective, so we obtain a morphism $\rho\colon \mathbb{P}(\mcE)\rightarrow \mathbb{P}(\varpi_*\mcL)$ over $T_3\times \mcN$. Let $W$ be the scheme theoretic image of $\rho\colon \tilde{D}_0\rightarrow \mathbb{P}(\varpi_*\mcL)$. Since $\tilde{D}_0$ is proper over $T_3\times \mcN$, $W$ is a closed subvariety of  $\mathbb{P}(\varpi_*\mcL)$. $W_x$ consists of one point because $\deg_{(\tilde{D}_0)_x}\mcL|_{(\tilde{D}_0)_x}=(\tilde{D}_0)_x.((\tilde{D}_0)_x+(\tilde{D}_1)_x)=0$. We can see that $\mathbb{P}(\mcE)\setminus (\tilde{D}_0)\rightarrow \mathbb{P}(\varpi_*\mcL)\setminus W$ is an isomorphism by the proof of Theorem V.2.17. in \cite{Ha}, and $\mathbb{P}(\mcE)$ is isomorphic to the blow-up of $\mathbb{P}(\varpi_*\mcL)$ along $W$. By the residue map 
	\[
	\res_{\tilde{t}_i}\colon \Omega_{\pl\times T_3 \times \mcN/T_3\times \mcN}^1(D(\tilde{\bm{t}}))|_{\tilde{t}_i}\longrightarrow\mcO_{\tilde{t}_i},
	\]
	we obtain an isomorphism $\tilde{D}_i \overset{\sim}{\rightarrow} \pl\times T_3 \times \mcN$. For each $i=1,2,3$ and $j=0,1,2$, let $\tilde{b}_{i,j}$ be the section of $\tilde{D}_i$ over $T_3\times \mcN$ defined by 
	\[
	\{((\nu_{i,j}+\res_{t_i}(\tfrac{dz}{z-t_3}):1), (t_k)_k, (\nu_{m,n})_{m,n})\} \subset \pl\times T_3 \times \mcN.
	\]
	Let $\tilde{\mcB}_j$ denote the reduced induced structure on $\tilde{b}_{1,j}\cup \tilde{b}_{2,j}\cup\tilde{b}_{3,j}$ for $j=0,1,2$. Then we can naturally regard $\rho(\tilde{\mcB}_i)$ as a closed subvariety of $\mathbb{P}(\varpi_*\mcL)$, and it is isomorphic to $\tilde{\mcB}_i$. So we use the same character $\tilde{\mcB}_i$ to denote $\rho(\tilde{\mcB}_i)$ for simplicity of notation. Let $g_2\colon S_2\rightarrow \mathbb{P}(\varpi_*\mcL)$ be the blow-up along $\tilde{\mcB}_2$, $g_1\colon S_1 \rightarrow S_2$ be the blow-up along the strict transform of $\tilde{\mcB}_1$ and $g \colon S\rightarrow S_1$ be the blow-up along the strict transform of $\tilde{\mcB}_0$. Then for each closed point $(\bm{t},\boldsymbol{\nu})\in T_3\times \mcN$, the fiber $S_{(\bm{t},\boldsymbol{\nu})}$ is a surface obtained by blowing up three points on each of three lines meeting at a single point on $\mathbb{P}((\varpi_*\mcL)_{(\bm{t}, \boldsymbol{\nu})})\cong \mathbb{P}^2$. Let $Bl_W\colon Z\rightarrow S$ be the blow-up along $W$. $Z$ is also obtained by repeating the blow-up of $\mathbb{P}(\mcE)$ .
	
	Let $\widehat{M^{\boldsymbol{\alpha}}_3}(0,0,2)$ be the moduli space of pairs of an $\boldsymbol{\alpha}$-stable parabolic $\phi$-connection and a certain subbundle (see subsection \ref{appsec}), and $\PC \colon  \widehat{M^{\boldsymbol{\alpha}}_3}(0,0,2) \rightarrow \overline{M^{\boldsymbol{\alpha}}_3}(0,0,2)$ be the morphism defined by forgetting subbundles. 
	Our aim is to prove the following theorem.
	\begin{theorem}\label{maintheorem}
		Take $\boldsymbol{\alpha}=(\alpha_{i,j})_{1\leq i,j\leq 3}$ and $\gamma$ such that $0<\alpha_{i,j}\ll 1$ for any $1\leq i, j\leq 3$ and $\gamma \gg 0$.
		
		\begin{itemize}
			\item[(1)] The closed subscheme $Y_{\leq 1}$ defined by $\rank\phi \leq 1$ is reduced. The forgetful map $\PC \colon  \widehat{M^{\boldsymbol{\alpha}}_3}(0,0,2) \rightarrow \overline{M^{\boldsymbol{\alpha}}_3}(0,0,2)$ is the blow-up along $Y_{\leq1}$. 
			\item[(2)] There exists an isomorphism $\widehat{M^{\boldsymbol{\alpha}}_3}(0,0,2)\overset{\sim}{\longrightarrow} Z$ and $\overline{M^{\boldsymbol{\alpha}}_3}(0,0,2)\overset{\sim}{\longrightarrow} S$ over $T_3 \times\mcN$ such that the diagram
			\[
			\begin{tikzcd}
				\widehat{M^{\boldsymbol{\alpha}}_3}(0,0,2)\ar[r, "\sim"]\ar[d, "\PC"']&Z\ar[d, "Bl_W"]\\
				\overline{M^{\boldsymbol{\alpha}}_3}(0,0,2)\ar[r, "\sim"]&S
			\end{tikzcd}
			\]
			commutes.
			In particular, $\overline{M^{\boldsymbol{\alpha}}_3}(\bm{t},\boldsymbol{\nu})$ is isomorphic to an $A^{(1)*}_2$-surface for each $(\bm{t},\boldsymbol{\nu})\in T_3 \times \mcN$. 
			\item[(3)] Let $Y$ be the closed subscheme of $\overline{M^{\boldsymbol{\alpha}}_3}(0,0,2)$ defined by the conditions $\wedge^3\phi=0$. Then $Y$ is reduced and $M^{\boldsymbol{\alpha}}_3(0,0,2)\cong \overline{M^{\boldsymbol{\alpha}}_3}(0,0,2)\setminus Y$. Moreover, for each $(\bm{t},\boldsymbol{\nu})\in T_3 \times \mcN$, the fiber $Y_{(\bm{t},\boldsymbol{\nu})}$ is the anti-canonical divisor of   $\overline{M^{\boldsymbol{\alpha}}_3}(\bm{t},\boldsymbol{\nu})$.
		\end{itemize}
	\end{theorem}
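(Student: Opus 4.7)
The plan is to follow the strategy sketched in the introduction: construct the morphism $\varphi$ to $\mathbb{P}(\mcE)$ explicitly, use the normal forms of $\boldsymbol{\alpha}$-stable parabolic $\phi$-connections to identify $\varphi$ with a composition of blow-ups, and then separately analyze the forgetful map $\PC$. First I would define the apparent map on $\widehat{M^{\boldsymbol{\alpha}}_3}(0,0,2)$ using a filtration of the underlying bundle: a pair $((E_1,E_2,\phi,\nabla,l^{(1)}_*,l^{(2)}_*),F)$ determines both an apparent point on $\pl$ (from the zero locus of the induced quotient $\phi$-connection) and its dual parameter, giving a morphism $\varphi\colon \widehat{M^{\boldsymbol{\alpha}}_3}(0,0,2) \to \mathbb{P}(\mcE)$ over $T_3\times\mcN$. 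Using the normal forms from subsection \ref{nfsec}, I would obtain explicit affine charts on $\widehat{M^{\boldsymbol{\alpha}}_3}(0,0,2)$ in which $\varphi$ is given by polynomial formulas; from these charts both $\widehat{M^{\boldsymbol{\alpha}}_3}(0,0,2)$ and $\overline{M^{\boldsymbol{\alpha}}_3}(0,0,2)$ are seen to be smooth and projective over $T_3\times\mcN$.

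For Part (2) I would show $\varphi$ is a composition of three blow-ups. A generic point of $\mathbb{P}(\mcE)$ has a single pre-image under $\varphi$, so $\varphi$ is birational. Its positive-dimensional fibers are concentrated along the nine sections $\tilde{b}_{i,j}\subset\tilde{D}_i$ ($1\leq i,j\leq 3$): on these loci a residue condition becomes degenerate and leaves one free parameter in the normal form, giving a $\mathbb{P}^1$-fiber. Factoring $\varphi$ through $g\circ g_1\circ g_2$ and verifying chart-by-chart in normal-form coordinates that the factorization lifts to an isomorphism, I conclude $\widehat{M^{\boldsymbol{\alpha}}_3}(0,0,2)\cong Z$. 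Together with the identification $\mathbb{P}(\mcE)\cong Bl_W\mathbb{P}(\varpi_*\mcL)$ established in subsection \ref{familysursec} and Part (1) below, this yields $\overline{M^{\boldsymbol{\alpha}}_3}(0,0,2)\cong S$ and the commutativity of the displayed square; the fiberwise identification with an $A^{(1)*}_2$-surface then follows from the definition of $S$.

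For Part (1) I would analyze how the auxiliary subbundle is constrained by the remaining data. When $\rank\phi\geq 2$, the parabolic filtration together with the image of $\phi$ rigidly determines the compatible subbundle, so $\PC$ is an isomorphism away from $Y_{\leq 1}$. When $\rank\phi\leq 1$, the space of such subbundles is a $\mathbb{P}^1$, giving positive-dimensional fibers of $\PC$. A deformation-theoretic computation in the normal-form charts would show that $Y_{\leq 1}$ is a smooth section of $\overline{M^{\boldsymbol{\alpha}}_3}(0,0,2)\to T_3\times\mcN$ cut out by reduced equations, so that it meets each geometric fiber in a single reduced point. By the universal property of blow-ups and the smoothness of source and target, $\PC$ coincides with the blow-up along $Y_{\leq 1}$; under the isomorphism of Part (2), $Y_{\leq 1}$ is identified with $W$, so $\PC$ corresponds to $Bl_W$.

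For Part (3), $\wedge^3\phi\neq 0$ on a geometric fiber if and only if $\phi$ is an isomorphism, so Theorem \ref{MT}(2) yields $M^{\boldsymbol{\alpha}}_3(0,0,2)\cong \overline{M^{\boldsymbol{\alpha}}_3}(0,0,2)\setminus Y$, and reducedness of $Y$ is checked from the local equations of $\phi$ in the normal-form charts. To identify $Y_{(\bm{t},\boldsymbol{\nu})}$ with the anti-canonical divisor, I would track the canonical class through the blow-up sequence defining $S_{(\bm{t},\boldsymbol{\nu})}$ starting from the Hirzebruch surface $\mathbb{P}(\mcE_{(\bm{t},\boldsymbol{\nu})})$, and decompose the divisor of $\wedge^3\phi$ under the isomorphism as the union of the strict transforms of $\tilde{D}_0,\tilde{D}_1,\tilde{D}_2,\tilde{D}_3$ together with the appropriate exceptional components; this cycle is exactly the prescribed $A^{(1)*}_2$ anti-canonical configuration. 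The principal obstacle is the behaviour when $(\bm{t},\boldsymbol{\nu})$ lies on a reflection hyperplane of $W(E^{(1)}_6)$: certain sections $\tilde{b}_{i,j}$ coincide and the nine blow-ups must be performed at infinitely near points, so verifying that $\varphi$ is the correct sequence of blow-ups in this degenerate situation requires a delicate local computation which will be deferred to the appendix.
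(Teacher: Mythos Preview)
Your overall strategy matches the paper's: construct $\varphi$ via apparent singularities and dual parameters, use the normal forms to identify $\varphi$ with a sequence of blow-ups along the nine sections $\tilde{b}_{i,j}$, and deduce the structure of $\PC$ from the resulting commutative square. The paper likewise proves (2) first (using Propositions \ref{isominusblow}, \ref{allneq}, \ref{twoeq}, \ref{alleq} together with Zariski's main theorem to identify $\widehat{M}\cong Z$ fiberwise, then Hartogs plus Zariski to pass to $\overline{M}\cong S$), and derives (1) from the diagram together with an explicit family in coordinates $(u,v)$ showing $Y_{\leq 1}$ is cut out by $u=0$.

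There is, however, a genuine confusion in your treatment of Part (3). You propose to compute the canonical class ``starting from the Hirzebruch surface $\mathbb{P}(\mcE_{(\bm{t},\boldsymbol{\nu})})$'' and to decompose $Y$ as the union of the strict transforms of $\tilde{D}_0,\tilde{D}_1,\tilde{D}_2,\tilde{D}_3$ plus exceptional components. But $Y$ lives on $\overline{M}\cong S$, and $S$ is obtained by blowing up $\mathbb{P}(\varpi_*\mcL)\cong\mathbb{P}^2$, not $\mathbb{P}(\mcE)$; it is $Z$ that arises from the Hirzebruch surface. Under $\PC$ the section $\tilde{D}_0$ (which on $\widehat{M}$ is exactly the locus $\rank\phi=1$, by the proof of Proposition \ref{injrk12}) is contracted to the single point $Y_{\leq 1}\subset\overline{M}$ (Remark \ref{rk1rem}), so $\tilde{D}_0$ contributes no divisorial component to $Y$ on $\overline{M}$. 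The correct picture, which the paper uses, is to pass to $\mathbb{P}^2$ via the blow-down $\rho$: writing $H_i=\rho(D_i)$ for the three lines through the point $\rho(D_0)$, one has $-K_{\overline{M}}=\hat{H}_1+\hat{H}_2+\hat{H}_3$ with no further exceptional summands, and the task reduces to showing $Y$ has multiplicity one along each $\hat{H}_i$. The paper does this by exhibiting an explicit one-parameter family (\ref{rk2coord}) in which $\hat{Y}$ is cut out by $\mu=0$. Your proposed decomposition, by contrast, would give the wrong divisor class and does not match the $A^{(1)*}_2$ configuration of three lines through a point.
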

	\begin{remark}\label{anynu}
		Theorem \ref{maintheorem} implies a description for all $\boldsymbol{\nu}$. Take $\nu_1, \nu_2, \nu_3 \in \mathbb{C}$ satisfying $\nu_1+\nu_2+\nu_3=2$. Put $L:=\Opl$ and
		\[
		\nabla_L:=d+\frac{1}{3}\left(\frac{\nu_1}{z-t_1}+\frac{\nu_2}{z-t_2}+\frac{\nu_3-2}{z-t_3}\right)dz.
		\]
		Then the morphism defined by
		\[
		\overline{M^{\boldsymbol{\alpha}}_3}(0,0,2)\longrightarrow \overline{M^{\boldsymbol{\alpha}}_3}(\nu_1,\nu_2,\nu_3), \; (E_1,E_2,\phi,\nabla,l^{(1)}_*,l^{(2)}_*)\longmapsto (E_1,E_2,\phi,\nabla,l^{(1)}_*,l^{(2)}_*)\otimes (L,\nabla_L)
		\]
		is an isomorphism. When $\deg E_1=\deg E_2\neq -2$, elementary transformations give isomorphisms of moduli spaces (see subsection \ref{eletr}).
	\end{remark}
	\begin{figure}
		\centering
		\includegraphics[width=0.5\linewidth]{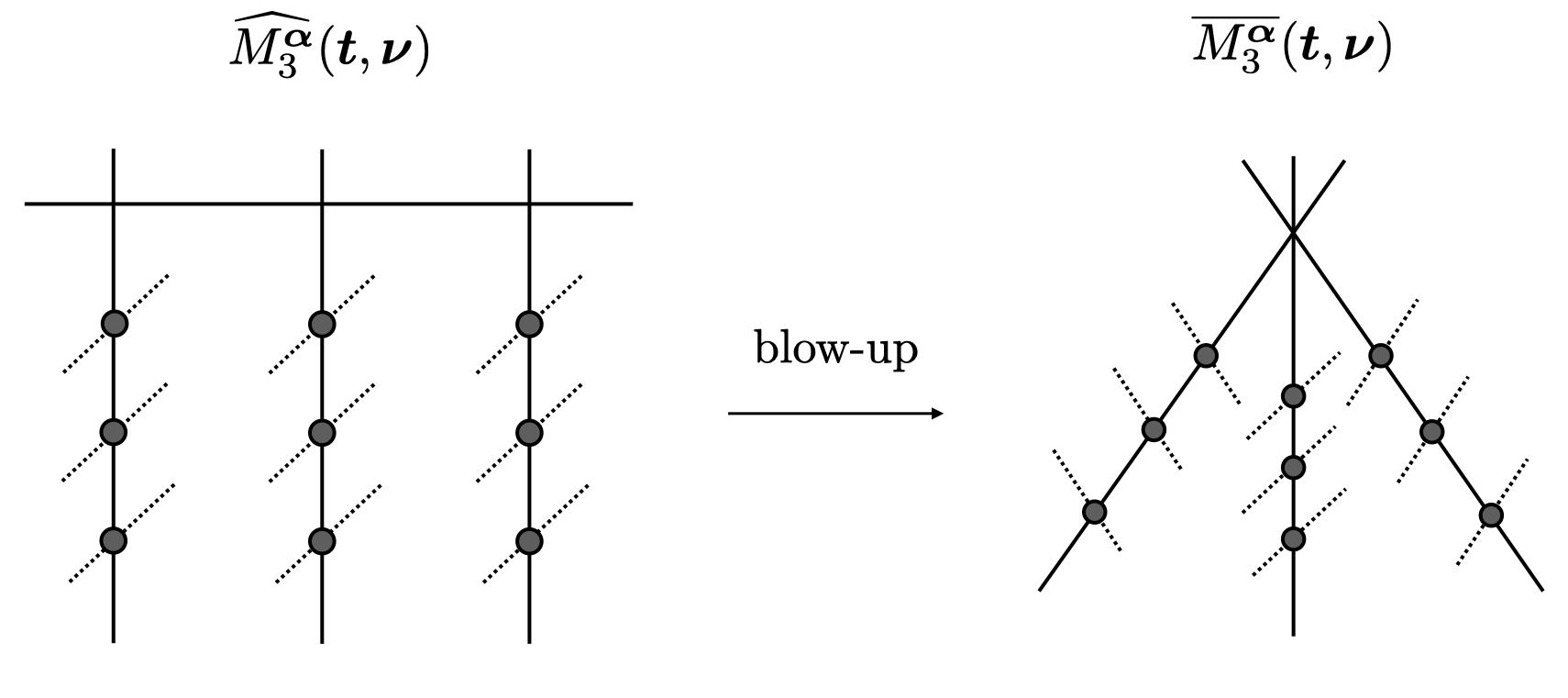}
	\end{figure}
	
	\subsection{The apparent map}\label{appsec}
	
	Take $\bm{t}=(t_i)_{1\leq i\leq 3}\in T_3, \boldsymbol{\nu}\in \mcN$ and put $D=t_1+t_2+t_3$. We assume that $0<\alpha_{i,j}\ll 1$ for any $1\leq i, j\leq 3$ and $\gamma \gg 0$. For simplicity of notation, we write $\overline{M}$ instead of $\overline{M^{\boldsymbol{\alpha}}_3}(\bm{t},\boldsymbol{\nu})$. First, we prove three lemmas and two propositions to define the apparent singularity and the apparent map.
	
	Let $(E_1,E_2,\phi,\nabla,l^{(1)}_*,l^{(2)}_*)$ be a $\boldsymbol{\nu}$-parabolic $\phi$-connection, and $F_1 \subset E_1$ and $F_2 \subset E_2$ be subbundles such that $(F_1,F_2)\neq (0,0)$. We put 
	\[
	\mu_{\boldsymbol{\alpha}}(F_1,F_2):=\frac{\deg F_1(-D)+\deg F_2(-D) - \gamma\, \rank F_2+\sum_{i=1}^{3}\sum_{j=1}^{3}\alpha_{i,j}(d^{(1)}_{i,j}(F_1)+d^{(2)}_{i,j}(F_2))}{\rank F_ 1+ \rank F_2 }, 
	\]
	where $d^{(k)}_{i,j}(F)=\dim (F|_{t_i}\cap l^{(k)}_{i,j-1})/(F|_{t_i}\cap l^{(k)}_{i,j})$.
	\begin{lemma}\label{stablem1}
		Let $(F_1, F_2) \subset (E_1, E_2)$ be a pair of subbundles with non-negative degree. If $(F_1,F_2)$ satisfies $\phi(F_1) \subset F_2$, $\nabla(F_1) \subset F_2 \otimes \Ompl(D)$ and $\rank F_1>\rank F_2$, then $(F_1, F_2)$ is an $\boldsymbol{\alpha}$-destabilizing pair of $(E_1,E_2,\phi,\nabla,l^{(1)}_*,l^{(2)}_*)$. 
	\end{lemma}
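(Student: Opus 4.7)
The plan is to expand the difference $\mu_{\boldsymbol{\alpha}}(F_1,F_2)-\mu_{\boldsymbol{\alpha}}(E_1,E_2)$ directly and show that it is strictly positive under the hypotheses. I would first reduce the rank pattern: since $\boldsymbol{\nu}\in\mcN(0,0,2)$ the Fuchs relation (Proposition \ref{fuchs}) gives $\deg E_1=\deg E_2=-2$, so a rank $3$ subbundle $F_1\subseteq E_1$ would coincide with $E_1$ and have degree $-2<0$, contradicting $\deg F_1\geq 0$. Hence $\rank F_1\in\{1,2\}$, and combined with $\rank F_1>\rank F_2$ the cases are $(\rank F_1,\rank F_2)\in\{(1,0),(2,0),(2,1)\}$; the numerical calculation below treats them uniformly.

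Next I would clear denominators. Writing $r_F:=\rank F_1+\rank F_2$ and using $\deg E_k=-2$, $\deg \Opl(-D)=-3$ and $\rank E_k=3$, a direct expansion gives
\begin{align*}
6r_F\bigl(\mu_{\boldsymbol{\alpha}}(F_1,F_2)-\mu_{\boldsymbol{\alpha}}(E_1,E_2)\bigr)&=6(\deg F_1+\deg F_2)+4r_F\\
&\quad+3\gamma(\rank F_1-\rank F_2)+P,
\end{align*}
where $P$ denotes the difference of the parabolic terms. The first summand is nonnegative by hypothesis, the second satisfies $4r_F\geq 4$, and the third is strictly positive since $\rank F_1-\rank F_2\geq 1$ and $\gamma>0$. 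For $P$ I would use the telescoping identity $\sum_{j=1}^{3}d^{(k)}_{i,j}(G)=\rank G$, valid for any subbundle $G\subseteq E_k$, to derive the uniform bound $|P|\leq 36\,(\max_{i,j}\alpha_{i,j})\,r_F$.

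Combining these estimates yields
\[
6r_F\bigl(\mu_{\boldsymbol{\alpha}}(F_1,F_2)-\mu_{\boldsymbol{\alpha}}(E_1,E_2)\bigr)\geq r_F\bigl(4-36\max_{i,j}\alpha_{i,j}\bigr)+3\gamma,
\]
which is strictly positive whenever $\alpha_{i,j}\ll 1$; the $\gamma$-contribution only reinforces the conclusion. This shows $\mu_{\boldsymbol{\alpha}}(F_1,F_2)>\mu_{\boldsymbol{\alpha}}(E_1,E_2)$, so $(F_1,F_2)$ is a destabilizing pair. The only delicate point is the careful bookkeeping of the parabolic terms, but after invoking the telescoping identity the essential mechanism is transparent: the $-D$ twist in $\mu_{\boldsymbol{\alpha}}$ produces a universal positive shift $4r_F$ (reflecting the gap between $\deg\Opl(-D)=-3$ and $\deg E_k/\rank E_k = -2/3$) that dominates the small parabolic perturbation independently of the rank pattern, while the $\gamma$-coefficient $\rank F_1-\rank F_2$ provides an additional strictly positive contribution.
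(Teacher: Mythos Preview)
Your proof is correct and follows essentially the same approach as the paper: both compute $\mu_{\boldsymbol{\alpha}}(F_1,F_2)-\mu_{\boldsymbol{\alpha}}(E_1,E_2)$ directly and show it is positive. The only difference is emphasis: the paper simply observes that the coefficient of $\gamma$ in this difference equals $(\rank F_1-\rank F_2)/(2r_F)>0$ and concludes immediately from $\gamma\gg 0$, whereas you carry out the bookkeeping more explicitly and show that the hypothesis $0<\alpha_{i,j}\ll 1$ already forces positivity via the $4r_F$ term, with the $\gamma$-contribution being redundant. Your initial case reduction on $(\rank F_1,\rank F_2)$ is not actually used in your subsequent uniform estimate, so it could be omitted.
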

	\begin{proof}
		We have 
		\begin{align*}
			\qquad\mu_{\boldsymbol{\alpha}}(F_1,F_2)-\mu_{\boldsymbol{\alpha}}(E_1,E_2)
			&=\frac{\rank F_1-\rank F_2}{2(\rank E_1+\rank E_2)}\gamma+\frac{\deg F_1+\deg F_2}{\rank F_1+\rank F_2}-\frac{\deg E_1}{\rank E_1}\\
			&\qquad+\frac{\sum_{k=1}^{2}\sum_{i=1}^{3}\sum_{j=1}^{3}\alpha_{i,j}d^{(k)}_{i,j}(F_k)}{\rank F_1+\rank F_2}-\frac{\sum_{k=1}^{2}\sum_{i=1}^{3}\sum_{j=1}^{3}\alpha_{i,j}}{\rank E_1+\rank E_2}.
		\end{align*}
		Now $\gamma \gg 0$, so under the assumption, we obtain $\mu_{\boldsymbol{\alpha}}(F_1,F_2)-\mu_{\boldsymbol{\alpha}}(E_1,E_2)>0$.
	\end{proof}
	\begin{lemma}\label{stablem2}
		Let $(F_1,F_2) \subset (E_1,E_2)$ be a pair of non-zero subbundles of rank $r'<r$. If $(F_1,F_2)$ satisfy $\phi(F_1) \subset F_2$, $\nabla(F_1) \subset F_2 \otimes \Ompl(D(\bm{t}))$ and $\mu(F_1)+\mu(F_2)\geq -1$, then $(F_1, F_2)$ is an $ \boldsymbol{\alpha}$-destabilizing pair of $(E_1,E_2,\phi,\nabla,l^{(1)}_*, l^{(2)}_*)$. Here for nonzero vector bundle $F$, $\mu(F)=\deg F/\rank F$.
	\end{lemma}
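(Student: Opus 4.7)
The plan is to prove the destabilizing inequality $\mu_{\boldsymbol{\alpha}}(F_1,F_2) > \mu_{\boldsymbol{\alpha}}(E_1,E_2)$ by a direct expansion of the definitions. The key structural observation is that because $\rank F_1 = \rank F_2 = r'$ and $\rank E_1 = \rank E_2 = 3$, the contribution of $\gamma$ to each slope equals $\gamma/2$ and therefore cancels in the difference; similarly, the two twists by $-D$ each contribute $-\deg D = -3$ and cancel. This is the reason why the case $\rank F_1 = \rank F_2$ requires a separate argument from Lemma \ref{stablem1}: the $\gamma$-mechanism exploited there is unavailable, so one must extract positivity from the slope gap determined by $(r,d) = (3,-2)$ together with the smallness of $\boldsymbol{\alpha}$.

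After the cancellations above, the difference reduces to
\[
\mu_{\boldsymbol{\alpha}}(F_1,F_2) - \mu_{\boldsymbol{\alpha}}(E_1,E_2)
= \frac{\deg F_1 + \deg F_2}{2r'} + \frac{2}{3}
+ \frac{\sum_{k,i,j}\alpha_{i,j}\,d^{(k)}_{i,j}(F_k)}{2r'}
- \frac{\sum_{k,i,j}\alpha_{i,j}\,d^{(k)}_{i,j}(E_k)}{6}.
\]
The hypothesis $\mu(F_1) + \mu(F_2) \geq -1$ is exactly $\deg F_1 + \deg F_2 \geq -r'$, which forces the degree part $\frac{\deg F_1+\deg F_2}{2r'} + \frac{2}{3}$ to be at least $-\frac{1}{2} + \frac{2}{3} = \frac{1}{6}$. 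This is the only place where the slope hypothesis is used, and it is used sharply.

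It remains to control the parabolic error. Each $d^{(k)}_{i,j}(F_k)$ is $0$ or $1$ (since $l^{(k)}_{i,j-1}/l^{(k)}_{i,j}$ is one-dimensional) and the sums run over finitely many indices, so both parabolic terms are bounded in absolute value by a universal constant multiple of $\max_{i,j}\alpha_{i,j}$ that depends only on $r'\in\{1,2\}$ and $r=3$. Under the standing assumption $0<\alpha_{i,j}\ll 1$ this error is strictly less than $\frac{1}{6}$, hence the total difference is positive, proving destabilization. The only mildly delicate step is verifying that the threshold for ``$\alpha_{i,j}$ small enough'' can be chosen uniformly in $(F_1,F_2)$, but this is automatic because the denominators $2r'$ and the counting of indices depend only on $r'\in\{1,2\}$; there is no genuine obstruction.
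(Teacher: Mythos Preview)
Your proof is correct and follows essentially the same approach as the paper: you expand $\mu_{\boldsymbol{\alpha}}(F_1,F_2)-\mu_{\boldsymbol{\alpha}}(E_1,E_2)$, observe that the $\gamma$-terms cancel because $\rank F_1=\rank F_2$, and then use $\mu(F_1)+\mu(F_2)\ge -1$ to get a positive gap of $\tfrac{1}{6}$ that dominates the parabolic error under $0<\alpha_{i,j}\ll 1$. The paper packages the same computation slightly differently, writing the difference as $\tfrac{1}{2}\bigl(\mu(F_1)+\mu(F_2)+\tfrac{4}{3}+\text{(small parabolic term)}\bigr)$, but the content is identical.
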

	\begin{proof}
		We have
		\begin{align*}
			\mu_{\boldsymbol{\alpha}}(F_1,F_2)-\mu_{\boldsymbol{\alpha}}(E_1,E_2)=\frac{1}{2}\left\{\mu(F_1)+\mu(F_2)+\frac{4}{3}+\frac{\sum_{k=1}^{2}\sum_{i=1}^{3}\sum_{j=1}^{3}\alpha_{i,j}(3d^{(k)}_{i,j}(F_k)-r')}{3r'}\right\}.
		\end{align*}
		If $\mu(F_1)+\mu(F_2)\geq -1$, we obtain $\mu_{\boldsymbol{\alpha}}(F_1,F_2)-\mu_{\boldsymbol{\alpha}}(E_1,E_2)>0$.
	\end{proof}
	
	We give the proof of the following in Appendix B.
	\begin{proposition} \label{bdlform}
		For any $\boldsymbol{\alpha}$-stable $\boldsymbol{\nu}$-parabolic $\phi$-connection $(E_1,E_2,\phi,\nabla,l^{(1)}_*,l^{(2)}_*)$ of rank 3 and degree $-2$, we have
		\[
		E_1 \cong E_2 \cong \Opl \oplus \Opl(-1) \oplus \Opl(-1).
		\]
		So $E_1$ and $E_2$ have a unique trivial line subbundle.
	\end{proposition}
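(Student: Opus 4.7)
The plan is to use Grothendieck's theorem to reduce to a finite list of possible splitting types and then rule out all non-balanced ones via $\boldsymbol{\alpha}$-stability. By Grothendieck, $E_k\cong\bigoplus_{i=1}^{3}\Opl(a^{(k)}_i)$ with $a^{(k)}_1\ge a^{(k)}_2\ge a^{(k)}_3$ and $\sum_i a^{(k)}_i=-2$. Since the sum is $-2$ and the $a^{(k)}_i$ are integers, necessarily $a^{(k)}_1\ge 0$; the balanced type $(0,-1,-1)$ is the target, and all remaining possibilities either have $a^{(k)}_1\ge 1$ or equal $(0,0,-2)$. I would show that in every non-balanced case there is a $\Phi$-invariant pair $(F_1,F_2)\subset(E_1,E_2)$ destabilizing $(E_1,E_2,\phi,\nabla,l^{(1)}_*,l^{(2)}_*)$, contradicting stability.

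To construct the pair, start with a line subbundle $L\subset E_1$ of degree $d_L\ge 0$ (always available when $E_1$ is non-balanced, by taking $L$ to be the top summand) and set $F_2\subset E_2$ to be the smallest saturated subbundle satisfying $\phi(L)\subset F_2$ and $\nabla(L)\subset F_2\otimes\Ompld$; equivalently $F_2$ is the saturated image of $\Phi\colon\Lambda^1_D\otimes L\to E_2$. Three subcases arise by $\rank F_2\in\{0,1,2\}$. If $\rank F_2=0$, Lemma \ref{stablem1} applied to $(L,0)$ (ranks $1>0$) destabilizes. If $\rank F_2=1$, a degree estimate on $\Phi|_{\Lambda^1_D\otimes L}$ (using $\deg\Lambda^1_D\otimes L=2d_L-1$) forces $\deg F_2\ge d_L-1$, so $\mu(L)+\mu(F_2)\ge 2d_L-1\ge -1$, and Lemma \ref{stablem2} destabilizes $(L,F_2)$.

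The main obstacle is the remaining case $\rank F_2=2$, where the coefficient of $\gamma$ in $\mu_{\boldsymbol{\alpha}}(L,F_2)-\mu_{\boldsymbol{\alpha}}(E_1,E_2)$ is negative. I would enlarge $L$ to the maximal saturated subbundle
\[
F_1^{\max}:=\{s\in E_1\mid \phi(s)\in F_2,\ \nabla(s)\in F_2\otimes \Ompld\},
\]
which contains $L$. If $\rank F_1^{\max}=3$ then $(E_1,F_2)$ has ranks $(3,2)$, and a direct computation yields a $\gamma$-coefficient of $+\gamma/10$ in the stability difference, destabilizing for $\gamma\gg 0$. If $\rank F_1^{\max}=2$ then $(F_1^{\max},F_2)$ has equal ranks, and since $F_1^{\max}\supset L$ one has $\deg F_1^{\max}\ge d_L$ and $\deg F_2\ge 2d_L-1$, giving $\mu(F_1^{\max})+\mu(F_2)\ge(3d_L-1)/2\ge -1/2>-1$, so Lemma \ref{stablem2} destabilizes. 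The residual subcase $\rank F_1^{\max}=1$ (so $L=F_1^{\max}$, $F_2$ of rank $2$) requires the most care: I would iterate the construction or choose an alternative $L'\subset E_1$ (e.g., a different summand or a rank $2$ subbundle such as $\Opl\oplus\Opl\subset E_1$ in the $(0,0,-2)$ case), until the rank stabilizes in one of the previous subcases.

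The same argument applied to $E_2$—this time using rank $2$ saturated subbundles of $E_2$ of degree $\ge -1$ together with a dual formulation of $\Phi$ (or equivalently working with quotient line bundles of $E_2$ of sufficiently negative degree)—forces $E_2\cong\Opl\oplus\Opl(-1)\oplus\Opl(-1)$ as well. Finally, the uniqueness of the trivial line subbundle in $\Opl\oplus\Opl(-1)\oplus\Opl(-1)$ follows from the computation $\Hom(\Opl,\Opl\oplus\Opl(-1)^{\oplus 2})=H^0(\Opl)\oplus H^0(\Opl(-1))^{\oplus 2}=\mathbb{C}$, which is one-dimensional.
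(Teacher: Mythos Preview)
Your overall strategy is sound and close to the paper's, but the argument has a genuine gap in the subcase you yourself flag: $\rank F_2=2$ with $\rank F_1^{\max}=1$. The suggestion to ``iterate the construction or choose an alternative $L'$'' is not a proof; you give no reason why iteration terminates in a favourable configuration, and in fact it need not. For instance when $E_1$ has type $(0,0,-2)$ and $E_2$ has type $(0,-1,-1)$, taking the rank~$2$ subbundle $\Opl\oplus\Opl\subset E_1$ produces an $F_2$ that may well be all of $E_2$, and then $(F_1,F_2)=(\Opl^{\oplus 2},E_2)$ has $\gamma$-coefficient $-\gamma/10<0$, so it does \emph{not} destabilise. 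The point is that the correct destabilising pair depends on the splitting type of \emph{both} $E_1$ and $E_2$ simultaneously; your construction uses only the type of $E_1$ and therefore cannot succeed uniformly. (A smaller issue: the bound $\deg F_1^{\max}\ge d_L$ in the $\rank F_1^{\max}=2$ case is unjustified---containment of $L$ gives no lower bound on $\deg(F_1^{\max}/L)$. A correct bound can be extracted from the kernel description of $F_1^{\max}$, but you have not done this.)

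The paper's proof (Appendix~B) avoids these difficulties by abandoning the uniform construction and instead running a $3\times 3$ case analysis: it classifies triples $(n_1,n_2,n_3)$ summing to $-2$ into types (i)~$n_1\ge n_2\ge 0>n_3$, (ii)~$n_1\ge 1>0>n_2\ge n_3$, (iii)~$(0,-1,-1)$, and for each of the eight non-trivial combinations of types for $(E_1,E_2)$ it writes down an explicit destabilising pair. In several cases this pair is \emph{not} of the form you propose---e.g.\ for $E_1$ of type~(i) with $l_1=0$ and $E_2$ of type~(iii) one takes $F_1=\ker(\phi|_{\Opl\oplus\Opl})$ and $F_2=(\Image\nabla|_{F_1})\otimes(\Ompld)^\vee$, which is neither a top summand nor obtained from your $F_1^{\max}$ construction. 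Your argument would become complete if you replaced the vague iteration step with this case analysis; alternatively, you would need a genuinely new idea to close the $\rank F_1^{\max}=1$ gap uniformly. The final paragraph on uniqueness of the trivial line subbundle is correct.
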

	
	\begin{lemma}\label{zerostab}
		Let $F$ be a unique trivial line subbundle of $E_1$. If $\phi|_{F}=0$, then $(E_1,E_2,\phi,\nabla,l^{(1)}_*,l^{(2)}_*)$ is $\boldsymbol{\alpha}$-unstable. In particular, if $\rank \phi=0$, i.e. $\phi=0$, then $(E_1,E_2,\phi,\nabla,l^{(1)}_*,l^{(2)}_*)$ is $\boldsymbol{\alpha}$-unstable.
	\end{lemma}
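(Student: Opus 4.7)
By Proposition \ref{bdlform} I may write $E_1 \cong \Opl \oplus \Opl(-1) \oplus \Opl(-1)$, so the unique trivial line subbundle $F \subset E_1$ is isomorphic to $\Opl$. The plan is to construct a destabilizing pair $(F_1, F_2) \subset (E_1, E_2)$ and then apply Lemma \ref{stablem1} or Lemma \ref{stablem2}. The key preliminary observation is that under the hypothesis $\phi|_F = 0$, the defining identity $\nabla(fs) = \phi(s) \otimes df + f\nabla(s)$ forces $\nabla|_F \colon F \to E_2 \otimes \Ompl(D)$ to be $\Opl$-linear rather than merely a $\phi$-connection, so that $\nabla$ restricts to an honest homomorphism of coherent sheaves on $F$.

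I would then split into two cases depending on whether $\nabla|_F$ vanishes. If $\nabla|_F = 0$, then the pair $(F_1, F_2) := (F, 0)$ is preserved by both $\phi$ and $\nabla$, has non-negative degrees, and satisfies $\rank F_1 = 1 > 0 = \rank F_2$, so Lemma \ref{stablem1} already yields destabilization. If $\nabla|_F \neq 0$, I let $\tilde{G} \subset E_2 \otimes \Ompl(D)$ denote the saturation of the image of $\nabla|_F$; since $\pl$ is a smooth curve, $\tilde{G}$ is automatically a line subbundle. Because the injection $\Opl \cong \Image(\nabla|_F) \hookrightarrow \tilde{G}$ has torsion cokernel, $\deg \tilde{G} \geq 0$. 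I then define $F_2 \subset E_2$ to be the unique line subbundle with $F_2 \otimes \Ompl(D) = \tilde{G}$, so that $\deg F_2 = \deg \tilde{G} - \deg \Ompl(D) \geq -1$, and set $F_1 := F$. By construction $\phi(F_1) = 0 \subset F_2$ and $\nabla(F_1) \subset \tilde{G} = F_2 \otimes \Ompl(D)$, both have rank $1 < 3$, and $\mu(F_1) + \mu(F_2) = \deg F_2 \geq -1$, so Lemma \ref{stablem2} applies. The final assertion is then immediate since $\rank \phi = 0$ means $\phi = 0$, which trivially forces $\phi|_F = 0$.

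The argument is essentially routine once the correct pair is identified. The only real care is the passage from the image of $\nabla|_F$ to its saturation $\tilde{G}$, which is needed so that $F_2$ is an actual subbundle rather than merely a subsheaf of $E_2$, and the corresponding bookkeeping of the degree shift coming from the twist by $\Ompl(D) \cong \Opl(1)$. I do not anticipate any substantive obstacle beyond this.
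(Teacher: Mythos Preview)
Your proof is correct and follows essentially the same route as the paper's: observe that $\nabla|_F$ is $\mcO_{\pl}$-linear once $\phi|_F=0$, then take $(F,0)$ if $\nabla|_F=0$ and otherwise take $(F,\,(\Image \nabla|_F)\otimes(\Ompl(D))^\vee)$ as a destabilizing pair. The only difference is that you explicitly pass to the saturation $\tilde{G}$ to guarantee that $F_2$ is a genuine subbundle before invoking Lemma~\ref{stablem2}, whereas the paper leaves this implicit; since saturation can only raise the degree, both versions go through.
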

	
	\begin{proof}
		If $\phi|_{F}=0$, then the composite
		\[
		f \colon F \longrightarrow E_1 \overset{\nabla}{\longrightarrow} E_2 \otimes \Omega^1_{\pl}(D) 
		\]
		becomes a homomorphism. If $f=0$, then $(F, 0)$ breaks the stability. If $f \neq 0$, then $(F, (\Image f) \otimes (\Omega^1_{\pl}(D))^\vee)$ breaks the stability. 
	\end{proof}
	The following is the key proposition to define the apparent singularity.
	\begin{proposition}\label{appfil}
		Take $(E_1,E_2,\phi,\nabla,l^{(1)}_*,l^{(2)}_*)\in \overline{M}$. Then there exists a filtration $E_k=F^{(k)}_0\supsetneq F^{(k)}_1\supsetneq F^{(k)}_2\supsetneq F^{(k)}_3=0$ by subbundles for $k=1,2$ such that 
		\begin{equation}\label{filcon1}
			F^{(1)}_1\cong F^{(2)}_1\cong \Opl\oplus \Opl(-1), \; F^{(1)}_2\cong F^{(2)}_2\cong \Opl,
		\end{equation}
		and
		\begin{equation}\label{filcon2}
			\phi(F^{(1)}_i)\subset F^{(2)}_i, \;\nabla(F^{(1)}_{i+1})\subset F^{(2)}_{i}\otimes \Omega_{\pl}^1(D)
		\end{equation}
		for any $0\leq i\leq  2$.
		Subbundles $F^{(1)}_2, F^{(2)}_1, F^{(2)}_2$ satisfying the above conditions are uniquely determined. If $\rank \phi=2$ and $3$, then $F^{(1)}_1$ is also unique. If $\rank \phi=1$, then there is a one-to-one correspondence between the set of all such $F^{(1)}_1$ and $\mathbb{P}^1$.
	\end{proposition}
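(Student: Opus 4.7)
The plan is to construct the four nontrivial pieces of the filtration in the order $F^{(1)}_2$, $F^{(2)}_2$, $F^{(2)}_1$, $F^{(1)}_1$, invoking Proposition \ref{bdlform} and the stability Lemmas \ref{stablem1}--\ref{zerostab} at each step. By Proposition \ref{bdlform}, $E_k\cong\Opl\oplus\Opl(-1)\oplus\Opl(-1)$, so $h^0(E_k)=1$ and each $E_k$ has a unique trivial line subbundle, which I take to be $F^{(k)}_2$. Lemma \ref{zerostab} forces $\phi|_{F^{(1)}_2}\neq 0$ (else $\boldsymbol{\alpha}$-instability), so $\phi(F^{(1)}_2)\subset E_2$ is a trivial line subbundle and hence coincides with $F^{(2)}_2$, establishing $\phi(F^{(1)}_2)\subset F^{(2)}_2$ together with the uniqueness of $F^{(1)}_2$ and $F^{(2)}_2$.

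\textbf{Construction of $F^{(2)}_1$.} Consider the composite
\[
\overline\nabla\colon F^{(1)}_2\xrightarrow{\nabla}E_2\otimes\Ompld\longrightarrow (E_2/F^{(2)}_2)\otimes\Ompld.
\]
If $\overline\nabla=0$, then $(F^{(1)}_2,F^{(2)}_2)$ forms a subsheaf pair with $\phi(F^{(1)}_2)\subset F^{(2)}_2$, $\nabla(F^{(1)}_2)\subset F^{(2)}_2\otimes\Ompld$, and $\mu(F^{(1)}_2)+\mu(F^{(2)}_2)=0\geq -1$, contradicting $\boldsymbol{\alpha}$-stability by Lemma \ref{stablem2}. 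Hence $\overline\nabla\neq 0$; using $E_2/F^{(2)}_2\cong\Opl(-1)^{\oplus 2}$ and $\Ompld\cong\Opl(1)$, $\overline\nabla$ is a nonzero morphism $\Opl\to\Opl^{\oplus2}$ whose image cuts out a unique rank-one subbundle $L\cong\Opl(-1)\subset E_2/F^{(2)}_2$. Setting $F^{(2)}_1$ equal to the preimage of $L$ in $E_2$ gives an extension $0\to F^{(2)}_2\to F^{(2)}_1\to L\to 0$ that splits since $\Ext^1(\Opl(-1),\Opl)=H^1(\pl,\Opl(1))=0$, whence $F^{(2)}_1\cong\Opl\oplus\Opl(-1)$, and uniqueness is built into the construction.

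\textbf{Construction of $F^{(1)}_1$.} The analysis branches on $\rk\phi\in\{1,2,3\}$ (Lemma \ref{zerostab} excludes $\phi=0$). If $\rk\phi=3$, then $\phi$ is an injective sheaf map between rank-three bundles of equal degree, hence an isomorphism, and $F^{(1)}_1:=\phi^{-1}(F^{(2)}_1)$ is uniquely determined. If $\rk\phi=1$, then $\phi(E_1)=\phi(F^{(1)}_2)=F^{(2)}_2\subset F^{(2)}_1$, so the inclusion $\phi(F^{(1)}_1)\subset F^{(2)}_1$ is automatic; $F^{(1)}_1$ thus ranges over rank-two subbundles of $E_1$ containing $F^{(1)}_2\cong\Opl$, parametrized modulo scalars by $\mathbb{P}(\Hom(E_1/F^{(1)}_2,\Opl(-1)))=\mathbb{P}(\Hom(\Opl(-1)^{\oplus2},\Opl(-1)))\cong\mathbb{P}^1$.

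\textbf{Main obstacle.} The case $\rk\phi=2$ is the delicate one. With $K:=\Ker\phi\cong\Opl(-a)$, $a\geq 1$ (since $F^{(1)}_2\not\subset K$), the natural candidate for $F^{(1)}_1$ is the saturation of $\phi^{-1}(F^{(2)}_1)$ in $E_1$, which has rank two once $\phi(E_1)\cap F^{(2)}_1$ has rank one. The sticking point is to rule out the degenerate configuration $\phi(E_1)=F^{(2)}_1$, under which $\phi(F^{(1)}_1)\subset F^{(2)}_1$ becomes vacuous and a $\mathbb{P}^1$-family of candidates would arise. I expect to exclude this by combining the defining property of $F^{(2)}_1$ (arising from $\nabla$) with a stability test: in the degenerate case $K\cong\Opl(-1)$, and coupling $K$ with a suitable partner subbundle of $E_2$ (either $F^{(2)}_2$ or the saturation of the image of $\nabla|_K\otimes\Ompld^{-1}$) should produce a destabilizing pair through Lemma \ref{stablem1} or \ref{stablem2}, yielding a contradiction with the fact that $F^{(2)}_1$ was constructed from $\nabla(F^{(1)}_2)$ rather than from $\phi$. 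Once this is settled, $F^{(1)}_1$ is unique and its isomorphism type $\Opl\oplus\Opl(-1)$ is forced by the chain $F^{(1)}_2\subset F^{(1)}_1\subset E_1$ together with Grothendieck's splitting theorem on $\pl$.
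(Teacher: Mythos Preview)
Your construction of $F^{(1)}_2,F^{(2)}_2,F^{(2)}_1$ and your treatment of the cases $\rk\phi=3$ and $\rk\phi=1$ coincide with the paper's proof. The paper likewise defines $F^{(1)}_1$ as $\ker(\phi\colon E_1\to E_2/F^{(2)}_1)$ in the rank $2,3$ cases and simply asserts that this kernel is $\Opl\oplus\Opl(-1)$; so you have correctly located the one nontrivial step, and in fact the paper does not justify it inside this proposition either.

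However, your proposed resolution of the degenerate case $\phi(E_1)\subset F^{(2)}_1$ does not work. Taking $K=\ker\phi\cong\Opl(-1)$ and pairing it with $F^{(2)}_2$ requires $\nabla(K)\subset F^{(2)}_2\otimes\Ompld$, which you have no way to force. Pairing $K$ with the saturation $G\subset E_2$ of $\nabla(K)\otimes(\Ompld)^\vee$ gives a rank-$(1,1)$ pair with $\mu(K)+\mu(G)=-1+\deg G$; but $G$ is only known to be a line subbundle of $E_2$, so $\deg G\le 0$, and Lemma \ref{stablem2} fires only when $\deg G=0$, i.e.\ $G=F^{(2)}_2$, which is the previous case again. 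Lemma \ref{stablem1} is inapplicable since the ranks agree. So stability alone via these two lemmas cannot exclude the degenerate configuration, and your remark that ``$F^{(2)}_1$ was constructed from $\nabla(F^{(1)}_2)$ rather than from $\phi$'' does not by itself supply a contradiction.

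The paper's actual exclusion of this case appears in the rank $2$ branch of the proof of Lemma \ref{iota}. One picks \emph{some} $F^{(1)}_1\cong\Opl\oplus\Opl(-1)$ containing $F^{(1)}_2$, writes $\phi,\nabla$ in a basis adapted to the filtrations (so that the third row of $\phi$ vanishes), and invokes Lemma \ref{detzero}: since $\wedge^3\phi=0$, the polynomial $\lvert\res_{t_i}\nabla-\lambda\phi_{t_i}\rvert$ vanishes identically in $\lambda$, which forces the relevant $(3,2)$- or $(3,3)$-entry of the connection matrix to vanish at every $t_i$, hence globally (it lies in $H^0(\Opl(1))$). This manufactures a genuine destabilizing pair of rank $(2,2)$ via Lemma \ref{stablem2}. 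The ingredient your sketch is missing is precisely this residue constraint; a purely bundle-theoretic argument on $\ker\phi$ will not suffice.
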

	\begin{proof}
		By Proposition \ref{bdlform}, $E_1$ and $E_2$ have a unique line subbundle which is isomorphic to the trivial line bundle. Let $F^{(k)}_2$ be the such line subbundle of $E_k$ for $k=1,2$. Then we have $\phi(F^{(1)}_2) \subset F^{(2)}_2$ by Proposition \ref{bdlform}, and so
		the composite
		\[
		f_2\colon \Opl \cong F^{(1)}_2 \hookrightarrow E_1 \overset{\nabla}{\longrightarrow} E_2 \otimes \Ompl (D) \rightarrow E_2/F^{(2)}_2 \otimes \Ompl(D) \cong \Opl\oplus \Opl
		\]
		becomes a homomorphism. If $f_2=0$, then $(F^{(1)}_2,F^{(2)}_2)$ breaks the stability of $(E_1,E_2,\phi,\nabla,l^{(1)}_*,l^{(2)}_*)$. So $f_2$ is not zero. Let 
		\[
		F^{(2)}_1=\ker (E_2\otimes \Ompl (D)\rightarrow(E_2/F^{(2)}_2\otimes \Ompl (D))/\Image f_2) \otimes  (\Omega_{\pl}^1(D))^\vee.
		\]
		Then we have $F^{(2)}_1\cong \Opl\oplus \Opl(-1)$ and $\nabla(F^{(1)}_2)\subset F^{(2)}_1\otimes \Omega_{\pl}^1(D)$. Let $K:=\ker (\phi \colon E_1 \rightarrow E_2/F^{(2)}_1)$. If $\rank \phi =2, 3$, then we have $K\cong \Opl\oplus \Opl(-1)$. Put $F^{(1)}_1=K$. We then obtain desired filtrations. The uniqueness of a filtration satisfying the above condition is clear. If $\rank \phi =1$, then $K=E_1$ by Lemma \ref{zerostab}. Take a subbundle $F^{(1)}_1\subset E_1$ which is isomorphic to $\Opl\oplus \Opl(-1)$. Then we have $\phi(F^{(1)}_1)\subset F^{(2)}_1$. We can see that there is a one-to-one correspondence between the set of such subbundles $F^{(1)}_1$  and  $\mathbb{P}\Hom(\Opl(-1), E_1/F^{(1)}_2)\cong \pl$.
	\end{proof}
	
	We define the apparent singularity. Let $E_k=F^{(k)}_0\supsetneq F^{(k)}_1\supsetneq F^{(k)}_2\supsetneq F^{(k)}_3=0$ be a filtration in Proposition \ref{appfil}. We define $f_1$ by
	\[
	f_1 \colon F^{(1)}_1 \hookrightarrow E_1 \overset{\nabla}{\longrightarrow} E_2 \otimes \Omega_{\pl}^1(D) \rightarrow E_2/F^{(2)}_1 \otimes \Omega_{\pl}^1(D). 
	\]
	Then $f_1$ becomes a homomorphism. If $f_1=0$, then we find $(F^{(1)}_1,  F^{(2)}_1)$ breaks the stability by Lemma \ref{stablem2}. So $f_1$ is not zero, and it implies that the induced homomorphism
	\[
	u \colon \Opl(-1)\cong F^{(1)}_1/F^{(1)}_2 \rightarrow E_1 \overset{\nabla}{\longrightarrow} E_2 \otimes \Omega_{\pl}^1(D) \rightarrow E_2/F^{(2)}_1 \otimes \Omega_{\pl}^1(D) \cong \Opl
	\]
	is also not zero because $\nabla(F^{(1)}_2) \subset F^{(2)}_1 \otimes \Omega_{\pl}^1(D)$. Since $u \in \Hom (\Opl(-1), \Opl) \cong H^0(\pl, \Opl(1))$, there exists a unique point $q \in \pl$ such that $u_q=0$. 
	
	\begin{definition}
		We call the zero $q$ of $u$ the apparent singularity of $(E_1,E_2,\phi,\nabla,l^{(1)}_*,l^{(2)}_*, F^{(1)}_1)$, and let $q$ denote $\App (E_1,E_2,\phi,\nabla,l^{(1)}_*,l^{(2)}_*, F^{(1)}_1)$. 
	\end{definition}
	Let $\widehat{M^{\boldsymbol{\alpha}}_3}(\bm{t},\boldsymbol{\nu})$ be the moduli space of pairs of a parabolic $\phi$-connections and a subbundle $F^{(1)}_1$, i.e.
	\[
	\widehat{M^{\boldsymbol{\alpha}}_3}(\bm{t},\boldsymbol{\nu}):=\{(E_1,E_2,\phi,\nabla, l^{(1)}_*,l^{(2)}_*,F^{(1)}_1)\}/\sim.
	\]
	We can construct $\widehat{M^{\boldsymbol{\alpha}}_3}(\bm{t},\boldsymbol{\nu})$ as follows.
	Let $(\tilde{E}_1,\tilde{E}_2,\tilde{\phi},\tilde{\nabla}, \tilde{l}^{(1)}_*, \tilde{l}^{(2)}_*)$ be a universal family over $\overline{M}\times \pl$ and $\tilde{F}^{(k)}_2 \subset \tilde{E}_k$ be a unique subbundle such that  $(\tilde{F}^{(k)}_2)_x \cong \Opl$ for each $x \in  \overline{M}$. Put
	\[
	\tilde{f}_2\colon \tilde{F}^{(1)}_2 \hookrightarrow \tilde{E}_1 \overset{\tilde{\nabla}}{\longrightarrow} \tilde{E}_2 \otimes \Ompl (D) \rightarrow \tilde{E}_2/\tilde{F}^{(2)}_2 \otimes \Ompl(D)
	\]
	and 
	\[
	\tilde{F}^{(2)}_1=\ker (\tilde{E}_2\otimes \Ompl (D)\rightarrow(\tilde{E}_2/\tilde{F}^{(2)}_2\otimes \Ompl (D))/\Image \tilde{f}_2) \otimes  \Omega_{\pl}^1(D)^\vee.
	\]
	Let $p_1\colon \overline{M}\times \pl\rightarrow \overline{M}$ and $p_2\colon \overline{M}\times \pl\rightarrow \pl $ be the projection and put $\mcG:=(p_1)_*\mcH om(p_2^*\Opl(-1), \tilde{E}_1/\tilde{F}^{(1)}_2)$. Then we have the natural isomorphism 
	\[
	\mcG|_x\cong \Hom(\Opl(-1),(\tilde{E}_1/\tilde{F}^{(1)}_2)_x)\cong \Hom(\Opl(-1),\Opl(-1)^{\oplus 2}).
	\]
	Let $\varpi \colon \mathbb{P}(\mcG)=\Proj \Sym (\mcG^\vee)\rightarrow \overline{M}$ be the projection and $[\sigma]$ be the homothety class of  a nonzero element $\sigma \in \mcG|_x$. Put
	\[
	\widehat{M^{\boldsymbol{\alpha}}_3}(\bm{t},\boldsymbol{\nu}):=\left\{[\sigma]\in \mathbb{P}(\mcG) \vb
	\begin{minipage}{8cm}
		the composite $\Opl(-1)\overset{\sigma}{\rightarrow} (\tilde{E}_1/\tilde{F}^{(1)}_2)_x\overset{\phi}{\rightarrow} (\tilde{E}_2/\tilde{F}^{(2)}_1)_x$ is zero, where $x=\varpi([\sigma])$
	\end{minipage} \right\}.
	\]
	Then $\widehat{M^{\boldsymbol{\alpha}}_3}(\bm{t},\boldsymbol{\nu})$ is a closed subscheme of $\mathbb{P}(\mcG)$ and desired one. We can see that the map 
	\[
	\widehat{M^{\boldsymbol{\alpha}}_3}(\bm{t},\boldsymbol{\nu}) \rightarrow \pl, \ (E_1,E_2,\phi,\nabla,l^{(1)}_*,l^{(2)}_*, F^{(1)}_1) \mapsto \App (E_1,E_2,\phi,\nabla,l^{(1)}_*,l^{(2)}_*, F^{(1)}_1)
	\]
	is a morphism. We call the morphism the apparent map and write it by $\App$.
	\subsection{Construction of the morphism $\varphi\colon \widehat{M^{\boldsymbol{\alpha}}_3}(\bm{t},\boldsymbol{\nu})\rightarrow \mathbb{P}(\Ompld \oplus \Opl)$}\label{morsec}
	
	For simplicity of notation, we write $\widehat{M}$ instead of $\widehat{M^{\boldsymbol{\alpha}}_3}(\bm{t},\boldsymbol{\nu})$. Take $(E_1,E_2,\phi,\nabla, l^{(1)}_*,l^{(2)}_*,F^{(1)}_1)\in \widehat{M}$ and put $q:=\App(E_1,E_2,\phi,\nabla, l^{(1)}_*,l^{(2)}_*,F^{(1)}_1)$. 
	Let $p_2 \colon E_2 \rightarrow E_2/F^{(2)}_1$ be the quotient and let us fix an isomorphism $E_2/F^{(2)}_1\cong \Opl(-t_3)$. We define a homomorphism $B \colon E_1 \rightarrow  E_2/F^{(2)}_1 \otimes \Omega_{\pl}^1(D)$ by $B(a):=(p_2 \otimes \id) \nabla(a)-d(p_2\phi(a))$ for $a \in E_1$, where $d$ is the canonical connection on $\Opl(-t_3)$. Since $\nabla(F^{(1)}_2)  \subset F^{(2)}_1 \otimes \Omega_{\pl}^1(D)$ and $u_q=0$, $B_q$ induces a homomorphism $h_1 \colon (E_1/F^{(1)}_1)|_q \rightarrow (E_2/F^{(2)}_1\otimes\Omega_{\pl}^1(D))|_q$ which makes the diagram 
	\begin{equation}\label{h1diag}
		\begin{tikzcd}
			0 \arrow[r]&F^{(1)}_1|_q\arrow[r]\arrow[rd,"0"]&E_1|_q \arrow[r]\arrow[d, "B_q"]&(E_1/F^{(1)}_1)|_q\arrow[r] \arrow[ld, "h_1"]&0 \\
			&&(E_2/F^{(2)}_1\otimes \Omega_{\pl}^1(D))|_q&&
		\end{tikzcd}
	\end{equation}
	commute. Let $h_2\colon (E_1/F^{(1)}_1)|_q \rightarrow (E_2/F^{(2)}_1)|_q$ be the homomorphism induced by $\phi$. Then $h_1,h_2$ determine a homomorphism 
	\[
	\iota \colon (E_1/F^{(1)}_1)|_q \longrightarrow ((E_2/F^{(2)}_1\otimes\Ompld)\oplus E_2/F^{(2)}_1)|_q, \quad a \mapsto (h_1(a),h_2(a)).
	\]
	\begin{lemma}\label{iota}
		$\iota$ is injective. 
	\end{lemma}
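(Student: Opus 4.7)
The plan is to argue by contradiction. Since $F^{(1)}_1$ has rank $2$ and $E_1$ has rank $3$ by Proposition \ref{appfil}, the fiber $(E_1/F^{(1)}_1)|_q$ is one-dimensional, and injectivity of $\iota$ is equivalent to $(h_1,h_2)\neq(0,0)$. I will therefore suppose $h_1=h_2=0$ and construct a $\phi$-$\nabla$-compatible subpair of $(E_1,E_2)$ violating $\boldsymbol{\alpha}$-stability via Lemma \ref{stablem2}, obtaining the desired contradiction.

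First I exploit $h_2=0$. The induced homomorphism $\bar\phi\colon E_1/F^{(1)}_1\to E_2/F^{(2)}_1$ is a map between line bundles of the same degree $-1$ whose fiber at $q$ vanishes, so $\bar\phi\equiv 0$ and $\phi(E_1)\subset F^{(2)}_1$ (ruling out $\rank\phi=3$). With $p_2\circ\phi=0$, the definition of $B$ simplifies to $B=(p_2\otimes\id)\circ\nabla\colon E_1\to E_2/F^{(2)}_1\otimes\Ompld\cong\Opl$. Since $\nabla(F^{(1)}_2)\subset F^{(2)}_1\otimes\Ompld$, the map $B$ vanishes on $F^{(1)}_2\cong\Opl$ and descends to $\bar B\colon E_1/F^{(1)}_2\to\Opl$. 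Next I use $\Ext^1(\Opl(-1),\Opl(-1))=0$ to split the sequence $0\to F^{(1)}_1/F^{(1)}_2\to E_1/F^{(1)}_2\to E_1/F^{(1)}_1\to 0$, yielding $E_1/F^{(1)}_2\cong\Opl(-1)^{\oplus 2}$ with first summand $F^{(1)}_1/F^{(1)}_2$. The two components of $\bar B$, viewed in $\Hom(\Opl(-1),\Opl)\cong H^0(\Opl(1))$, are then the defining section $u$ of $q$ (nonzero but with $u_q=0$) and a section whose value at $q$ equals $h_1=0$. Both are therefore multiples of a common nonzero section $u_0\in H^0(\Opl(1))$ vanishing at $q$, so I can write $\bar B=(cu_0,c'u_0)$ with $c\neq 0$.

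Finally I set $K:=\ker B\subset E_1$. The kernel of the linear map $(c,c')\colon \Opl(-1)^{\oplus 2}\to\Opl(-1)$ is a line subbundle isomorphic to $\Opl(-1)$, so $K$ is a saturated rank-$2$ subbundle of $E_1$ containing $F^{(1)}_2\cong\Opl$ with $K/F^{(1)}_2\cong\Opl(-1)$, forcing $K\cong\Opl\oplus\Opl(-1)$. By construction $B|_K=0$ gives $\nabla(K)\subset F^{(2)}_1\otimes\Ompld$, and $\phi(K)\subset\phi(E_1)\subset F^{(2)}_1$. Thus $(K,F^{(2)}_1)$ is a $\phi$-$\nabla$-compatible subpair of rank $(2,2)$ satisfying $\mu(K)+\mu(F^{(2)}_1)=-1$, so Lemma \ref{stablem2} shows it is $\boldsymbol{\alpha}$-destabilizing, contradicting the stability assumption. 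The technical step I expect to require the most care is the identification of the two components of $\bar B$ with sections of $\Opl(1)$ sharing a common zero at $q$, which depends on choosing the splitting of $E_1/F^{(1)}_2$ compatibly with $F^{(1)}_1/F^{(1)}_2$; once that is in hand, the construction of $K$ and the application of Lemma \ref{stablem2} are immediate.
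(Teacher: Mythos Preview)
Your argument is correct. It differs from the paper's proof in an interesting way. The paper treats the three cases $\rank\phi=3,2,1$ separately: for $\rank\phi=3$ it observes $h_2\neq 0$ trivially; for $\rank\phi=2$ it passes to explicit matrix representations of $\phi$ and $\nabla$ and invokes Lemma~\ref{detzero} (the determinantal identity for $\res_{t_i}\nabla-\lambda\phi_{t_i}$) to force $h_2\neq 0$; for $\rank\phi=1$ it shows $h_1\neq 0$ by a kernel argument producing a destabilizing pair $(G,F^{(2)}_1)$ of slope sum $-1$. Your proof instead assumes $h_1=h_2=0$ simultaneously and runs a single kernel argument: from $h_2=0$ you get $\phi(E_1)\subset F^{(2)}_1$, so $B$ is $\mcO_{\pl}$-linear on all of $E_1$, and then $h_1=0$ together with $u_q=0$ forces both components of $\bar B\colon E_1/F^{(1)}_2\to\Opl$ to vanish at $q$, giving a rank-one subbundle kernel and hence a destabilizing $(K,F^{(2)}_1)$ via Lemma~\ref{stablem2}. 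This is essentially the paper's rank-$1$ argument promoted to cover rank $2$ as well, and it bypasses the matrix computations and Lemma~\ref{detzero} entirely. The paper's case analysis extracts slightly sharper information (namely that $h_2\neq 0$ whenever $\rank\phi\geq 2$), but for the purposes of the lemma your uniform treatment is cleaner. The one point worth stating explicitly in a final write-up is why $K$ is saturated: since $(c,c')\neq 0$, the map $\bar B$ factors as a surjection $\Opl(-1)^{\oplus 2}\to\Opl(-1)$ followed by multiplication by $u_0$, so $E_1/K\cong\Opl(-1)$ is locally free.
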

	\begin{proof}
		If $\rank \phi=3$, then $h_2$ is not zero. In fact, if $h_2=0$, then $\phi(E_1)\subset F^{(2)}_1$ since $\phi\colon \Opl(-1)\cong E_1/F^{(1)}_1 \rightarrow E_2/F^{(2)}_1\cong \Opl(-1)$ is zero. It is a contradiction. So $\iota$ is injective.
		
		Consider the case $\rank \phi=2$. Assume that $h_2=0$. We take a local basis $e^{(1)}_0, e^{(1)}_1, e^{(1)}_2$ (resp. $e^{(2)}_0, e^{(2)}_1, e^{(2)}_2$) of $E_1$ (resp. $E_2$) such that $e^{(1)}_2$ generates $F^{(1)}_2$ and $e^{(1)}_1, e^{(1)}_2$ generate $F^{(1)}_1$ (resp. $e^{(2)}_2$ generates $F^{(2)}_2$ and $e^{(2)}_1, e^{(2)}_2$ generate $F^{(2)}_1$). By taking bases well, $\phi$ and $\nabla$ are represented by matrices
		\[
		\phi(e^{(1)}_2, e^{(1)}_1, e^{(1)}_0)=(e^{(2)}_2, e^{(2)}_1, e^{(2)}_0)
		\begin{pmatrix}
			1&0&0 \\
			0&\phi_{22}&\phi_{23} \\
			0&0&0
		\end{pmatrix},
		\]
		\[
		\nabla(e^{(1)}_2, e^{(1)}_1, e^{(1)}_0)=(e^{(2)}_2, e^{(2)}_1, e^{(2)}_0)
		\begin{pmatrix}
			0& a_{12}(z)& a_{13}(z)\\
			1& a_{22}(z)& a_{23}(z)\\
			0& a_{32}(z)& a_{33}(z)
		\end{pmatrix}
		\frac{dz}{h(z)},
		\]
		where $z$ is an inhomogeneous coordinate on $\pl=\Spec \mathbb{C}[z]\cup \{\infty\}$ and $h(z)=(z-t_1)(z-t_2)(z-t_3)$ and $\phi_{22}, \phi_{23} \in \mathbb{C}$. Suppose that $\phi_{22}=0$. Then we may assume that $\phi_{23}=1$.
		For each $i=1,2,3$, $a_{32}(t_i)$ must be zero because the polynomial
		\[
		\left| \res_{t_i}\nabla-\lambda \phi\right|
		=\frac{1}{h'(t_i)}
		\begin{vmatrix}
			-h'(t_i)\lambda& a_{12}(t_i)& a_{13}(t_i)\\
			1& a_{22}(t_i)& a_{23}(t_i)-h'(t_i)\lambda\\
			0& a_{32}(t_i)& a_{33}(t_i)
		\end{vmatrix}
		\]
		in $\lambda$ is identically zero by Lemma \ref{detzero} and $h'(t_i)a_{32}(t_i)$ is the second order coefficient of $\left| \res_{t_i}\nabla-\lambda \phi\right|$. Here $'=d/dz$. Since $a_{32}(z)\in  H^0(\Opl(1))$, we obtain $a_{32}(z)=0$. Then $(F^{(1)}_1, F^{(2)}_1)$ breaks the stability of $(E_1,E_2,\phi, \nabla, l^{(1)}_*,l^{(2)}_*)$. Suppose that $\phi_{22}\neq 0$. Then we may assume that $\phi_{23}=0$. In the same way as the above, we can see that $a_{33}(z)=0$. So $(F^{(1)}_2\oplus E_1/F^{(1)}_1, F^{(2)}_1)$ breaks the stability of $(E_1,E_2,\phi, \nabla, l^{(1)}_*,l^{(2)}_*)$. Hence $h_2\neq 0$ and so $\iota$ is injective.
		
		Finally, we consider the case $\rank \phi =1$. Let $f\colon E_1/F^{(1)}_2\rightarrow E_2/F^{(2)}_1 \otimes \Ompld$ be the homomorphism induced by $\nabla$. Since $\phi(E_1)\subset F^{(2)}_2\subset F^{(2)}_1$, the map $f$ becomes a homomorphism. If $h_1=0$, then we have $f|_q=0$ by the diagram (\ref{h1diag}). If $f=0$, then $(E_1,F^{(2)}_1)$ breaks the stability, so $f\neq 0$. Since $E_1/F^{(1)}_2 \cong \Opl(-1)^{\oplus 2}$,  $E_2/F^{(2)}_1 \otimes \Ompld \cong \Opl$ and $f|_q=0$, we have $\ker f \cong \Opl(-1)$. Put $G:=\ker (E_1 \rightarrow (E_1/F^{(1)}_2)/\ker f)$. Then $G\cong \Opl\oplus \Opl(-1)$ and so $(G, F^{(2)}_1)$ breaks the stability. Hence $h_1\neq 0$ and so $\iota$ is injective.
	\end{proof}
	\begin{lemma}\label{detzero}
		For each $i$, the polynomial $\left| \res_{t_i}\nabla-\lambda \phi_{t_i}\right|$ in $\lambda$ has the form
		\[
		(\wedge^3\phi_{t_i})(\nu_{i,0}-\lambda)(\nu_{i,1}-\lambda)(\nu_{i,2}-\lambda).
		\]
	\end{lemma}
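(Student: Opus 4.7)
The plan is to reduce the identity to an elementary lower-triangular determinant computation by working in bases of $E_1|_{t_i}$ and $E_2|_{t_i}$ adapted to the parabolic filtrations. First, since each filtration $E_k|_{t_i}=l^{(k)}_{i,0}\supsetneq l^{(k)}_{i,1}\supsetneq l^{(k)}_{i,2}\supsetneq l^{(k)}_{i,3}=0$ is strict with steps of codimension one, I would choose an ordered basis $e_0,e_1,e_2$ of $E_1|_{t_i}$ such that $l^{(1)}_{i,j}=\langle e_j,\ldots,e_2\rangle$ for each $j=0,1,2$, and likewise a basis $f_0,f_1,f_2$ of $E_2|_{t_i}$ adapted to $l^{(2)}_{i,*}$.

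Next, I would read off both structures in these bases by applying the two parabolic compatibility conditions of Definition \ref{phiconn}. Writing $\phi_{t_i}(e_j)=\sum_k a_{kj}f_k$, the condition $\phi_{t_i}(l^{(1)}_{i,j})\subset l^{(2)}_{i,j}$ yields $a_{kj}=0$ for $k<j$, so the matrix $M_\phi$ of $\phi_{t_i}$ is lower triangular with diagonal entries $a_{00},a_{11},a_{22}$ whose product equals $\wedge^3\phi_{t_i}$. Writing $\res_{t_i}\nabla(e_j)=\sum_k b_{kj}f_k$, the condition $(\res_{t_i}\nabla-\nu_{i,j}\phi_{t_i})(l^{(1)}_{i,j})\subset l^{(2)}_{i,j+1}$ gives $b_{kj}-\nu_{i,j}a_{kj}=0$ for all $k\leq j$; combined with $a_{kj}=0$ for $k<j$, this forces $b_{kj}=0$ for $k<j$ and pins the diagonal to $b_{jj}=\nu_{i,j}a_{jj}$. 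Hence the matrix of $\res_{t_i}\nabla$ is also lower triangular with diagonal $(\nu_{i,0}a_{00},\nu_{i,1}a_{11},\nu_{i,2}a_{22})$.

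Subtracting, the matrix of $\res_{t_i}\nabla-\lambda\phi_{t_i}$ is lower triangular with diagonal entries $((\nu_{i,j}-\lambda)a_{jj})_{j=0,1,2}$, so its determinant factors as
\[
a_{00}a_{11}a_{22}\prod_{j=0}^{2}(\nu_{i,j}-\lambda)=(\wedge^3\phi_{t_i})(\nu_{i,0}-\lambda)(\nu_{i,1}-\lambda)(\nu_{i,2}-\lambda),
\]
which is the claim. There is no substantive obstacle: the entire content is that the two parabolic conditions simultaneously lower-triangularize $\phi_{t_i}$ and $\res_{t_i}\nabla$ in a compatible pair of flag bases, and rigidly prescribe the diagonal of the latter in terms of the diagonal of the former together with $\boldsymbol{\nu}$.
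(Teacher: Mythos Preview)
Your proof is correct and is essentially identical to the paper's own argument: both choose bases of $E_1|_{t_i}$ and $E_2|_{t_i}$ adapted to the parabolic flags, observe that $\phi_{t_i}$ and $\res_{t_i}\nabla$ become triangular with diagonals $(a_{jj})$ and $(\nu_{i,j}a_{jj})$, and read off the determinant. The only cosmetic difference is that the paper orders its basis as $(v_2,v_1,v_0)$, yielding upper-triangular matrices, whereas you order as $(e_0,e_1,e_2)$ and obtain lower-triangular ones.
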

	\begin{proof}
		We take a basis $v^{(1)}_0, v^{(1)}_1, v^{(1)}_2$ (resp. $v^{(2)}_0, v^{(2)}_1, v^{(2)}_2$) of $E_1|_{t_i}$ (resp. $E_2|_{t_i}$) such that $v^{(1)}_2$ generates $l^{(1)}_2$ and $v^{(1)}_1, v^{(1)}_2$ generate $l^{(1)}_1$ (resp. $v^{(2)}_2$ generates $l^{(2)}_2$ and $v^{(2)}_1, v^{(2)}_2$ generate $l^{(2)}_1$). Then $\phi_{t_i}$ and $\res_{t_i}\nabla$ are represented by matrices
		\[
		\phi_{t_i}(v^{(1)}_2, v^{(1)}_1, v^{(1)}_0)=(v^{(2)}_2, v^{(2)}_1, v^{(2)}_0)
		\begin{pmatrix}
			\phi_{11}&\phi_{12}&\phi_{13}\\
			0&\phi_{22}&\phi_{23} \\
			0&0&\phi_{33}
		\end{pmatrix}, 
		\]
		\[
		\res_{t_i}\nabla(v^{(1)}_2, v^{(1)}_1, v^{(1)}_0)=(v^{(2)}_2, v^{(2)}_1, v^{(2)}_0)
		\begin{pmatrix}
			a_{11}& a_{12}& a_{13}\\
			0& a_{22}& a_{23}\\
			0& 0& a_{33}
		\end{pmatrix}
		\]
		because $\phi_{t_i}$ and $\res_{t_i}\nabla$ are parabolic. Since $(\res_{t_i}\nabla- \nu_{i,j}\phi_{t_i})(l^{(1)}_{i,j}) \subset l^{(2)}_{i,j+1}$ for $j=0, 1,2$, we have $a_{11}=\nu_{i,0}\phi_{11}$, $a_{22}=\nu_{i,1}\phi_{22}$ and $a_{33}=\nu_{i,2}\phi_{33}$. So we have
		\[
		\left| \res_{t_i}\nabla-\lambda \phi_{t_i}\right|=\phi_{11}\phi_{22}\phi_{33}(\nu_{i,0}-\lambda)(\nu_{i,1}-\lambda)(\nu_{i,2}-\lambda).
		\]
	\end{proof}
	By Lemma \ref{iota}, the map $\iota$ determines a point $\varphi(E_1,E_2,\phi,\nabla, l^{(1)}_*,l^{(2)}_*,F^{(1)}_1)$ of $\mathbb{P}(\Ompld \oplus \Opl)$. We can see that the map 
	\begin{equation}\label{varphidef}
		\varphi \colon \widehat{M}\longrightarrow \mathbb{P}(\Omega_{\pl}^1(D) \oplus \Opl)
	\end{equation}
	is a morphism. We prove later that $\varphi$ can be factored into a composition of blow-ups (see Proposition \ref{allneq}, Proposition \ref{twoeq}, Proposition \ref{alleq}).
	\subsection{Normal forms of $\boldsymbol{\alpha}$-stable parabolic $\phi$-connections}\label{nfsec}
	Take $(E_1,E_2,\phi,\nabla, l^{(1)}_*,l^{(2)}_*,F^{(1)}_1) \in \widehat{M}$.
	For $k=1,2$, let $E_k\supsetneq F^{(k)}_1\supsetneq F^{(k)}_2\supsetneq 0$ be a filtration in Proposition \ref{appfil}. We take a local basis $e^{(1)}_0, e^{(1)}_1, e^{(1)}_2$ (resp. $e^{(2)}_0, e^{(2)}_1, e^{(2)}_2$) of $E_1$ (resp. $E_2$) such that $e^{(1)}_2$ generates $F^{(1)}_2$ and $e^{(1)}_1, e^{(1)}_2$ generate $F^{(1)}_1$ (resp. $e^{(2)}_2$ generates $F^{(2)}_2$ and $e^{(2)}_1, e^{(2)}_2$ generate $F^{(2)}_1$). Let $z$ be a fixed inhomogeneous coordinate on $\pl=\Spec \mathbb{C}[z] \cup \{\infty\}$. Then $\phi$ and $\nabla$ are represented by matrices
	\[
	\phi(e^{(1)}_2, e^{(1)}_1, e^{(1)}_0)=(e^{(2)}_2, e^{(2)}_1, e^{(2)}_0)
	\begin{pmatrix}
		\phi_{11}&\phi_{12}&\phi_{13}\\
		0&\phi_{22}&\phi_{23}\\
		0&0&\phi_{33}
	\end{pmatrix}, 
	\]
	\[
	\small
	\nabla(e^{(1)}_2, e^{(1)}_1, e^{(1)}_0)=(e^{(2)}_2, e^{(2)}_1, e^{(2)}_0)
	\begin{pmatrix}
		a_{11}(z)& a_{12}(z)& a_{13}(z)\\
		a_{21}& \phi_{22}(z-t_1)(z-t_2)+a_{22}(z)& \phi_{23}(z-t_1)(z-t_2)+a_{23}(z)\\
		0& a_{32}(z)& \phi_{33}(z-t_1)(z-t_2)+a_{33}(z)
	\end{pmatrix}
	\frac{dz}{h(z)},
	\]
	where $\phi_{11},\phi_{22},\phi_{23},\phi_{33} \in H^0(\Opl), \phi_{12}, \phi_{13}\in H^0(\Opl(1))$, $a_{11},a_{22},a_{23},a_{32},a_{33} \in H^0(\Opl(1))$,$a_{21}\in H^0(\Opl)$, and $h(z)=(z-t_1)(z-t_2)(z-t_3)$. By taking $e^{(1)}_0, e^{(1)}_1, e^{(2)}_0, e^{(2)}_1$ well, we may assume that $\phi_{12}=\phi_{13}=0, a_{11}(z)=0$ and $a_{21}=1$. Then we have $a_{12}, a_{13}\in H^0(\Opl(2))$. Let $q$ be the apparent singular point of $(E_1,E_2,\phi,\nabla, l^{(1)}_*,l^{(2)}_*,F^{(1)}_1)$.
	\begin{lemma}\label{rk3formlem}
		Assume that $\rank \phi=3$, i.e. $\wedge^3\phi \neq 0$. Then $\phi$ and $\nabla$ have the forms
		\begin{equation}\label{rk3form}
			\phi=
			\begin{pmatrix}
				1& 0& 0\\
				0& 1& 0\\
				0& 0& 1
			\end{pmatrix}, 
			\nabla=d+
			\begin{pmatrix}
				0& a_{12}(z)& a_{13}(z)\\
				1& (z-t_1)(z-t_2)-p& 0\\
				0& z-q& (z-t_1)(z-t_2)+p
			\end{pmatrix}
			\frac{dz}{h(z)},
		\end{equation}
		respectively, where $p\in \mathbb{C}$ and $a_{12}(z),a_{13}(z)$ are quadratic polynomials in $z$ satisfying
		\begin{equation}\label{rk3coe1}
			a_{12}(t_i)=-h'(t_i)^2(\nu_{i,0}\nu_{i,1}+\nu_{i,1}\nu_{i,2}+\nu_{i,2}\nu_{i,0}-(\res_{t_i}(\tfrac{dz}{z-t_3}))^2)-p^2,
		\end{equation}
		\begin{equation}\label{rk3coe0}
			(t_i-q)a_{13}(t_i)=\prod_{j=0}^2(h'(t_i)(\nu_{i,j}-\res_{t_i}(\tfrac{dz}{z-t_3}))-p)
		\end{equation}
		for any $i=1,2,3$. Here $'=d/dz$.
	\end{lemma}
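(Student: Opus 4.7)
My plan is to prove the lemma in three stages: normalize $\phi$ to the identity, then normalize $\nabla$ using the stabilizer of $\phi = \id$, then extract the residue identities via Lemma \ref{detzero}.

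\emph{Stage 1 (normalize $\phi$).} Since $\wedge^3\phi = \phi_{11}\phi_{22}\phi_{33}$ is a nonzero constant by hypothesis, each diagonal $\phi_{jj}$ is a nonzero scalar. Rescaling $e^{(2)}_j$ puts $\phi_{11} = \phi_{22} = \phi_{33} = 1$, and the residual constant $\phi_{23}$ is killed by $e^{(1)}_0 \mapsto e^{(1)}_0 - \phi_{23} e^{(1)}_1$ together with the matching $e^{(2)}_0 \mapsto e^{(2)}_0 - \phi_{23} e^{(2)}_1$ that preserves $\phi = \id$. The normalizations $a_{11}(z) = 0$ and $a_{21} = 1$ are restored using the residual scalar gauge.

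\emph{Stage 2 (normalize $\nabla$).} The stabilizer of $\phi = \id$ inside the filtration-preserving automorphism group consists of pairs $(\sigma,\sigma)$, which has dimension seven modulo the scalar center. Three successive moves finish the normalization. The substitution $e^{(k)}_0 \mapsto e^{(k)}_0 + \lambda(z) e^{(k)}_2$ with $\lambda \in H^0(\Opl(1))$ linear shifts the $(2,3)$-entry of $A$ by $\lambda$, so $\lambda := -a_{23}$ kills $a_{23}(z)$. The analogous $e^{(k)}_1 \mapsto e^{(k)}_1 + \mu(z) e^{(k)}_2$ shifts $(2,2)$ by $\mu$; a suitable linear $\mu$ reduces $a_{22}(z)$ to a constant, which we name $-p$. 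Finally, a scalar rescaling brings $a_{32}(z) \in H^0(\Opl(1))$ to the monic form $z - q$. The trace identity from Lemma \ref{detzero}, namely $\tr(\res_{t_i}\nabla) = \sum_j \nu_{i,j}$ when $\phi = \id$, together with $a_{11}=0$ and $\nu_1 = \nu_2 = 0,\,\nu_3 = 2$, forces the linear polynomial $a_{22}(z) + a_{33}(z)$ to vanish at each $t_i$ and hence identically, so $a_{33}(z) = p$ is automatic. The dimension count ($7$ normalization conditions against $7$ gauge parameters) confirms the normal form is attainable and determines $p,q$ uniquely.

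\emph{Stage 3 (residue identities).} Set $H_i := h'(t_i)$ and $r_i := \res_{t_i}(\tfrac{dz}{z-t_3})$; the key identity $(t_i - t_1)(t_i - t_2) = H_i r_i$ is trivially valid for every $i$ and absorbs the $(z-t_1)(z-t_2)$ contribution to the diagonal when we change variable to $\tilde\lambda := H_i(\lambda - r_i)$. Cofactor expansion of $\det(A(t_i) - \lambda H_i I)$ along the first column, equated with $H_i^3 \prod_j(\nu_{i,j} - \lambda) = \prod_j(H_i(\nu_{i,j} - r_i) - \tilde\lambda)$ supplied by Lemma \ref{detzero}, yields the identity (\ref{rk3coe1}) for $a_{12}(t_i)$ by matching the coefficient of $\tilde\lambda$, and (\ref{rk3coe0}) for $(t_i - q) a_{13}(t_i)$ by matching the constant term. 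The main obstacle throughout is careful bookkeeping of how each gauge transformation alters every matrix entry, in particular the cross terms from $\lambda\,\nabla(e^{(1)}_2)$ and from $d\lambda = \lambda'(z) h(z)\cdot\tfrac{dz}{h(z)}$ produced by the $\phi$-connection product rule; but each individual step is routine once the bookkeeping is set up.
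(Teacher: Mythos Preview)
Your overall plan matches the paper's, and Stages 1 and 3 are essentially correct. But Stage 2 contains a genuine error.

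The problem is the gauge $e^{(k)}_1 \mapsto e^{(k)}_1 + \mu(z)\,e^{(k)}_2$. Conjugation by $M=I+\mu E_{1,2}$ does shift the $(2,2)$-entry by $\mu$ as you say, but it also shifts the $(1,1)$-entry: since $a_{21}=1$ and $a_{31}=0$, one computes
\[
(M^{-1}AM)_{1,1}=a_{11}-\mu\,a_{21}=-\mu,
\]
so the normalization $a_{11}=0$ is destroyed. There is no way to repair this within the stabilizer of $\phi=\id$: with upper-triangular $M$ one has $(M^{-1}AM)_{1,1}=(M^{-1})_{1,1}a_{11}+(M^{-1})_{1,2}a_{21}$, so the $(1,2)$-gauge is exactly what couples $(1,1)$ and $(2,2)$, and no other entry of $M$ can touch $(1,1)$ because $a_{31}=0$. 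Your dimension count hides this: you are imposing one more condition than you have independent parameters once the coupling is accounted for.

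The paper avoids the issue by never using the $(1,2)$-slot. It conjugates only by
\[
C=\begin{pmatrix}1&0&c_{13}(z)\\0&1&c_{23}\\0&0&1\end{pmatrix},\qquad c_{13}\in H^0(\Opl(1)),\ c_{23}\in\mathbb{C},
\]
which modifies column 3 and adds multiples of row 3; since $a_{31}=0$, the first column (hence $a_{11}=0$ and $a_{21}=1$) is untouched. The single constant $c_{23}$ is spent normalizing $a_{33}$ to $(z-t_1)(z-t_2)+p$, and then $c_{13}$ (two parameters) kills the resulting $(2,3)$-entry. Crucially, $a_{22}$ is not gauged at all: it is \emph{forced} by the trace relation $\res_{t_i}\tr\nabla=2\res_{t_i}(\tfrac{dz}{z-t_3})$, which pins down the linear polynomial $a_{22}+a_{33}$ at three points and hence gives $a_{22}=(z-t_1)(z-t_2)-p$. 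The fix to your argument is therefore to normalize $a_{33}$ via the $(2,3)$-gauge $e^{(k)}_0\mapsto e^{(k)}_0+c_{23}e^{(k)}_1$ instead of attacking $a_{22}$ directly, and then let the trace determine $a_{22}$.
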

	\begin{proof}
		Applying $\phi^{-1}$ to $E_2$, we may assume that $\phi =\id$. Put
		\[
		C=
		\begin{pmatrix}
			1& 0& c_{13}(z)\\
			0& 1& c_{23}\\
			0& 0& 1
		\end{pmatrix},
		\]
		where $c_{13}(z) \in H^0(\Opl(1))$ and $c_{23} \in H^0(\Opl)$. Then we have
		\begin{align*}
			&C\circ \nabla \circ C^{-1}\\
			=&d+
			\begin{pmatrix}
				0& a_{12}(z)+c_{13}(z-q)& a_{13}(z)-c_{23}a_{12}(z)+c_{13}(z)a_{33}(z)-c_{13}(z)c_{23}(z-q)-h(z)c'_{13}(z)\\
				1& a_{22}(z)+c_{23}(z-q)& a_{23}(z)-c_{23}a_{22}(z)-c_{13}(z)+c_{23}a_{33}(z)-c_{23}^2(z-q)\\
				0& z-q& a_{33}(z)-c_{23}(z-q)
			\end{pmatrix}
			\frac{dz}{h(z)}.
		\end{align*}
		So we may assume that $a_{23}(z)=0$ and $a_{33}(z)$ changes into the form $(z-t_1)(z-t_2)+p$. Since $\res_{t_i}\tr \nabla =2\res_{t_i}(\frac{dz}{z-t_3})$, we have $a_{22}(z)=(z-t_1)(z-t_2)-p$. So we obtain the desired form
		\[
		\nabla
		=
		d+
		\begin{pmatrix}
			0& a_{12}(z)& a_{13}(z)\\
			1& (z-t_1)(z-t_2)-p& 0\\
			0& z-q& (z-t_1)(z-t_2)+p
		\end{pmatrix}\frac{dz}{h(z)}.
		\]
		By Lemma \ref{detzero},  we can see that $a_{12}(z)$ and $a_{13}(z)$ satisfy the conditions (\ref{rk3coe1}) and (\ref{rk3coe0}) for each $i=1,2,3$.
	\end{proof}
	\begin{remark}
		The polynomial $a_{12}(z)$ is uniquely determined by $p$. When $q\neq t_1, t_2, t_3$, $a_{13}(z)$ is also uniquely determined by $q$ and $p$. When $q=t_i$, $p$ is equal to one of $h'(t_i)(\nu_{i,0}-\res_{t_i}(\tfrac{dz}{z-t_3})), h'(t_i)(\nu_{i,1}-\res_{t_i}(\tfrac{dz}{z-t_3})), h'(t_i)(\nu_{i,2}-\res_{t_i}(\tfrac{dz}{z-t_3}))$ and $a_{13}(t_i)$ takes any complex number. When $p=h'(t_i)(\nu_{i,j}-\res_{t_i}(\tfrac{dz}{z-t_3}))$, we have $(\res_{t_i}\oplus \id)(\varphi(E,\nabla,l_*))=(\nu_{i,j}-\res_{t_i}(\tfrac{dz}{z-t_3}):1)$, where $\res_{t_i}\oplus \id\colon \mathbb{P}(\Omega_{\pl}^1(D(\bm{t}))\oplus \mcO_{\pl})|_{t_i}\rightarrow \mathbb{P}(\mcO_{\pl}|_{t_i}\oplus \mcO_{\pl}|_{t_i})$ is a natural isomorphism. The choices of $a_{13}(t_i)$ give exceptional curves of the first kind on the moduli space of parabolic connections (see Proposition 2.20, 2.21, and  2.22). 
	\end{remark}
	\begin{lemma}\label{rk2formlem}
		Assume that $\rank \phi=2$. Then $\phi$ and $\nabla$ have the forms
		\begin{equation}\label{rk2form}
			\phi=
			\begin{pmatrix}
				1&0&0 \\
				0&0&0 \\
				0&0&1
			\end{pmatrix}, \ 
			\nabla=\phi\otimes d+
			\begin{pmatrix}
				0&0&\prod_{j\neq i}(z-t_j) \\
				1&0&0\\
				0&z-t_i&(z-t_1)(z-t_2)+p
			\end{pmatrix}
			\frac{dz}{h(z)},
		\end{equation}
		respectively.
	\end{lemma}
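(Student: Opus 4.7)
The plan is to mirror the argument of Lemma \ref{rk3formlem}, adapted to handle the degeneracy of $\phi$. First I would pin down the shape of $\phi$: by Lemma \ref{zerostab} applied to the unique trivial line subbundle $F^{(1)}_2$ we have $\phi_{11}\neq 0$; by the rank-$2$ case of the proof of Lemma \ref{iota} (the step showing $h_2\neq 0$) we have $\phi_{33}\neq 0$; and $\rank\phi=2$ gives $\wedge^3\phi=\phi_{11}\phi_{22}\phi_{33}\equiv 0$, forcing $\phi_{22}=0$. Rescaling the basis vectors by constants normalizes $\phi_{11}=\phi_{33}=1$, and the substitution $e^{(2)}_0\mapsto e^{(2)}_0+\phi_{23}e^{(2)}_1$ (legitimate since $\phi_{23}\in\mathbb{C}$ and $e^{(2)}_1$ lies in $F^{(2)}_1$) eliminates the remaining off-diagonal entry $\phi_{23}$, giving $\phi=\mathrm{diag}(1,0,1)$.

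Next I would apply Lemma \ref{detzero} at each pole $t_i$. Because $\wedge^3\phi_{t_i}=0$, the polynomial $\lvert\res_{t_i}\nabla-\lambda\phi_{t_i}\rvert$ in $\lambda$ vanishes identically. Expanding and collecting the coefficients of $\lambda^2$, $\lambda$ and $\lambda^0$ at each $t_i$ yields the relations
\[
a_{22}(t_i)=0,\qquad a_{12}(t_i)+a_{23}(t_i)a_{32}(t_i)=0,\qquad a_{13}(t_i)a_{32}(t_i)=a_{12}(t_i)\bigl((t_i-t_1)(t_i-t_2)+a_{33}(t_i)\bigr).
\]
The first relation at three distinct points, combined with $a_{22}\in H^0(\Opl(1))$, forces $a_{22}\equiv 0$.

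Then I would exploit the residual gauge freedom---pairs $(C_1,C_2)$ with $C_2\phi=\phi C_1$ preserving both filtrations---to normalize $a_{12}\equiv a_{23}\equiv 0$, reduce $a_{33}(z)$ to a constant $p$, and rescale $a_{32}(z)$ to a monic polynomial. Stability then rules out the degenerate cases. If $a_{32}\equiv 0$, the pair $(F^{(1)}_1,F^{(2)}_1)$ is preserved by both $\phi$ and $\nabla$ with $\mu(F^{(1)}_1)+\mu(F^{(2)}_1)=-1$, destabilizing by Lemma \ref{stablem2}. If $a_{13}\equiv 0$ (with $a_{12}=a_{23}=0$ already), the pair $\bigl(\langle e^{(1)}_1,e^{(1)}_0\rangle,\langle e^{(2)}_0\rangle\bigr)$ is preserved by both $\phi$ and $\nabla$, and a direct computation of $\mu_{\boldsymbol{\alpha}}$ shows it destabilizes for $\gamma\gg 0$. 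Thus $a_{32}\not\equiv 0$ and $a_{13}\not\equiv 0$, and the collapsed constraint $a_{13}(t_i)a_{32}(t_i)=0$ at each pole forces $a_{32}(z)=z-t_i$ for the unique $i\in\{1,2,3\}$ at which $a_{32}$ vanishes, together with $a_{13}(z)=\prod_{j\neq i}(z-t_j)$, yielding the stated normal form.

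The main obstacle will be the gauge-reduction step. Unlike in the $\rank\phi=3$ case of Lemma \ref{rk3formlem}, the gauge cannot be written as a single conjugation $C\circ\nabla\circ C^{-1}$ because $\phi$ is not invertible; the actions on $E_1$ and $E_2$ decouple where $\phi$ vanishes, providing fewer degrees of freedom on the off-diagonal entries. Carefully tracking which entries the residual stabilizer can still normalize, and pairing this with the Fuchs constraints at the three poles together with the degree bounds on the $a_{ij}(z)$, will be needed both to reach the target matrix and to verify the rank-$2$ phenomenon that the apparent singularity is forced to coincide with one of the poles.
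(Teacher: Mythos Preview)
Your plan is correct and is essentially the paper's argument, just with the order of ``gauge'' and ``pole constraints'' interchanged. One point to sharpen: you list ``normalize $a_{12}\equiv 0$'' among the gauge steps, but with $\phi=\mathrm{diag}(1,0,1)$ the stabilizer $\{(C_1,C_2):C_2\phi=\phi C_1\}$ that also preserves the earlier normalizations $\phi_{12}=\phi_{13}=0$, $a_{11}=0$, $a_{21}=1$ does not have enough freedom to kill a generic degree-two $a_{12}$ on its own. What actually happens (and what the paper does) is that one gauges away $a_{23}$ via $C_1=C_2=\bigl(\begin{smallmatrix}1&0&-a_{23}\\0&1&0\\0&0&1\end{smallmatrix}\bigr)$, and then your second pole relation $a_{12}(t_i)+a_{23}(t_i)a_{32}(t_i)=0$ collapses to $a_{12}(t_i)=0$ at all three poles, forcing $a_{12}\equiv 0$ by the degree bound. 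Your closing paragraph already anticipates that gauge and Fuchs constraints must be used together, so this is an expository refinement rather than a gap. The remaining normalizations (making $a_{33}$ constant via $e^{(1)}_0\mapsto e^{(1)}_0+c\,e^{(1)}_1$, and rescaling $a_{32}$, $a_{13}$) are as you describe; your destabilizing pair for $a_{13}\equiv 0$ is the subbundle incarnation of the paper's quotient pair $(E_1/F^{(1)}_2,\,E_2/F^{(2)}_1)$ and works by the same $\gamma\gg 0$ mechanism as Lemma~\ref{stablem1}.
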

	\begin{proof}
		By the proof of Lemma \ref{iota}, we have $\phi_{33}\neq 0$. So we may assume that $\phi$ is of the form (\ref{rk2form}).
		Applying an automorphism of $E_1,E_2$ given by the form
		\[
		\begin{pmatrix}
			1&0&-a_{23}(z) \\
			0&1&0 \\
			0&0&1
		\end{pmatrix},
		\]
		$\nabla$ changes into the form
		\[
		\begin{pmatrix}
			0& a_{12}(z)+a_{23}(z)a_{32}(z)& a_{13}(z)+a_{23}(z)a_{33}(z)-h(z)a'_{23}(z)\\
			1& a_{22}(z)& 0\\
			0& a_{32}(z)& a_{33}(z)
		\end{pmatrix}
		\frac{dz}{h(z)}.
		\]
		So we may assume without loss of generality that $a_{23}(z)=0$.
		Using an argument of the proof of Lemma \ref{iota}, we obtain $a_{12}(z)=a_{22}(z)=0$ and $a_{32}(t_i)a_{13}(t_i)=0$ for $i=1,2,3$. If $a_{32}(z)$ is identically zero, then $(F^{(1)}_1,  F^{(2)}_1)$ breaks the stability. If $a_{13}(z)$ is identically zero, then $(E_1/F^{(1)}_2, E_2/F^{(2)}_1)$  breaks the stability. So there exists unique $i \in \{1,2,3\}$ such that $a_{32}(t_i)=0$, which implies $a_{13}(t_j)=0$ for $j\neq i$. Applying suitable automorphisms, we obtain the desired form (\ref{rk2form}).
	\end{proof}
	\begin{lemma}\label{rk1formlem}
		Assume that $\rank \phi=1$. Then $\phi$ and $\nabla$ have the forms
		\begin{equation}\label{rk1form}
			\phi=
			\begin{pmatrix}
				1&0&0 \\
				0&0&0 \\
				0&0&0
			\end{pmatrix}, \ 
			\nabla=\phi\otimes d+
			\begin{pmatrix}
				0&\prod_{j\neq i}(z-t_j)&0 \\
				1&0&0\\
				0&z-q&z-t_i
			\end{pmatrix}
			\frac{dz}{h(z)},
		\end{equation}
		respectively, where $t_i\neq q$.
	\end{lemma}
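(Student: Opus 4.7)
The plan is to combine the rank-one condition on $\phi$, the identities coming from Lemma \ref{detzero}, stability arguments, and a sequence of explicit gauge transformations, following the template of the proofs of Lemmas \ref{rk3formlem} and \ref{rk2formlem}. First, because $\rank \phi = 1$ the image of $\phi$ is contained in $F^{(2)}_2$ by the rank-one case of Lemma \ref{iota}, and $\phi|_{F^{(1)}_2}$ is nonzero by Lemma \ref{zerostab}; rescaling then yields the stated form of $\phi$ so that in our basis $\phi_{22} = \phi_{23} = \phi_{33} = 0$ and $\phi_{11} = 1$. Since $\wedge^3 \phi = 0$, Lemma \ref{detzero} makes $|\res_{t_i}\nabla - \lambda\phi_{t_i}|$ identically zero; reading off its coefficients of $\lambda^0$ and $\lambda^1$ at each $t_i$ yields
\[
a_{22}(t_i)a_{33}(t_i) = a_{23}(t_i)a_{32}(t_i), \qquad a_{12}(t_i)a_{33}(t_i) = a_{13}(t_i)a_{32}(t_i).
\]
The first is a polynomial of degree at most two vanishing at three points, so $a_{22}a_{33} = a_{23}a_{32}$ identically; the second gives $a_{12}a_{33} - a_{13}a_{32} = c\cdot h(z)$ for some constant $c$.

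Second, I use stability to fix the lower block. The sheaf map $E_1 \to (E_2/F^{(2)}_1)\otimes\Ompld$ induced by $\nabla$ is represented by $(a_{32}, a_{33})$; if $a_{32}$ and $a_{33}$ shared a zero on $\pl$, or if one of them vanished identically, the saturated kernel together with $F^{(1)}_2$ would be a rank-two subbundle of $E_1$ of type $\Opl\oplus\Opl(-1)$, which paired with $F^{(2)}_1$ is $\phi$- and $\nabla$-invariant with $\mu$-sum equal to $-1$, destabilizing by Lemma \ref{stablem2}. Hence $a_{32}$ and $a_{33}$ are coprime linear polynomials. A scaling makes $a_{32} = z - q$, and the mixing $e^{(1)}_0 \mapsto e^{(1)}_0 + \gamma_1 e^{(1)}_1$ replaces $a_{33}$ by $a_{33} + \gamma_1 a_{32}$; since $a_{33}(q)\neq 0$ and $q$ equals at most one $t_j$, I can pick $i$ with $t_i\neq q$ and $\gamma_1$ so that $a_{33}$ becomes $z - t_i$. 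Evaluating the two relations at $z = t_i$ now yields $a_{13}(t_i) = a_{23}(t_i) = 0$, so $a_{33}$ divides both $a_{13}$ and $a_{23}$; combined with the global identity $a_{22}a_{33} = a_{23}a_{32}$ and coprimality, this forces $a_{23} = \gamma_2 a_{33}$ and $a_{22} = \gamma_2 a_{32}$ for the same constant $\gamma_2$. The gauges $e^{(2)}_0 \mapsto e^{(2)}_0 + (a_{13}/a_{33}) e^{(2)}_2$ and $e^{(2)}_0 \mapsto e^{(2)}_0 + \gamma_2 e^{(2)}_1$ then simultaneously kill $a_{13}$, $a_{22}$, and $a_{23}$, and the surviving identity $a_{12}(z - t_i) = c\cdot h(z)$ forces $a_{12}(z) = c\prod_{j\neq i}(z-t_j)$.

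The hard part is to show $c \neq 0$, without which no diagonal rescaling can bring $c$ to $1$. Combined with $a_{22}a_{33} = a_{23}a_{32}$ and coprimality of $a_{32}, a_{33}$, the condition $c = 0$ is equivalent to the $3\times 2$ matrix $\begin{pmatrix} a_{12} & a_{13} \\ a_{22} & a_{23} \\ a_{32} & a_{33} \end{pmatrix}$ having global rank one, which means the $\mcO$-linear map $\nabla|_{\ker\phi}\colon \ker\phi \to E_2\otimes\Ompld$ factors through $L\otimes\Ompld$ for some line subbundle $L\subset E_2$. A generator of $L$ projects to a generator of $E_2/F^{(2)}_1 \cong \Opl(-1)$, so $L \cong \Opl(-1)$ and $\deg L = -1$. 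The pair $(\ker\phi, L)$ is $\phi$- and $\nabla$-invariant with $(\rank,\deg) = (2,-2)$ on the left and $(1,-1)$ on the right, and a direct slope computation gives
\[
\mu_{\boldsymbol{\alpha}}(\ker\phi, L) - \mu_{\boldsymbol{\alpha}}(E_1, E_2) = \frac{\gamma - 2}{6} + O(\boldsymbol{\alpha}),
\]
which is positive for $\gamma \gg 0$, contradicting $\boldsymbol{\alpha}$-stability. Hence $c \neq 0$, a final diagonal scaling normalizes $c$ to $1$, and the stated normal form follows with $t_i \neq q$ built into the choice of $i$.
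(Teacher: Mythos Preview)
Your proof is correct and follows the same strategy as the paper's: extract the vanishing relations from Lemma~\ref{detzero}, normalize the bottom row via an automorphism of $E_1$, clear $a_{13},a_{22},a_{23}$ via automorphisms of $E_2$, and then rule out $a_{12}\equiv 0$ by exhibiting the destabilizing pair $(\ker\phi,L)$ of ranks $(2,1)$ --- this is exactly the pair the paper denotes (somewhat loosely) as $(E_1/F^{(1)}_2,\,E_2/F^{(2)}_1)$, and your explicit slope difference $(\gamma-2)/6$ makes precise what the paper only asserts. Your global packaging of the determinant relations ($a_{22}a_{33}=a_{23}a_{32}$ identically, $a_{12}a_{33}-a_{13}a_{32}=c\,h$) is a clean variant of the paper's pointwise relations (\ref{rk1rel1})--(\ref{rk1rel2}).

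One small correction is needed: the mixing $e^{(1)}_0\mapsto e^{(1)}_0+\gamma_1 e^{(1)}_1$ alone has a single free parameter and cannot force both the leading coefficient of $a_{33}+\gamma_1 a_{32}$ to be $1$ \emph{and} its root to lie at a chosen $t_i$. You must also rescale $e^{(1)}_0$, i.e.\ use $e^{(1)}_0\mapsto s\,e^{(1)}_0+\gamma_1 e^{(1)}_1$ with $s=(q-t_i)/a_{33}(q)$; this is precisely the role of the $(3,3)$ entry $a_{33}(q)^{-1}(q-t_i)$ in the paper's automorphism. Everything downstream of this step is unaffected.
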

	\begin{proof}
		By Lemma \ref{zerostab} and the assumption, $\phi$ and $\nabla$ have the forms
		\[
		\phi=
		\begin{pmatrix}
			1&0&0 \\
			0&0&0 \\
			0&0&0
		\end{pmatrix}, \quad 
		\nabla=\phi\otimes d+
		\begin{pmatrix}
			0& a_{12}(z)& a_{13}(z)\\
			1& a_{22}(z)& a_{23}(z)\\
			0& z-q& a_{33}(z)
		\end{pmatrix}
		\frac{dz}{h(z)}, 
		\]
		where $a_{12},a_{13}\in H^0(\Opl(2))$ and $a_{22},a_{23},a_{33} \in H^0(\Opl(1))$. If $a_{33}(q)=0$, then we may assume that $a_{33}(z)=0$ by applying an automorphism of $E_1$, which implies that $(F^{(1)}_2\oplus E_1/F^{(1)}_1, F^{(2)}_1)$ breaks the stability of $(E_1,E_2,\phi,\nabla, l^{(1)}_*,l^{(2)}_*)$.  Hence we have $a_{33}(q)\neq 0$. Let us fix $i\in \{1,2,3\}$ satisfying $t_i\neq q$. 
		Applying an automorphism of $E_1$ given by the form
		\[
		\begin{pmatrix}
			1& 0& 0\\
			0& 1& 1-a_{33}(q)^{-1}a'_{33}(q)(q-t_i)\\
			0& 0& a_{33}(q)^{-1}(q-t_i)
		\end{pmatrix},
		\]
		the $\phi$-connection $\nabla$ changes into the form
		\[
		\phi\otimes d+
		\begin{pmatrix}
			0& a_{12}(z)& a_{13}(z)\\
			1& a_{22}(z)& a_{23}(z)\\
			0& z-q& z-t_i
		\end{pmatrix}.
		\]
		We consider the polynomial
		\begin{equation}\label{rk1eigen}
			\left| \res_{t_j}\nabla-\lambda \phi_{t_j}\right|
			=\frac{1}{h'(t_j)^3}
			\begin{vmatrix}
				-h'(t_j)\lambda& a_{12}(t_j)& a_{13}(t_j)\\
				1& a_{22}(t_j)& a_{23}(t_j)\\
				0& t_j-q& t_j-t_i
			\end{vmatrix}
		\end{equation}
		in $\lambda$. By Lemma \ref{detzero}, the polynomial (\ref{rk1eigen}) is identically zero, that is, we have 
		\begin{equation}\label{rk1rel1}
			(t_j-t_i)a_{22}(t_j)-(t_j-q)a_{23}(t_j)=0,
		\end{equation}
		\begin{equation}\label{rk1rel2}
			(t_j-t_i)a_{12}(t_j)-(t_j-q)a_{13}(t_j)=0
		\end{equation}
		for any $j$. By (\ref{rk1rel1}) and (\ref{rk1rel2}),  we have $a_{13}(t_i)=a_{23}(t_i)=0$.
		Applying a suitable automorphism of $E_2$, we may assume without loss of generality that $a_{13}(z)=a_{23}(z)=0$. Then we have $a_{22}(t_j)=0$ for $j\neq i$ by (\ref{rk1rel1}), and it implies that $a_{22}(z)=0$. By (\ref{rk1rel2}), we have $a_{12}(t_i)=0$ for $j\neq i$. If $a_{12}(z)$ is identically zero, then $(E_1/F^{(1)}_2, E_2/F^{(2)}_1)$ breaks the stability. So $\phi$ and $\nabla$ have the forms (\ref{rk1form}).
	\end{proof}
	\begin{remark}\label{rk1rem}
		Let $(E_1,E_2,\phi,\nabla, l^{(1)}_*,l^{(2)}_*,F^{(1)}_1)$, $(E'_1,E'_2,\phi',\nabla', l'^{(1)}_*,l'^{(2)}_*,F'^{(1)}_1)$ be $\boldsymbol{\nu}$-parabolic $\phi$-connections such that $\rank \phi=\rank \phi'=1$. Then $(E_1,E_2,\phi,\nabla, l^{(1)}_*,l^{(2)}_*)$ and $(E'_1,E'_2,\phi',\nabla', l'^{(1)}_*,l'^{(2)}_*)$ are isomorphic to each other. In other words, the locus on $\overline{M}$ defined by $\rank \phi =1$ consists of one point. In fact, applying automorphisms of $E_1,E_2$, $\phi$ and $\nabla$ change into the form
		\begin{equation*}
			\begin{pmatrix}
				1&0&0 \\
				0&0&0 \\
				0&0&0
			\end{pmatrix}, \;
			\phi\otimes d+
			\begin{pmatrix}
				0&(z-t_2)(z-t_3)&0 \\
				1&0&0\\
				0&z-t_2&z-t_1
			\end{pmatrix}
			\frac{dz}{h(z)}.
		\end{equation*}
		By the proof of Proposition \ref{injrk12}, it follows that  parabolic structures $l^{(1)}_{i,*}$ and $l^{(2)}_{i,*}$ satisfying the conditions $\phi_{t_i}(l^{(1)}_{i,j}) \subset l^{(2)}_{i,j}$ and $(\res_{t_i}\nabla-\nu_{i,j}\phi_{t_i})(l^{(1)}_{i,j})\subset l^{(2)}_{i,j+1}$ are uniquely determined. 
	\end{remark}
	\begin{proposition}\label{injrk12}
		Let $Y$ be the closed subscheme of $\widehat{M}$ defined by the condition $\wedge^3\phi=0$. Then the restriction morphism $\varphi \colon Y\longrightarrow \mathbb{P}(\Omega_{\pl}^1(D) \oplus \Opl)$ is injective.
	\end{proposition}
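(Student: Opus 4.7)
The plan is to decompose $Y$ into strata indexed by the rank of $\phi$ and use the normal forms from Lemmas \ref{rk2formlem} and \ref{rk1formlem} together with Remark \ref{rk1rem}. Write $Y=Y_1\sqcup Y_2^{(1)}\sqcup Y_2^{(2)}\sqcup Y_2^{(3)}$, where $Y_1$ is the rank $1$ locus and $Y_2^{(i)}$ is the rank $2$ locus whose apparent singularity equals $t_i$; that $Y_2^{(i)}$ is well defined in this way follows from the rank $2$ normal form, since the $(3,2)$-entry $z-t_i$ of the connection matrix is exactly what produces the zero of $u$, so $\App$ sends the stratum labelled by $i$ to $t_i$. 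By Proposition \ref{appfil} the subbundle $F^{(1)}_1$ is unique on the rank $2$ strata, so $Y_2^{(i)}$ is parameterized by the single parameter $p\in\mathbb{C}$ of Lemma \ref{rk2formlem}; on $Y_1$, Remark \ref{rk1rem} gives only one isomorphism class of the underlying $\phi$-connection while Proposition \ref{appfil} shows the choices of $F^{(1)}_1$ form a $\pl$, identified with $\pl$ through $\App$.

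The second component $h_2$ of $\iota$ is induced by $\phi\colon E_1/F^{(1)}_1\to E_2/F^{(2)}_1$, which vanishes precisely on $Y_1$ and is nonzero on each $Y_2^{(i)}$. Hence $\varphi(Y_1)$ lies in the section $\{h_2=0\}$ of the $\pl$-bundle $\mathbb{P}(\Ompld\oplus \Opl)\to \pl$, while each $\varphi(Y_2^{(i)})$ lies in its complement; so the rank $1$ and rank $2$ images are disjoint, and $\varphi(Y_2^{(i)})$ and $\varphi(Y_2^{(j)})$ for $i\neq j$ are disjoint because they lie over the distinct base points $t_i,t_j$. Injectivity on $Y_1$ is then immediate, since $\App$ factors through $\varphi$ via the structure map $\mathbb{P}(\Ompld\oplus \Opl)\to \pl$ and $\App$ is already a bijection from $Y_1$ onto $\pl$ by the above.

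For injectivity on $Y_2^{(i)}$ it remains to recover $p$ from $\varphi$. Using the normal form of Lemma \ref{rk2formlem}, the $p$-dependent part of $(p_2\otimes \id)\nabla(e^{(1)}_0)$ is $\bar{e}^{(2)}_0\otimes p\,dz/h(z)$; after trivializing $\Ompld$ near $t_i$ by $dz/(z-t_i)$, the evaluation at $t_i$ contributes the factor $p/\prod_{j\neq i}(t_i-t_j)$, which is nonzero and linear in $p$, whereas $d(p_2\phi(e^{(1)}_0))$ and the $p$-free part of $(p_2\otimes \id)\nabla(e^{(1)}_0)$ are independent of $p$. Hence $h_1(e^{(1)}_0)|_{t_i}$ is affine in $p$ with nonzero slope while $h_2(e^{(1)}_0)|_{t_i}$ is a nonzero $p$-independent constant, so $p\mapsto[h_1:h_2]$ is injective. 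The main obstacle is precisely this rank $2$ bookkeeping: one must fix the trivialization of $E_2/F^{(2)}_1\cong \Opl(-t_3)$ near $t_i$ carefully so that $d(p_2\phi(e^{(1)}_0))$ is a genuine $p$-independent $1$-form and the $p$-coefficient identified above is untouched; once this is done, combining stratumwise injectivity with the disjointness above yields injectivity of $\varphi|_Y$.
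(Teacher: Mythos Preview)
Your approach coincides with the paper's: stratify $Y$ by $\rank\phi$, observe that the rank~$1$ locus lands in the section $D_0=\{h_2=0\}$ while the rank~$2$ locus with index $i$ lands in the fiber $D_i\setminus D_0$, and then argue injectivity stratum by stratum using the normal forms. Your explicit computation of $[h_1:h_2]$ as an affine function of $p$ on $Y_2^{(i)}$ spells out what the paper only asserts in one line (``by Lemma~\ref{rk2formlem}, $(\phi,\nabla)$ is uniquely determined by $\varphi(x)$'').

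There is, however, a genuine omission on the rank~$2$ strata. Recovering $p$ from $\varphi(x)$ only determines the pair $(\phi,\nabla)$ up to isomorphism; a point of $\widehat{M}$ also carries the parabolic data $l^{(1)}_*,l^{(2)}_*$, and Lemma~\ref{rk2formlem} says nothing about them. To conclude that $Y_2^{(i)}$ is parametrized by $p$ alone you must verify that for the normal form \eqref{rk2form} the conditions $\phi_{t_m}(l^{(1)}_{m,j})\subset l^{(2)}_{m,j}$ and $(\res_{t_m}\nabla-\nu_{m,j}\phi_{t_m})(l^{(1)}_{m,j})\subset l^{(2)}_{m,j+1}$ force $l^{(1)}_*,l^{(2)}_*$ uniquely. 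The paper records this as a separate check; without it two points of $Y_2^{(i)}$ with the same $p$ but different parabolic structures would have the same image under $\varphi$. (On $Y_1$ you are covered, since Remark~\ref{rk1rem} already includes the parabolic structures.)

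A smaller point: your assertion that $\App$ is a bijection from $Y_1$ onto $\pl$ is not ``by the above''. Proposition~\ref{appfil} gives only an abstract $\pl$ of subbundles $F^{(1)}_1$; that $\App$ realizes this identification is the content of Lemma~\ref{rk1formlem}, which shows that two choices of $F^{(1)}_1$ with the same apparent singularity $q$ reduce to the identical normal form \eqref{rk1form} and hence coincide in $\widehat{M}$.
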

	\begin{proof}
		Take a point  $x=(E_1,E_2,\phi,\nabla, l^{(1)}_*,l^{(2)}_*,F^{(1)}_1) \in Y_{(\bm{t},\boldsymbol{\nu})}$. Then $\rank \phi$ must be one or two by Lemma \ref{zerostab}. Let $D_0$ be the section of $\mathbb{P}(\Omega_{\pl}^1(D)\oplus \Opl)$ over $\pl$ defined by the injection $\Omega_{\pl}^1(D) \hookrightarrow \Ompld \oplus \Opl$, that is, $D_0$ is the section defined by $h_2=0$, where $h_2$ is defined in subsection \ref{morsec}. Let $D_i\subset \mathbb{P}(\Omega_{\pl}^1(D) \oplus \Opl)$ be the fiber over $t_i \in \pl$. By the proof of Lemma \ref{rk2formlem} and Lemma \ref{rk1formlem}, $\varphi(x) \in \cup_{i=1}^3D_i\setminus D_0$ if and only if $\rank \phi =2$, and  $\varphi(x) \in D_0$ if and only if $\rank \phi =1$.
		
		First, we consider the case of $\rank \phi=2$. By Lemma \ref{rk2formlem}, a pair $(\phi, \nabla)$ is uniquely determined up to isomorphism by $\varphi(x)$.  By Proposition \ref{appfil}, $F^{(1)}_1$ is also uniquely determined by $(E_1,E_2,\phi,\nabla)$. Moreover, we can check that parabolic structures $l^{(1)}_*$ and $l^{(2)}_*$ are uniquely determined by $(E_1,E_2,\phi,\nabla)$. 
		
		Next we consider the case of $\rank \phi =1$. By Proposition \ref{appfil} and Lemma \ref{rk1formlem}, a triple $(\phi, \nabla, F^{(1)}_1)$ is uniquely determined up to isomorphism by the apparent singularity $q$. We can see that parabolic structures $l^{(1)}_*$ and $l^{(2)}_*$ are determined by $\phi$ and $\nabla$. So $\varphi|_{Y_{(\bm{t},\boldsymbol{\nu})}}$ is injective.
	\end{proof}
	
	\subsection{Proof of Theorem \ref{maintheorem}}\label{pfsec}
	To prove Theorem \ref{maintheorem}, we study  $\widehat{M}$ and $\overline{M}$ in more detail. Let $D_0$ be the section of $\mathbb{P}(\Omega^1_{\pl}(D)\oplus \mcO_{\pl})$ over $\mathbb{P}^1$ defined by the injection $\Omega_{\pl}^1(D)\hookrightarrow \Omega^1_{\pl}(D)\oplus \mcO_{\pl}$, and $D_i$ be the fiber of $\mathbb{P}(\Omega^1_{\pl}(D)\oplus \mcO_{\pl})$ over $t_i \in \mathbb{P}^1$. Let $b_{i,j}$ be the point of $D_i$ corresponding to $\nu_{i,j}$, and put $B:=\{b_{i,j}\mid 1\leq i\leq 3, 0\leq j\leq 2\}$. We show that $\widehat{M}$ is obtained by blowing up $\mathbb{P}(\Omega^1_{\pl}(D)\oplus \mcO_{\pl})$ at any point in $B$.
	\begin{proposition}\label{isominusblow}
		The restriction morphism
		\begin{equation}\label{restphi}
			\varphi \colon \widehat{M} \setminus \varphi^{-1}(B) \longrightarrow \mathbb{P}(\Omega^1_{\pl}(D)\oplus \mcO_{\pl}) \setminus B
		\end{equation}
		is an isomorphism.
	\end{proposition}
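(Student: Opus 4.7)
The plan is to construct an explicit inverse morphism $\psi \colon \mathbb{P}(\Ompld\oplus\Opl) \setminus B \longrightarrow \widehat{M}\setminus \varphi^{-1}(B)$ using the normal forms of Lemmas \ref{rk3formlem}, \ref{rk2formlem}, \ref{rk1formlem}, and then verify that $\varphi$ and $\psi$ are mutual inverses.

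First I would establish set-theoretic bijectivity of $\varphi$ on $\widehat{M}\setminus\varphi^{-1}(B)$. Injectivity on the locus $\{\wedge^3\phi=0\}\setminus\varphi^{-1}(B)$ is furnished by Proposition \ref{injrk12}, while on the rank-three locus it follows from the uniqueness statement in the remark after Lemma \ref{rk3formlem}: the parameter $q$ is the apparent singularity, and $p$ is recovered from the fiber coordinate of $\varphi$ through the explicit expression of $h_1/h_2$ dictated by (\ref{rk3form}); then the polynomials $a_{12}(z)$ and $a_{13}(z)$ are uniquely determined by the constraints (\ref{rk3coe1}) and (\ref{rk3coe0}) when $q\neq t_i$. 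For surjectivity, given a point of $\mathbb{P}(\Ompld\oplus\Opl)\setminus B$, one reads off the appropriate parameters from the fiber coordinates and builds the parabolic $\phi$-connection using the relevant normal form, deducing $\boldsymbol{\alpha}$-stability from the subbundle analyses already carried out in the proofs of those lemmas (and of Proposition \ref{appfil}).

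Second, to upgrade this bijection to an algebraic isomorphism, I would construct a flat family of parabolic $\phi$-connections over $\mathbb{P}(\Ompld\oplus\Opl)\setminus B$ whose classifying morphism is the desired $\psi$. On the generic open stratum $\mathbb{P}(\Ompld\oplus\Opl)\setminus(D_0\cup D_1\cup D_2\cup D_3)$ the rank-three normal form yields a family with $E_1=E_2=\Opl\oplus\Opl(-1)\oplus\Opl(-1)$, $\phi=\id$, and $\nabla$ depending algebraically on $(q,p)$ via (\ref{rk3form}), with $p$ expressed as a regular function of the fiber coordinate $[h_1:h_2]$. Near each $D_i\setminus B$ I would change frame so that $\phi$ is permitted to degenerate to the rank-two normal form of Lemma \ref{rk2formlem} as $q\to t_i$, using a parameter-dependent gauge transformation that absorbs the pole of $\phi=\id$ composed with the limiting behavior of $\nabla$. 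Near $D_0$, the $\mathbb{P}^1$ parametrizing $F^{(1)}_1$ in the rank-one case of Proposition \ref{appfil} matches $D_0\cong\mathbb{P}^1$ directly, and the rank-one normal form of Lemma \ref{rk1formlem} gives the family there.

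The verifications $\varphi\circ\psi=\id$ and $\psi\circ\varphi=\id$ then reduce to tracing the definitions of $\App$ and $\iota$ through the explicit families, which is essentially mechanical given the normal forms. The main obstacle will be organizing the family construction across the three strata of different $\phi$-rank: the normal forms are gauge-fixed differently on each stratum, so producing a single flat family on $\mathbb{P}(\Ompld\oplus\Opl)\setminus B$ requires carefully chosen frames in neighborhoods of $D_1\cup D_2\cup D_3$ and of $D_0$ whose transition maps regularize the degenerations of $\phi$ and $\nabla$. I expect this gluing, rather than the bijectivity itself, to be the computationally involved part of the argument.
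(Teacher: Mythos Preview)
Your approach is correct in principle, but the paper takes a shorter route that sidesteps the gluing entirely. The paper constructs the explicit inverse family only on the dense open stratum $\mathbb{P}(\Omega^1_{\pl}(D)\oplus\mcO_{\pl})\setminus\mcD$, where $\mcD=D_0\cup D_1\cup D_2\cup D_3\cup D_\infty$, using the rank-three normal form (\ref{rk3form}) with the parabolic structures written out explicitly as in (\ref{rk3flag2}) and (\ref{rk3flag1}); this already shows that $\varphi$ is birational. Rather than extending the inverse across the lower-rank strata by hand, the paper then invokes Proposition \ref{injrk12} for injectivity on $Y=\{\wedge^3\phi=0\}$ and applies Zariski's main theorem to conclude that the birational, injective morphism to the smooth target is an isomorphism. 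This bypasses exactly the step you flag as the main obstacle---the cross-stratum gluing of families with different gauge fixings---at the cost of one appeal to a general structural theorem. Your direct construction would also work, and has the minor advantage of being self-contained, but the normal-form lemmas already supply everything Zariski's main theorem needs, so the extra labor of regularizing frames near $D_0$ and the $D_i$ buys nothing here.
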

	\begin{proof}
		Let $z$ be a fixed inhomogeneous coordinate on $\pl=\Spec \mathbb{C}[z] \cup \{\infty\}$. Let $D_\infty$ be the fiber of $\mathbb{P}(\Omega^1_{\pl}(D)\oplus \mcO_{\pl})$ over $\infty \in \mathbb{P}^1$. Put $\mcD= \bigcup_{i=0}^3 D_i\cup D_\infty$. Then the morphism
		\[
		(\pl\setminus \{t_1,t_2,t_3, \infty\})\times \mathbb{C}\longrightarrow \mathbb{P}(\Omega^1_{\pl}(D)\oplus \mcO_{\pl})\setminus \mcD; \quad (q,p)\longmapsto \mathbb{C}(p\tfrac{dz}{h(z)}, 1) \subset  \Omega^1_{\pl}(D)|_q\oplus \mcO_{\pl}|_q
		\]
		becomes an isomorphism. By this isomorphism, we regard $(q,p)$ as a coordinate on $\mathbb{P}(\Omega^1_{\pl}(D)\oplus \mcO_{\pl})\setminus \mcD$. We define a family of $\boldsymbol{\nu}$-parabolic connections $(E,\nabla,l_*)$ on $\mathbb{P}(\Omega^1_{\pl}(D)\oplus \mcO_{\pl})\setminus \mcD\times \pl$ as follows. Let $E=p_2^*(\mcO_{\pl}\oplus \mcO_{\pl}(-1)\oplus \mcO_{\pl}(-1))$, where $p_2\colon \mathbb{P}(\Omega^1_{\pl}(D)\oplus \mcO_{\pl})\setminus \mcD\times \pl\rightarrow \pl$ be the projection. We define a relative logarithmic connection $\nabla \colon E\rightarrow E\otimes p_2^*\Omega^1_{\pl}(D)$ by
		\[
		\nabla
		:=
		d
		+
		\begin{pmatrix}
			0&a_{12}(p;z)&a_{13}(q,p;z)\\
			1&(z-t_1)(z-t_2)-p&0\\
			0&z-q&(z-t_1)(z-t_2)+p
		\end{pmatrix}
		\frac{dz}{h(z)},
		\]
		where $a_{12}(p;z), a_{13}(q,p;z)$ are the quadratic polynomials in $z$ satisfying
		\[
		a_{12}(p;t_i)=(t_i-t_1)^2(t_i-t_2)^2-p^2-h'(t_i)^{2}(\nu_{i,0}\nu_{i,1}+\nu_{i,1}\nu_{i,2}+\nu_{i,2}\nu_{i,0})
		\]
		\[
		(t_i-q)a_{13}(q,p;t_i)=\prod_{j=0}^2(h'(t_i)(\nu_{i,j}-(\res_{t_i}(\tfrac{dz}{z-t_3})))-p)
		\]
		
		for any $i=1,2,3$. Let $E|_{t_i}\supsetneq l_{i,1}\supsetneq l_{i,2} \supsetneq 0$ be a filtration by subbundles such that $(\res_{t_i}\nabla-\nu_{i,j}\id)(l_{i,j})\subset l_{i,j+1}$ for any $j=0,1,2$. Then we have
		\begin{equation}\label{rk3flag2}
			l_{i,2}=\mathbb{C}
			\begin{pmatrix}
				(p+h'(t_i)(\nu_{i,2}-\res_{t_i}(\tfrac{dz}{z-t_3})))(h'(t_i)(\nu_{i,2}-\res_{t_i}(\tfrac{dz}{z-t_3}))-p)\\
				(h'(t_i)(\nu_{i,2}-\res_{t_i}(\tfrac{dz}{z-t_3}))-p)\\
				t_i-q
			\end{pmatrix},
		\end{equation}
		\begin{equation}\label{rk3flag1}
			l_{i,1}=
			\mathbb{C}
			\begin{pmatrix}
				(p+h'(t_i)(\nu_{i,2}-\res_{t_i}(\tfrac{dz}{z-t_3})))(h'(t_i)(\nu_{i,2}-\res_{t_i}(\tfrac{dz}{z-t_3}))-p)\\
				(h'(t_i)(\nu_{i,2}-\res_{t_i}(\tfrac{dz}{z-t_3}))-p)\\
				t_i-q
			\end{pmatrix}
			+\mathbb{C}
			\begin{pmatrix}
				-h'(t_i)\nu_{i,0}\\
				1\\
				0
			\end{pmatrix}
		\end{equation}
		For any $(q,p) \in \mathbb{P}(\Omega^1_{\pl}(D)\oplus \mcO_{\pl})\setminus \mcD$, the corresponding $\boldsymbol{\nu}$-parabolic connection $(E_{(q,p)},\nabla_{(q,p)},(l_*)_{(q,p)})$ is $\boldsymbol{\alpha}$-stable. So we obtain a morphism 
		\[
		\mathbb{P}(\Omega^1_{\pl}(D)\oplus \mcO_{\pl})\setminus \mcD \longrightarrow \widehat{M} \setminus \varphi^{-1}(\mcD), 
		\]
		which is just the inverse of the morphism
		\[
		\varphi \colon \widehat{M} \setminus \varphi^{-1}(\mcD) \longrightarrow \mathbb{P}(\Omega^1_{\pl}(D)\oplus \mcO_{\pl}) \setminus \mcD.
		\]
		Hence the morphism (\ref{restphi}) is a birational morphism. By Proposition \ref{injrk12} and Zariski's main theorem, the morphism (\ref{restphi}) is an isomorphism. 
	\end{proof}
	\begin{proposition}
		$\overline{M}$ and $\widehat{M}$ is a smooth variety. 
	\end{proposition}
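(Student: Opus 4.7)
The plan is to deduce smoothness of $\widehat{M}$ from the explicit normal forms already established, and then derive smoothness of $\overline{M}$ using the forgetful morphism $\PC\colon\widehat{M}\to\overline{M}$.

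For $\widehat{M}$: by Proposition \ref{isominusblow}, $\varphi$ restricts to an isomorphism $\widehat{M}\setminus\varphi^{-1}(B)\overset{\sim}{\longrightarrow}\mathbb{P}(\Omega^1_{\pl}(D)\oplus\Opl)\setminus B$, so it suffices to verify smoothness at each point of $\varphi^{-1}(b_{i,j})$ for the nine points $b_{i,j}\in B$. Since $b_{i,j}\in D_i\setminus D_0$, any point of $\varphi^{-1}(b_{i,j})$ has $\rank\phi\in\{2,3\}$ by the proof of Proposition \ref{injrk12}. I would construct an explicit local family around $\varphi^{-1}(b_{i,j})$ by extending the family of Proposition \ref{isominusblow} to a three-parameter family in $(q,p,a_{13}(t_i))$, treating $a_{13}(t_i)$ as independent: by the remark following Lemma \ref{rk3formlem}, this coefficient becomes free exactly at $q=t_i$ and $p=p_{i,j}:=h'(t_i)(\nu_{i,j}-\res_{t_i}(dz/(z-t_3)))$, while for $q\neq t_i$ it is determined by (\ref{rk3coe0}). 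Setting $u=q-t_i$, $v=p-p_{i,j}$, $w=a_{13}(t_i)$, the relation (\ref{rk3coe0}) at $t_i$ reads $-uw=\prod_{j'=0}^{2}(p_{i,j'}-p_{i,j}-v)$, whose linear part in $(u,v,w)$ equals $-Cv$ with $C=\prod_{j'\neq j}(p_{i,j'}-p_{i,j})\neq 0$. By the implicit function theorem this cuts out a smooth two-dimensional germ with local coordinates $(u,w)$; the analogous analysis in the complementary chart (corresponding to $a_{13}(t_i)=\infty$, i.e.\ the rank-two point of Lemma \ref{rk2formlem}) covers the rest of $\varphi^{-1}(b_{i,j})$. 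Together with Proposition \ref{isominusblow}, this establishes smoothness of $\widehat{M}$ everywhere.

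For $\overline{M}$: by the uniqueness part of Proposition \ref{appfil}, $\PC$ is an isomorphism on the open set $\{\rank\phi\geq 2\}$, so $\overline{M}$ inherits smoothness there. The locus $\{\rank\phi=1\}\subset\overline{M}$ consists of a single point $y_0$ by Remark \ref{rk1rem}, with $\PC^{-1}(y_0)\cong\mathbb{P}^1$ parametrizing the choices of $F^{(1)}_1$. I would apply Castelnuovo's contractibility criterion: using Lemma \ref{rk1formlem} and the projective-bundle construction of $\widehat{M}$ over a neighbourhood of the rank-one locus, I would compute the self-intersection of $\PC^{-1}(y_0)$ in the smooth surface $\widehat{M}$ and verify it equals $-1$, whence $\overline{M}$ is smooth at $y_0$.

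The main obstacle is the local verification at the nine points of $B$: correctly identifying the linear part of relation (\ref{rk3coe0}) as above and showing that the rank-two stratum (the exceptional locus $\{u=0\}$ in the coordinates above) is compatible with the local model built from Lemma \ref{rk3formlem} via Lemma \ref{rk2formlem}. A uniform alternative avoiding coordinate computations would be a deformation-theoretic proof computing the hypercohomology of the tangent complex of a parabolic $\phi$-connection and verifying the vanishing of the obstruction group $\mathbb{H}^2$ via Serre duality and $\boldsymbol{\alpha}$-stability, but this requires setting up additional infrastructure not yet developed in the paper.
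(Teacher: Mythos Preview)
Your approach reverses the paper's logic: the paper first establishes smoothness of $\overline{M}$ by a direct deformation-theoretic argument (the infinitesimal lifting criterion over Artinian local rings, carried out in Appendix~C using the normal forms), and then deduces smoothness of $\widehat{M}$ almost for free --- since $\PC$ is an isomorphism over the open set $\{\rank\phi\ge 2\}$, one only needs smoothness of $\widehat{M}$ along $\PC^{-1}(p_0)$, and that fibre lies over $D_0$, hence away from $B$, so Proposition~\ref{isominusblow} already covers it. No Castelnuovo computation is needed.

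Your route has a genuine gap at the nine points $b_{i,j}$. The implicit-function argument relies on $C=\prod_{j'\neq j}(p_{i,j'}-p_{i,j})\neq 0$, i.e.\ on the $\nu_{i,0},\nu_{i,1},\nu_{i,2}$ being pairwise distinct. The statement, however, is for all $\boldsymbol{\nu}\in\mcN$. When two or three of the $\nu_{i,j}$ coincide, your linear part vanishes and, more seriously, the three-parameter family $(q,p,a_{13}(t_i))$ no longer charts $\widehat{M}$ near $\varphi^{-1}(b_{i,j})$: as Propositions~\ref{twoeq} and~\ref{alleq} show, the fibre then contains additional $(-2)$-curves coming from the non-uniqueness of the parabolic structure $l_{i,*}$, data your family does not record. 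So the local model you write down misses an entire component of the fibre in the non-generic case. The deformation-theoretic alternative you mention at the end is precisely what the paper implements, and it handles all $\boldsymbol{\nu}$ uniformly.
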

	\begin{proof}
		We give a proof of the smoothness of $\overline{M}$ in Appendix C.
		By Remark \ref{rk1rem}, the locus on $\overline{M}$ defined by $\rank \phi =1$ consists of one point $p_0$. Let $\PC \colon  \widehat{M}\rightarrow \overline{M}$ be the forgetful map. Then, by Proposition \ref{appfil}, the restriction map
		\[
		\PC \colon  \widehat{M}\setminus \PC^{-1}(p_0)\longrightarrow \overline{M}\setminus\{p_0\}
		\]
		becomes an isomorphism. So it is sufficient to prove that $\widehat{M}$ is smooth at any point in $\PC^{-1}(p_0)$, and it follows from Proposition \ref{isominusblow}.
	\end{proof}
	We investigate the fiber of $\varphi$ over $B$.
	\begin{proposition}\label{allneq}
		If $\nu_{i,0}\neq \nu_{i,1}\neq \nu_{i,2}\neq \nu_{i,0}$, then $\varphi
		^{-1}(b_{i,j})\cong \pl$ for any $j=0,1,2$ and these are $(-1)$-curves.
	\end{proposition}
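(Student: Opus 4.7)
The plan is to describe $\varphi^{-1}(b_{i,j})$ explicitly via the normal forms of Section \ref{nfsec} and then read off both the isomorphism with $\pl$ and the self-intersection.

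First, I would identify the rank-three part of the fiber. By Lemma \ref{rk3formlem} and the remark following it, $b_{i,j}$ lifts to precisely those normal forms whose parameters satisfy $(q,p)=(t_i,h'(t_i)(\nu_{i,j}-\res_{t_i}(\tfrac{dz}{z-t_3})))$. For this specific choice the equation (\ref{rk3coe0}) at $t_i$ degenerates to $0=0$, so the value $a_{13}(t_i) \in \mathbb{C}$ becomes a free parameter while $a_{13}(t_k)$ for $k\neq i$ and the polynomial $a_{12}(z)$ remain pinned down by the remaining relations (\ref{rk3coe1}), (\ref{rk3coe0}). The hypothesis $\nu_{i,0}\neq\nu_{i,1}\neq\nu_{i,2}\neq\nu_{i,0}$ ensures that the three admissible values of $p$ (one for each $j$) are pairwise distinct, so the three fibers over $b_{i,0},b_{i,1},b_{i,2}$ are genuinely separate, and the rank-three part of $\varphi^{-1}(b_{i,j})$ becomes an affine line $\mathbb{A}^1$ with coordinate $s:=a_{13}(t_i)$.

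Second, I would analyze the limit $s\to\infty$ to compactify the fiber. After rescaling the appropriate basis vector of $E_2$ by $s^{-1}$ and taking the limit, the component $\phi_{33}$ becomes $0$ and $\phi$ drops to rank two. By Lemma \ref{rk2formlem} there is a unique isomorphism class of rank-two parabolic $\phi$-connection with apparent singularity at $t_i$ and with a prescribed value of $p$; by Proposition \ref{injrk12} this single class is the unique point of $\varphi^{-1}(b_{i,j})$ outside the rank-three stratum. Adjoining this point to the $\mathbb{A}^1$ above produces a bijective morphism $\pl\to\varphi^{-1}(b_{i,j})$, and since $\widehat{M}$ is smooth (hence the scheme-theoretic fiber is reduced) this morphism is an isomorphism by Zariski's main theorem.

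Finally, the $(-1)$-curve statement follows formally. By Proposition \ref{isominusblow}, $\varphi\colon\widehat{M}\to\mathbb{P}(\Omega^1_{\pl}(D)\oplus\mcO_{\pl})$ is a birational morphism between smooth projective surfaces which is an isomorphism off the nine-point set $B$, so its exceptional locus is a divisor whose connected components are trees of $(-1)$-curves contracted to points of $B$. Having shown $\varphi^{-1}(b_{i,j})$ is a single irreducible $\pl$, it must itself be a $(-1)$-curve. The delicate step is the second one: one must verify that the $s\to\infty$ limit lies in the rank-two stratum rather than the rank-one stratum (which would force $\varphi$ to land in $D_0$, contradicting $b_{i,j}\in D_i\setminus D_0$), and that under the rescaling the flags given by (\ref{rk3flag1}) and (\ref{rk3flag2}) converge to parabolic structures satisfying the correct residue conditions, with $\boldsymbol{\alpha}$-stability preserved via Lemmas \ref{stablem1} and \ref{stablem2}.
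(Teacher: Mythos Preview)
Your stratify-by-rank strategy is reasonable and differs from the paper's, but the limit computation in step 2 fails. Rescaling so that $\phi_{33}\to 0$ does not produce a point of $\widehat M$: the proof of Lemma~\ref{iota} (rank-two case) shows that any stable rank-two $\phi$-connection in this basis has $\phi_{33}\neq 0$, and the normal form in Lemma~\ref{rk2formlem} has $\phi=\mathrm{diag}(1,0,1)$, so it is $\phi_{22}$ that must vanish. Concretely, your rescaling yields a limit with $\phi=\mathrm{diag}(1,1,0)$, and then the pair $(F_1,F_2)=(\langle e^{(1)}_2,e^{(1)}_0\rangle,\langle e^{(2)}_2,e^{(2)}_1\rangle)$ satisfies $\phi(F_1)\subset F_2$, $\nabla(F_1)\subset F_2\otimes\Omega^1_{\pl}(D)$, and $\mu(F_1)+\mu(F_2)=-1$, so it destabilizes by Lemma~\ref{stablem2}. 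The limit therefore does not lie in $\widehat M$, and your ``bijective morphism $\pl\to\varphi^{-1}(b_{i,j})$'' is at this point only a set-theoretic bijection, since you have not written down a family over $\pl$.

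The paper sidesteps this by writing down a global $\pl$-family $(\phi_\mu,\nabla_{(\mu,\eta)})$ with $\phi_\mu=\mathrm{diag}(1,\mu,1)$, $(\mu:\eta)\in\pl$, which directly interpolates between rank three ($\mu\neq 0$) and the correct rank-two normal form ($\mu=0$); one checks that for $(\mu,\eta)\neq(0,0)$ the parabolic structures are uniquely determined and the result is $\boldsymbol{\alpha}$-stable, and then Lemmas~\ref{rk3formlem} and~\ref{rk2formlem} force the induced morphism $\pl\to\varphi^{-1}(b_{i,j})$ to be an isomorphism. You can salvage your argument either by finding this correct degeneration, or---more cleanly---by dropping the explicit limit and leaning entirely on your step 3: since $\varphi$ is a birational morphism of smooth projective surfaces that is an isomorphism off $B$, each fiber over $B$ is a connected tree of smooth rational curves (not ``$(-1)$-curves''---only the final exceptional divisor in a blow-up sequence has self-intersection $-1$); the rank stratification shows the fiber is irreducible (one $\mathbb{A}^1$ plus one closed point by Proposition~\ref{injrk12}), hence a single $\pl$, hence a $(-1)$-curve.
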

	\begin{proof}
		Let $E_1=E_2=\Opl\oplus\Opl(-1)\oplus\Opl(-1)$, $p=h'(t_i)(\nu_{i,j}-\res_{t_i}(\frac{dz}{z-t_3}))$ and $h(z)=(z-t_1)(z-t_2)(z-t_3)$. 
		Let $a(z)$ be the quadratic polynomial satisfying 
		\[
		a(t_m)=(t_m-t_1)^2(t_m-t_2)^2-p^2- h'(t_m)^{2}(\nu_{m,0}\nu_{m,1}+\nu_{m,1}\nu_{m,2}+\nu_{m,2}\nu_{m,0})
		\]
		for $m=1,2,3$. Let $b(z)$ be the quadratic polynomial satisfying $b(t_i)=0$ and
		\[
		(t_m-t_i)b(t_m)
		=\prod_{j=0}^2 (h'(t_m)(\nu_{m,j}-\res_{t_m}(\tfrac{dz}{z-t_3}))-p)
		\]
		for $m\neq i$.  
		Put 
		\begin{equation}\label{allneqform}
			\phi_\mu=
			\begin{pmatrix}
				1& 0& 0\\
				0& \mu& 0\\
				0& 0& 1
			\end{pmatrix}
			,\;\nabla_{(\mu,\eta)}=\phi_\mu\otimes d+
			\begin{pmatrix}
				0&\mu a(z)&\mu b(z)+\eta \prod_{m\neq i}(z-t_m)\\
				1&\mu(z-t_1)(z-t_2)-\mu p&0\\
				0&z-t_i&(z-t_1)(z-t_2)+p
			\end{pmatrix}\frac{dz}{h(z)},
		\end{equation}
		where $\mu, \eta \in \mathbb{C}$. When $\mu=\eta=0$, the $\phi_\mu$-connection $(E_1,E_2, \phi_\mu, \nabla_{(\mu,\eta)})$ becomes $\boldsymbol{\alpha}$-unstable for any parabolic structures. Assume that $(\mu,\eta)\neq (0,0)$. Then parabolic structures $l^{(1)}_{i,*}$ and $l^{(2)}_{i,*}$ of $E_1$ and $E_2$, respectively, satisfying the conditions $(\phi_\mu)_{t_i}(l^{(1)}_{i,j}) \subset l^{(2)}_{i,j}$ and $(\res_{t_i}(\nabla_{(\mu,\eta)})-\nu_{i,j}(\phi_\mu)_{t_i})(l^{(1)}_{i,j})\subset l^{(2)}_{i,j+1}$ are uniquely determined.  We can see that $(E_1,E_2,\phi_\mu,\nabla_{(\mu,\eta)}, l^{(1)}_*,l^{(2)}_*)$ is $\boldsymbol{\alpha}$-stable if and only if $(\mu,\eta)\neq (0,0)$. We can also see that $(E_1, E_2,\phi_{\mu_1},\nabla_{(\mu_1,\eta_1)})$ and $(E_1, E_2,\phi_{\mu_2},\nabla_{(\mu_2,\eta_2)} )$ are isomorphic to each other if and only if there exists $c \in \mathbb{C}^\times $ such that $(\mu_1,\eta_1)=c(\mu_2,\eta_2)$. So we obtain the morphism
		\[
		\mathbb{P}^1\longrightarrow \varphi^{-1}(b_{i,j}); \; (\mu:\eta)\longmapsto (E_1,E_2,\phi_{\mu_1},\nabla_{(\mu_1,\eta_1)},l^{(1)}_*, l^{(2)}_*), 
		\]
		which is an isomorphism by Lemma \ref{rk3formlem} and Lemma \ref{rk2formlem}. Since $\widehat{M}$ and $\mathbb{P}(\Omega_{\pl}^1(D)\oplus\Opl)$ are smooth, $\varphi^{-1}(b_{i,j})$ is a $(-1)$-curve.
	\end{proof}
	Let $N_3(\bm{t},\boldsymbol{\nu})$ be the moduli space of rank 3 stable $\boldsymbol{\nu}$-logarithmic connections over $(\mathbb{P}, \bm{t})$. A connection $(E,\nabla)$ is said to be stable if for any nonzero subbundle $F\subsetneq E$ preserved by $\nabla$, the inequality 
	\[
	\frac{\deg F}{\rank F}<\frac{\deg E}{\rank E}
	\]
	holds. Under the assumption in this section, a $\boldsymbol{\nu}$-parabolic connection $(E,\nabla,l_*)$ is $\boldsymbol{\alpha}$-stable if and only if $(E,\nabla)$ is stable. So we have the surjective morphism $M^{\boldsymbol{\alpha}}_3(\bm{t},\boldsymbol{\nu})\rightarrow N_3(\bm{t},\boldsymbol{\nu})$ by forgetting parabolic structures.
	
	\begin{proposition}\label{twoeq} Let $j_0,j_1$ and $j_2$ be distinct elements of $\{0,1,2\}$. Assume that $\nu_{i,j_0}=\nu_{i,j_1}\neq\nu_{i,j_2}$. Then $\varphi^{-1}(b_{i,j_0})$ is the union of two projective lines $C_1$, $C_2$ such that $ Y\cap C_1$ and  $C_1\cap C_2$ consist of one point, respectively, and $Y\cap C_2=\emptyset$. Moreover, self-intersection numbers of $C_1$ and  $C_2$ are $-1$ and $-2$, respectively. 
	\end{proposition}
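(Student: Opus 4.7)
The plan is to extend the argument of Proposition \ref{allneq} to the degenerate situation, tracking the extra $\pl$ of parabolic structures that appears when the residue at $t_i$ has a double eigenvalue. As in the proof of Proposition \ref{allneq}, I would take $E_1 = E_2 = \Opl \oplus \Opl(-1) \oplus \Opl(-1)$ and $p = h'(t_i)(\nu_{i,j_0}-\res_{t_i}(\tfrac{dz}{z-t_3}))$; because $\nu_{i,j_0} = \nu_{i,j_1}$, this common $p$-value is associated to both coincident eigenvalues and the right hand side of (\ref{rk3coe0}) has a double zero at $j \in \{j_0, j_1\}$, so the family $(\phi_\mu,\nabla_{(\mu,\eta)})$ of (\ref{allneqform}) still parametrizes a copy of $\pl$ of underlying $\phi$-connections whose image under $\varphi$ is $b_{i,j_0}$, with coordinate $(\mu:\eta)$.

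I would then analyse the compatible parabolic structures by computing $\res_{t_i}\nabla_{(\mu,\eta)}-\nu_{i,j_0}(\phi_\mu)_{t_i}$ explicitly. After rescaling to $\mu=1$ in the rank-$3$ region, its kernel in $E_1|_{t_i}$ is two-dimensional exactly when $\eta=0$ and one-dimensional otherwise; combined with the remaining residue conditions, this shows that $(l^{(1)}_*,l^{(2)}_*)$ is uniquely determined for $(\mu:\eta)\neq (1:0)$ while at $(\mu:\eta)=(1:0)$ the subspace $l^{(1)}_{i,1}$ varies over a $\pl$ of $2$-dimensional subspaces of $E_1|_{t_i}$ containing the fixed $l^{(1)}_{i,2}$ (with $l^{(2)}_{i,1}$ then forced by $\phi$-compatibility); the rank-$2$ case $\mu=0$ also yields a unique structure by Lemma \ref{rk2formlem}. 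Letting $C_1$ be the closure of the locus of uniquely determined parabolic structures (a $\pl$ parametrized by $(\mu:\eta)$, whose limit at $(1:0)$ produces $C_1\cap C_2$) and $C_2$ the $\pl$ of parabolic structures at $(\mu:\eta)=(1:0)$, we obtain $\varphi^{-1}(b_{i,j_0})=C_1\cup C_2$ with $C_1 \cap C_2$ a single transverse point. Since $\wedge^3\phi_\mu=\mu$, the divisor $Y$ meets $C_1$ only at $(\mu:\eta)=(0:1)$ and is disjoint from $C_2$.

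The main obstacle is computing the self-intersection numbers. Since $\widehat{M}$ and $\mathbb{P}(\Ompld\oplus\Opl)$ are smooth projective surfaces and $\varphi$ is a birational morphism contracting $C_1\cup C_2$ to the smooth point $b_{i,j_0}$ with $C_1\cdot C_2=1$ transversally, the factorization theorem for birational morphisms between smooth surfaces forces $\varphi$ to be a composition of exactly two blow-ups, first at $b_{i,j_0}$ and then at an infinitely near point on the resulting exceptional divisor; hence $\{C_1^2,C_2^2\}=\{-1,-2\}$. To decide which is which, I would carry out an explicit tangent-direction analysis: for deformations $(q,p)=(t_i+\delta,h'(t_i)\nu_{i,j_0}+\zeta)$ off the fiber, a residue computation shows that the induced unique parabolic structure $l^{(1)}_{i,1}/l^{(1)}_{i,2}\in \mathbb{P}(E_1|_{t_i}/l^{(1)}_{i,2})$ admits a limit as $(\delta,\zeta)\to(0,0)$ depending only on the ratio $\delta:\zeta$ at leading order. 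Thus the points of $C_2$ correspond bijectively to tangent directions at $b_{i,j_0}$ in the base, identifying $C_2$ as the strict transform of the first exceptional (so $C_2^2=-2$) and $C_1$, which collects the second-order deformation data refining the single tangent direction corresponding to $C_1\cap C_2$, as the new exceptional of the second blow-up (so $C_1^2=-1$).
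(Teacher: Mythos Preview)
Your identification of $C_1$ and $C_2$ via the family $(\phi_\mu,\nabla_{(\mu,\eta)})$ and the analysis of when the compatible parabolic structure is unique versus a $\pl$ is correct and matches the paper's argument essentially line for line; the paper carries out exactly this residue computation and writes the free parabolic data at $\eta=0$ as a pencil parametrized by $(s:t)$.

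Where you diverge is in computing the self-intersections. The paper does \emph{not} perform a tangent-direction analysis. Instead it introduces the moduli space $N_3(\bm t,\boldsymbol\nu)$ of stable $\boldsymbol\nu$-connections \emph{without} parabolic structure, builds an explicit three-parameter family $(q,p,\eta)$ cut out by
\[
f(q,p,\eta)=(t_i-q)\eta-\prod_{j=0}^2\bigl(h'(t_i)(\nu_{i,j}-\res_{t_i}(\tfrac{dz}{z-t_3}))-p\bigr)=0,
\]
and observes that at the relevant point this is an $A_1$-singularity (the double eigenvalue makes the product have a square factor). Since $C_2$ is precisely the fiber of the forgetful map $M^{\boldsymbol\alpha}_3(\bm t,\boldsymbol\nu)\to N_3(\bm t,\boldsymbol\nu)$ over that node, $C_2^2=-2$ follows from the resolution of an $A_1$-singularity, and $C_1^2=-1$ then drops out because $\varphi$ factors through blow-ups. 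Your route via the factorization theorem plus a limiting computation of $l^{(1)}_{i,1}/l^{(1)}_{i,2}$ along paths $(q,p)=(t_i+\delta,p_0+\zeta)$ is a legitimate alternative and avoids introducing $N_3$; however, the claim that the limit depends \emph{only} on $\delta:\zeta$ is more delicate than you indicate: if one uses the explicit flags (\ref{rk3flag1})--(\ref{rk3flag2}), the leading term in the $(s:t)$-coordinate naively looks constant, and it is only after invoking the trace relation $2\nu_i+\nu'_i=2\res_{t_i}(\tfrac{dz}{z-t_3})$ that a cancellation occurs making the leading term genuinely linear in $\zeta:\delta$. So your sketch is correct in outline, but the ``residue computation'' you defer is where the actual content lies. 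The paper's $A_1$-singularity argument packages this same cancellation more transparently as the equation $\delta\eta\sim\zeta^2$.
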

	\begin{proof}
		Assume that $j_0=0, j_1=1, j_2=2$.
		Put $\nu_i:=\nu_{i,0}=\nu_{i,1}, \nu'_i:=\nu_{i,2}$ and $p:=h'(t_i)(\nu_i-\res_{t_i}(\frac{dz}{z-t_3}))$. Let $a(z), b(z), h(z)$ be the polynomials defined in the proof of Proposition \ref{allneq}. Then we can see that any element $(E_1,E_2,\phi,\nabla,l^{(1)}_*, l^{(2)}_*) \in \varphi^{-1}(b_{i,0})$ have the forms
		\[
		\phi=
		\begin{pmatrix}
			1& 0& 0\\
			0& \mu& 0\\
			0& 0& 1
		\end{pmatrix}
		,\;\nabla=\phi\otimes d+
		\begin{pmatrix}
			0&\mu a(z)&\mu b(z)+\eta \prod_{m\neq i}(z-t_m)\\
			1&\mu(z-t_1)(z-t_2)-\mu p&0\\
			0&z-t_i&(z-t_1)(z-t_2)+p
		\end{pmatrix}\frac{dz}{h(z)},
		\]
		where $(\mu: \eta) \in \pl$. So we have
		\[
		\res_{t_i}\nabla-\nu_i\phi_{t_i}=\frac{1}{h'(t_i)}
		\begin{pmatrix}
			- h'(t_i)\nu_i&\mu a(t_i)&\eta \prod_{m\neq i}(t_i-t_m)\\
			1&- \mu h'(t_i)\nu'_i&0\\
			0&0&0
		\end{pmatrix}
		\]
		and
		\[
		\res_{t_i}\nabla-\nu'_i\phi_{t_i}=\frac{1}{h'(t_i)}
		\begin{pmatrix}
			- h'(t_i)\nu'_i&\mu a(t_i)&\eta \prod_{m\neq i}(t_i-t_m)\\
			1&- \mu h'(t_i)\nu_i&0\\
			0&0&h'(t_i)(\nu_i-\nu'_i)
		\end{pmatrix}.
		\]
		By definition, we have $a(t_i)=-h'(t_i)^2\nu_i\nu'_i$. If $\eta=0$, then $l^{(1)}_{i,*}$ and $ l^{(2)}_{i,*}$ are of the form
		\[
		l^{(1)}_{i,2}=\begin{pmatrix}
			-h'(t_i)\nu_i\mu\\1\\0
		\end{pmatrix},\;
		l^{(1)}_{i,1}=\mathbb{C}
		\begin{pmatrix}
			-h'(t_i)\nu_i\mu\\1\\0
		\end{pmatrix}
		+\mathbb{C}
		\begin{pmatrix}
			s\\0\\t
		\end{pmatrix},\;
		\]
		\[
		l^{(2)}_{i,2}=\mathbb{C}
		\begin{pmatrix}
			-h'(t_i)\nu_i\\1\\0
		\end{pmatrix},\;
		l^{(2)}_{i,1}=\mathbb{C}
		\begin{pmatrix}
			-h'(t_i)\nu_i\\1\\0
		\end{pmatrix}
		+\mathbb{C}
		\begin{pmatrix}
			s\\0\\t
		\end{pmatrix},
		\]
		where $(s:t) \in \pl$. 
		If $\eta \neq 0$, then 
		\[
		l^{(1)}_{i,2}=\mathbb{C}
		\begin{pmatrix}
			- h'(t_i)\nu_i\mu\\1\\0
		\end{pmatrix}, 
		l^{(1)}_{i,1}=\mathbb{C}
		\begin{pmatrix}
			1\\0\\0
		\end{pmatrix}+\mathbb{C}
		\begin{pmatrix}
			0\\1\\0
		\end{pmatrix}, \;
		l^{(2)}_{i,2}=\mathbb{C}
		\begin{pmatrix}
			-h'(t_i)\nu_i\\1\\0
		\end{pmatrix}, 
		l^{(2)}_{i,1}=\mathbb{C}
		\begin{pmatrix}
			1\\0\\0
		\end{pmatrix}+\mathbb{C}
		\begin{pmatrix}
			0\\1\\0
		\end{pmatrix}.
		\]
		By the above argument, we have
		\[
		C_1:=\overline{\{\eta\neq 0\}\cap \varphi^{-1}(b_{i,j_0})}\cong \pl, C_2:=\{\eta= 0\}\cong \pl, \varphi^{-1}(b_{i,j_0})=C_1\cup C_2
		\]
		and we find that $C_1\cap Y$ and $C_1\cap C_2$ consist of one point, respectively. 
		
		Next we consider self-intersection numbers.
		Let $a_{12}(p;z)$ be the quadratic polynomial satisfying 
		\[
		a_{12}(p;t_m)=(t_m-t_1)^2(t_m-t_2)^2-p^2- h'(t_m)^{2}(\nu_{m,0}\nu_{m,1}+\nu_{m,1}\nu_{m,2}+\nu_{m,2}\nu_{m,0})
		\]
		for $m=1,2,3$.
		Let $a_{13}(q,p,\eta; z)$ be the quadratic polynomial satisfying $a_{13}(q,p,\eta;t_i)=\eta$ and
		\[
		(t_m-q)a_{13}(q,p,\eta;t_m)
		=
		\prod_{j=0}^2 (h'(t_m)(\nu_{m,j}-\res_{t_m}(\tfrac{dz}{z-t_3}))-p)
		\]
		for $m\neq i$.  Put $E=\Opl\oplus\Opl(-1)\oplus\Opl(-1)$, 
		\[
		\nabla_{(q,p,\eta)}=d+
		\begin{pmatrix}
			0&a_{12}(p;z)&a_{13}(q,p,\eta;z)\\
			1&(z-t_1)(z-t_2)-p&0\\
			0&z-q&(z-t_1)(z-t_2)+p
		\end{pmatrix}\frac{dz}{h(z)},
		\]
		\[
		f(q,p,\eta)=(t_i-q)\eta-\prod_{j=0}^2 (h'(t_i)(\nu_{i,j}-\res_{t_i}(\tfrac{dz}{z-t_3}))-p),
		\]
		and
		\[
		X=\{f(q,p,\eta)=0\} \subset  (\mathbb{C}\setminus \{t_m\}_{m \neq i})\times \mathbb{C}\times \mathbb{C}.
		\]
		Then $(E,\nabla_{(q,p,\eta)})$ is a stable $\boldsymbol{\nu}$-connection, which induces the morphism $X \rightarrow N_3(\bm{t},\boldsymbol{\nu})$. We can see that this morphism is an open immersion, which implies that the point in $N_3(\bm{t},\boldsymbol{\nu})$ corresponding to $(q,p,\eta)=(t_i,h'(t_i)(\nu_{i}-\res_{t_i}(\tfrac{dz}{z-t_3})),0)$ is an $A_1$-singularity. Since $C_2$ is the fiber of the map $M^{\boldsymbol{\alpha}}_3(\bm{t},\boldsymbol{\nu})\rightarrow N_3(\bm{t},\boldsymbol{\nu})$ over $(t_i,h'(t_i)(\nu_{i}-\res_{t_i}(\tfrac{dz}{z-t_3})),0)$, we have $C_2^2=-2$. The morphism $\varphi$ can be factored into a composition of blow-ups, so $C_1$ must be a $(-1)$-curve.
		
		We can also prove the case of $j_2=0,1$ in the same manner.
	\end{proof}
	\begin{figure}
		\centering
		\includegraphics[width=11cm]{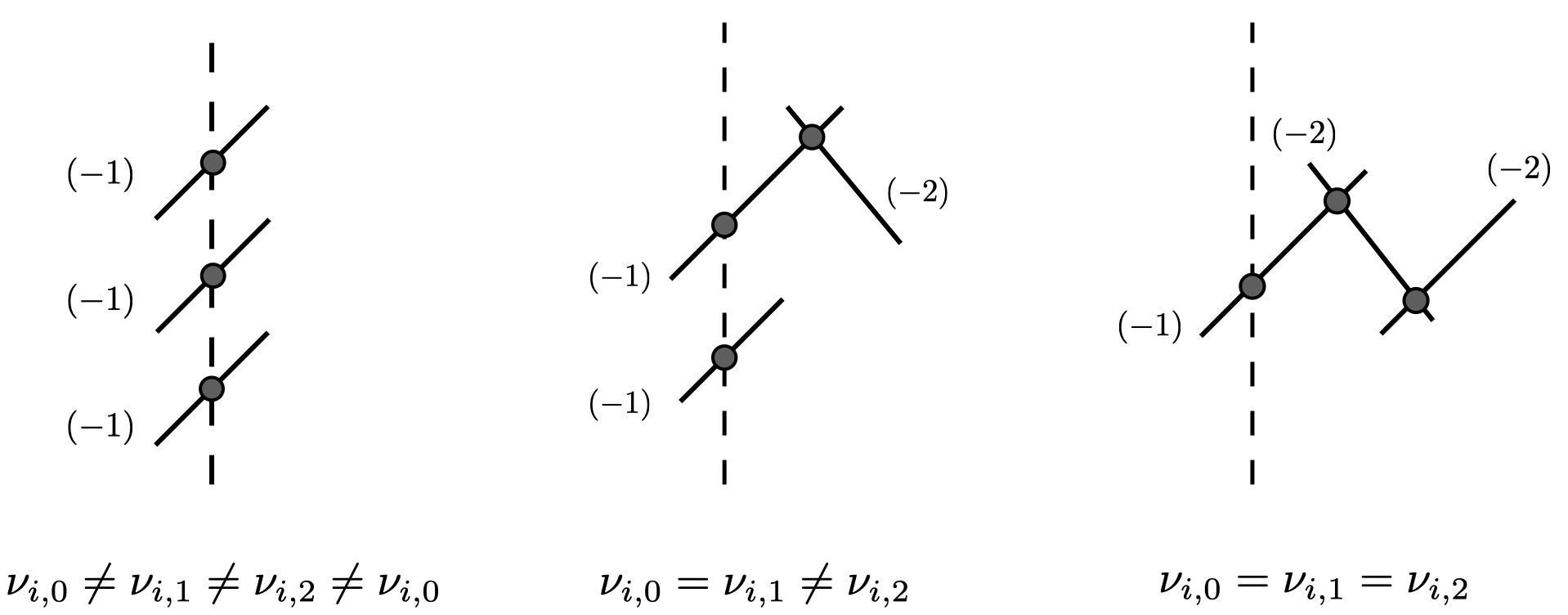}
	\end{figure}
	The following is shown in the same way of the Proposition \ref{twoeq}.
	\begin{proposition}\label{alleq}
		Assume that $\nu_{i,0}=\nu_{i,1}=\nu_{i,2}$. Then $\varphi^{-1}(b_{i,j})$ is the union of three projective lines $C_1, C_2, C_3$ such that $C_1\cap Y$, $C_1\cap C_2$, and $C_2\cap C_3$ consist of one point, $C_1\cap C_3=\emptyset$, and self-intersection numbers of $C_1$, $C_2$ and $C_3$ are $-1$, $-2$, and $-2$, respectively. 
	\end{proposition}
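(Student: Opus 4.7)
The plan is to parallel the proof of Proposition \ref{twoeq} almost verbatim, replacing the $A_1$-analysis there with an $A_2$-analysis. Set $\nu_i := \nu_{i,0} = \nu_{i,1} = \nu_{i,2}$ and $p := h'(t_i)(\nu_i - \res_{t_i}(\tfrac{dz}{z-t_3}))$, and consider the two-parameter family $(\phi_\mu, \nabla_{(\mu,\eta)})$ of (\ref{allneqform}) with $(\mu:\eta) \in \pl$. On the open locus $\eta \neq 0$ the residue $\res_{t_i}\nabla_{(\mu,\eta)} - \nu_i \phi_\mu$ is a regular nilpotent matrix (the $\eta$-term providing the off-diagonal entry that obstructs semisimplicity), so the admissible parabolic flags $l^{(1)}_{i,*}, l^{(2)}_{i,*}$ are uniquely determined just as in Proposition \ref{twoeq}. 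This yields a closed immersion $\pl \hookrightarrow \varphi^{-1}(b_{i,j})$ whose image I call $C_1$, and $C_1 \cap Y$ is the single point $\mu = 0$ at which $\phi$ drops rank.

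Next, on the closed locus $\eta = 0$ (necessarily with $\mu \neq 0$) the nilpotent residue degenerates and the conditions of Definition \ref{phiconn} admit a two-dimensional family of compatible flags. Parametrizing this family explicitly in terms of the nilpotent Jordan structure and dividing by the residual gauge freedom that preserves the normal form, I expect to see it split into two irreducible components each isomorphic to $\pl$: a first component $C_2$ on which $l^{(1)}_{i,2}$ is pinned to the distinguished line of the nilpotent but $l^{(1)}_{i,1}$ sweeps out a pencil, and a second component $C_3$ belonging to the deeper stratum in which $l^{(1)}_{i,2}$ itself varies. A direct check then shows that $C_2 \cap C_3$ is the single maximally degenerate flag, $C_1 \cap C_2$ is the $\eta \to 0$ limit of the unique flag on $C_1$, and $C_1 \cap C_3 = \emptyset$.

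For the self-intersection numbers, I extend the family to the three-parameter family of stable $\boldsymbol{\nu}$-connections $(E, \nabla_{(q,p,\eta)})$ on $E = \Opl \oplus \Opl(-1)^{\oplus 2}$ cut out by $f(q,p,\eta) = (t_i-q)\eta - \prod_{j=0}^{2}(h'(t_i)(\nu_{i,j} - \res_{t_i}(\tfrac{dz}{z-t_3})) - p)$, exactly as in Proposition \ref{twoeq}. With all three $\nu_{i,j}$ equal, this equation reduces in local coordinates $u := t_i - q$, $v := h'(t_i)(\nu_i - \res_{t_i}(\tfrac{dz}{z-t_3})) - p$ to $u\eta = v^3$, which is the standard equation of an $A_2$-singularity on $N_3(\bm{t},\boldsymbol{\nu})$. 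Its minimal resolution, realized by the forgetful morphism $M^{\boldsymbol{\alpha}}_3(\bm{t},\boldsymbol{\nu}) \to N_3(\bm{t},\boldsymbol{\nu})$ over this point, is the chain of two $(-2)$-curves, which must be $C_2$ and $C_3$. Since $\varphi$ is a composition of blow-ups between smooth surfaces and $C_1$ is the unique component of $\varphi^{-1}(b_{i,j})$ meeting $Y$, $C_1$ must be a $(-1)$-curve. The main obstacle will be the combinatorial analysis of the $\eta = 0$ stratum: one must rigorously verify that the admissible flags decompose into precisely two $\pl$-components meeting in the claimed configuration, rather than into a scheme-theoretically thicker or more complicated locus. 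The $A_2$-computation itself is a routine extension of the $A_1$-argument already carried out in Proposition \ref{twoeq}.
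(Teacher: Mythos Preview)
Your proposal is correct and follows exactly the route the paper intends: the paper's own ``proof'' of Proposition~\ref{alleq} is the single sentence ``The following is shown in the same way of the Proposition~\ref{twoeq},'' and you have spelled out what that entails, including the upgrade from an $A_1$- to an $A_2$-singularity on $N_3(\bm t,\boldsymbol\nu)$ via $u\eta=v^3$.

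One small point of language: when $\eta=0$ (so $\mu\neq 0$ and $\phi$ is invertible), the nilpotent $N:=\phi_{t_i}^{-1}\res_{t_i}\nabla-\nu_i$ has Jordan type $(2,1)$, and the compatible flags form a \emph{one}-dimensional scheme, not a two-dimensional family.  Concretely, $l_{i,2}$ must lie in the two-dimensional kernel of $N$, giving a $\pl$; for $l_{i,2}\neq\operatorname{Im}N$ the plane $l_{i,1}$ is forced to be $\ker N$, while at the special point $l_{i,2}=\operatorname{Im}N$ a second $\pl$ of choices for $l_{i,1}$ opens up.  This is precisely your $C_3\cup C_2$ meeting at one point, so your subsequent description is right; only the phrase ``two-dimensional family'' should be dropped.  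With that adjustment the combinatorial obstacle you flag is essentially already handled by this Jordan-type analysis, and the identification of $C_2\cup C_3$ with the exceptional locus of the $A_2$-resolution (hence $C_2^2=C_3^2=-2$) goes through exactly as in the $A_1$ case of Proposition~\ref{twoeq}.
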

	\begin{proof}[Proof of Theorem 2.1] We prove (2) first. The morphism (\ref{varphidef}) extends to the morphism
		\[
		\varphi\colon\widehat{M^{\boldsymbol{\alpha}}_3}(0,0,2)\longrightarrow \mathbb{P}(\mcE).
		\]
		Let $\tilde{\mcB}$ be the reduced induced structure on $\tilde{\mcB_0}\cup\tilde{\mcB_1}\cup\tilde{\mcB_2}$. Then we can see that the restriction morphism 
		\[
		\varphi\colon\widehat{M^{\boldsymbol{\alpha}}_3}(0,0,2)\setminus \varphi^{-1}(\tilde{\mcB})\longrightarrow \mathbb{P}(\mcE)\setminus \tilde{\mcB}
		\]
		is an isomorphism by Proposition \ref{isominusblow}.  Any irreducible component of the inverse image $\varphi^{-1}(\tilde{\mcB})$ has codimension one by Zariski's main theorem. In particular, the inverse image $\varphi^{-1}(\tilde{\mcB}_2)$ is a Cartier divisor on $\widehat{M^{\boldsymbol{\alpha}}_3}(0,0,2)$, so $\varphi$ induces the morphism
		\[
		f_2\colon \widehat{M^{\boldsymbol{\alpha}}_3}(0,0,2) \longrightarrow Z_2,
		\]
		where $Z_2$ is the blow-up of $\mathbb{P}(\mcE)$ along $\tilde{\mcB}_2$. Let $Z_1$ be the blow-up of $Z_2$ along the strict transform of $\tilde{\mcB}_1$. In the same way, we obtain the morphisms $f_1\colon \widehat{M^{\boldsymbol{\alpha}}_3}(0,0,2) \rightarrow Z_1$ and $f\colon \widehat{M^{\boldsymbol{\alpha}}_3}(0,0,2) \rightarrow Z$.
		By Proposition \ref{isominusblow}, \ref{allneq}, \ref{twoeq}, and \ref{alleq}, the morphism $f_{(\bm{t},\boldsymbol{\nu})}\colon \widehat{M^{\boldsymbol{\alpha}}_3}(\bm{t},\boldsymbol{\nu}) \rightarrow Z_{(\bm{t},\boldsymbol{\nu})}$ is an isomorphism for any $(\bm{t},\boldsymbol{\nu})\in T_3\times \mcN$. So $f$ is an isomorphism. Let  $(Y_{\leq 1})_{\text{red}}$ be the reduction of $Y_{\leq 1}$. Then the composite
		\[
		Bl_W\circ f \circ \PC^{-1}\colon \overline{M^{\boldsymbol{\alpha}}_3}(0,0,2)\setminus (Y_{\leq 1})_{\text{red}}  \longrightarrow S\setminus W
		\]
		is an isomorphism, where $Bl_W\colon Z\rightarrow S$ is the blow-up along $W$. By Hartogs' theorem, the above morphism extends to the morphism $f'\colon \overline{M^{\boldsymbol{\alpha}}_3}(0,0,2)\rightarrow S$ and it becomes an isomorphism by Zariski's main theorem. By the construction of $f'$, the diagram
		\[
		\begin{tikzcd}
			\widehat{M^{\boldsymbol{\alpha}}_3}(0,0,2)\ar[r, "f"]\ar[d, "\PC"']&Z\ar[d, "Bl_W"]\\
			\overline{M^{\boldsymbol{\alpha}}_3}(0,0,2)\ar[r, "f'"]&S
		\end{tikzcd}
		\]
		becomes commutative.
		
		To prove (1), it is sufficient to show that $Y_{\leq 1}$ is reduced. Let us fix $\bm{t}=(t_i)_{1\leq i\leq 3}\in T_3$. Take a Zariski open subset  $U\subset \pl$ such that $U\cong \Spec \mathbb{C}[z]$ and $t_1, t_2, t_3 \in U\setminus \{0\}\cong \Spec \mathbb{C}[z, \frac{1}{z}]$. Let $a_{12}(u; z)$ and $a_{13}(u,v; z)$ be the quadratic polynomials in $z$ satisfying 
		\[
		a_{12}(u;t_i)=u^2(t_i-t_1)^2(t_i-t_2)^2-1- u^2h'(t_i)^{2}(\nu_{i,0}\nu_{i,1}+\nu_{i,1}\nu_{i,2}+\nu_{i,2}\nu_{i,0})
		\]
		\[
		a_{13}(u,v;t_i)= \prod_{j=0}^{2}((\nu_{i,j}-\res_{t_i}(\tfrac{dz}{z-t_3}))h'(t_i)u-1)\prod_{m\neq i}(t_mv-u)
		\]
		for $i=1,2,3$. Put $E_1=E_2=\Opl\oplus \Opl(-1)\oplus \Opl(-1)$, $\mu(u,v)=(t_1v-u)(t_2v-u)(t_3v-u)$
		\[
		\phi_{(u,v)}=
		\begin{pmatrix}
			1&0&0\\
			0&u^2\mu(u,v)&0\\
			0&0&u
		\end{pmatrix}, \ 
		\nabla_{(u,v)}=
		\begin{pmatrix}
			0&\mu(u,v) a_{12}(u;z)&a_{13}(u,v;z)\\
			1&u^2\mu(u,v)(z-t_1)(z-t_2)-u\mu(u,v)&0\\
			0&vz-u&u(z-t_1)(z-t_2)+1
		\end{pmatrix}
		\]
		and
		\[
		X=\left\{(u,v,\bm{t},\boldsymbol{\nu})\in \mathbb{C}^2\times T_3\times\mcN \vb \begin{minipage}{8cm}
			$(\nu_{i,j}-\res_{t_i}(\tfrac{dz}{z-t_3}))h'(t_i)u-1\neq 0$ for any $1\leq i\leq 3$ and $0\leq j\leq 2$ and $\bm{t}\in (U\setminus \{0\})^3$
		\end{minipage}\right\}.
		\]
		Then we can see that parabolic structures of $(l^{(1)}_*)_{(u,v)}$ and $(l^{(2)}_*)_{(u,v)}$ of $E_1$ and $E_2$, respectively, satisfying $\phi_{(u,v)}((l^{(1)}_{i,j})_{(u,v)})\subset (l^{(2)}_{i,j})_{(u,v)}$ and $(\res_{t_i}\nabla_{(u,v)}-\nu_{i,j}\phi_{(u,v)})((l^{(1)}_{i,j})_{(u,v)})\subset (l^{(2)}_{i,j+1})_{(u,v)}$ are unique. So we obtain an open immersion $X \hookrightarrow \overline{M^{\boldsymbol{\alpha}}_3}(0,0,2)$.
		Since $Y_{\leq 1}$ is defined by $u=0$, $Y_{\leq 1}$ is reduced.
		
		Finally, we prove (3). Let $\rho\colon \mathbb{P}(\Ompld\oplus \Opl)\rightarrow \mathbb{P}^2$ be the blow-down of $D_0$ and $H_i=\rho(D_i)$. Then there is a unique morphism $\varphi'\colon \overline{M^{\boldsymbol{\alpha}}_3}(\bm{t},\boldsymbol{\nu})\rightarrow \mathbb{P}^2$ such that the diagram
		\begin{equation}\label{diagmtpf}
			\begin{tikzcd}
				\widehat{M^{\boldsymbol{\alpha}}_3}(\bm{t},\boldsymbol{\nu})\ar[r, "\varphi"]\ar[d, "\PC"']&\mathbb{P}(\Ompld\oplus \Opl)\ar[d, "\rho"]\\
				\overline{M^{\boldsymbol{\alpha}}_3}(\bm{t},\boldsymbol{\nu})\ar[r, "\varphi'"]&\mathbb{P}^2
			\end{tikzcd}
		\end{equation}
		commutes. The morphism $\varphi'$ can be factored into a composition of blow-ups at a point. Let $\hat{H}_i$ be the strict transform of $H_i$ under $\varphi'$, respectively. Then we have $-K_{\overline{M^{\boldsymbol{\alpha}}_3}(\bm{t},\boldsymbol{\nu})}=\hat{H}_1+\hat{H}_2+\hat{H}_3$. So it is sufficient to show that $Y_{(\bm{t},\boldsymbol{\nu})}$ on $\overline{M^{\boldsymbol{\alpha}}_3}(\bm{t},\boldsymbol{\nu})$ has multiplicity one along $\hat{H}_i$ for each $i=1,2,3$, which is equivalent to that the strict transform $\hat{Y}_{(\bm{t},\boldsymbol{\nu})}$ of $Y_{(\bm{t},\boldsymbol{\nu})}$ under $\PC$ on $\widehat{M^{\boldsymbol{\alpha}}_3}(\bm{t},\boldsymbol{\nu})$ has multiplicity one along $\hat{D}_i$ for $i=1,2,3$, where $\hat{D}_i$ is that the strict transform of $D_i$ under $\varphi$. Let $b_{12}(p;z)$ be the quadratic polynomial in $z$ satisfying 
		\[
		b_{12}(p;t_m)=(t_m-t_1)^2(t_m-t_2)^2-p^2- h'(t_m)^{2}(\nu_{i,0}\nu_{i,1}+\nu_{i,1}\nu_{i,2}+\nu_{i,2}\nu_{i,0})
		\]
		for $m=1,2,3$. Let $b_{13}(q,p; z)$ be the quadratic polynomial in $z$ satisfying $b_{13}(q,p;t_i)=0$ and
		\[
		(t_m-q)b_{13}(q,p;t_m)
		= \prod_{j=0}^2 (h'(t_m)(\nu_{m,j}-\res_{t_m}(\tfrac{dz}{z-t_3}))-p)
		\]
		for $m\neq i$.  Put
		\[
		f(q,p,\mu)=h'(t_i)(t_i-q)-\mu \prod_{j=0}^2 (h'(t_i)(\nu_{i,j}-\res_{t_i}(\tfrac{dz}{z-t_3}))-p)
		\]
		and 
		\[
		X=\{f(q,p,\mu)=0\}\subset (\mathbb{C}\setminus\{t_m\}_{m\neq i})\times (\mathbb{C}\setminus \{h'(t_i)(\nu_{i,j}-\res_{t_i}(\tfrac{dz}{z-t_3}))\}_{0\leq j\leq 2})\times \mathbb{C}.
		\]
		Then the family of parabolic $\phi$-connections defined by 
		\begin{equation}\label{rk2coord}
			\phi_\mu=
			\begin{pmatrix}
				1& 0& 0\\
				0& \mu& 0\\
				0& 0& 1
			\end{pmatrix}
			,\;\nabla_{(q,p,\mu)}=\phi_\mu\otimes d+
			\begin{pmatrix}
				0&\mu b_{12}(p;z)&\mu b_{13}(q,p;z)+\prod_{m\neq i}(z-t_m)\\
				1&\mu(z-t_1)(z-t_2)-\mu p&0\\
				0&z-q&(z-t_1)(z-t_2)+p
			\end{pmatrix}\frac{dz}{h(z)}
		\end{equation}
		gives an open immersion $\iota \colon X \hookrightarrow \widehat{M^{\boldsymbol{\alpha}}_3}(\bm{t},\boldsymbol{\nu})$.
		In particular, $\iota^*\hat{Y}_{(\bm{t},\boldsymbol{\nu})}$ is defined by $\mu=0$. So $\hat{Y}_{(\bm{t},\boldsymbol{\nu})}$ on  $\widehat{M^{\boldsymbol{\alpha}}_3}(\bm{t},\boldsymbol{\nu})$ has multiplicity one along $D_i$.
	\end{proof}
	\subsection{Explicit correspondence between points on the $A^{(1)*}_2$-surface and parabolic $\phi$-connections}\label{expcorr}
	Let $(c_0:c_1)$ be a homogeneous coordinate of $\pl$ and $(z_0: z_1: z_2)$ be a homogeneous coordinate of $\mathbb{P}^2$. 
	We consider the case $t_1=(1:0), t_2=(1:1), t_3=(0:1)$ and $\nu_{i,0}\neq \nu_{i,1}\neq \nu_{i,2}\neq \nu_{i,0}$ for each $i=1,2,3$.
	Then we can see that the morphism $\varphi'\colon \overline{M^{\boldsymbol{\alpha}}_3}(\bm{t},\boldsymbol{\nu})\rightarrow \mathbb{P}^2$ defined in (\ref{diagmtpf}) is the blow-up at the following nine points;
	\[
	\begin{tabular}{lll}
		$p_1: (1: \nu_{1,0}: 1)$,&$p_2: (1: \nu_{1,1}: 1)$, & $p_3: (1: \nu_{1,2}: 1)$, \\
		$p_4: (0:-\nu_{0,0}: 1)$,&$p_5: (0:-\nu_{0,1}: 1)$,& $p_9: (0:-\nu_{0,2}: 1)$, \\
		$p_6: (1: -\nu_{\infty,0}+1: 0)$,&$p_7: (1: -\nu_{\infty,1}+1: 0)$,&  $p_8: (1: -\nu_{\infty,2}+1: 0)$.
	\end{tabular}
	\]
	The numbering follows Sakai's paper (see Appendix B in \cite{Sa1}).
	Put
	\[
	U_0:=\{c_0\neq 0\}\subset \pl, \ U_\infty:=\{c_1\neq 0\} \subset \pl, \ z:=c_1/c
	_0, \ w:=c_0/c_1.
	\]
	We take a local basis $e^{(0)}_0, e^{(0)}_1, e^{(0)}_2$ (resp. $e^{(\infty)}_0, e^{(\infty)}_1, e^{(\infty)}_2$) of $E\cong \Opl\oplus\Opl(-1)\oplus\Opl(-1)$ on $U_0$ (resp. on $U_\infty$) satisfying 
	$e^{(0)}_0=e^{(\infty)}_0$, $e^{(0)}_1=\frac{1}{w}e^{(\infty)}_1$,$e^{(0)}_2=\frac{1}{w}e^{(\infty)}_2$. For simplicity of notation, we write $\phi=A$ on $U_0$ (resp. on $U_\infty$) instead of $\phi(e^{(0)}_0, e^{(0)}_1, e^{(0)}_2)=(e^{(0)}_0, e^{(0)}_1, e^{(0)}_2)A$ (resp. $\phi(e^{(\infty)}_0, e^{(\infty)}_1, e^{(\infty)}_2)=(e^{(\infty)}_0, e^{(\infty)}_1, e^{(\infty)}_2)A$). We use a similar expression for a $\phi$-connection $\nabla$.

	The correspondence between points on $\mathbb{P}^2\setminus \{p_i\}_{1\leq i\leq 9}$ and parabolic $\phi$-connections is as follows;
	\begin{itemize}
		\item $z_0, z_2\neq 0$
		\[
		(q:p:1)\longleftrightarrow 
		\phi=\begin{pmatrix}
			1& 0& 0\\
			0& 1& 0\\
			0& 0& 1
		\end{pmatrix}, \ \nabla=d+
		\begin{pmatrix}
			0& a_{12}(z)& a_{13}(z)\\
			1& -p& 0\\
			0& z-q& p
		\end{pmatrix}
		\frac{dz}{z(z-1)}\quad \text{on $U_0$}
		\]
		or
		\[
		(1:p':q')\longleftrightarrow 
		\phi=\begin{pmatrix}
			1& 0& 0\\
			0& 1& 0\\
			0& 0& 1
		\end{pmatrix}, \ \nabla=d+
		\begin{pmatrix}
			0& b_{12}(w)& b_{13}(w)\\
			1& w-1-p'& 0\\
			0& w-q'& w-1+p'
		\end{pmatrix}
		\frac{dw}{w(w-1)} \quad \text{on $U_\infty$}
		\]
		
		\item$z_0=0, z_2\neq 0$
		\[
		(0:p:1)\longleftrightarrow 
		\phi=
		\begin{pmatrix}
			1& 0& 0\\
			0& 0& 0\\
			0& 0& 1
		\end{pmatrix}
		, \ \nabla=\phi \otimes d+
		\begin{pmatrix}
			0& 0& z-1\\
			1& 0& 0\\
			0& z& p
		\end{pmatrix}
		\frac{dz}{z(z-1)}\quad \text{on $U_0$}
		\]
		
		\item$z_0=z_2\neq 0$
		\[
		(1:p:1)\longleftrightarrow 
		\phi=
		\begin{pmatrix}
			1& 0& 0\\
			0& 0& 0\\
			0& 0& 1
		\end{pmatrix}
		, \ \nabla=\phi \otimes d+
		\begin{pmatrix}
			0& 0& z\\
			1& 0& 0\\
			0& z-1& p
		\end{pmatrix}
		\frac{dz}{z(z-1)}\quad \text{on $U_0$}
		\]
		
		\item$z_0\neq0, z_2=0$
		\[
		(1:p':0)\longleftrightarrow 
		\phi=\begin{pmatrix}
			1& 0& 0\\
			0& 0& 0\\
			0& 0& 1
		\end{pmatrix}
		, \ \nabla=\phi \otimes d+
		\begin{pmatrix}
			0& 0& w-1\\
			1& 0& 0\\
			0& w& p'
		\end{pmatrix}
		\frac{dw}{w(w-1)} \quad \text{on $U_\infty$}
		\]
		
		\item$z_0=z_2=0$
		\[
		(0:1:0)\longleftrightarrow 
		\phi=\begin{pmatrix}
			1& 0& 0\\
			0& 0& 0\\
			0& 0& 0
		\end{pmatrix}, \ \nabla=\phi \otimes d+
		\begin{pmatrix}
			0& z& 0\\
			1& 0& 0\\
			0& z& z-1
		\end{pmatrix}
		\frac{dz}{z(z-1)} \quad \text{on $U_0$}
		\]
	\end{itemize}
	Here $a_{12}(z), a_{13}(z), b_{12}(w), b_{13}(w)$ are the quadratic polynomials satisfying
	\begin{align*}
		a_{12}(0)=&-p^2-(\nu_{0,0}\nu_{0,1}+\nu_{0,1}\nu_{0,2}+\nu_{0,2}\nu_{0,0}),\\
		a_{12}(1)=&-p^2-(\nu_{1,0}\nu_{1,1}+\nu_{1,1}\nu_{1,2}+\nu_{1,2}\nu_{1,0}),\\
		\lim_{z\rightarrow \infty}a_{12}(z)/z^2=&1-(\nu_{\infty,0}\nu_{\infty,1}+\nu_{\infty,1}\nu_{\infty,2}+\nu_{\infty,2}\nu_{\infty,0}),\\
		a_{13}(0)=&(p+\nu_{0,0})(p+\nu_{0,1})(p+\nu_{0,2})/q,\\
		a_{13}(1)=&(p-\nu_{1,0})(p-\nu_{1,1})(p-\nu_{1,2})/(q-1),\\
		\lim_{z\rightarrow \infty}a_{13}(z)/z^2=&(1-\nu_{\infty,0})(1-\nu_{\infty,1})(1-\nu_{\infty,2}),\\
		b_{12}(0)=&1-p'^2-(\nu_{\infty,0}\nu_{\infty,1}+\nu_{\infty,1}\nu_{\infty,2}+\nu_{\infty,2}\nu_{\infty,0}),\\
		b_{12}(1)=&-p'^2-(\nu_{1,0}\nu_{1,1}+\nu_{1,1}\nu_{1,2}+\nu_{1,2}\nu_{1,0}),\\
		\lim_{w\rightarrow \infty}b_{12}(w)/w^2=&-(\nu_{0,0}\nu_{0,1}+\nu_{0,1}\nu_{0,2}+\nu_{0,2}\nu_{0,0}),\\
		b_{13}(0)=&(p'-1+\nu_{\infty,0})(p'-1+\nu_{\infty,1})(p'-1+\nu_{\infty,2})/q',\\
		b_{13}(1)=&(p'-\nu_{1,0})(p'-\nu_{1,1})(p'-\nu_{1,2})/(q'-1),\\
		\lim_{w\rightarrow \infty}b_{13}(w)/w^2=&\nu_{0,0}\nu_{0,1}\nu_{0,2}.
	\end{align*}
	We can see that the parabolic connections corresponding to $(q:p:1)$ and $(1:p':q')$ are isomorphic to each other when $q'=q^{-1}$ and $p'=pq^{-1}$.
	
	The correspondence between points on the strict transform at $p_i$ and parabolic $\phi$-connections is as follows;
	\begin{itemize}
		\item The strict transform at $(0: -\nu_{0,j}: 1)$
		\[
		(\mu:\eta)\longleftrightarrow 
		\phi=\begin{pmatrix}
			1& 0& 0\\
			0& \mu& 0\\
			0& 0& 1
		\end{pmatrix}
		, \ \nabla=\phi \otimes d+
		\begin{pmatrix}
			0& \mu a_{12}(p;z)& \mu c_0(z)+\eta (z-1)\\
			1& \mu \nu_{0,j}& 0\\
			0& z& -\nu_{0,j}
		\end{pmatrix}
		\frac{dz}{z(z-1)} \quad \text{on $U_0$}
		\]
		\item The strict transform at $(1: \nu_{1,0}: 1)$
		\[
		(\mu:\eta)\longleftrightarrow 
		\phi=\begin{pmatrix}
			1& 0& 0\\
			0& \mu& 0\\
			0& 0& 1
		\end{pmatrix}
		, \ \nabla=\phi \otimes d+
		\begin{pmatrix}
			0& \mu a_{12}(p;z)& \mu c_1(z)+\eta z\\
			1& -\mu \nu_{1,j}& 0\\
			0& z-1& \nu_{1,j}
		\end{pmatrix}
		\frac{dz}{z(z-1)} \quad \text{on $U_0$}
		\]
		\item The strict transform at $(1, -\nu_{\infty,j}+ 1: 0)$
		\[
		(\mu:\eta)\longleftrightarrow 
		\phi=\begin{pmatrix}
			1& 0& 0\\
			0& \mu& 0\\
			0& 0& 1
		\end{pmatrix}
		, \nabla=\phi \otimes d+
		\begin{pmatrix}
			0& \mu b_{12}(w)& \mu c_\infty(w)+\eta (w-1)\\
			1& \mu (w-1+\nu_{0,j})& 0\\
			0& w& w-1-\nu_{0,j}
		\end{pmatrix}
		\hspace{-2pt}\frac{dw}{w(w-1)} \ \text{on $U_\infty$}
		\]
	\end{itemize}
	Here
	\begin{align*}
		c_0(z)=&(1-\nu_{\infty,0})(1-\nu_{\infty,1})(1-\nu_{\infty,2})z(z-1)+(\nu_{0,j}+\nu_{1,0})(\nu_{0,j}+\nu_{1,1})(\nu_{0,j}+\nu_{1,2})z,\\
		c_1(z)=&(1-\nu_{\infty,0})(1-\nu_{\infty,1})(1-\nu_{\infty,2})z(z-1)-(\nu_{1,j}+\nu_{0,0})(\nu_{1,j}+\nu_{0,1})(\nu_{1,j}+\nu_{0,2})(z-1),\\
		c_\infty(w)=&\nu_{0,0}\nu_{0,1}\nu_{0,2}w(w-1)-(1-\nu_{\infty,j}-\nu_{1,0})(1-\nu_{\infty,j}-\nu_{1,1})(1-\nu_{\infty,j}-\nu_{1,2})w.
	\end{align*}
	\section{Moduli space of parabolic bundles and parabolic connections}
	\subsection{Moduli space of $w$-stable parabolic bundles}
	In this subsection, we determine $w$-stable parabolic bundles with degree $-2$ and investigate the moduli space and the wall-crossing behavior. Let us fix $\bm{t}\in T_3$.
	\begin{definition}
		A rank 3 parabolic bundle $(E,l_*)$ over $(\pl,\bm{t})$ is said to be $\boldsymbol{\alpha}$-stable if for any nonzero subbundle $F \subsetneq E$, the inequality 
		\begin{equation}\label{pbstabineq}
			\frac{\deg F + \sum_{i=1}^{3} \sum_{j=1}^{3} \alpha_{i,j} d_{i,j}(F)}{\rank F}<\frac{\deg E + \sum_{i=1}^{3} \sum_{j=1}^{3} \alpha_{i,j}}{\rank E}
		\end{equation}
		holds, where $d_{i,j}(F)=\dim (F|_{t_i}\cap l_{i,j-1})/(F|_{t_i}\cap l_{i,j})$.
	\end{definition}
	We assume that
	\[
	\alpha_{1,3}-\alpha_{1,2}=\alpha_{1,2}-\alpha_{1,1}=\alpha_{2,3}-\alpha_{2,2}=\alpha_{2,2}-\alpha_{2,1}=\alpha_{3,3}-\alpha_{3,2}=\alpha_{3,2}-\alpha_{3,1}=:w. 
	\]
	Then we have $0<w<1/2$. We consider the case of $\deg E=-2$. Take a nonzero subbundle $F\subsetneq E$. If $\rank F=2$, then the inequality (\ref{pbstabineq}) is equivalent to 
	\begin{equation}
		-4-3\deg F+\sum_{i=1}^3\sum_{j=1}^3\alpha_{i,j}(2-3d_{i,j}(F))>0,
	\end{equation}
	and we have
	\[
	\sum_{j=1}^3\alpha_{i,j}(2-3d_{i,j}(F))=\left\{
	\begin{array}{lll}
		-3w &\quad F|_{t_i}= l_{i,1} \\
		0&\quad   F|_{t_i}\neq l_{i,1},  F|_{t_i}\supset l_{i,2}\\
		3w&\quad F|_{t_i}\nsupseteq l_{i,2}.
	\end{array}
	\right.
	\]
	In the case of $\rank F=1$, (\ref{pbstabineq}) is equivalent to 
	\begin{equation}
		-2-3\deg F+\sum_{i=1}^3\sum_{j=1}^3\alpha_{i,j}(1-3d_{i,j}(F))>0,
	\end{equation}
	and we have
	\[
	\sum_{j=1}^3\alpha_{i,j}(1-3d_{i,j}(F))=\left\{
	\begin{array}{lll}
		3w &\quad F|_{t_i}\nsubseteq l_{i,1} \\
		0&\quad  F|_{t_i}\subset l_{i,1},  F|_{t_i}\neq l_{i,2}\\
		-3w&\quad F|_{t_i}= l_{i,2}.
	\end{array}
	\right.
	\]
	The stability condition is determined by $w$ under the assumption, so we call the special case of the $\boldsymbol{\alpha}$-stability the $w$-stability.
	
	Let $(E,l_*)$ be a $w$-stable parabolic bundle with $\deg E=-2$. The vector bundle $E$ can be written by the form $\Opl(m_1)\oplus \Opl(m_2)\oplus \Opl(m_3)$, where $m_1\geq m_2\geq m_3$ and $m_1+m_2+m_3=-2$. Suppose that $m_1\geq 1$. Then we can see that $\Opl(m_1)$ breaks the stability. Hence $E$ is isomorphic to $\Opl\oplus \Opl(-1)\oplus \Opl(-1)$. Suppose that $\Opl|_{t_i}= l_{i,2}$ for some $i$. Then $\Opl$ breaks the stability.
	So $\Opl|_{t_i}\neq  l_{i,2}$ for any $i$. Let $l'_i$ be the image of $l_{i,2}$ by the quotient $E|_{t_i}\rightarrow (E/\Opl)|_{t_i}$. Since $\Opl|_{t_i}\neq  l_{i,2}$, $l'_i$ is not zero for any $i$. For a parabolic structure $l'_*=\{l'_i\}_{1\leq i\leq 3}$ on $\Opl(-1)^{\oplus 2}$, put
	\[
	n(l'_*):=\max_{\Opl(-1)\cong F\subset \Opl(-1)^{\oplus 2}}\#\{i\mid F|_{t_i}=l'_i\}.
	\]
	A parabolic bundle $(\Opl(-1)^{\oplus 2}, l'_*)$ with $n(l'_*)=1$ and $3$ is unique up to isomorphism, respectively. When $n(l'_*)=2$, there are three isomorphism classes of such parabolic bundles, that is, those isomorphism classes are determined by the pair of numbers $1\leq i< j\leq 3$. Let $(*)$ be the following condition;
	\begin{itemize}
		\item[$(*)$] There is no  subbundle $F\subset E$ such that  $F\cong \Opl(-1)^{\oplus 2}, l_{i,2}\subset F|_{t_i}$ and $F|_{t_j}=l_{j,1}$ for some $i$  and any $j\neq i$. 
	\end{itemize}
	\begin{proposition}\label{pbmod}
		Let $P^w(-2)$ be the moduli space of $w$-stable parabolic bundles over $(\pl,\bm{t})$ of rank 3 and degree $-2$.
		\begin{itemize}
			\item[(1)] If $0<w<2/9, 4/9<w<1/2$, then $P^w(-2)=\emptyset$.
			\item[(2)] If $2/9<w<1/3$, then a $w$-stable parabolic bundle $(E,l_*)$ fits into a nonsplit exact sequence 
			\begin{equation}\label{pbextprop}
				0\longrightarrow (\Opl, \emptyset)\longrightarrow (E,l_*)\longrightarrow (\Opl(-1)^{\oplus 2}, l'_*)\longrightarrow 0,
			\end{equation}
			where $n(l'_*)=1$. In particular, $P^w(-2)$ is isomorphic to $\pl$.
			\item[(3)] If $1/3<w<4/9$, then a $w$-stable parabolic bundle $(E,l_*)$ is either type of the following:
			\begin{itemize}
				\item[(i)] $E\cong \Opl \oplus \Opl(-1)\oplus \Opl(-1)$, $\#\{i\mid \Opl|_{t_i}\subset l_{i,1}\}=0$, $n(l'_*)=1$, and the condition $(*)$ holds.
				\item[(ii)] $E\cong \Opl \oplus \Opl(-1)\oplus \Opl(-1)$, $\#\{i\mid \Opl|_{t_i}\subset l_{i,1}\}= 1$, $n(l'_*)=1$, and the condition $(*)$ holds. 
			\end{itemize}
			In particular, $P^w(-2)$ is isomorphic to $\pl$.
		\end{itemize}
	\end{proposition}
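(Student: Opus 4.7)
The plan is to enumerate all proper nonzero subbundles $F \subsetneq E \cong \Opl \oplus \Opl(-1)^{\oplus 2}$ and to translate the $w$-stability condition into inequalities involving $w$ and the discrete type of the intersection of $F|_{t_i}$ with the flag $l_{i,*}$. The relevant subbundles split into four families: the unique trivial line subbundle $\Opl \subset E$ (already isolated in the preliminary analysis), the line subbundles $\Opl(-1) \subset E$, the rank-two subbundles $\Opl \oplus \Opl(-1) \subset E$ (each containing the unique $\Opl$, hence parametrised by the line subbundles of $E/\Opl \cong \Opl(-1)^{\oplus 2}$), and the rank-two subbundles $\Opl(-1)^{\oplus 2} \subset E$ (in bijection with the surjections $E \twoheadrightarrow \Opl$ modulo scaling). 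For each type the stability inequality takes the form $\lambda + 3(r - p)w > 0$, where $\lambda$ depends only on $(\rank F, \deg F)$ and $p, q, r$ count the parabolic points in each of the three intersection classes already tabulated in the text before the proposition.

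For part (1), stability of $\Opl$ reads $3 a w > 2$ with $a := \#\{i \mid \Opl|_{t_i} \nsubseteq l_{i,1}\} \in \{0,1,2,3\}$; even $a = 3$ fails when $w < 2/9$, so $P^w(-2) = \emptyset$ in that range. For $4/9 < w < 1/2$, the simultaneous stability inequalities for $\Opl$ (which now forces $a \geq 2$) and for the rank-two $\Opl(-1)^{\oplus 2}$-subbundles (whose sharpest inequality is $2 + 3(r-p)w > 0$) become incompatible for every admissible parabolic structure; the wall at $w = 4/9$ comes from equating the critical configuration with the boundary case.

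For part (2), with $2/9 < w < 1/3$, $\Opl$-stability forces $a = 3$, i.e., $l_{i,1}$ is complementary to $\Opl|_{t_i}$ at every $t_i$, which is precisely the setting of the extension in (2). Any rank-two subbundle $F \cong \Opl \oplus \Opl(-1)$ then has $p = 0$, so its stability demands $rw > 1/3$, i.e., $r \geq 2$; translating via the quotient identifies $r$ with the number of points where $(F/\Opl)|_{t_i} \neq l'_i$, so requiring $r \geq 2$ for every line subbundle $F/\Opl \subset E/\Opl$ becomes $n(l'_*) \leq 1$, and combined with the already-established $l'_i \neq 0$ this yields $n(l'_*) = 1$. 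For part (3), with $1/3 < w < 4/9$, $\Opl$-stability allows $a \in \{2, 3\}$, giving types (i) and (ii). The sharper constraint on $\Opl(-1)^{\oplus 2}$-subbundles, $2 + 3(r - p)w > 0$, fails precisely when $r = 0$ and $p \geq 2$; enumerating the possible positions of such an $F$ relative to the flags translates this into the condition $(*)$.

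The main obstacle will be the identification $P^w(-2) \cong \pl$. In case (2), after fixing the iso class of $(\Opl(-1)^{\oplus 2}, l'_*)$ with $n(l'_*) = 1$, the remaining parameters are the $6$-dimensional affine space of lifts $l_{i,1}$ (each a complement of $\Opl|_{t_i}$, with $l_{i,2}$ then determined by $l'_i$) together with the projective parameters for $l'_*$, quotiented by the $8$-dimensional group $\textrm{Aut}(E)/\mathbb{G}_m$; a direct calculation identifies the quotient with $\pl$. Case (3) is treated analogously, with the additional subtlety that types (i) and (ii) together parametrise a single $\pl$. The most technically delicate step is the exhaustive verification that no remaining subbundle destabilises in each of the three ranges, so that the listed conditions are simultaneously necessary and sufficient.
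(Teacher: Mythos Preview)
Your enumeration of subbundles is incomplete, and this breaks the argument for the wall at $w=4/9$. You list only line subbundles of degree $0$ and $-1$ and rank-two subbundles of degree $-1$ and $-2$, but the crucial destabiliser in the range $4/9<w<1/2$ is a line subbundle $F\cong\Opl(-2)$. When $n(l'_*)=1$ there is a unique such $F\subset E$ with $F|_{t_i}=l_{i,2}$ for all $i$, and its stability value is $-2-3(-2)-9w=4-9w$; this is exactly what produces the threshold $4/9$. Your proposed ``incompatibility'' between the $\Opl$-inequality and the $\Opl(-1)^{\oplus 2}$-inequality cannot yield this wall: the rank-two inequality $2+3(r-p)w>0$ with $r-p\in\mathbb{Z}$ has walls only at $w\in\{2/9,1/3,2/3,\ldots\}$, never at $4/9$. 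So part~(1) in the range $4/9<w<1/2$ is unproved.

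A second, smaller gap: in part~(2) you do not explicitly verify that the extension is nonsplit. The splitting corresponds to the existence of an $\Opl(-1)^{\oplus 2}$-subbundle $F$ with $F|_{t_i}=l_{i,1}$ for all $i$, i.e.\ $(r,p)=(0,3)$, and your rank-two inequality does exclude this; but you only invoke the $\Opl$ and $\Opl\oplus\Opl(-1)$ families in part~(2), so this should be made explicit. Finally, for the identification $P^w(-2)\cong\pl$, the paper's route is cleaner than your automorphism quotient: once $n(l'_*)=1$ the quotient parabolic bundle $(\Opl(-1)^{\oplus 2},l'_*)$ is unique up to isomorphism, so $P^w(-2)\cong\mathbb{P}\operatorname{Ext}^1\bigl((\Opl(-1)^{\oplus 2},l'_*),(\Opl,\emptyset)\bigr)\cong\mathbb{P}H^1(\Opl(1)^{\oplus 2}(-D))\cong\pl$ directly.
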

	\begin{proof}
		
		Assume that $w<2/9$. Then $\Opl$ breaks the stability. In particular, we have $P^{\boldsymbol{\alpha}}(-2)=\emptyset$. 
		
		Assume that $2/9<w<1/3$. If $\Opl|_{t_i}\subset l_{i,1}$ for some $i$, then $\Opl$ breaks the stability. 
		So $\Opl|_{t_i}\nsubseteq l_{i,1}$ for any $i$. Hence $(E,l_*)$ fits into an exact sequence 
		\begin{equation}\label{pbext}
			0\longrightarrow (\Opl, \emptyset)\longrightarrow (E,l_*)\longrightarrow (\Opl(-1)^{\oplus 2}, l'_*=\{l'_i\}_{1\leq i\leq 3})\longrightarrow 0.
		\end{equation}
		If (\ref{pbext}) splits, that is, there exists a subbundle $F$ such that $F\cong \Opl(-1)^{\oplus 2}$ and $F|_{t_i}=l_{i,1}$ for all $i$, then $F$ breaks the stability. 
		So (\ref{pbext}) does not split. When $n(l'_*)\geq 2$, we can take a subbundle $F\subset E$ satisfying $F\cong \Opl(-1)$ and $F|_{t_i}=l_{i,2}, F|_{t_j}=l_{j,2}$ for some $1\leq i<j\leq 3$. Then $F$ breaks the stability.
		Hence $n(l'_*)=1$ and we have
		\[
		P^w(-2)\cong \mathbb{P}\Ext^1((\Opl(-1)^{\oplus 2}, l'_*), (\Opl, \emptyset))\cong \mathbb{P}H^1((\Opl(1)^{\oplus 2})(-D))\cong \pl.
		\]
		Assume that $1/3<w<1/2$.  When $n(l'_*)\geq 2$, we can take a subbundle $F\subset E$ satisfying $F\cong \Opl(-1)$ and $F|_{t_i}=l_{i,2}, F|_{t_j}=l_{j,2}$ for some $1\leq i<j\leq 3$. Then $F$ breaks stability. Hence $n(l'_*)= 1$. In this case, we can take a unique subbundle $F\subset E$ such that $F\cong \Opl(-2)$ and $F|_{t_i}=l_{i,2}$ for any $i$, and we have
		\[
		-2-3\deg F+\sum_{i=1}^3\sum_{j=1}^3\alpha_{i,j}(1-3d_{i,j}(F))=4-9w.
		\]
		So $P^w(-2)=\emptyset$ if $w>4/9$. Assume that $1/3<w<4/9$. When $\#\{i\mid \Opl|_{t_i}\subset l_{i,1}\}\geq 2$, $\Opl$ breaks the stability. 
		Hence $\#\{i\mid \Opl|_{t_i}\subset l_{i,1}\}\leq 1$. We consider the case $\Opl|_{t_i}\nsubseteq l_{i,1}$ for any $i$.  Then we can take a unique subbundle $F_{ij}\subset E$ such that $F_{ij}\cong \Opl(-1)^{\oplus 2}, F_{ij}|_{t_i}=l_{i,1}$ and $F_{ij}|_{t_j}=l_{j,1}$ for each $1\leq i<j\leq 3$. If $l_{m,2}\subset F_{ij}|_{t_m}$ for $m\neq i,j$, then $F$ breaks the stability.
		So such a parabolic bundle becomes $w$-unstable, which is a contradiction. We can see  that such a parabolic bundle $p_{ij}\in \mathbb{P}\Ext^1((\Opl(-1)^{\oplus 2}, l'_*), (\Opl, \emptyset))$ is unique for each $1\leq i<j\leq 3$. Next we consider the case $\Opl|_{t_i}\nsubseteq l_{m,1}$ for some $m$. Let $i,j$ be different elements of $\{1,2,3\}\setminus \{m\}$. Then we can take a unique subbundle $F_{ij}\subset E$ such that $F_{ij}\cong \Opl(-1)^{\oplus 2}, F_{ij}|_{t_i}=l_{i,1}$ and $F_{ij}|_{t_j}=l_{j,1}$. For the same reason as the above, we have $l_{m,2}\nsubseteq F|_{t_m}$. We can see that such a parabolic bundle $p_m$ is unique up to isomorphism. Therefore we have
		\[
		P^w(-2)\cong (\mathbb{P}\Ext^1((\Opl(-1)^{\oplus 2}, l'_*), (\Opl, \emptyset))\setminus \{p_{12},p_{13},p_{23}\})\sqcup \{p_1,p_2,p_3\}\cong \pl.
		\]
	\end{proof}
	
	As the above proof shows, $p_{12}, p_{13}, p_{23}$ become $w$-unstable and $p_1, p_2, p_3$ become $w$-stable when $w$ is across $1/3$.
	Let us investigate this in detail. Assume that $2/9<w<1/3$. In this case, a $w$-stable parabolic bundle $(E,l_*)$ fits into a nonsplit exact sequence (\ref{pbextprop}). Then we can take nonzero homomorphisms $s_1,s_2\colon \Opl(-1)\rightarrow E$ satisfying $l_{1,2}=(\Image s_1)|_{t_1}$, $l_{2,2}=(\Image s_2)|_{t_2}$, $0\neq (\Image s_1)|_{t_2}\subset l_{2,1}$, $0\neq (\Image s_2)|_{t_1}\subset l_{1,1}$. Let $e_1,e_2$ be local basis corresponding to $s_1,s_2$, respectively, and $e_0$ be the nonzero section of $\Opl\subset E$. Let us denote $ae_0+be_1+ce_2$ by the matrix $^t(a\;b\;c)$. Since $n(l'_*)=1$,  we can wright $l_*$  by the form
	\[
	l_{1,2}=\mathbb{C}
	\begin{pmatrix}
		0\\1\\0
	\end{pmatrix}, \; 
	l_{1,1}=\mathbb{C}
	\begin{pmatrix}
		0\\1\\0
	\end{pmatrix}+
	\mathbb{C}
	\begin{pmatrix}
		0\\0\\1
	\end{pmatrix}, \quad
	l_{2,2}=\mathbb{C}
	\begin{pmatrix}
		0\\0\\1
	\end{pmatrix}, \; 
	l_{2,1}=\mathbb{C}
	\begin{pmatrix}
		0\\1\\0
	\end{pmatrix}+
	\mathbb{C}
	\begin{pmatrix}
		0\\0\\1
	\end{pmatrix}
	\]
	\[
	l_{3,2}=\mathbb{C}
	\begin{pmatrix}
		a+b\\1\\1
	\end{pmatrix}, \;
	l_{3,1}=\mathbb{C}
	\begin{pmatrix}
		a\\1\\0
	\end{pmatrix}+
	\mathbb{C}
	\begin{pmatrix}
		b\\0\\1
	\end{pmatrix}, 
	\]
	where $a,b\in \mathbb{C}$. The exact sequence (\ref{pbextprop}) splits if and only if $(a,b)=(0,0)$, and parabolic bundles defined by $(a,b), (a',b')$ are isomorphic to each other if and only if $(a,b), (a',b')$ are the same up to scalar multiplicities. In this way, we also prove that $P^w(-2)\cong \pl$. The parabolic bundles $p_{12}, p_{13}, p_{23}$ in the proof of Proposition \ref{pbmod} correspond to the case $a+b=0, b=0,a=0$, respectively. Let us fix $a\neq 0$ and put $\mu=a+b$. Let $\tilde{l}_*$ be the parabolic structure defined by
	\[
	\tilde{l}_{1,2}=\mathbb{C}
	\begin{pmatrix}
		0\\1\\0
	\end{pmatrix}, \; 
	\tilde{l}_{1,1}=\mathbb{C}
	\begin{pmatrix}
		0\\1\\0
	\end{pmatrix}+
	\mathbb{C}
	\begin{pmatrix}
		0\\0\\1
	\end{pmatrix}, \quad
	\tilde{l}_{2,2}=\mathbb{C}
	\begin{pmatrix}
		0\\0\\1
	\end{pmatrix}, \; 
	\tilde{l}_{2,1}=\mathbb{C}
	\begin{pmatrix}
		0\\1\\0
	\end{pmatrix}+
	\mathbb{C}
	\begin{pmatrix}
		0\\0\\1
	\end{pmatrix}
	\]
	\[
	\tilde{l}_{3,2}=\mathbb{C}
	\begin{pmatrix}
		1\\1\\1
	\end{pmatrix}, \;
	\tilde{l}_{3,1}=\mathbb{C}
	\begin{pmatrix}
		1\\ \frac{\mu}{a}\\0
	\end{pmatrix}+
	\mathbb{C}
	\begin{pmatrix}
		1\\1\\ 1
	\end{pmatrix}.
	\]
	When $\mu\neq 0$, the homomorphism defined by the matrix $\textrm{diag}(\mu,1,1)$ is an isomorphism from $(E,\tilde{l}_*)$ to $(E,l_*)$. When $\mu=0$, $(E,\tilde{l}_*)$ and $(E,l_*)$ are parabolic bundles corresponding to $p_3$ and $p_{12}$ in the proof of Proposition \ref{pbmod}, respectively. So $p_3$ and $p_{12}$ are infinitesimally close to each other. In the same way, we can see that $p_1, p_2$ are infinitesimally close to $p_{23}, p_{13}$, respectively. 
	
	\subsection{Moduli space of $\lambda$-connections}
	In this subsection, we consider the compactification of the moduli space of parabolic connections by using $\lambda$-connections.
	\begin{definition}
		A $\boldsymbol{\nu}$-parabolic $\lambda$-connection is a collection $(\lambda, E,\nabla, l_*=\{l_{i,*}\}_{1\leq i\leq 3})$ consisting the following data:
		\begin{itemize}
			\setlength{\itemsep}{0cm}
			\item[(1)] $E$ is a vector bundle on $\pl$ of rank 3 and degree $-2$,
			\item[(2)]  $\nabla \colon E \rightarrow E \otimes \Ompld$ is a $\lambda$-twisted logarithmic connection, i.e.  $\nabla(fa)=a\otimes \lambda df + f \nabla(a)$ for any $f \in \Opl, a \in E_1$, and 
			\item[(3)] $l_{i,*}$ is a filtration $E|_{t_i}=l_{i,0} \supsetneq l_{i,1} \supsetneq  l_{i,2} \supsetneq l_{i,3}=0$ satisfying  $(\res_{t_i}(\nabla)-\nu_{i,j}\id )({l_{i,j}})\subset l_{i,j+1}$ for $i=1,2,3$ and $j=0,1,2$.
		\end{itemize}
	\end{definition}
	A $\boldsymbol{\nu}$-parabolic 0-connection is a parabolic Higgs bundle, and a $\boldsymbol{\nu}$-parabolic 1-connection is a $\boldsymbol{\nu}$-parabolic connection. 
	Let $\overline{M^w_3(\bm{t},\boldsymbol{\nu})^0}$ be the moduli space of $\lambda\boldsymbol{\nu}$-parabolic $\lambda$-connections over $(\pl,\bm{t})$ whose underlying parabolic bundle is $w$-stable, that is,
	\[
	\overline{M^w_3(\bm{t},\boldsymbol{\nu})^0}:=\left\{(\lambda, E,\nabla, l_*) \vb (E,l_*)\in P^w(-2)\right\}/\sim.
	\]
	Here two objects $(\lambda_1, E_1,\nabla_1,(l_1)_*), (\lambda_2, E_2,\nabla_2,(l_2)_*)$ are equivalent if there exists an isomorphism $\sigma \colon (E_1,(l_1)_*)\rightarrow (E_2,(l_2)_*)$ and $\mu \in \mathbb{C}^*$ such that the diagram
	\[
	\begin{tikzcd}
		E_1 \arrow[r,"\nabla_1"] \arrow[d, "\sigma"']&E_1 \otimes \Ompld \arrow[d, "\sigma \otimes \id "] \\
		E_2 \arrow[r,"\mu\nabla_2"]&E_2 \otimes \Ompld 
	\end{tikzcd}
	\]
	commutes. Take $(E,l_*) \in P^w(-2)$ and a $\boldsymbol{\nu}$-logarithmic connection $\nabla$ over $(E,l_*)$. All $\lambda\boldsymbol{\nu}$-logarithmic $\lambda$-connections over $(E,l_*)$ are of the form $\lambda\nabla+\Phi$, where $\Phi$ is a parabolic Higgs field over $(E,l_*)$. 
	The space of all isomorphism classes of $\lambda\boldsymbol{\nu}$-logarithmic $\lambda$-connections over $(E,l_*)$ is $\mathbb{P}(\mathbb{C}\nabla\oplus H)$
	and it can be regarded as a compactification of the space of all $\boldsymbol{\nu}$-logarithmic connections over $(E,l_*)$. Here $H$ is the space of all parabolic Higgs fields over $(E,l_*)$. In particular, $\overline{M^w_3(\bm{t},\boldsymbol{\nu})^0}$ is a compactification of a Zariski open subset 
	\[
	M^w_3(\bm{t},\boldsymbol{\nu})^0:=\left\{(E,\nabla, l_*) \vb (E,l_*)\in P^w(-2)\right\}/\sim
	\]
	of $M^w_3(\bm{t},\boldsymbol{\nu})$. The boundary is the locus defined by $\lambda=0$ on $\overline{M^w_3(\bm{t},\boldsymbol{\nu})^0}$ and is isomorphic to the projectivization $\mathbb{P}T^*P^w(-2)$ of the cotangent bundle of $P^w(-2)$ because $T^*P^w(-2)$ is the moduli space of parabolic Higgs bundles whose underlying parabolic bundles are $w$-stable.
	The following result when $\nu_{1,0}+\nu_{2,0}+\nu_{3,0}= 0$ is a version of Proposition 4.6 in \cite{LS} in the present setting. 
	\begin{theorem}\label{pbthm}
		Assume that $2/9<w<1/3$. Then we have 
		\[
		\overline{M^w_3(\bm{t},\boldsymbol{\nu})^0}\cong 
		\left\{
		\begin{array}{lll}
			\pl\times \pl &\nu_{1,0}+\nu_{2,0}+\nu_{3,0}\neq 0\\
			\mathbb{P}(\Opl\oplus \Opl(-2)) &\nu_{1,0}+\nu_{2,0}+\nu_{3,0}= 0.
		\end{array}
		\right.
		\]
	\end{theorem}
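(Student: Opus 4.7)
The plan is to realize $\overline{M^w_3(\bm{t},\boldsymbol{\nu})^0}$ as a $\pl$-bundle over $P^w(-2)\cong\pl$ and determine its Hirzebruch type by analyzing the associated rank $2$ bundle. Using the explicit parameterization of $P^w(-2)$ by $(a:b)\in\pl$ recalled in the discussion after Proposition \ref{pbmod}, I would first fix a universal family of $w$-stable parabolic bundles on $\pl\times P^w(-2)$. For each $(E,l_*)\in P^w(-2)$, the space of strongly parabolic Higgs fields is $1$-dimensional, since the moduli space of parabolic Higgs bundles with $w$-stable underlying parabolic bundle is $T^*P^w(-2)$, of total dimension $2$. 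These fibers assemble into the line bundle $\mathcal{H}\cong T^*P^w(-2)\cong\Opl(-2)$ on $P^w(-2)$. The full space of $\lambda\boldsymbol{\nu}$-connections on $(E,l_*)$, varying $\lambda$ too, is $2$-dimensional; globally these form a rank $2$ bundle $\mathcal{V}$ on $P^w(-2)$ fitting into
\[
0\longrightarrow\mathcal{H}\longrightarrow\mathcal{V}\longrightarrow\mcO_{P^w(-2)}\longrightarrow 0,
\]
where the quotient records $\lambda$, and $\overline{M^w_3(\bm{t},\boldsymbol{\nu})^0}\cong\mathbb{P}(\mathcal{V})$.

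Since $\Ext^1(\mcO_{\pl},\Opl(-2))\cong H^1(\pl,\Opl(-2))\cong\mathbb{C}$ is one-dimensional, there are only two possibilities for $\mathcal{V}$. If the extension splits, then $\mathcal{V}\cong\mcO\oplus\Opl(-2)$, yielding $\mathbb{P}(\mathcal{V})\cong\mathbb{P}(\Opl\oplus\Opl(-2))$. If the extension is non-split, then its long exact sequence forces $H^0(\mathcal{V})=0$, whence $\mathcal{V}\cong\Opl(-1)^{\oplus 2}$ by Birkhoff--Grothendieck (this is the unique rank $2$ bundle of degree $-2$ on $\pl$ with no sections), and $\mathbb{P}(\mathcal{V})\cong\pl\times\pl$. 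It therefore remains to decide when the extension splits, which is equivalent to the existence of a global section of $\mathcal{V}\twoheadrightarrow\mcO_{P^w(-2)}$, i.e.\ a consistent choice of $\boldsymbol{\nu}$-parabolic connection on the universal family.

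When $\nu_{1,0}+\nu_{2,0}+\nu_{3,0}=0$, a candidate section is furnished by the reducible connections preserving the canonical trivial line subbundle $\Opl\subset E$: the restriction $\nabla|_\Opl$ is then a connection on $\Opl$, and since $\Opl|_{t_i}\nsubseteq l_{i,1}$ (established in the proof of Proposition \ref{pbmod}(2)), its residue at $t_i$ is forced to be $\nu_{i,0}$, which is compatible with the Fuchs relation on $\Opl$ precisely when $\sum_i\nu_{i,0}=0$. Writing such $\nabla$ explicitly in terms of $(a:b)$ and extending by the quotient connection on $E/\Opl\cong\Opl(-1)^{\oplus 2}$ produces a global section, so the extension splits and $\mathbb{P}(\mathcal{V})\cong\mathbb{P}(\Opl\oplus\Opl(-2))$. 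When $\sum_i\nu_{i,0}\neq 0$, no such reducible connection exists; the plan is to construct local sections on the two standard affine charts of $P^w(-2)\cong\pl$ (corresponding to $a\neq 0$ and $b\neq 0$) and then to read off the extension class from the Čech difference of the two local connections on the overlap.

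The main obstacle will be the non-splitting direction: showing that the Čech cocycle computed from the two local connections represents a genuinely non-zero class in $H^1(\pl,\Opl(-2))\cong\mathbb{C}$ proportional to $\nu_{1,0}+\nu_{2,0}+\nu_{3,0}$, rather than a class that can be absorbed into a choice of local Higgs field. In practice, one can reduce this to the following clean dichotomy: assume a global section $\{\nabla_{(a,b)}\}$ exists and extract from it a connection on the trivial subbundle $\Opl\subset E$ over a sufficiently generic $(E,l_*)$; the residues of this induced connection are forced to be $\nu_{i,0}$, so its Fuchs relation yields $\sum_i\nu_{i,0}=0$. This contrapositive, once made rigorous, completes the identification of the obstruction and hence of $\mathbb{P}(\mathcal{V})$ in each case.
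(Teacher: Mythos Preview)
Your framework is sound and in fact coincides with what the paper does: both realize $\overline{M^w_3(\bm{t},\boldsymbol{\nu})^0}$ as $\mathbb{P}(\mathcal{V})$ for a rank $2$ bundle $\mathcal{V}$ on $P^w(-2)\cong\pl$ sitting in $0\to\Opl(-2)\to\mathcal{V}\to\mcO_{\pl}\to 0$, and the whole question reduces to computing this extension class. The paper simply carries out the \v{C}ech computation you describe in your penultimate paragraph: it writes down explicit local bases $(\nabla_0(a),\Phi_0(a))$ over $U_0=\{b\neq 0\}$ and $(\nabla_\infty(b),\Phi_\infty(b))$ over $U_\infty=\{a\neq 0\}$, and checks directly that on the overlap
\[
(\nabla_\infty,\Phi_\infty)=(\nabla_0,\Phi_0)\begin{pmatrix}1&0\\-(\nu_{1,0}+\nu_{2,0}+\nu_{3,0})a^{-1}&a^{-2}\end{pmatrix}
\]
up to the isomorphism $P=\textrm{diag}(a,1,1)$ of parabolic bundles. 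The off-diagonal entry $-(\sum_i\nu_{i,0})a^{-1}$ is precisely the \v{C}ech representative of the extension class in $H^1(\pl,\Opl(-2))$, and the result follows.

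The gap in your proposal is the contrapositive you flag at the end. If the extension splits, the unique lift $\mcO_{\pl}\hookrightarrow\mathcal{V}$ gives a family $\{\nabla_{(E,l_*)}\}$ of $\boldsymbol{\nu}$-parabolic connections, but there is no a priori reason these should preserve the subbundle $\Opl\subset E$; the second fundamental form $\Opl\hookrightarrow E\xrightarrow{\nabla}E\otimes\Ompld\twoheadrightarrow(E/\Opl)\otimes\Ompld$ is an $\mcO$-linear map, not a connection, and it need not vanish just because $\nabla$ came from a global section. So you cannot ``extract a connection on $\Opl$'' and invoke Fuchs. One can repair this by studying the second fundamental form as a bundle map $\sigma\colon\mathcal{V}\to\mathcal{F}$ into the rank $2$ bundle with fiber $\Hom(\Opl,(E/\Opl)\otimes\Ompld)$, observing that $\sigma|_{\mathcal{H}}$ is fiberwise injective, and arguing via $\det\sigma$; but making this precise requires identifying $\mathcal{F}$, and at that point you are doing a computation of comparable weight to the paper's direct transition calculation. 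Your split direction (building a global section from reducible connections when $\sum_i\nu_{i,0}=0$) is more plausible but also not free: you must check that a $\boldsymbol{\nu}$-parabolic connection on $(E,l_*)$ preserving $\Opl$ actually exists for every $(E,l_*)\in P^w(-2)$, not just that Fuchs is satisfied on the subbundle, and that the choices glue. In the paper's explicit $\nabla_0(a)$ the $(2,1)$-entry is already $0$ and the $(3,1)$-entry is $-(\sum_i\nu_{i,0})h'(t_3)$, so $\nabla_0$ itself is reducible exactly when $\sum_i\nu_{i,0}=0$; this is how the paper's computation encodes your heuristic.
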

	\begin{proof}
		Let $U_0:=\mathbb{C}$ and $U_\infty:=\mathbb{C}$. For $a\in U_0$ and $b\in U_\infty$, let us define a parabolic structure $(l_a)_*$ and $(l_b)_*$ on $\Opl\oplus \Opl(-1)\oplus \Opl(-1)$ by
		\[
		(l_a)_{1,2}=(l_b)_{1,2}=\mathbb{C}
		\begin{pmatrix}
			0\\1\\0
		\end{pmatrix}, \; 
		(l_a)_{1,1}=(l_b)_{1,1}=\mathbb{C}
		\begin{pmatrix}
			0\\1\\0
		\end{pmatrix}+
		\mathbb{C}
		\begin{pmatrix}
			0\\0\\1
		\end{pmatrix},
		\]
		\[
		(l_a)_{2,2}=(l_b)_{2,2}=\mathbb{C}
		\begin{pmatrix}
			0\\0\\1
		\end{pmatrix}, \; 
		(l_a)_{2,1}=(l_b)_{2,1}=\mathbb{C}
		\begin{pmatrix}
			0\\1\\0
		\end{pmatrix}+
		\mathbb{C}
		\begin{pmatrix}
			0\\0\\1
		\end{pmatrix},
		\]
		\[
		(l_a)_{3,2}=\mathbb{C}
		\begin{pmatrix}
			a+1\\1\\1
		\end{pmatrix}, \;
		(l_a)_{3,1}=\mathbb{C}
		\begin{pmatrix}
			a\\1\\0
		\end{pmatrix}+
		\mathbb{C}
		\begin{pmatrix}
			1\\0\\1
		\end{pmatrix}, 
		(l_b)_{3,2}=\mathbb{C}
		\begin{pmatrix}
			1+b\\1\\1
		\end{pmatrix}, \;
		(l_b)_{3,1}=\mathbb{C}
		\begin{pmatrix}
			1\\1\\0
		\end{pmatrix}+
		\mathbb{C}
		\begin{pmatrix}
			b\\0\\1
		\end{pmatrix}. 
		\]
		Then $(U_0, a)$ and $(U_\infty, b)$ define coordinates on $P^w(-2)$, and we have $a=1/b$ when $a,b\neq 0$. Put
		\[
		\nabla_0(a):=d+
		\begin{pmatrix}
			c_{11}(z)&c^0_{12}(a)(z-t_1)(z-t_2)&c^0_{13}(a)(z-t_1)(z-t_2)\\ 
			0&(z-t_1)(z-t_2)+c_{22}(z)&c^0_{23}(t_3-t_1)(z-t_2)\\ 
			c^0_{31}h'(t_3)&c^0_{32}(a)(t_3-t_2)(z-t_1)&(z-t_1)(z-t_2)+c_{33}(z)
		\end{pmatrix}\frac{dz}{h(z)}.
		\]
		\[
		\Phi_0(a):=
		\begin{pmatrix}
			0&a(a+1)(z-t_1)(z-t_2)&-a(a+1)(z-t_1)(z-t_2) \\
			h'(t_3)&0&-(a+1)(t_3-t_1)(z-t_2) \\
			-ah'(t_3)&a(a+1)(t_3-t_2)(z-t_1)&0
		\end{pmatrix}
		\frac{dz}{h(z)},
		\]
		\[
		\nabla_\infty(b):=d+
		\begin{pmatrix}
			c_{11}(z)&c^\infty_{12}(b)(z-t_1)(z-t_2)&c^\infty_{13}(b)(z-t_1)(z-t_2)\\ 
			c^\infty_{21}h'(t_3)&(z-t_1)(z-t_2)+c_{22}(z)&c^\infty_{23}(b)(t_3-t_1)(z-t_2)\\ 
			0&c^\infty_{32}(t_3-t_2)(z-t_1)&(z-t_1)(z-t_2)+c_{33}(z)
		\end{pmatrix}\frac{dz}{h(z)},
		\]
		\[
		\Phi_\infty(b):=
		\begin{pmatrix}
			0&b(1+b)(z-t_1)(z-t_2)&-b(1+b)(z-t_1)(z-t_2) \\
			bh'(t_3)&0&-b(1+b)(t_3-t_1)(z-t_2) \\
			-h'(t_3)&(1+b)(t_3-t_2)(z-t_1)&0
		\end{pmatrix}
		\frac{dz}{h(z)}, 
		\]
		where 
		\begin{align*}
			c_{11}(z)&=\nu_{2,0}(t_2-t_3)(z-t_1)+\nu_{1,0}(t_1-t_3)(z-t_2),\\
			c_{22}(z)&=\nu_{2,1}(t_2-t_3)(z-t_1)+\nu_{1,2}(t_1-t_3)(z-t_2),\\ c_{33}(z)&=\nu_{2,2}(t_2-t_3)(z-t_1)+\nu_{1,1}(t_1-t_3)(z-t_2), \\
			c^0_{12}(a)&=a(1+\nu_{1,0}+\nu_{2,0}-\nu_{1,2}-\nu_{2,1})+(1-(\nu_{1,2}+\nu_{2,1}+\nu_{3,1})), \\
			c^0_{13}(a)&=a((\nu_{1,2}+\nu_{2,1}+\nu_{3,2})-1)+(1-(\nu_{1,1}+\nu_{2,2}+\nu_{3,0})), \\
			c^0_{32}(a)&=(\nu_{1,1}+\nu_{2,2}+\nu_{3,2})-1+(a+1)(\nu_{1,0}+\nu_{2,0}+\nu_{3,0}), \\
			c^\infty_{21}&=-(\nu_{1,0}+\nu_{2,0}+\nu_{3,0}),  c^0_{23}=(\nu_{1,2}+\nu_{2,1}+\nu_{3,2})-1, \\
			c^\infty_{12}(b)&=(1-\nu_{1,2}-\nu_{2,1}-\nu_{3,0})+b((\nu_{1,1}+\nu_{2,2}+\nu_{3,2})-1), \\
			c^\infty_{13}(b)&=(1-\nu_{1,1}-\nu_{2,2}-\nu_{3,1})+b(1+\nu_{1,0}+\nu_{2,0}-\nu_{1,1}-\nu_{2,2}), \\
			c^\infty_{23}(b)&=(\nu_{1,2}+\nu_{2,1}+\nu_{3,2})-1+(1+b)(\nu_{1,0}+\nu_{2,0}+\nu_{3,0}), \\
			c^0_{31}&=-(\nu_{1,0}+\nu_{2,0}+\nu_{3,0}),   c^\infty_{32}=(\nu_{1,1}+\nu_{2,2}+\nu_{3,2})-1.
		\end{align*}
		Then we have
		\[
		\Bun^{-1}(U_0)\cong \mathbb{P}(\mathbb{C}\nabla_0\oplus\mathbb{C}\Phi_0), \; \Bun^{-1}(U_\infty)\cong \mathbb{P}(\mathbb{C}\nabla_\infty\oplus\mathbb{C}\Phi_\infty),
		\]
		where $\Bun\colon \overline{M^w_3(\bm{t},\boldsymbol{\nu})^0}\rightarrow  P^w(-2)$ is the forgetful map. We can see that 
		\[
		\nabla_\infty=P^{-1}
		(\nabla_0-(\nu_{1,0}+\nu_{2,0}+\nu_{3,0})a^{-1}\Phi_0)P, \;
		\Phi_\infty=
		P^{-1}
		(a^{-2}\Phi_0)P, 
		\]
		where $P=\textrm{diag}(a,1,1)$, and so we have
		\[
		(\nabla_\infty, \Phi_\infty)\cong (\nabla_0, \Phi_0)
		\begin{pmatrix}
			1&0\\
			-(\nu_{1,0}+\nu_{2,0}+\nu_{3,0})a^{-1}&a^{-2}
		\end{pmatrix}.
		\]
		Hence we obtain the theorem.
	\end{proof}
	
	\subsection{Comparing two compactifications}
	Let us consider the relation between the moduli space of $\boldsymbol{\nu}$-parabolic $\phi$-connections $\overline{M^{\boldsymbol{\alpha}}_3}(\bm{t},\boldsymbol{\nu})$ and the moduli space of $\lambda\boldsymbol{\nu}$-parabolic $\lambda$-connections $\overline{M^w_3(\bm{t},\boldsymbol{\nu})^0}$. We assume that $\nu_{i,0}\neq \nu_{i,1}\neq \nu_{i,2}\neq \nu_{i,0}$ for each $i$ for simplicity. Let $\varphi \colon \widehat{M^{\boldsymbol{\alpha}}_3}(\bm{t},\boldsymbol{\nu})\rightarrow \mathbb{P}(\Omega_{\pl}^1(D(\bm{t}))\oplus\Opl), \varphi'\colon  \overline{M^{\boldsymbol{\alpha}}_3}(\bm{t},\boldsymbol{\nu})\rightarrow \mathbb{P}^2$ and $\rho\colon \mathbb{P}(\Omega_{\pl}^1(D(\bm{t}))\oplus\Opl)\rightarrow \mathbb{P}^2$ be the morphism defined in Section \ref{secdes} (see the diagram (\ref{diagmtpf}) in the proof of Theorem \ref{maintheorem}). Let $D_i\subset \mathbb{P}(\Omega_{\pl}^1(D(\bm{t}))\oplus\Opl)$ be the fiber over $t_i$ and $\hat{D}_i$ be the strict transform of $D_i$ under $\varphi$. Let $H_i=\rho(D_i)$ and $\hat{H}_i$ be the strict transform of $H_i$ under $\varphi'$. Let $D_0$ be the section of $\mathbb{P}(\Omega^1_{\pl}(D(\bm{t}))\oplus \mcO_{\pl})$ over $\mathbb{P}^1$ defined by the injection $\Omega_{\pl}^1(D(\bm{t}))\hookrightarrow \Omega^1_{\pl}(D(\bm{t}))\oplus \mcO_{\pl}$. Let $b_{i,j} \in \mathbb{P}(\Omega_{\pl}^1(D(\bm{t}))\oplus\Opl)$ be the point defined in the subsection \ref{pfsec} and put $c_{i,j}=\rho(b_{i,j})\in \mathbb{P}^2$. We can see that three points $c_{1,i}, c_{2,j}, c_{3,k}$ are on the same line if and only if $\nu_{1,i}+\nu_{2,j}+\nu_{3,k}=1$, and six points $c_{1,i_1}, c_{1,i_2}, c_{2,j_1}, c_{2,j_2}, c_{3,k_1}, c_{3,k_2}$ are on the same conic if and only if $\nu_{1,i_1}+\nu_{1,i_2}+\nu_{2,j_1}+\nu_{2,j_2}+\nu_{3,k_1}+\nu_{3,k_2}=2$. 
	
	The following proposition follows from the proof of Proposition \ref{isominusblow} and Proposition \ref{allneq}.
	\begin{proposition}
		Assume that $0<\alpha_{i,j}\ll 1$ and $\nu_{i,0}\neq \nu_{i,1}\neq \nu_{i,2}\neq \nu_{i,0}$ for each $i$. Take $(E,\nabla,l_*) \in M^{\boldsymbol{\alpha}}_3(\bm{t},\boldsymbol{\nu})$. Then the type of $(E,l_*)$ is one of the following:
		\begin{itemize}
			\item[(i)] $E\cong \Opl \oplus \Opl(-1)\oplus \Opl(-1)$, $\#\{i\mid \Opl|_{t_i}\subset l^{(i)}_1\}=0$, $n(l'_*)=1$, and the condition $(*)$ holds.
			\item[(i)$'$]  $E\cong \Opl \oplus \Opl(-1)\oplus \Opl(-1)$, $\#\{i\mid \Opl|_{t_i}\subset l^{(i)}_1\}=0$, $n(l'_*)=1$, and the condition $(*)$ does not hold.
			\item[(ii)]  $E\cong \Opl \oplus \Opl(-1)\oplus \Opl(-1)$, $\#\{i\mid \Opl|_{t_i}\subset l^{(i)}_1\}=1$, $n(l'_*)=1$, and the condition $(*)$ holds.
			\item[(iii)] 
			$E\cong \Opl \oplus \Opl(-1)\oplus \Opl(-1)$, $\#\{i\mid \Opl|_{t_i}\subset l^{(i)}_1\}=0$, $n(l'_*)\geq 2$, and the condition $(*)$ holds.
		\end{itemize}
		For $(E,l_*)$ whose type is (iii), $n(l'_*)= 3$ when $\nu_{1,2}+\nu_{2,2}+\nu_{3,2}=1$ and $n(l'_*)= 2$ when $\nu_{1,2}+\nu_{2,2}+\nu_{3,2}\neq 1$
	\end{proposition}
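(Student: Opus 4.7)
The plan is to use the normal form of Lemma \ref{rk3formlem} (applied with $\phi = \id$, i.e.\ $\rank \phi = 3$) to make the three invariants $k := \#\{i : \Opl|_{t_i} \subset l_{i,1}\}$, $n(l'_*)$, and the condition $(*)$ all explicit in terms of the normal-form parameters $(q, p) \in \mathbb{C}^2$, after which the classification becomes a finite case analysis.

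First, Proposition \ref{bdlform} guarantees $E \cong \Opl \oplus \Opl(-1) \oplus \Opl(-1)$ with a unique trivial line subbundle, and Proposition \ref{appfil} supplies the canonical filtration $F^{(1)}_*$ required for Lemma \ref{rk3formlem}. In the basis adapted to $F^{(1)}_*$ the trivial line $\Opl|_{t_i}$ is the span of the first coordinate vector, and from (\ref{rk3flag1}) this vector lies in $l_{i,1}$ precisely when $q = t_i$. Since $q \in \pl$ is a single point, $k \leq 1$ immediately. Next, (\ref{rk3flag2}) shows that the image $l'_i$ of $l_{i,2}$ in $(E/\Opl)|_{t_i}$ is the line spanned by $(\tilde{\nu}_{i,2} - p,\, t_i - q)$ with $\tilde{\nu}_{i,2} := h'(t_i)(\nu_{i,2} - \res_{t_i}(dz/(z-t_3)))$; hence $l'_i = l'_j$ is a single linear equation in $(q,p)$, and the triple coincidence $l'_1 = l'_2 = l'_3$ forces the three points $(\tilde{\nu}_{i,2}, t_i) \in \mathbb{A}^2$ to be collinear, which unpacks to the linear relation $\nu_{1,2} + \nu_{2,2} + \nu_{3,2} = 1$ on $\boldsymbol{\nu}$, proving the last sentence of the proposition.

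Next, the combination $k = 1$ with $n(l'_*) \geq 2$ must be excluded. Substituting $q = t_i$, the line $l'_i$ becomes the first coordinate axis while $l'_j$ for $j \neq i$ has nonzero second coordinate $t_j - t_i$, so $l'_i \neq l'_j$; the remaining equation $l'_j = l'_m$ for $j, m \neq i$ is linear in $p$ with nonzero leading term, and the constraint (\ref{rk3coe0}) then forces this $p$ to be one of $\tilde{\nu}_{i,0}, \tilde{\nu}_{i,1}, \tilde{\nu}_{i,2}$. Under the hypothesis $\nu_{i,0} \neq \nu_{i,1} \neq \nu_{i,2} \neq \nu_{i,0}$ this happens only on a codimension-one sub-locus of the nine exceptional curves described in Proposition \ref{allneq}, and a direct check on the explicit equations of those curves shows the intersection lies in $Y$, hence outside $M^{\boldsymbol{\alpha}}_3(\bm{t}, \boldsymbol{\nu})$ by Theorem \ref{maintheorem}.

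Finally, to verify $(*)$ in types (ii) and (iii), recall that any $F \cong \Opl(-1)^{\oplus 2} \subset E$ arises as the kernel of a surjection $E \to \Opl$, so it is parameterized up to scalar by $\mathbb{P}(H^0(\Hom(E, \Opl))) = \mathbb{P}(H^0(\Opl \oplus \Opl(1)^{\oplus 2})) \cong \mathbb{P}^4$. I would translate the defining conditions of $(*)$ (namely $l_{i,2} \subset F|_{t_i}$ for one $i$ and $F|_{t_j} = l_{j,1}$ for the other two $j$'s) into linear equations in these $\mathbb{P}^4$ coordinates, substitute the explicit $l_{i,*}$ from (\ref{rk3flag2}) and (\ref{rk3flag1}), and check that in each remaining combination the system is either inconsistent or yields a subbundle of the wrong splitting type (e.g.\ $\Opl \oplus \Opl(-2)$). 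The main obstacle is this last step: the analysis is a case-by-case linear-algebra computation in the normal-form coordinates with three subcases (one for each choice of $i$) and an extra degeneration when $k = 1$ or $n(l'_*) \geq 2$, and keeping track of when $F$ really has the required splitting type requires careful bookkeeping of the rank and the divisor of zeros of the surjection $E \to \Opl$.
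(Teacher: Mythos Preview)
Your overall approach---reading the invariants $k$, $n(l'_*)$ and condition $(*)$ directly from the normal form of Lemma~\ref{rk3formlem} together with the explicit flags (\ref{rk3flag2}), (\ref{rk3flag1})---is exactly what the paper intends: it simply refers back to the proofs of Proposition~\ref{isominusblow} and Proposition~\ref{allneq}, whose content is precisely these formulas.

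There is, however, a genuine gap in your exclusion of the combination $k=1$ with $n(l'_*)\geq 2$. First, the claim that this intersection ``lies in $Y$'' cannot be right: $Y$ is by definition the locus $\wedge^3\phi=0$, and every point of $M^{\boldsymbol{\alpha}}_3(\bm t,\boldsymbol\nu)$ has $\phi=\id$, so none of it lies in $Y$. Second, the equality $l'_j=l'_m$ for $j,m\neq i$ on the exceptional curve $\varphi^{-1}(b_{i,0})$ does \emph{not} cut out a codimension-one sublocus: computing with (\ref{allneqform}) for $\mu=1$, the last two rows of $\res_{t_m}\nabla$ (which determine $l'_m$) are independent of the curve parameter $\eta$, so either every connection on that $\pl$ has $l'_j=l'_m$ or none does. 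Whether it happens is a condition on $\boldsymbol\nu$---namely the collinearity of $c_{i,0},c_{j,2},c_{m,2}$, i.e.\ $\nu_{i,0}+\nu_{j,2}+\nu_{m,2}=1$---and the distinctness hypothesis of the proposition alone does not rule it out. In the paper this point is silently absorbed because the only place the classification is used is the ensuing discussion, which imposes the extra genericity conditions (\ref{nucond2}) and (\ref{nucond01}). Your sketch for condition $(*)$ is along the right lines, but as you note the case bookkeeping is where all the content lives; invoking the explicit flags from Proposition~\ref{allneq} on the exceptional curves (not only (\ref{rk3flag2})--(\ref{rk3flag1}) on the generic chart) is necessary there as well.
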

	
	Assume that $\boldsymbol{\nu}$ satisfies the condition 
	\begin{equation}\label{nucond2}
		\nu_{1,2}+\nu_{2,2}+\nu_{3,2}\neq 1
	\end{equation}
	and
	\begin{equation}\label{nucond01}
		\nu_{1,j_1}+\nu_{2,2}+\nu_{3,2}\neq 1, \;  \nu_{1,2}+\nu_{2,j_2}+\nu_{3,2}\neq 1, \;  \nu_{1,2}+\nu_{2,2}+\nu_{3,j_3}\neq 1
	\end{equation}
	for any $j_1, j_2, j_3=0, 1$. 
	When $2/9<w<1/3$, $P^w(-2)$ consists of parabolic bundles of the type (i) and (i)$'$. We can obtain $\overline{M^w_3(\bm{t},\boldsymbol{\nu})^0}$ from $\widehat{M^{\boldsymbol{\alpha}}_3}(\bm{t},\boldsymbol{\nu})$ by the following three steps.
	
	\begin{figure}
		\centering
		\includegraphics[width=11cm]{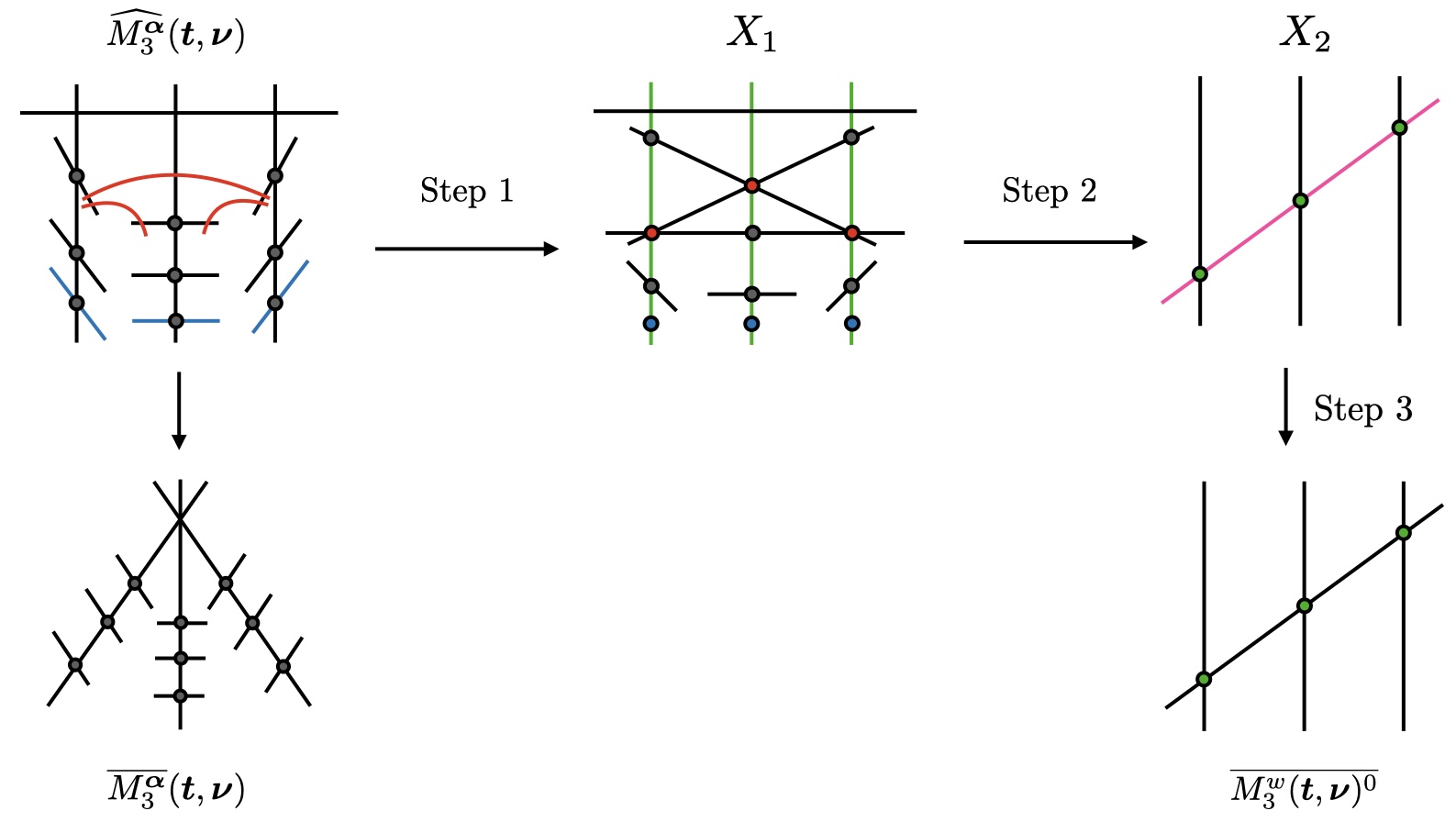}
	\end{figure}
	Step 1: contract the locus consisting of the type (ii) and (iii). We have
	\[
	\{(E,\nabla,l_*)\in M^{\boldsymbol{\alpha}}_3(\bm{t},\boldsymbol{\nu})\mid \text{the type of $(E,l_*)$ is (ii)}\}=(\varphi^{-1}(b_{1,0})\setminus D_1)\cup(\varphi^{-1}(b_{2,0})\setminus D_2)\cup(\varphi^{-1}(b_{3,0})\setminus D_3). 
	\]
	By Proposition \ref{allneq}, $\varphi^{-1}(b_{i,j})$ is a $(-1)$-curve. 
	From (\ref{rk3flag2}), the closure of the set
	\[
	\{(E,\nabla,l_*)\in M^{\boldsymbol{\alpha}}_3(\bm{t},\boldsymbol{\nu})\mid \text{$l'_i$ and $l'_j$ lie on  some subbundle $\Opl(-1)\cong F'\subset \Opl(-1)\oplus\Opl(-1)$}\}
	\]
	on $\overline{M^{\boldsymbol{\alpha}}_3}(\bm{t},\boldsymbol{\nu})$ is the closure of the locus defined by  
	\[
	(h'(t_i)(\nu_{i,2}-\res_{t_i}(\tfrac{dz}{z-t_3}))-p)
	(t_j-q)-(h'(t_j)(\nu_{j,2}-\res_{t_j}(\tfrac{dz}{z-t_3}))-p)
	(t_i-q)=0,
	\]
	where $(q,p)$ is the coordinate defined in the proof of Proposition \ref{isominusblow}, which is just the strict transform $\hat{L}_{ij}\subset \overline{M^{\boldsymbol{\alpha}}_3}(\bm{t},\boldsymbol{\nu})$ of the line $L_{ij}\subset \mathbb{P}^2$ passing through $c_{i,2}$ and $c_{j,2}$ under $\varphi'$. Since any $c_{m,n}$ for $(m,n)\neq (i,2), (j,2)$ is not on $L_{i,j}$ from the condition (\ref{nucond2}) and (\ref{nucond01}), the intersection number of $\hat{L}_{ij}$ is $-1$.  By contracting $\varphi^{-1}(b_{1,0}), \varphi^{-1}(b_{2,0}), \varphi^{-1}(b_{3,0})$ and the inverse images of 
	$\hat{L}_{12}, \hat{L}_{23}, \hat{L}_{13}$ under $\PC$, we obtain a morphism $\rho_1\colon \widehat{M^{\boldsymbol{\alpha}}_3}(\bm{t},\boldsymbol{\nu}) \rightarrow X_1$,  where $X_1$ is a smooth projective surface. 
	
	Step 2: contract the locus defined by $\rank \phi =2$. Since $\varphi\colon \widehat{M^{\boldsymbol{\alpha}}_3}(\bm{t},\boldsymbol{\nu}) \rightarrow \mathbb{P}(\Omega_{\pl}^1(D(\bm{t}))\oplus \Opl)$ is the blow-up at 9 points $\{b_{i,j}\}^{1\leq i\leq 3}_{0\leq j\leq 2}$, $\hat{D}_i$ is a $(-3)$-curve for each $i$. $\hat{H}_i$ intersects with $\varphi^{-1}(c_{i,0})$ and $\hat{L}_{jm}\;(j,m\neq i)$ at one point, respectively. So the image $\rho_1(\hat{D}_i)\subset X_1$ is a $(-1)$-curve. Contracting 
	$\hat{D}_1, \hat{D}_2, \hat{D}_3$, we obtain a morphism $\rho_2\colon X_1 \rightarrow X_2$. When $\nu_{1,0}+\nu_{2,0}+\nu_{3,0}=0$, there exists a conic $C\subset \mathbb{P}^2$ passing through six points $c_{1,1}, c_{1,2}, c_{2,1}, c_{2,2}, c_{3,1}, c_{3,2}$. Let $\hat{C}\subset \widehat{M^{\boldsymbol{\alpha}}_3}(\bm{t},\boldsymbol{\nu})$ be the strict transform of $C$ under $\rho\circ \varphi=\varphi'\circ \PC$. Then $\rho_1(\hat{C})\cong \rho_2(\rho_1(\hat{C}))$ is a projective line and intersects with $\rho_2(\rho_1(\varphi^{-1}(b_{i,1})))$ for each $i=1,2,3$. So $X_2$ is isomorphic to $\mathbb{P}(\Opl \oplus \Opl(-2))$. Since $C$ does not intersect with  $\varphi'^{-1}(c_{i,0})$, and $C$ intersects with each $\hat{H}_i$ and $\hat{L}_{mn}$ at two points, we have $\rho_2(\rho_1(\hat{C}))^2=\rho_1(\hat{C})^2=\hat{C}^2=-2$. $\rho_2(\rho_1(\hat{C}))$ is the unique section whose intersection number is $-2$. When $\nu_{1,0}+\nu_{2,0}+\nu_{3,0}\neq 0$, there is no projective line contained in $X_2$ which intersects with $\rho_2(\rho_1(\varphi^{-1}(b_{i,1})))$ for each $i=1,2,3$. So $X_2$ is isomorphic to $\pl \times \pl$.
	
	Step 3: change $D_0$ to $\mathbb{P}T^*P^w(-2)$. $D_0$ and $\mathbb{P}T^*P^w(-2)$ are infinitesimally close to each other. A $\boldsymbol{\nu}$-parabolic connection 
	\[
	\phi=
	\begin{pmatrix}
		1&0&0\\
		0&1&0\\
		0&0&1
	\end{pmatrix}, \;
	\nabla=d+
	\begin{pmatrix}
		0& a_{12}(z)& a_{13}(z)\\
		1& (z-t_1)(z-t_2)-p& 0\\
		0& z-q& (z-t_1)(z-t_2)+p
	\end{pmatrix}
	\frac{dz}{h(z)}
	\]
	whose apparent singularity $q$ is not $t_1,t_2$ and $t_3$
	has the limits
	\begin{equation}
		\begin{tikzcd}
			\small
			\begingroup 
			\setlength\arraycolsep{2pt}
			\begin{pmatrix}
				p^{-2}&0&0\\
				0&p^{-2}&0\\
				0&0&1
			\end{pmatrix}
			(\phi,\nabla)
			\begin{pmatrix}
				p^2&0&0\\
				0&1&0\\
				0&0&p^{-1}
			\end{pmatrix}
			\endgroup
			\ar[r,"p\rightarrow \infty"]&
			\footnotesize\left(
			\begingroup 
			\setlength\arraycolsep{2pt}
			\begin{pmatrix}
				1&0&0\\
				0&0&0\\
				0&0&0
			\end{pmatrix}, \;
			\begin{pmatrix}
				0&-1&g(z)\\
				1&0&0\\
				0&z-q&1
			\end{pmatrix}\frac{dz}{h(z)}
			\endgroup\right),
		\end{tikzcd}
	\end{equation}
	\begin{equation}
		\begin{tikzcd}
			\small
			\begingroup 
			\setlength\arraycolsep{2pt}
			\begin{pmatrix}
				1&0&0\\
				0&p&0\\
				0&0&p^2
			\end{pmatrix}
			(\phi,\nabla)
			\begin{pmatrix}
				p^{-1}&0&0\\
				0&p^{-2}&0\\
				0&0&p^{-3}
			\end{pmatrix}
			\endgroup
			\ar[r,"p\rightarrow \infty"]&
			\footnotesize\left(
			\begingroup 
			\setlength\arraycolsep{2pt}
			\begin{pmatrix}
				0&0&0\\
				0&0&0\\
				0&0&0
			\end{pmatrix}, \;
			\begin{pmatrix}
				0&-1&g(z)\\
				1&-1&0\\
				0&z-q&1
			\end{pmatrix}
			\endgroup
			\frac{dz}{h(z)}\right),
		\end{tikzcd}
	\end{equation}
	where $g(z)=\sum_{i=1}^{3}\frac{1}{(q-t_i)h'(t_i)}\prod_{j\neq i}(z-t_j)$. Put
	\[
	C(q;z):=
	\begin{pmatrix}
		\frac{\left(t_3-t_1\right)h'(t_3)}{\left(t_2-t_1\right)\left(q-t_1\right)\left(q-t_3\right)}&\frac{\left(t_3-t_2\right)\left(z+q-t_1-t_2\right)}{\left(t_1-t_2\right)\left(q-t_2\right)}&\frac{\left(t_3-t_1\right)\left(z+q-t_1-t_2\right)}{\left(t_2-t_1\right)\left(q-t_1\right)}\\
		0&\frac{t_3-t_2}{t_1-t_2}&\frac{t_3-t_1}{t_2-t_1}\\
		0&\frac{\left(t_3-t_2\right)\left(q-t_1\right)}{t_1-t_2}&\frac{\left(t_3-t_1\right)\left(q-t_2\right)}{t_2-t_1}
	\end{pmatrix},
	\]
	\[
	C_1(q;z):=
	\begin{pmatrix}
		-(q-t_2)(q-t_3)&0&z+q-t_2-t_3\\
		0&-(q-t_2)(q-t_3)&0\\
		0&0&1
	\end{pmatrix},
	\]
	\[
	C_2(q;z):=
	\begin{pmatrix}
		-(q-t_2)^{-1}(q-t_3)^{-1}&0&0\\
		0&1&1\\
		0&0&q-t_1
	\end{pmatrix}.
	\]
	Then we have
	\begin{align*}
		\small\hspace{-14pt}
		C_1(q;z)\left(\begin{pmatrix}
			1&0&0\\
			0&0&0\\
			0&0&0
		\end{pmatrix}, \;
		\begin{pmatrix}
			0&-1&g(z)\\
			1&0&0\\
			0&z-q&1
		\end{pmatrix}\frac{dz}{h(z)}\right)C_2(q;z)
		=\left(
		\begin{pmatrix}
			1&0&0 \\
			0&0&0 \\
			0&0&0
		\end{pmatrix},
		\begin{pmatrix}
			0&(z-t_2)(z-t_3)&0 \\
			1&0&0\\
			0&z-q&z-t_1
		\end{pmatrix}
		\frac{dz}{h(z)}\right),
	\end{align*}
	and
	\[
	C(q;z)^{-1}
	\begin{pmatrix}
		0&-1&g(z)\\
		1&-1&0\\
		0&z-q&1
	\end{pmatrix}
	\frac{dz}{h(z)}C(q;z)=
	\frac{(t_3-t_1)(q-t_2)}{h'(t_2)(q-t_1)(q-t_3)}\Phi_0(-\tfrac{(t_3-t_2)(q-t_1)}{(t_3-t_1)(q-t_2)}).
	\]
	So a $\boldsymbol{\nu}$-parabolic $\phi$-connection with $\rank \phi=1$ and a parabolic Higgs bundle is infinitesimally closed to each other. 
	In the case of  $q=t_1,t_2,t_3$, we can also see it by using (\ref{allneqform}) and (\ref{rk2coord}). Therefore we can obtain $\overline{M^w_3(\bm{t},\boldsymbol{\nu})^0}$ from $\widehat{M^{\boldsymbol{\alpha}}_3}(\bm{t},\boldsymbol{\nu})$.
	
	\subsection{Parabolic bundles and apparent singularities}\label{pbapp}
	We fix $2/9<w<1/3$. Let $V_0 \subset P^w(-2)$ be the subset consisting of parabolic bundles of the type (i). The set $V_0$ is the set of $P^w(-2)$ minus 3 points by Proposition \ref{pbmod}. Let $(E,l_*)\in V_0$ and $\nabla$ be a $\lambda \boldsymbol{\nu}$-logarithmic $\lambda$-connection on $(E,l_*)$. Assume that $\nu_{1,0}+\nu_{2,0}+\nu_{3,0}\neq 0$. Then there exists a unique filtration $E=:F_0\supset F_1\supset F_2\supset 0$ such that $F_2\cong \Opl, F_1\cong \Opl\oplus \Opl(-1)$, and $\nabla(F_2)\subset F_1 \otimes \Omega^1_{\pl}(D(\bm{t}))$.  We define the apparent singularity $\App(E,\nabla,l_*)$ by the zero of the nonzero homomorphism
	\[
	\Opl(-1)\cong F_1/F_2\overset{\nabla}{\rightarrow} (E/F_1)\otimes \Omega_{\pl}^1(D(\bm{t})) \cong \Opl.
	\]
	When $\lambda\neq 0$, this definition is the same as the definition in subsection \ref{appsec}. 
	
	\begin{remark}
		Assume that $(E,l_*)\in P^w(-2)\setminus V_0$. Then for any parabolic connection $\nabla$ over $(E,l_*)$,  there exists a unique filtration $E=F_0\supset F_1\supset F_2\supset 0$ such that $F_2\cong \Opl, F_1\cong \Opl\oplus \Opl(-1)$, and $\nabla(F_2)\subset F_1 \otimes \Omega^1_{\pl}(D(\bm{t}))$. However, we can see that for a parabolic Higgs field $\Phi$ over $(E,l_*)$, such filtration is not unique. So we can not define the apparent map $\App$ over $\overline{M^{w}_3(\bm{t},\boldsymbol{\nu})^0}$.
	\end{remark}
	The following is a version of Theorem 4.3 in \cite{LS} in the present setting. 
	
	\begin{proposition}\label{appbun}
		We fix $2/9<w<1/3$ and assume that $\nu_{1,0}+\nu_{2,0}+\nu_{3,0}\neq 0$. Then the morphism
		\[
		\App \times \Bun \colon \Bun^{-1}(V_0)\longrightarrow \pl\times V_0
		\]
		is finite and its generic fiber consists of three points. 
	\end{proposition}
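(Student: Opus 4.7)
The plan is to work in the explicit coordinates provided by the proof of Theorem~\ref{pbthm}. Under the assumption $\nu_{1,0}+\nu_{2,0}+\nu_{3,0}\neq 0$, the compactification $\overline{M^w_3(\bm{t},\boldsymbol{\nu})^0}$ is isomorphic to $\pl\times\pl$, and the formulas for $\nabla_0(a)$ and $\Phi_0(a)$ given there trivialize the fibration $\Bun$ over the chart $U_0\subset P^w(-2)\cong\pl$. Restricting to the open locus of connections ($\lambda\neq 0$) and to parabolic bundles in $V_0$, and using that $U_0\cap V_0$ is a dense open subset of $V_0$, we obtain an identification
\[
\Bun^{-1}(U_0\cap V_0)\;\cong\;(U_0\cap V_0)\times\mathbb{A}^1
\]
under which the point $(a,\mu)$ corresponds to the connection $\nabla_0(a)+\mu\Phi_0(a)$, and $\Bun$ is the first projection. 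Consequently $\App\times\Bun$ takes the form $(a,\mu)\mapsto (q(a,\mu),\,a)$, where $q(a,\mu)$ denotes the apparent singularity of $\nabla_0(a)+\mu\Phi_0(a)$ as defined at the start of the present subsection.

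The main step is to compute $q(a,\mu)$ explicitly. The trivial line subbundle $F_2\subset E\cong\Opl\oplus\Opl(-1)\oplus\Opl(-1)$ is independent of $\mu$, and the first column of the matrix of $\nabla_0(a)+\mu\Phi_0(a)$ yields the second fundamental form $f_2\colon F_2\to E/F_2\otimes\Omega^1_{\pl}(D(\bm{t}))\cong \Opl^{\oplus 2}$. Its direction --- a pair of scalars depending linearly on $\mu$ --- determines the rank-two intermediate subbundle $F_1=F_1(\mu)$ of Proposition~\ref{appfil}, so that $F_1$ itself twists linearly with $\mu$. The remaining second fundamental form
\[
u\colon F_1/F_2\longrightarrow E/F_1\otimes\Omega^1_{\pl}(D(\bm{t}))
\]
is a section of $\Opl(1)$ whose unique zero is $q(a,\mu)$. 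The most direct route is to choose an explicit gauge transformation that takes $\nabla_0(a)+\mu\Phi_0(a)$ into the normal form of Lemma~\ref{rk3formlem} and to read off $q$ as the location of the zero appearing in the $(3,2)$-entry; this displays $q(a,\mu)$ as an explicit rational function of $(a,\mu)$.

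The expected outcome is that, for generic $a\in U_0\cap V_0$, the function $\mu\mapsto q(a,\mu)$ is a rational map $\pl\to\pl$ of degree three. Granting this, the restriction of $\App$ to each generic fiber $\{a\}\times\mathbb{A}^1$ of $\Bun$ is a finite morphism of degree three onto $\pl$, hence every fiber of $\App\times\Bun$ is zero-dimensional. Since both source and target are irreducible of dimension two and the map is dominant by the explicit formula, $\App\times\Bun$ is finite, with generic fiber of cardinality three.

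The principal obstacle is the bookkeeping in the explicit gauge transformation: the $\mu$-dependence of the intermediate subbundle $F_1(\mu)$ must be propagated through the normalization procedure, and one must verify that the resulting $\mu$-degree of $q$ equals exactly three (and not one or two). This degree-three phenomenon, which follows from the explicit formula, is what produces the three-point fibers and quantifies the failure of $\App\times\Bun$ to be birational in rank three, in contrast to the rank-two situation of \cite{LS,FL,FLM,Ma}.
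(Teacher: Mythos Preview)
Your overall strategy matches the paper's: both work in the explicit coordinates supplied by the proof of Theorem~\ref{pbthm}, determine the intermediate subbundle $F_1$ from the first column of the connection matrix, and read off the apparent singularity as the zero of the induced map $F_1/F_2\to (E/F_1)\otimes\Omega^1_{\pl}(D(\bm{t}))$. Two points deserve correction.

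First, the detour through a gauge transformation to the normal form of Lemma~\ref{rk3formlem} is unnecessary and not what the paper does. Once $F_1$ has been identified (in the paper's homogeneous fiber coordinates $(\mu:\lambda)$ on $\Bun^{-1}(a)$) as the span of ${}^t(1,0,0)$ and ${}^t(0,\lambda,\mu c^0_{31}-\lambda a)$, one simply applies $\mu\nabla_0+\lambda\Phi_0$ to the second generator and projects modulo $F_1$: the result is a linear form in $z$ whose two coefficients $f_1(a;\mu,\lambda)$ and $f_2(a;\mu,\lambda)$ are visibly homogeneous of degree three in $(\mu,\lambda)$. No normalization procedure is needed, and this sidesteps the bookkeeping you flag as the principal obstacle.

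Second, your finiteness argument has a genuine gap. You work only on the affine chart $\lambda\neq 0$ over $U_0\cap V_0$ and then assert that a dominant morphism between irreducible surfaces with zero-dimensional generic fiber is finite; this is false (an open immersion already fails), and in any case you have controlled the fibers only over \emph{generic} $a$, not over every $a\in V_0$. The paper instead carries out the computation on the full fiber $\Bun^{-1}(a)\cong\pl$, in homogeneous $(\mu:\lambda)$, so that $\Bun^{-1}(V_0)$ is a $\pl$-bundle over $V_0$ and in particular proper over $V_0$. Then $\App\times\Bun$ is a $V_0$-morphism from a proper $V_0$-scheme to a separated one, hence proper, and properness together with quasi-finiteness gives finiteness. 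You should extend the computation to include the boundary $\lambda=0$ and invoke properness rather than attempt to deduce finiteness from the affine data alone.
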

	\begin{proof}
		Consider fibers of $\App \times \Bun$. We have
		\[
		(\mu\nabla_0+\lambda\Phi_0)
		\begin{pmatrix}
			1\\0\\0
		\end{pmatrix}=
		\begin{pmatrix}
			\mu c_{11}(z)\\ \lambda h'(t_3)\\ (\mu c^0_{31}-\lambda a)h'(t_3)
		\end{pmatrix}\frac{dz}{h(z)}.
		\]
		So $F_1$ is generated by the sections $^t(1, 0, 0)$ and $^t(0, \lambda, (\mu c^0_{31}-\lambda a))$. Since
		\begin{align*}
			&(\mu \nabla_0+\lambda\Phi_0)
			\begin{pmatrix}
				0\\ \lambda\\\mu c^0_{31}-\lambda a
			\end{pmatrix}\\
			=&
			\begin{pmatrix}
				*\\
				\mu \lambda((z-t_1)(z-t_2)+c_{22}(z))+(\mu c^0_{31}-\lambda a)(\mu c^0_{23}-\lambda(a+1))(t_3-t_1)(z-t_2)\\
				\lambda(\mu c^0_{32}(a)+\lambda a(a+1))(t_3-t_2)(z-t_1)+\mu (\mu c^0_{31}-\lambda a)((z-t_1)(z-t_2)+c_{33}(z))
			\end{pmatrix}, 
		\end{align*}
		the apparent singularity of $\mu\nabla_0+\lambda\Phi_0$ is the zero of the polynomial
		\begin{align*}
			&\lambda\{\lambda(\mu c^0_{32}(a)+\lambda a(a+1))(t_3-t_2)(z-t_1)+\mu (\mu c^0_{31}-\lambda a)((z-t_1)(z-t_2)+c_{33}(z))\}\\
			&-(\mu c^0_{31}-\lambda a)\{\mu \lambda((z-t_1)(z-t_2)+c_{22}(z))+(\mu c^0_{31}-\lambda a)(\mu c^0_{23}-\lambda(a+1))(t_3-t_1)(z-t_2)\}\\
			=&f_1(a;\mu,\lambda)(z-t_1)+f_2(a;\mu,\lambda)(z-t_2),
		\end{align*}
		where
		\begin{align*}
			f_1(a;\mu,\lambda)=&(t_3-t_2)\{a(a+1)\lambda^3+(c^0_{32}(a)+(\nu_{2,2}-\nu_{2,1})a)\lambda^2\mu -(\nu_{2,2}-\nu_{2,1})c^0_{31}\mu^2 \lambda \},\\
			f_2(a;\mu,\lambda)=&(t_3-t_1)\{a^2(a+1)\lambda^3-((\nu_{1,2}-\nu_{1,1})a+2a(a+1)c^0_{31}+a^2c^0_{32}(a))\lambda^2\mu\\
			&+((\nu_{1,2}-\nu_{1,1})c^0_{31}+2ac^0_{31}c^0_{23}+(a+1)(c^0_{31})^2)\lambda\mu^2-(c^0_{31})^2c^0_{23}\mu^3\}.
		\end{align*}
		Hence $\App\colon \Bun^{-1}((E,(l_a)_*))\cong \mathbb{P}(\mathbb{C}\nabla_0(a)\oplus \mathbb{C}\Phi_0(a)) \rightarrow \pl$ is defined by 
		\[
		\App(\mu\nabla_0+\lambda\Phi_0)=(f_1(a;\mu,\lambda)+f_2(a;\mu,\lambda):t_1f_1(a;\mu,\lambda)+t_2f_2(a;\mu,\lambda)),
		\]
		which implies that a generic fiber consists of three points. Since $\App \times\Bun$ is proper, $\App \times\Bun$ is finite.
	\end{proof}
	
	\appendix
	\section{Computation on the stability}
	\begin{proposition}\label{prostab}
		All points of $R^s$ are properly stable with respect to the action of $G$ and the $G$-linearized $S$-ample line bundle $L^{\otimes N}$.
	\end{proposition}
	\begin{proof}
		Take any geometric point $x$ of $R^s$. Let $y$ be the induced geometric point of $S$. We prove that $x$ is a properly stable point of the fiber $R^s_y$ with respect to the action of $G_y$ and the polarization $L^{\otimes N}$. So we may assume that $S=\Spec K$ with $K$ is an algebraically closed field. We put
		\[
		(E_1,E_2,\Phi,F_*(E_1),F_*(E_2)):=((\mcE_1)_x,(\mcE_2)_x,\tilde{\Phi}_x,F_*(\mcE_1)_x, F_*(\mcE_2)_x))
		\]
		For simplicity, we write the same character $V_1,V_2, W_1,W_2$ to denote $(V_1)_y, (V_2)_y,(W_1)_y,(W_2)_y$, respectively. Let
		\[
		\pi_2 \colon V_1 \otimes W_1 \oplus V_2 \otimes W_2 \rightarrow N_2, \ \pi_1 \colon V_1 \otimes W_2 \rightarrow N_1, \ \pi_{1,i} \colon V_1 \rightarrow N^{(1)}_i, \ \pi_{2,i} \colon V_2 \rightarrow N^{(2)}_i
		\]
		be the quotients of vector spaces corresponding to $\iota(x)$. We will show that $\iota(x)$ is a properly stable point with respect to the action of $G$ and the linearization of $L^{\otimes N}$. Consider the character
		\[
		\chi \colon GL(V_1) \times GL(V_2) \longrightarrow \bm{G}_m; \ (g_1,g_2)\mapsto \det(g_1)\det(g_2).
		\]
		Since the natural composite $\ker \chi \rightarrow GL(V_1) \times GL(V_2)\rightarrow G$ is an isogeny, by Theorem 2.1 \cite{Mu} it is sufficient to show that $\mu^{L^{\otimes N}} (x,\lambda)>0$ for any nontrivial homomorphism $\lambda\colon \bm{G}_m\rightarrow \ker \chi$ ,where $\mu^{L^{\otimes N}} (x,\lambda)$ is defined in Definition 2.2 \cite{Mu}. Let  $\lambda\colon \bm{G}_m\rightarrow \ker \chi$ be a nontrivial homomorphism.
		For  a suitable basis $e^{(1)}_1,\ldots, e^{(1)}_{n_1}$ (resp. $e^{(1)}_1,\ldots, e^{(2)}_{n_2}$), the action of $\lambda$ on $V_1$ (resp. $V_2$) is represented by
		\[
		e^{(1)}_i \mapsto t^{u^{(1)}_i}e^{(1)}_i\ (\text{resp}. \ e^{(2)}_i \mapsto t^{u^{(2)}_i}e^{(2)}_i)\ \ (t \in \bm{G}_m), 
		\]
		where $u^{(1)}_1 \leq \cdots \leq u^{(1)}_{n_1}$ (resp. $u^{(2)}_1 \leq \cdots \leq u^{(2)}_{n_2}$). Then we have $\sum_{i=1}^{n_1}u^{(1)}_i + \sum_{i=1}^{n_2}u^{(2)}_i=0$. Let $f^{(k)}_1,\ldots, f^{(k)}_{b_k}$ be a basis of $W_k$ for each $k=1,2$.
		
		For $q=0,1,\ldots, n_1+n_2$, we define functions  $a_1(q),a_2(q)$ as follows. First, we set $(a_1(q),a_2(q))=(0,0)$ and put
		\[
		(a_1(1),a_2(1))=\left\{
		\begin{array}{ll}
			(1,0) &\text{if} \ u^{(1)}_1\leq u^{(2)}_1 \\
			(0,1) & \text{if} \ u^{(1)}_1> u^{(2)}_1
		\end{array}.
		\right.
		\]
		We inductively define
		\[
		(a_1(q+1),a_2(q+1))=\left\{
		\begin{array}{ll}
			(a_1(q)+1,a_2(q))& \text{if}\; u^{(1)}_{a_1(q)+1}\leq u^{(2)}_{a_2(q)+1},\;  a_1(q)<n_1, \text{and}\; a_2(q)<n_2\\
			(a_1(q),a_2(q)+1)& \text{if}\; u^{(1)}_{a_1(q)+1}> u^{(2)}_{a_2(q)+1}\; a_1(q)<n_1, \text{and}\; a_2(q)<n_2 \\
			(a_1(q)+1,a_2(q))& \text{if} \; a_2(q)=n_2 \\
			(a_1(q),a_2(q)+1) & \text{if} \; a_1(q)=n_1
		\end{array}.
		\right.
		\]
		Then $a_1(q)$ and $a_2(q)$ are integers satisfying $0 \leq a_1(q)\leq n_1$, $0 \leq a_2(q)\leq n_2$, $a_1(q)\leq a_1(q+1)$, $a_2(q)\leq a_2(q+1)$ and $a_1(q)+a_2(q)=q$. We define $v_1,\ldots, v_{n_1+n_2}$ by
		\[
		v_q=\left\{
		\begin{array}{ll}
			u^{(1)}_{a_1(q)} &\text{if} \ (a_1(q), a_2(q))=(a_1(q-1)+1, a_2(q-1)) \\
			u^{(2)}_{a_2(q)} & \text{if} \ (a_1(q), a_2(q))=(a_1(q-1), a_2(q-1)+1)
		\end{array}.
		\right.
		\]
		For $p=1, \ldots,b_1n_1+b_2n_2$, we can find a unique integer $q\in \{1,\ldots,n_1+n_2\}$ such that 
		\[
		p=\left\{
		\begin{array}{ll}
			(a_1(q)-1)b_1+a_2(q)b_2+j &\text{for some}\ 1 \leq j\leq b_1\  \text{if} \  (a_1(q), a_2(q))=(a_1(q-1)+1, a_2(q-1))\\
			a_1(q)b_1+(a_2(q)-1)b_2+j & \text{for some}\ 1 \leq j\leq b_2\  \text{if} \ (a_1(q), a_2(q))=(a_1(q-1), a_2(q-1)+1)
		\end{array}.
		\right.
		\]
		For each $p$, we put $s^{(2)}_p:=v_q$ and
		\[
		h_p:=\left\{
		\begin{array}{ll}
			e^{(1)}_{a_1(q)}\otimes f^{(1)}_j &\text{if} \ (a_1(q), a_2(q))=(a_1(q-1)+1, a_2(q-1)) \\
			e^{(2)}_{a_2(q)}\otimes f^{(2)}_j  & \text{if} \ (a_1(q), a_2(q))=(a_1(q-1), a_2(q-1)+1)
		\end{array}.
		\right.
		\]
		Put $\delta_p:=(v_{q+1}-v_q)(n_1+n_2)^{-1}$. Then we have 
		\begin{equation}\label{vsum}
			v_{n_1+n_2}=\sum_{q=1}^{n_1+n_2-1}q\delta_q, 
		\end{equation}
		\begin{equation}\label{u1sum}
			u^{(1)}_{n_1}=\sum_{\underset{a_1(q)<n_1}{1\leq q \leq n_1+n_2-1}}q\delta_q+\sum_{\underset{a_1(q)= n_1}{1\leq q \leq n_1+n_2-1}}(q-n_1-n_2)\delta_q,
		\end{equation}
		and
		\begin{equation}\label{u2sum}
			u^{(2)}_{n_2}=\sum_{\underset{a_2(q)<n_2}{1\leq q \leq n_1+n_2-1}}q\delta_q+\sum_{\underset{a_2(q)= n_2}{1\leq q \leq n_1+n_2-1}}(q-n_1-n_2)\delta_q.
		\end{equation}
		Let $U^{(2)}_p$ be the vector subspace of $V_1 \otimes W_1 \oplus V_2 \otimes W_2$ generated by $h_1,\ldots,h_p$. For $i=1,\ldots, r_2$, we can find an integer $p^{(2)}_i \in \{1,\ldots,b_1n_1+b_2n_2\}$ such that $\dim \pi_2(U^{(2)}_{p^{(2)}_i})=i$ and $\dim \pi_2(U^{(2)}_{p^{(2)}_i-1})=i-1$. Then
		\begin{align*}
			\sum_{i=1}^{r_2}s^{(2)}_{p^{(2)}_i}
			&=\sum_{i=1}^{r_2}s^{(2)}_{p^{(2)}_i}\left(\dim \pi_2(U^{(2)}_{p^{(2)}_i})-\dim \pi_2(U^{(2)}_{p^{(2)}_i-1})\right)\\
			&=\sum_{p=1}^{b_1n_1+b_2n_2}s^{(2)}_p\left(\dim \pi_2(U^{(2)}_p)-\dim \pi_2(U^{(2)}_{p-1})\right)\\
			&=r_2s^{(2)}_{b_1n_1+b_2n_2}-\sum_{p=1}^{b_1n_1+b_2n_2-1}(s^{(2)}_{p+1}-s^{(2)}_p)\dim \pi_2(U^{(2)}_p)\\
			&=r_2v_{n_1+n_2}-\sum_{q=1}^{n_1+n_2-1}(v_{q+1}-v_q)\dim \pi_2(U^{(2)}_{b_1a_1(q)+b_2a_2(q)})\\
			&\overset{(\ref{vsum})}{=}\sum_{q=1}^{n_1+n_2-1}\left(r_2q-(n_1+n_2)\dim \pi_2(U^{(2)}_{b_1a_1(q)+b_2a_2(q))}\right)\delta_q.
		\end{align*}
		For $p=(i-1)b_2+j\;(1 \leq i\leq n_1, 1 \leq j \leq b_2)$, we put  $s^{(1)}_p=u^{(1)}_i$ and $h'_p=e^{(1)}_i\otimes f^{(2)}_j$. Let $U^{(1)}_p$ be the subspace of $V_1\otimes W_2$ generated by $h'_1,\ldots, h'_p$. For $i=1, \ldots, r_1$,   we can find an integer $p^{(1)}_i \in \{1,\ldots,b_2n_1\}$ such that $\dim \pi_1(U^{(1)}_{p^{(1)}_i})=i$ and $\dim \pi_1(U^{(1)}_{p^{(1)}_i-1})=i-1$. Then we have 
		\begin{align*}
			\sum_{i=1}^{r_1}s^{(1)}_{p^{(1)}_i}
			=&\sum_{q=1}^{n_1+n_2-1}\left(r_1q-(n_1+n_2)\dim \pi_1(U^{(1)}_{a_1(q)b_2})\right)\delta_q
		\end{align*}
		by using (\ref{u1sum}). Let $V^{(1)}_p$ be the subspace of $V_1$ generated by $e^{(1)}_1,\ldots, e^{(1)}_p$. For $i=1,\ldots,l_1$ and for $j=1, \ldots, d^{(1)}_i$, let $p^{(1)}_{i,j}$ be the integer such that $\dim \pi_{1,i} (V^{(1)}_{p^{(1)}_{i,j}})=j$ and $\dim \pi_{1,i} (V^{(1)}_{p^{(1)}_{i,j}-1})=j-1$. Then we have
		\begin{align*}
			\sum_{j=1}^{d^{(1)}_i}u^{(1)}_{p^{(1)}_{i,j}}
			=&\sum_{q=1}^{n_1+n_2-1}\left(d^{(1)}_iq-(n_1+n_2)\dim \pi_{1,i}(V^{(1)}_{a_1(q)})\right)\delta_q
		\end{align*}
		by using (\ref{u1sum}). Let $V^{(2)}_p$ be the subspace of $V_2$ generated by $e^{(2)}_1,\ldots, e^{(2)}_p$. For $i=1,\ldots,l_2$, and for $j=1, \ldots, d^{(2)}_i$, let $p^{(2)}_{i,j}$ be the integer such that $\dim \pi_{2,i} (V^{(2)}_{p^{(2)}_{i,j}})=j$ and $\dim \pi_{2,i} (V^{(2)}_{p^{(2)}_{i,j}-1})=j-1$. Then we have
		\begin{align*}
			\sum_{j=1}^{d^{(2)}_i}u^{(2)}_{p^{(2)}_{i,j}}
			=&\sum_{q=1}^{n_1+n_2-1}\left(d^{(2)}_iq-(n_1+n_2)\dim \pi_{2,i}(V^{(2)}_{a_2(q)})\right)\delta_q
		\end{align*}
		by using (\ref{u2sum}). So we have
		\begin{align*}
			\mu^{L^{\otimes N}} (x,\lambda)
			=&-\left(\xi\sum_{i=1}^{r_1}s^{(k)}_{p^{(k)}_i}+\sum_{i=1}^{l_1}\xi^{(1)}_i\sum_{j=1}^{d^{(1)}_i}u^{(1)}_{p^{(1)}_{i,j}}+\sum_{i=1}^{l_2}\xi^{(2)}_i\sum_{j=1}^{d^{(2)}_i}u^{(2)}_{p^{(2)}_{i,j}}\right)N \\
			=&-\sum_{q=1}^{n_1+n_2-1}N\delta_q\left\{q\sum_{i=1}^{l_1}\xi^{(1)}_id^{(1)}_i+q\sum_{i=1}^{l_2}\xi^{(2)}_id^{(2)}_i-(n_1+n_2)\sum_{i=1}^{l_1}\xi^{(1)}_i\dim \pi^{(
				1)}_i(V^{(1)}_{a_1(q)})\right.\\
			&-(n_1+n_2)\sum_{i=1}^{l_2}\xi^{(2)}_i\dim \pi^{(
				2)}_i(V^{(2)}_{a_2(q)})+(r_1+r_2)q\xi \\
			&-(n_1+n_2)\xi \left(\dim \pi_1(U^{(1)}_{a_1(q)b_2}) +\dim \pi_2(U^{(2)}_{b_1a_1(q)+b_2a_2(q)}) \right) \Biggr\}.
		\end{align*}
		Hence $x$ is a properly stable point if 
		\begin{align*}
			&-q\sum_{i=1}^{l_1}\xi^{(1)}_id^{(1)}_{i+1}-q\sum_{i=1}^{l_2}\xi^{(2)}_id^{(2)}_{i+1}+(n_1+n_2)\sum_{i=1}^{l_1}\xi^{(1)}_i\dim \pi_{1,i}(V^{(1)}_{a_1(q)})+(n_1+n_2)\sum_{i=1}^{l_2}\xi^{(2)}_i\dim \pi_{2,i}(V^{(2)}_{a_2(q)}) \\
			&\quad-q\xi(r_1+r_2)+\xi(n_1+n_2)\left( \dim \pi_1(U^{(1)}_{a_1(q)b_2})+\dim \pi_2(U^{(2)}_{b_1a_1(q)+b_2a_2(q)})\right)>0
		\end{align*}
		for all $q=1,\ldots,n_1+m_2-1$.
		
		For each $q=1,\ldots,n_1+n_2-1$, let $V'_k$ be the vector subspace of $V_k$ generated by $e^{(k)}_1,\ldots,e^{(k)}_{a_k(q)}$ for $k=1,2$. We note that 
		\begin{equation}\label{qeq}
			q=\dim V'_1+\dim V'_2.
		\end{equation}
		Then $U^{(1)}_{a_1(q)b_2}=V'_1 \otimes W_2$ and $U^{(2)}_{b_1a_1(q)+b_2a_2(q)}=V'_1 \otimes W_1 \oplus V'_2 \otimes W_2$. Put
		\[
		E'_1:=\Image (V'_1 \otimes \mcO_{X_y}(-m_0) \rightarrow E_1),\;
		E'_2:=\Image (\Lambda^1_{D_y}\otimes V'_1 \otimes \mcO_{X_y}(-m_0) \oplus V'_2 \otimes \mcO_{X_y}(-m_0+\gamma) \rightarrow E_2).
		\]
		By the choice of $m_1$, we have 
		\begin{equation}\label{image}
			\pi_2(U^{(2)}_{b_1a_1(q)+b_2a_2(q)})=H^0(E'_2(m_0+m_1-\gamma)),\quad \pi_1(U^{(1)}_{a_1(q)b_2})=H^0(E'_1(m_0+m_1)).
		\end{equation}
		Put $r'_1=\rank E_1', r'_2=\rank E'_2$.  Let $\pi'_{k,i}$ be the composite $V'_k\hookrightarrow V_k \overset{\pi_{k,i}}{\rightarrow} N^{(k)}_i$ for $k=1,2$. Then we have 
		\begin{equation}\label{kerdimeq}
			\dim V'_k\leq h^0(E'_k(m_0)),\; \dim \ker \pi_{k,i}\leq h^0(F_{i+1}(E'_k)(m_0)),
		\end{equation}			
		for $k=1,2$, $1\leq i\leq l_1$ for $1\leq j\leq l_2$. So we obtain
		\begin{align*}
			&-q\xi(r_1+r_2)+\xi(n_1+n_2)\left( \dim \pi_1(U^{(1)}_{a_1(q)b_2})+\dim \pi_2(U^{(2)}_{b_1a_1(q)+b_2a_2(q)})\right)-q\sum_{i=1}^{l_1}\xi^{(1)}_id^{(1)}_{i+1}\\
			&-q\sum_{j=1}^{l_2}\xi^{(2)}_jd^{(2)}_{j+1}+(n_1+n_2)\sum_{i=1}^{l_1}\xi^{(1)}_i\dim \pi_{1,i}(V^{(1)}_{a_1(q)})+(n_1+n_2)\sum_{j=1}^{l_2}\xi^{(2)}_j\dim \pi_{2,i}(V^{(2)}_{a_2(q)})\\
			\overset{(\ref{qeq})(\ref{image})}{=}
			&\xi\Big\{-(\dim V'_1+\dim V'_2)(h^0(E_1(m_0+m_1))+h^0(E_2(m_0+m_1-\gamma)))\\
			&+(\dim V_1+\dim V_2)(h^0(E'_1(m_0+m_1))+h^0(E'_2(m_0+m_1-\gamma)))\Big\} \\
			&-(\dim V'_1+\dim V'_2)\sum_{i=1}^{l_1}\xi^{(1)}_id^{(1)}_{i+1}+(\dim V_1+\dim V_2)\sum_{i=1}^{l_1}\xi^{(1)}_i(\dim V'_1-\dim \ker \pi'_{1,i}) \\
			&-(\dim V'_1+\dim V'_2)\sum_{j=1}^{l_2}\xi^{(2)}_jd^{(2)}_{j+1}+(\dim V_1+\dim V_2)\sum_{j=1}^{l_2}\xi^{(2)}_j(\dim V'_2-\dim \ker \pi'_{2,j}) \\
			\overset{(\ref{xidef})}{=}
			&\big(\dim V_1+\dim V_2 -\sum_{i=1}^{l_1}\epsilon^{(1)}_id^{(1)}_{i+1}-\sum_{j=1}^{l_2}\epsilon^{(2)}_jd^{(2)}_{j+1}\big)\big\{-(\dim V'_1+\dim V'_2)(2rd_Xm_1+\dim V_1+\dim V_2)\\
			&+(\dim V_1+\dim V_2)((r'_1+r'_2)d_Xm_1+\chi(E'_1(m_0))+\chi(E'_2(m_0-\gamma)))\big\} \\
			&-2rd_Xm_1(\dim V'_1+\dim V'_2)\sum_{i=1}^{l_1}\epsilon^{(1)}_id^{(1)}_{i+1}+2rd_Xm_1(\dim V_1+\dim V_2)\sum_{i=1}^{l_1}\epsilon^{(1)}_i(\dim V'_1-\dim \ker \pi'_{1,i}) \\
			&-2rd_Xm_1(\dim V'_1+\dim V'_2)\sum_{j=1}^{l_2}\epsilon^{(2)}_jd^{(2)}_{j+1}+2rd_Xm_1(\dim V_1+\dim V_2)\sum_{j=1}^{l_2}\epsilon^{(2)}_j(\dim V'_2-\dim \ker \pi'_{2,j}) \\
			=
			&-2rd_Xm_1(\dim V_1+\dim V_2)\\
			&\times \Big\{\dim V'_1+\dim V'_2-\sum_{i=1}^{l_1}\epsilon^{(1)}_i(\dim V'_1-\dim \ker \pi'_{1,i})-\sum_{j=1}^{l_2}\epsilon^{(2)}_j(\dim V'_2-\dim \ker \pi'_{2,j})\Big\}\\
			&+(r'_1+r'_2)d_Xm_1(\dim V_1+\dim V_2)\big(\dim V_1+\dim V_2 -\sum_{i=1}^{l_1}\epsilon^{(1)}_id^{(1)}_{i+1}-\sum_{j=1}^{l_2}\epsilon^{(2)}_jd^{(2)}_{j+1}\big)\\
			&+(\dim V_1+\dim V_2)\Big(\dim V_1+\dim V_2 -\sum_{i=1}^{l_1}\epsilon^{(1)}_id^{(1)}_{i+1}-\sum_{j=1}^{l_2}\epsilon^{(2)}_jd^{(2)}_{j+1}\Big) \\
			&\times \big\{-(\dim V'_1+\dim V'_2)+\chi(E'_1(m_0))+\chi(E'_2(m_0-\gamma))\big\}\\
			\overset{(\ref{kerdimeq})}{\geq} 
			&(r'_1+r'_2)d_Xm_1(\dim V_1+\dim V_2)\biggl\{h^0(E_1(m_0))+h^0(E_2(m_0-\gamma)) -\sum_{i=1}^{l_1}\epsilon^{(1)}_id^{(1)}_{i+1}-\sum_{j=1}^{l_2}\epsilon^{(2)}_jd^{(2)}_{j+1}\biggr\}\\
			&-2rd_Xm_1(\dim V_1+\dim V_2)\biggl\{ h^0(E'_1(m_0))+h^0(E'_2(m_0-\gamma)) \\
			&-\sum_{i=1}^{l_1}\epsilon^{(1)}_i\big(h^0(E'_1(m_0))-h^0(F_{i+1}(E'_1)(m_0))\big)-\sum_{j=1}^{l_2}\epsilon^{(2)}_j\big(h^0(E'_2(m_0-\gamma))-h^0(F_{j+1}(E'_2)(m_0-\gamma))\big)\biggr\}\\
			&-(\dim V_1+\dim V_2)\left(\dim V_1+\dim V_2 -\sum_{i=1}^{l_1}\epsilon^{(1)}_id^{(1)}_{i+1}-\sum_{j=1}^{l_2}\epsilon^{(2)}_jd^{(2)}_{j+1}\right)\\
			&\times \left(\dim V'_1+\dim V'_2-\chi(E'_1(m_0))-\chi(E'_2(m_0-\gamma))\right)\\
			\overset{(\ref{stableineq})}{>}&0.
		\end{align*}
		Hence $x$ is a properly stable point.
	\end{proof}
	\section{Types of underlying vector bundles}
	In this subsection, we investigate types of underlying vector bundles. Take $\bm{t}=(t_i)_{1\leq i\leq 3}\in T_3, \boldsymbol{\nu}\in \mcN$ and put $D=t_1+t_2+t_3$. Let $(E_1,E_2,\phi,\nabla,l^{(1)}_*,l^{(2)}_*)$ be a $\boldsymbol{\nu}$-parabolic $\phi$-connection. We assume that $0<\alpha_{i,j}\ll 1$ for any $1\leq i, j\leq 3$ and $\gamma \gg 0$.
	
	\begin{proposition}
		For any $\boldsymbol{\alpha}$-stable $\boldsymbol{\nu}$-parabolic $\phi$-connection $(E_1,E_2,\phi,\nabla,l^{(1)}_*,l^{(2)}_*)$ of rank 3 and degree $-2$, we have
		\[
		E_1 \cong E_2 \cong \Opl \oplus \Opl(-1) \oplus \Opl(-1).
		\]
	\end{proposition}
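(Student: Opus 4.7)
The plan is to write $E_k \cong \bigoplus_{i=1}^3 \Opl(m_i^{(k)})$ with $m_1^{(k)} \geq m_2^{(k)} \geq m_3^{(k)}$ and $\sum_i m_i^{(k)} = -2$, and rule out every splitting type other than $(0,-1,-1)$ by exhibiting a destabilizing subtriple $(F_1, F_2) \subset (E_1, E_2)$, meaning $\phi(F_1) \subset F_2$ and $\nabla(F_1) \subset F_2 \otimes \Ompld$. Up to sorting, the bad splittings are those with $m_1^{(k)} \geq 1$ together with the sporadic type $(0, 0, -2)$. The central lever is that $\gamma \gg 0$ makes every subtriple with $\rank F_1 > \rank F_2$ destabilizing (Lemma \ref{stablem1}), while Lemma \ref{stablem2} handles the equal-rank case whenever $\mu(F_1) + \mu(F_2) \geq -1$.

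First I would handle $m_1^{(1)} \geq 1$. Let $L := \Opl(m_1^{(1)}) \subset E_1$. If $\phi|_L = 0 = \nabla|_L$, then $(L, 0)$ destabilizes directly via Lemma \ref{stablem1}. If $\phi|_L = 0$ but $\nabla|_L \neq 0$, then $\nabla|_L$ is $\Opl$-linear and factors through a line subbundle $L_2 \subset E_2$ of degree $\geq m_1^{(1)} - 1 \geq 0$; Lemma \ref{stablem2} then destabilizes $(L, L_2)$ since its slopes sum to at least $1$. If $\phi|_L \neq 0$, let $L_2 \subset E_2$ be the saturation of $\phi(L)$ (so $\deg L_2 \geq m_1^{(1)}$); if moreover $\nabla(L) \subset L_2 \otimes \Ompld$ then $(L, L_2)$ destabilizes as before. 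The sporadic case $(0, 0, -2)$ is treated in parallel starting from the rank-$2$ subbundle $\Opl^{\oplus 2} \subset E_1$, and the constraints on $E_2$ are obtained from the analogous analysis of its line quotients.

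The delicate subcase is $\phi|_L \neq 0$ with $\nabla(L) \not\subset L_2 \otimes \Ompld$. Here the induced homomorphism $\bar\nabla \colon L \to (E_2/L_2) \otimes \Ompld$ is $\Opl$-linear (using $\phi(L) \subset L_2$) and nonzero, so factors through a line subbundle of $E_2/L_2$; pulling back to $E_2$ gives a rank-$2$ subbundle $F_2 \subset E_2$ with $\deg F_2 \geq 2 m_1^{(1)} - 1 \geq 1$. One then studies the residuals $\bar\phi \colon E_1 \to E_2/F_2 \cong \Opl(d)$ and $\bar\nabla \colon E_1 \to \Opl(d+1)$ with $d \leq -3$. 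When both vanish, $(E_1, F_2)$ is a subtriple with $\rank F_1 = 3 > 2 = \rank F_2$ and Lemma \ref{stablem1} applies. Otherwise the $\Hom$-vanishing $H^0(\Opl(j)) = 0$ for $j < 0$ forces the kernel of the nonzero residual to give a rank-$2$ saturated subbundle $F_1' \subset E_1$ of sufficiently high degree that $(F_1', F_2)$ is a subtriple with $\mu(F_1') + \mu(F_2) \geq -1$, and Lemma \ref{stablem2} applies.

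The main obstacle is this final step: carrying out the $\Hom$-vanishing bookkeeping across all remaining bad splitting types of $E_1$, particularly those with $m_3^{(1)} \leq -3$. When the smallest summand of $E_1$ is very negative, $\bar\phi$ can fail to vanish on $\Opl(m_3^{(1)})$ and the $\nabla$-residual on the resulting rank-$2$ kernel may remain nontrivial; handling such tails requires iteratively enlarging the subtriple until the rank gap itself forces Lemma \ref{stablem1}, or exhibiting a different destabilizer built from a rank-$2$ subbundle of $E_1$ of positive degree. The resulting case analysis is routine but extensive — this is what makes the Appendix B proof computationally heavy, even though the underlying mechanism is uniformly the interplay between $\gamma \gg 0$ and $\Hom$-vanishing on $\pl$.
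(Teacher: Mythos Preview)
Your approach shares the paper's toolkit — Lemmas \ref{stablem1} and \ref{stablem2} together with $\Hom$-vanishing on $\pl$ — but organizes the casework differently. The paper classifies the splitting types of \emph{both} $E_1$ and $E_2$ into three bins (two summands $\geq 0$; top summand $\geq 1$ with the other two negative; the target type $(0,-1,-1)$) and then runs a systematic $3\times 3$ analysis on the pair. Your scheme instead starts from the top line subbundle $L\subset E_1$ and builds subtriples incrementally. Both work, and the paper's version is not conceptually deeper; it simply avoids the iterative ``enlarge the subtriple'' step you flag at the end, because once the joint type of $(E_1,E_2)$ is fixed, the correct destabilizer can be read off directly.

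There is one genuine gap in your outline: the clause ``the constraints on $E_2$ are obtained from the analogous analysis of its line quotients'' is doing too much work. When $E_1$ already has type $(0,-1,-1)$ but $E_2$ does not, your incremental construction from $L=\Opl(m_1^{(1)})$ never fires (since $m_1^{(1)}=0$), so you need a separate, \emph{dual} argument: start from the bottom line quotient $E_2\twoheadrightarrow\Opl(m_3^{(2)})$ with $m_3^{(2)}\leq -2$, take $F_2=\ker(E_2\to\Opl(m_3^{(2)}))$, and study the residuals $E_1\to\Opl(m_3^{(2)})$ and $E_1\to\Opl(m_3^{(2)}+1)$. This is exactly what the paper does in its (iii)--(i) and (iii)--(ii) cases, and it is not literally ``analogous'' to your $E_1$ analysis because the roles of $E_1$ and $E_2$ in a $\phi$-connection are asymmetric. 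A smaller point: when you invoke Lemma \ref{stablem1} for pairs like $(E_1,F_2)$, note that $\deg E_1=-2$ violates the stated non-negative degree hypothesis; the conclusion still holds because $\gamma\gg 0$ dominates once the degrees are bounded below, but you should say so (the paper itself just asserts ``breaks the stability'' in these cases rather than citing the lemma).
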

	\begin{proof}
		Take decompositions
		\begin{align*}
			E_1&=\Opl(l_1)\oplus\Opl(l_2)\oplus \Opl(l_3)  \qquad(l_1+l_2+l_3=-2,\: l_1\geq l_2 \geq l_3)\\
			E_2&=\Opl(m_1)\oplus\Opl(m_2)\oplus \Opl(m_3) \qquad(m_1+m_2+m_3=-2,\: m_1\geq m_2 \geq m_3).
		\end{align*}
		If a triple of integers $(n_1,n_2,n_3)$ satisfies $n_1+n_2+n_3=-2$ and $n_1\geq n_2\geq n_3$, then $(n_1,n_2,n_3)$ satisfies one of the following conditions:
		\begin{itemize}
			\setlength{\itemsep}{0cm}
			\item [(i)] $n_1\geq n_2\geq 0>n_3$,
			\item [(ii)] $n_1\geq 1, \; 0> n_2\geq n_3$,
			\item [(iii)] $n_1=0,\; n_2=n_3=-1$.
		\end{itemize}

		If $(l_1,l_2,l_3)$ and $(m_1,m_2,m_3)$ satisfy the condition (i), then we have $\phi(\Opl(l_1) \oplus \Opl(l_2)) \subset \Opl(m_1) \oplus \Opl(m_2)$. The composite 
		\[
		\Opl(l_1) \oplus \Opl(l_2) \rightarrow E_1 \overset{\nabla}{\longrightarrow} E_2 \otimes \Omega_{\pl}^1(D)\rightarrow \Opl(m_3) \otimes \Omega_{\pl}^1(D) \cong \Opl(m_3+1)
		\]
		becomes a homomorphism and must be zero since $m_3+1=-1-m_1-m_2 \leq -1$. So we have $\nabla(\Opl(l_1) \oplus \Opl(l_2)) \subset (\Opl(m_1) \oplus \Opl(m_2)) \otimes \Omega_{\pl}^1(D)$. Since $\mu(\Opl(l_1)\oplus \Opl(l_2))+\mu(\Opl(m_1)\oplus \Opl(m_2)) \geq 0$, the pair $(\Opl(l_1)\oplus \Opl(l_2), \Opl(m_1)\oplus \Opl(m_2))$ breaks the stability of $(E_1,E_2,\phi,\nabla,\l^{(1)}_*,l^{(2)}_*)$.
		
		Suppose that $(l_1,l_2,l_3)$ satisfies (i) and $(m_1,m_2,m_3)$ satisfies (ii). Then the pair  $(\Opl(l_1) \oplus \Opl(l_2), \Opl(m_1)\oplus\Opl(m_2))$ breaks the stability of $(E_1,E_2,\phi,\nabla,\l^{(1)}_*,l^{(2)}_*)$.
		
		Suppose that $(l_1,l_2,l_3)$ satisfies (i) and $(m_1,m_2,m_3)$ satisfies (iii). Then we have $\phi(\Opl(l_1)\oplus \Opl(l_2)) \subset \Opl(m_1)$.  If $l_1\geq 1$, then the pair $(\Opl(l_1),\Opl(m_1))$ breaks the stability.  If $l_1=0$, then we have $l_2=0$. Put $F_1=\Ker \phi |_{\Opl(l_1)\oplus \Opl(l_2)}$. Then the composite 
		\[
		f \colon F_1\longrightarrow E_1 \overset{\nabla}{\longrightarrow} E_2 \otimes \Omega^1_C(D()
		\]
		becomes a homomorphism. Put $F_2 =(\Image f) \otimes (\Omega^1_C(D))^\vee$. The pair  $(F_1, F_2)$ breaks the stability.
		
		Suppose that $(l_1,l_2,l_3)$ satisfies (ii) and $(m_1,m_2,m_3)$ satisfies (i). If $l_1>m_1$, then the composite $\Opl(l_1) \rightarrow E_1\overset{\nabla}{\rightarrow} E_2 \otimes \Omega_{\pl}^1(D)$ becomes a homomorphism. Put $F_2=(\Image \nabla|_{\Opl(l_1)}) \otimes (\Omega_{\pl}^1(D))^\vee$, then $(\Opl(l_1), F_2)$ breaks the stability. If $l_1 \leq m_1$, then we can see that the pair  $(\Opl(l_1)\oplus \Opl(l_2),\Opl(m_1)\oplus \Opl(m_2))$ breaks the stability because 
		\[
		\mu(\Opl(l_1)\oplus \Opl(l_2))+\mu(\Opl(m_1)\oplus \Opl(m_2))=\frac{l_1+l_2-2-m_3}{2}\geq \frac{1}{2}.
		\] 
		
		If $(l_1,l_2,l_3)$ satisfies (ii) and $(m_1,m_2,m_3)$ satisfies (ii) or (iii), then $(\Opl(l_1),\Opl(m_1))$ breaks the stability.
		
		Suppose that $(l_1,l_2,l_3)$ satisfies (iii) and $(m_1,m_2,m_3)$ satisfies (i), then $m_3=-2-m_1-m_2\leq -2$. If $m_3<-2$, then the pair $(E_1, \Opl(m_1)\oplus \Opl(m_2))$ breaks the stability of $(E_1,E_2,\phi,\nabla,\l^{(1)}_*,l^{(2)}_*)$. If $m_3=-2$ , then $m_1=m_2=0$ and $\phi(\Opl(l_2) \oplus \Opl(l_3)) \subset \Opl(m_1) \oplus \Opl(m_2)$. Moreover the composite
		\[
		f: \Opl(l_2) \oplus \Opl(l_3) \rightarrow E_1 \overset{\nabla}{\longrightarrow} E_2 \otimes \Omega_{\pl}^1(D) \rightarrow \Opl(m_3)\otimes \Omega_{\pl}^1(D)
		\]
		becomes a homomorphism. Let $F_1 = \Ker f$. If $F_1 =\Opl(l_2) \oplus \Opl(l_3)$, then the pair $(E_1, \Opl(m_1)\oplus \Opl(m_2))$ breaks the stability. If $F_1 \neq \Opl(l_2) \oplus \Opl(l_3)$, then we have $F_1\cong \Opl(-1)$ since $\Opl(l_2) \cong \Opl(l_3)\cong \Opl(m_3) \otimes \Omega_{\pl}^1(D) \cong \Opl(-1)$. So the pair  $(\Opl(l_1)\oplus F_1, \Opl(m_1)\oplus \Opl(m_2))$ breaks the stability.
		
		Suppose that $(l_1,l_2,l_3)$ satisfies (iii) and $(m_1,m_2,m_3)$ satisfies (ii). If $m_2<-1$, then the pair  $(\Opl(l_1), \Opl(m_1))$ breaks the stability. If $m_2=-1$ and $m_3<-2$, then the pair $(E_1, \Opl(m_1)\oplus \Opl(m_2))$ breaks the stability. If $m_2=-1$ and $m_3=-2$, then we have $\phi(\Opl(l_2) \oplus \Opl(l_3)) \subset \Opl(m_1) \oplus \Opl(m_2)$ and so the composite
		\[
		f: \Opl(l_2) \oplus \Opl(l_3) \rightarrow E_1 \overset{\nabla}{\longrightarrow} E_2 \otimes \Omega_{\pl}^1(D) \rightarrow \Opl(m_3)\otimes \Omega_{\pl}^1(D)
		\]
		becomes a homomorphism. Let $F_1 = \Ker f$. If $F_1 =\Opl(l_2) \oplus \Opl(l_3)$, then the pair $(E_1, \Opl(m_1)\oplus \Opl(m_2))$ breaks the stability. If $F_1 \neq \Opl(l_2) \oplus \Opl(l_3)$, then we have $F_1\cong \Opl(-1)$ since $\Opl(l_2) \cong \Opl(l_3)\cong \Opl(m_3) \otimes \Omega_{\pl}^1(D) \cong \Opl(-1)$. So the pair $(\Opl(l_1)\oplus F_1, \Opl(m_1)\oplus \Opl(m_2))$ breaks the stability.
		
		Hence we have $E_1 \cong E_2 \cong \Opl \oplus \Opl(-1) \oplus \Opl(-1)$.
	\end{proof}
	
	\section{Smoothness of moduli space of parabolic $\phi$-connections}
	
	Let $\tilde{t}_i \subset \pl \times T_3 \times \mcN$ be the section defined by 
	\[
	T_3 \times \mcN \hookrightarrow \pl \times T_3 \times \mcN; \quad ((t_j)_{1\leq j\leq 3}, (\nu_{m,n})^{1\leq m\leq 3}_{0\leq n \leq 2}) \mapsto (t_i, (t_j)_{1\leq j\leq 3}, (\nu_{m,n})^{1\leq m\leq 3}_{0\leq n \leq 2}) 
	\]
	for $i=1,2,3$ and $D(\tilde{\bm{t}})=\tilde{t}_1+\tilde{t}_2+\tilde{t}_3$ be a relative effective Cartier divisor for the projection  $\pl \times T_3 \times \mcN \rightarrow T_3 \times \mcN$. For each $1\leq i\leq 3$ and $0\leq j\leq 2$, let 
	\[
	\tilde{\nu}_{i,j}:=\{(\nu_{i,j}, (t_k)_k, (\nu_{m,n})_{m,n})\} \subset \mathbb{C}\times T_3 \times \mcN.
	\]
	\begin{proposition}
		$\overline{M^{\boldsymbol{\alpha}}_3}(0,0,2)$ is smooth over $T_3 \times \mcN$.
	\end{proposition}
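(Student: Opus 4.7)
The plan is to establish fiberwise smoothness of the morphism $\overline{M^{\boldsymbol{\alpha}}_3}(0,0,2)\to T_3\times\mcN$ via the vanishing of an obstruction cohomology group, and then to promote this to relative smoothness by the infinitesimal lifting criterion. Following the parabolic $\Lambda^1_D$-triple framework of Section 2, to each $\boldsymbol{\alpha}$-stable geometric point $\mathbf{E}=(E_1,E_2,\phi,\nabla,l^{(1)}_*,l^{(2)}_*)$ of the fiber over $(\bm{t},\boldsymbol{\nu})$, I attach a two-term complex $\mathcal{F}^\bullet_\mathbf{E}$ on $\pl$: its degree-$0$ term consists of pairs $(f_1,f_2)\in\mathcal{E}nd(E_1)\oplus\mathcal{E}nd(E_2)$ of endomorphisms preserving the parabolic flags $l^{(1)}_*, l^{(2)}_*$; its degree-$1$ term consists of pairs $(g,h)$ with $g\colon E_1\to E_2$ and $h\colon E_1\to E_2\otimes\Ompld$ satisfying the compatibilities with filtrations and residue data that define a parabolic $\phi$-connection infinitesimally; and the differential sends $(f_1,f_2)$ to $(f_2\phi-\phi f_1,\,(f_2\otimes\id)\nabla-\nabla f_1)$. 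Standard deformation theory then identifies the Zariski tangent space to $\overline{M^{\boldsymbol{\alpha}}_3}(\bm{t},\boldsymbol{\nu})$ at $\mathbf{E}$ with $\mathbb{H}^1(\mathcal{F}^\bullet_\mathbf{E})$ and places obstructions in $\mathbb{H}^2(\mathcal{F}^\bullet_\mathbf{E})$.

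Next, $\boldsymbol{\alpha}$-stability together with Lemma \ref{scalar} applied to the associated parabolic $\Lambda^1_D$-triple forces $\mathbb{H}^0(\mathcal{F}^\bullet_\mathbf{E})=\mathbb{C}\cdot(\id_{E_1},\id_{E_2})$. A Riemann--Roch computation on $\pl$, using $E_1\cong E_2\cong\Opl\oplus\Opl(-1)\oplus\Opl(-1)$ from Proposition \ref{bdlform} together with the explicit parabolic degree contributions at $t_1,t_2,t_3$, will give $\chi(\mathcal{F}^\bullet_\mathbf{E})=-1$; this matches the expected fiber dimension $2r^2(g-1)+nr(r-1)+2=2$ inherited from Inaba's formula. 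Hence fiberwise smoothness of the expected dimension becomes equivalent to the vanishing $\mathbb{H}^2(\mathcal{F}^\bullet_\mathbf{E})=0$.

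The main obstacle is this vanishing, which I plan to achieve via Serre duality on $\pl$. Under Serre duality, $\mathbb{H}^2(\mathcal{F}^\bullet_\mathbf{E})$ is dual to the $\mathbb{H}^0$ of the appropriate Serre-dual complex; its nonzero global sections would correspond to nonzero pairs of homomorphisms $E_2\to E_1$ intertwining $\phi$ and $\nabla$ in a transposed sense, with parabolic conditions dualized accordingly. The plan is to show that any such pair produces a pair of $\phi$- and $\nabla$-stable subbundles $(F_1,F_2)\subsetneq(E_1,E_2)$ that violates the $\boldsymbol{\alpha}$-stability inequality, contradicting the standing assumption on $\mathbf{E}$. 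The case analysis runs over $\rank\phi\in\{1,2,3\}$; the degenerate cases $\rank\phi<3$, where the dual complex picks up additional factors along $\ker\phi$ and $\mathrm{coker}\,\phi$, are the most delicate, and I expect to treat them using Lemmas \ref{stablem1}, \ref{stablem2}, \ref{zerostab} together with Proposition \ref{bdlform}, exactly in the spirit of the destabilizing constructions already carried out in Section 3.

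Finally, to pass from fiberwise smoothness of constant dimension $2$ to relative smoothness over $T_3\times\mcN$, I apply the infinitesimal lifting criterion relatively: given a square-zero thickening $S\hookrightarrow\tilde{S}$ of Artinian local schemes over $T_3\times\mcN$ and an $S$-family of $\boldsymbol{\alpha}$-stable parabolic $\phi$-connections, the obstruction to lifting to $\tilde{S}$ still lies in the group $\mathbb{H}^2(\mathcal{F}^\bullet_\mathbf{E})$ attached to the restriction $\mathbf{E}$ of the family to a geometric fiber, and this group vanishes by the preceding step. Combining this with the finite-type statement from Theorem \ref{MT} and smoothness of $T_3\times\mcN$ itself gives the desired relative smoothness. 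The substantive content is thus concentrated entirely in the obstruction vanishing step of the third paragraph.
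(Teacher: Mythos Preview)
Your approach differs substantially from the paper's. The paper verifies the infinitesimal lifting criterion by direct computation: given a square-zero extension $A\to A/I$ of Artinian local $T_3\times\mcN$-algebras and a family over $A/I$, it writes $\phi$ and $\nabla$ in matrix form via $E_1\cong E_2\cong\Opl\oplus\Opl(-1)^{\oplus 2}$ (Proposition~\ref{bdlform}), brings them toward the normal forms of Section~\ref{nfsec}, and then explicitly exhibits lifts to $A$ of each matrix entry and of the parabolic flags, with separate calculations for $\rank(\phi\otimes k)=2$ and $=1$ (the case $\rank=3$ being covered by \cite{In}). No deformation complex or hypercohomology appears.

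Your cohomological plan is more conceptual, but the step you yourself identify as the substantive content---the Serre-duality argument for $\mathbb{H}^2=0$---has a genuine gap. For genuine parabolic connections, obstruction vanishing rests on a trace pairing that renders the deformation complex essentially self-dual, so that $\mathbb{H}^2$ is controlled by $\mathbb{H}^0$. For parabolic $\phi$-connections the degree-$0$ term lives in $\End(E_1)\oplus\End(E_2)$ while the degree-$1$ term lives in $\Hom(E_1,E_2)\oplus\Hom(E_1,E_2)\otimes\Ompld$, and no such self-duality is available when $\phi$ is not invertible. Your identification of the Serre-dual $\mathbb{H}^0$ with certain maps $E_2\to E_1$ is correct in outline, but the assertion that a nonzero such map produces a destabilizing subpair $(F_1,F_2)\subset(E_1,E_2)$ is unjustified: a morphism $E_2\to E_1$ does not directly yield subbundles preserved by $\phi\colon E_1\to E_2$ and by $\nabla$, and the asymmetric $\gamma$-weighting in the stability condition (which penalizes $\rank F_2$ but not $\rank F_1$) further blocks any naive image/kernel argument. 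This is not a routine detail; it is precisely where the difficulty of the $\phi$-connection case resides, and the paper's choice of an explicit hands-on lift construction likely reflects that the duality route does not go through cleanly here.
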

	\begin{proof}
		Let $A$ be an artinian local ring with the residue field $A/\mfm=k$ and $I$ be an ideal of $A$ such that $\mfm I=0$. Let  $\Spec A \rightarrow T_3\times \mcN$ be a morphism and  $t_i \in \pl_{A}, \nu_{i,j}\in A$ be the elements obtained by the pullback of the sections $\tilde{t}_i, \tilde{\nu}_{i,j}$, respectively. By the definition of $\mcN$, we have 
		\begin{equation}\label{eigrel}
			\nu_{i,0}+\nu_{i,1}+\nu_{i,2}=2\res_{t_i}(\tfrac{dz}{z-t_3}).
		\end{equation}
		We take an open subset $U\subset \mathbb{P}^1_A$ such that $ U\cong \Spec A[z]$ and $t_1,t_2,t_3 \in U$. We show that 
		\begin{equation}\label{smsurj}
			\overline{M^{\boldsymbol{\alpha}}_3}(0,0,2)(A)\longrightarrow \overline{M^{\boldsymbol{\alpha}}_3}(0,0,2)(A/I)
		\end{equation}
		is surjective. Put $K:=\Omega_{\pl_{A/I}/(A/I)}(D(\tilde{\bm{t}})_{A/I})$ and take $(E_1,E_2,\phi,\nabla,l^{(1)}_*,l^{(2)}_*) \in \overline{M^{\boldsymbol{\alpha}}_3}(0,0,2)(A/I)$.
		Then $E_1\cong E_2 \cong \mcO_{\pl_{A/I}}\oplus \mcO_{\pl_{A/I}}(-1)\oplus\mcO_{\pl_{A/I}}(-1)$. The homomorphism $\phi$ can be written by the form
		\[
		\phi=
		\begin{pmatrix}
			\phi_{11}&\phi_{12}&\phi_{13} \\
			0&\phi_{22}&\phi_{23} \\
			0&\phi_{32}&\phi_{33}
		\end{pmatrix},
		\]
		where $\phi_{11}, \phi_{22}, \phi_{23}, \phi_{32}, \phi_{33} \in H^0(\mcO_{\pl_{A/I}})\cong A/I$ and $\phi_{12}, \phi_{13} \in H^0(\mcO_{\pl_{A/I}}(1))$. By Lemma \ref{zerostab}, $\phi_{11}$ is a unit, so we may assume that $\phi_{12}=\phi_{13}=0$. Then $\nabla$ can be written  by
		\[
		\quad \nabla=\phi \otimes d+
		\begin{pmatrix}
			0&0&0 \\
			0&\phi_{22}&\phi_{23} \\
			0&\phi_{32}&\phi_{33}
		\end{pmatrix}\frac{dz}{z-t_3}
		+
		\begin{pmatrix}
			\omega_{11}&\omega_{12}&\omega_{13}\\
			\omega_{21}&\omega_{22}&\omega_{23}\\
			\omega_{31}&\omega_{32}&\omega_{33}
		\end{pmatrix}, 
		\]
		where $\omega_{21}, \omega_{31} \in  H^0(K(-1))\cong A/I$, $\omega_{11},\omega_{22},\omega_{23},\omega_{32},\omega_{33}\in H^0(K)$, and $\omega_{12},\omega_{13} \in H^0(K(1))$. Taking decompositions $E_1\cong E_2 \cong \mcO_{\pl_{A/I}}\oplus \mcO_{\pl_{A/I}}(-1)\oplus\mcO_{\pl_{A/I}}(-1)$ well, we may assume that  $\omega_{11}=\omega_{31}=0$ and $\res_{t_i}\omega_{21}\in (A/I)^\times$ for any $i=1,2,3$.  The smoothness of the map $M^{\boldsymbol{\alpha}}_3(0,0,2)\rightarrow T_3 \times \mcN$ is proved in \cite{In}, which means the map (\ref{smsurj}) is surjective when $\wedge^3\phi \notin \mfm/I$. So we consider the case $\wedge^3\phi \in \mfm/I$.

		Assume that $\rank \phi \otimes \id_k=2$. Then applying certain automorphisms of $E_1$ and $E_2$, we may assume that $\phi\otimes \id_k$ and $\nabla\otimes \id_k$ have the form (\ref{rk2form}). Then we may also assume that $\phi_{11}=\phi_{33}=1$ and $\phi_{23}=\phi_{32}=0$ and $\omega_{23}=0$. We note that $\phi_{22}\in \mfm/I$. In the same way of the proof Lemma \ref{detzero}, we obtain $|\res_{t_i}\nabla-\lambda\phi_{t_i}|=(\wedge^3 \phi_{t_i})(\nu_{i,0}-\lambda)(\nu_{i,1}-\lambda)(\nu_{i,2}-\lambda)$. By comparing the coefficients on both sides and using (\ref{eigrel}), we have
		\begin{equation}\label{rk2coe2}
			\omega_{22}(t_i)+\phi_{22}\omega_{33}(t_i)=0,
		\end{equation}
		\begin{equation}\label{rk2coe1}
			\omega_{22}(t_i)\omega_{33}(t_i)-\omega_{21}(t_i)\omega_{12}(t_i)=\phi_{22}(\nu_{i,0}\nu_{i,1}+\nu_{i,0}\nu_{i,2}+\nu_{i,1}\nu_{i,2}-(\res_{t_i}(\tfrac{dz}{z-t_3}))^2), 
		\end{equation}
		\begin{equation}\label{rk2coe0}
			-\omega_{21}(t_i)(\omega_{12}(t_i)(\omega_{33}(t_i)+\res_{t_i}(\tfrac{dz}{z-t_3}))-\omega_{13}(t_i)\omega_{32}(t_i))=\phi_{22}\nu_{i,0}\nu_{i,1}\nu_{i,2},
		\end{equation}
		for each $i=1,2,3$, where $\omega_{ij}(t_m):=\res_{t_m}\omega_{ij}$. 
		From the form (\ref{rk2form}), we have $\omega_{13}(t_i) \in (A/I)^\times$ and $\omega_{32}(t_j) \in (A/I)^\times$ for $j\neq i$. Put
		\[
		v^{(1)}_{i,2}
		=
		\begin{pmatrix}
			\phi_{22}\omega_{13}(t_i)(\omega_{33}(t_i)+\res_{t_i}(\tfrac{dz}{z-t_3})-(\nu_{i,0}+\nu_{i,1}))\\
			\omega_{13}(t_i)\omega_{21}(t_i)\\
			\phi_{22}(\omega_{33}(t_i)+\res_{t_i}(\tfrac{dz}{z-t_3})-\nu_{i,0})(\omega_{33}(t_i)+\res_{t_i}(\tfrac{dz}{z-t_3})-\nu_{i,1})
		\end{pmatrix}, \;
		v^{(1)}_{i,1}
		=
		\begin{pmatrix}
			\omega_{13}(t_i)\\
			0\\
			\omega_{33}(t_i)+\res_{t_i}(\tfrac{dz}{z-t_3})-\nu_{i,0}
		\end{pmatrix},
		\]
		\[
		v^{(2)}_{i,2}
		=
		\begin{pmatrix}
			\omega_{13}(t_i)(\omega_{33}(t_i)+\res_{t_i}(\tfrac{dz}{z-t_3})-(\nu_{i,0}+\nu_{i,1}))\\
			\omega_{13}(t_i)\omega_{21}(t_i)\\
			(\omega_{33}(t_i)+\res_{t_i}(\tfrac{dz}{z-t_3})-\nu_{i,0})(\omega_{33}(t_i)+\res_{t_i}(\tfrac{dz}{z-t_3})-\nu_{i,1})
		\end{pmatrix},\;
		v^{(2)}_{i,1}
		=
		\begin{pmatrix}
			\omega_{13}(t_i)\\
			0\\
			\omega_{33}(t_i)+\res_{t_i}(\tfrac{dz}{z-t_3})-\nu_{i,0}
		\end{pmatrix}
		\]
		and 
		\[
		v^{(1)}_{j,2}
		=
		\begin{pmatrix}
			(\omega_{22}(t_j)+\phi_{22}(\res_{t_i}(\tfrac{dz}{z-t_3})-\nu_{j,2}))(\omega_{33}(t_j)+\res_{t_i}(\tfrac{dz}{z-t_3})-\nu_{j,2})\\
			-\omega_{21}(t_j)(\omega_{33}(t_j)+\res_{t_i}(\tfrac{dz}{z-t_3})-\nu_{i,2})\\
			\omega_{21}(t_j)\omega_{32}(t_j)
		\end{pmatrix}, \;
		v^{(1)}_{j,1}
		=
		\begin{pmatrix}
			-\phi_{22}\nu_{j,0}\\
			\omega_{21}(t_j)\\
			0
		\end{pmatrix},
		\]
		\[
		v^{(2)}_{j,2}
		=
		\begin{pmatrix}
			(\omega_{22}(t_j)+\phi_{22}(\res_{t_i}(\tfrac{dz}{z-t_3})-\nu_{j,2}))(\omega_{33}(t_j)+\res_{t_i}(\tfrac{dz}{z-t_3})-\nu_{j,2})\\
			-\phi_{22}\omega_{21}(t_j)(\omega_{33}(t_j)+\res_{t_i}(\tfrac{dz}{z-t_3})-\nu_{j,2})\\
			\omega_{21}(t_j)\omega_{32}(t_j)
		\end{pmatrix}, \;
		v^{(2)}_{j,1}
		=
		\begin{pmatrix}
			-\nu_{j,0}\\
			\omega_{21}(t_j)\\
			0
		\end{pmatrix} 
		\]
		for $j\neq i$. Then we can see that 
		\[
		l^{(1)}_{j,2}=(A/I)v^{(1)}_{j,2}, \quad l^{(1)}_{j,1}=(A/I)v^{(1)}_{j,1}+(A/I)v^{(1)}_{j,2}, \quad l^{(2)}_{j,2}=(A/I)v^{(2)}_{j,2}, \quad l^{(2)}_{j,1}=(A/I)v^{(2)}_{j,1}+(A/I)v^{(2)}_{j,2}
		\]
		for any $j=1,2,3$ by the conditions $\phi_{t_i}(l^{(1)}_{i,j})\subset l^{(2)}_{i,j}$, $(\res_{t_i}\nabla-\nu_{i,j}\phi_{t_i})(l^{(1)}_{i,j})\subset l^{(2)}_{i,j+1}$ and the relations (\ref{rk2coe2}), (\ref{rk2coe1}), (\ref{rk2coe0}).
		We take lifts $\tilde{\phi}_{22} \in A, \tilde{\omega}_{21}\in H^0(\Omega^1_{\pl_{A}/A}(D(\bm{t})_A)(-1)), \tilde{\omega}_{33}\in H^0(\Omega^1_{\pl_{A}/A}(D(\bm{t})_A))$ and $\tilde{\omega}_{13}^{(i)}\in A^\times$ of $\phi_{22}, \omega_{21},\omega_{33}$ and $\omega_{13}(t_i)$, respectively. Put $\tilde{\omega}_{22}:=-\tilde{\phi}_{22}\tilde{\omega}_{33}$ and let $\tilde{\omega}_{12}\in H^0(\Omega^1_{\pl_{A}/A}(D(\bm{t})_A)(1))$ be a lift of $\omega_{12}$ satisfying 
		\[
		\tilde{\omega}_{21}(t_i)\tilde{\omega}_{12}(t_i)=\tilde{\omega}_{22}\tilde{\omega}_{33}-\tilde{\phi}_{22}(\nu_{i,0}\nu_{i,1}+\nu_{i,0}\nu_{i,2}+\nu_{i,1}\nu_{i,2}-(\res_{t_i}(\tfrac{dz}{z-t_3}))^2).
		\]
		Then we can find a lift $\tilde{\omega}_{32} \in H^0(\Omega^1_{\pl_{A}/A}(D(\bm{t})_A))$ of $\omega_{32}$ satisfying 
		\[
		\tilde{\omega}_{21}(t_i)(\tilde{\omega}_{12}(t_i)(\tilde{\omega}_{33}(t_i)+\res_{t_i}(\tfrac{dz}{z-t_3}))-\tilde{\omega}_{13}^{(i)}\tilde{\omega}_{32}(t_i))=\tilde{\phi}_{22}\nu_{i,0}\nu_{i,1}\nu_{i,2}.
		\]
		Let $\tilde{\omega}_{13}$ be the element of $H^0(\Omega^1_{\pl_{A}/A}(D(\bm{t})_A)(1))$ satisfying
		\[
		-\tilde{\omega}_{21}(t_j)(\tilde{\omega}_{12}(t_j)(\tilde{\omega}_{33}(t_i)+\res_{t_i}(\tfrac{dz}{z-t_3}))-\tilde{\omega}_{13}(t_j)\tilde{\omega}_{32}(t_j))=\tilde{\phi}_{22}\nu_{j,0}\nu_{j,1}\nu_{j,2}.
		\]
		for $j\neq i$ and $\tilde{\omega}_{13}(t_i)=\tilde{\omega}_{13}^{(i)}$.  Put
		\[
		\tilde{\phi}=
		\begin{pmatrix}
			1&0&0 \\
			0&\tilde{\phi}_{22}&0 \\
			0&0&1
		\end{pmatrix},
		\quad \tilde{\nabla}=\tilde{\phi} \otimes d+
		\begin{pmatrix}
			0&0&0 \\
			0&\tilde{\phi}_{22}&0 \\
			0&0&1
		\end{pmatrix}\frac{dz}{z-t_3}
		+
		\begin{pmatrix}
			0&\tilde{\omega}_{12}&\tilde{\omega}_{13}\\
			\tilde{\omega}_{21}&\tilde{\omega}_{22}&0\\
			0&\tilde{\omega}_{32}&\tilde{\omega}_{33}
		\end{pmatrix}, 
		\]
		\[
		\tilde{v}^{(1)}_{i,2}
		=
		\begin{pmatrix}
			\tilde{\phi}_{22}\tilde{\omega}_{13}(t_i)(\tilde{\omega}_{33}(t_i)+\res_{t_i}(\tfrac{dz}{z-t_3})-(\nu_{i,0}+\nu_{i,1}))\\
			\tilde{\omega}_{13}(t_i)\tilde{\omega}_{21}(t_i)\\
			\tilde{\phi}_{22}(\tilde{\omega}_{33}(t_i)+\res_{t_i}(\tfrac{dz}{z-t_3})-\nu_{i,0})(\tilde{\omega}_{33}(t_i)+\res_{t_i}(\tfrac{dz}{z-t_3})-\nu_{i,1})
		\end{pmatrix}, \;
		\tilde{v}^{(1)}_{i,1}
		=
		\begin{pmatrix}
			\tilde{\omega}_{13}(t_i)\\
			0\\
			\tilde{\omega}_{33}(t_i)+\res_{t_i}(\tfrac{dz}{z-t_3})-\nu_{i,0}
		\end{pmatrix},
		\]
		\[
		\tilde{v}^{(2)}_{i,2}
		=
		\begin{pmatrix}
			\tilde{\omega}_{13}(t_i)(\tilde{\omega}_{33}(t_i)+\res_{t_i}(\tfrac{dz}{z-t_3})-(\nu_{i,0}+\nu_{i,1}))\\
			\tilde{\omega}_{13}(t_i)\tilde{\omega}_{21}(t_i)\\
			(\tilde{\omega}_{33}(t_i)+\res_{t_i}(\tfrac{dz}{z-t_3})-\nu_{i,0})(\tilde{\omega}_{33}(t_i)+\res_{t_i}(\tfrac{dz}{z-t_3})-\nu_{i,1})
		\end{pmatrix},\;
		\tilde{v}^{(2)}_{i,1}
		=
		\begin{pmatrix}
			\tilde{\omega}_{13}(t_i)\\
			0\\
			\tilde{\omega}_{33}(t_i)+\res_{t_i}(\tfrac{dz}{z-t_3})-\nu_{i,0}
		\end{pmatrix}
		\]
		and 
		\[
		\tilde{v}^{(1)}_{j,2}
		=
		\begin{pmatrix}
			(\tilde{\omega}_{22}(t_j)+\tilde{\phi}_{22}(\res_{t_i}(\tfrac{dz}{z-t_3})-\nu_{j,2}))(\tilde{\omega}_{33}(t_j)+\res_{t_i}(\tfrac{dz}{z-t_3})-\nu_{j,2})\\
			-\tilde{\omega}_{21}(t_j)(\tilde{\omega}_{33}(t_j)+\res_{t_i}(\tfrac{dz}{z-t_3})-\nu_{i,2})\\
			\tilde{\omega}_{21}(t_j)\tilde{\omega}_{32}(t_j)
		\end{pmatrix}, \;
		\tilde{v}^{(1)}_{j,1}
		=
		\begin{pmatrix}
			-\tilde{\phi}_{22}\nu_{j,0}\\
			\tilde{\omega}_{21}(t_j)\\
			0
		\end{pmatrix},
		\]
		\[
		\tilde{v}^{(2)}_{j,2}
		=
		\begin{pmatrix}
			(\tilde{\omega}_{22}(t_j)+\tilde{\phi}_{22}(\res_{t_i}(\tfrac{dz}{z-t_3})-\nu_{j,2}))(\tilde{\omega}_{33}(t_j)+\res_{t_i}(\tfrac{dz}{z-t_3})-\nu_{j,2})\\
			-\tilde{\phi}_{22}\tilde{\omega}_{21}(t_j)(\tilde{\omega}_{33}(t_j)+\res_{t_i}(\tfrac{dz}{z-t_3})-\nu_{j,2})\\
			\tilde{\omega}_{21}(t_j)\tilde{\omega}_{32}(t_j)
		\end{pmatrix}, \;
		\tilde{v}^{(2)}_{j,1}
		=
		\begin{pmatrix}
			-\nu_{j,0}\\
			\tilde{\omega}_{21}(t_j)\\
			0
		\end{pmatrix} 
		\]
		for $j\neq i$. Let $\tilde{l}^{(m)}_{j,2}=A\tilde{v}^{(m)}_{j,2} \subset A^{\oplus 3}$ and $ \tilde{l}^{(m)}_{j,1}=A\tilde{v}^{(m)}_{j,1}+A\tilde{v}^{(m)}_{j,2} \subset A^{\oplus 3}$ for $m=1,2$ and  $j=1,2,3$. Then we can see that $A^{\oplus 3}/l^{(m)}_{j,n}$ is flat over $A$ and $(\res_{t_j}\tilde{\nabla}-\nu_{j,n}\tilde{\phi}_{t_j})(l^{(1)}_{j,n})\subset l^{(2)}_{j,n+1}$ for any $j=1,2,3$ and $n=0,1,2$ by the way of taking lifts $\tilde{\omega}_{12},\tilde{\omega}_{13},\tilde{\omega}_{22},\tilde{\omega}_{32}$. So $\tilde{\phi}, \tilde{\nabla}, \tilde{l}^{(1)}_{i,j}$ and $ \tilde{l}^{(2)}_{i,j}$ are desire lifts.
		
		Next we consider the case $\rank \phi\otimes \id_k=1$. Then applying certain automorphisms of $E_1$ and $E_2$, we may assume that $\phi\otimes \id_k$ and $\nabla \otimes \id_k$ have the form (\ref{rk1form}). In particular, we may assume that $\omega_{32}(t_i)\in (A/I)^\times$. In the same way as the proof Lemma \ref{detzero}, we also obtain $|\res_{t_i}\nabla-\lambda\phi_{t_i}|=(\wedge^3 \phi)(\nu_{i,0}-\lambda)(\nu_{i,1}-\lambda)(\nu_{i,2}-\lambda)$, and by comparing the coefficients on both sides and using (\ref{eigrel}), we have
		\begin{equation}\label{rk1coe2}
			\phi_{22}\omega_{33}(t_i)+\phi_{33}\omega_{22}(t_i)-\phi_{23}\omega_{32}(t_i)-\phi_{32}\omega_{23}(t_i)=0,
		\end{equation}
		\begin{equation}\label{rk1coe1}
			\begin{split}
				&(\omega_{22}(t_i)\omega_{33}(t_i)-\omega_{23}(t_i)\omega_{32}(t_i))-\omega_{21}(t_i)(\omega_{12}(t_i)\phi_{33}-\omega_{13}(t_i)\phi_{32})\\
				&\quad=(\phi_{22}\phi_{33}-\phi_{23}\phi_{32})(\nu_{i,0}\nu_{i,1}+\nu_{i,0}\nu_{i,2}+\nu_{i,1}\nu_{i,2}-(\res_{t_i}(\tfrac{dz}{z-t_3}))^2),
			\end{split}
		\end{equation}
		\begin{equation}\label{rk1coe0}
			\begin{split}
				&-\omega_{21}(t_i)(\omega_{12}(t_i)(\omega_{33}(t_i)+\phi_{33}\res_{t_i}(\tfrac{dz}{z-t_3}))-\omega_{13}(t_i)(\omega_{32}(t_i)+\phi_{32}\res_{t_i}(\tfrac{dz}{z-t_3})))\\
				&\quad=(\phi_{22}\phi_{33}-\phi_{23}\phi_{32})\nu_{i,0}\nu_{i,1}\nu_{i,2}.
			\end{split}
		\end{equation}
		Put
		\[
		v^{(1)}_{j,2}
		:=
		\begin{pmatrix}
			\omega_{22}(t_j)\omega_{33}(t_j)-\omega_{32}(t_j)\omega_{23}(t_j)+(\phi_{22}\phi_{33}-\phi_{23}\phi_{32})(\res_{t_j}(\tfrac{dz}{z-t_3})-\nu_{j,2})^2\\
			-\omega_{21}(t_j)(\omega_{33}(t_j)+\phi_{33}(\res_{t_j}(\tfrac{dz}{z-t_3})-\nu_{j,2}))\\
			\omega_{21}(t_j)(\omega_{32}(t_j)+\phi_{32}(\res_{t_j}(\tfrac{dz}{z-t_3})-\nu_{j,2}))
		\end{pmatrix},
		\]
		\[
		v^{(1)}_{j,1}:=
		\begin{pmatrix}
			-\nu_{j,0}(\phi_{22}\omega_{32}(t_j)-\phi_{32}\omega_{22}(t_j))+\omega_{21}(t_j)\omega_{12}(t_j)\phi_{32}\\
			(\omega_{32}(t_j)+\phi_{32}(\res_{t_i}(\tfrac{dz}{z-t_3})-\nu_{j,0}))\omega_{21}(t_j)\\
			0
		\end{pmatrix},
		\]
		\[
		v^{(2)}_{j,2}:=(\res_{t_i}\nabla-\nu_{j,1}\phi_{t_j})(v^{(1)}_{j,1}), \;
		v^{(2)}_{j,1}:=
		\begin{pmatrix}
			-\nu_{i,0}\\
			\omega_{21}(t_j)\\
			0
		\end{pmatrix}. 
		\]
		Then we can see that $l^{(1)}_{j,2}=(A/I)v^{(1)}_{j,2}, l^{(1)}_{j,1}=(A/I)v^{(1)}_{j,1}+(A/I)v^{(1)}_{j,2}, l^{(2)}_{j,2}=(A/I)v^{(2)}_{j,2}$ and $ l^{(2)}_{j,1}=(A/I)v^{(2)}_{j,1}+(A/I)v^{(2)}_{j,2}$ for any $j=1,2,3$ by the conditions $\phi_{t_j}(l^{(1)}_{j,m})\subset l^{(2)}_{j,m}$ and  $(\res_{t_j}\nabla-\nu_{j,m}\phi_{t_j})(l^{(1)}_{j,m})\subset l^{(2)}_{j,m+1}$, and the relations (\ref{rk1coe2}), (\ref{rk1coe1}), (\ref{rk1coe0}). We take lifts $\psi_{22},\psi_{23},\tilde{\phi}_{32},\tilde{\phi}_{33} \in A, \tilde{\omega}_{21} \in H^0(\Omega_{\pl_{A}/A}^1(D(\bm{t})_A)(-1)), \tilde{\omega}_{32}, \tilde{\omega}_{33}\in H^0(\Omega_{\pl_{A}/A}^1(D(\bm{t})_A))$ and $\tilde{\omega}_{12} \in H^0(\Omega_{\pl_{A}/A}^1(D(\bm{t})_A)(1))$ of $\phi_{22},\phi_{23},\phi_{32},\phi_{33}, \omega_{21}, \omega_{32},\omega_{33}, \omega_{12}$, respectively. We take lifts $\tilde{\omega}_{13} \in H^0(\Omega_{\pl_{A}/A}^1(D(\bm{t})_A)(1))$, $\tilde{\omega}_{22}, \tilde{\omega}_{23}  \in H^0(\Omega_{\pl_{A}/A}^1(D(\bm{t})_A))$ of $\omega_{13}, \omega_{22},\omega_{23}$, respectively, satisfying
		\begin{equation*}
			\begin{split}
				&-\tilde{\omega}_{21}(t_j)(\tilde{\omega}_{12}(t_j)(\tilde{\omega}_{33}(t_j)+\tilde{\phi}_{33}\res_{t_j}(\tfrac{dz}{z-t_3}))-\tilde{\omega}_{13}(t_j)(\tilde{\omega}_{32}(t_j)+\tilde{\phi}_{32}\res_{t_j}(\tfrac{dz}{z-t_3})))\\
				&\quad=(\psi_{22}\tilde{\phi}_{33}-\psi_{23}\tilde{\phi}_{32})\nu_{j,0}\nu_{j,1}\nu_{j,2},
			\end{split}
		\end{equation*}
		\begin{equation*}
			\begin{split}
				&-\tilde{\omega}_{23}(t_i)\tilde{\omega}_{32}(t_i)-\tilde{\omega}_{21}(t_i)(\tilde{\omega}_{12}(t_i)\tilde{\phi}_{33}-\tilde{\omega}_{13}(t_i)\tilde{\phi}_{32})\\
				&\quad=(\psi_{22}\tilde{\phi}_{33}-\psi_{23}\tilde{\phi}_{32})(\nu_{i,0}\nu_{i,1}+\nu_{i,0}\nu_{i,2}+\nu_{i,1}\nu_{i,2}-(\res_{t_i}(\tfrac{dz}{z-t_3}))^2),
			\end{split}
		\end{equation*}
		\begin{equation*}
			\begin{split}
				&(\tilde{\omega}_{22}(t_j)\tilde{\omega}_{33}(t_j)-\tilde{\omega}_{23}(t_j)\tilde{\omega}_{32}(t_j))-\tilde{\omega}_{21}(t_j)(\tilde{\omega}_{12}(t_j)\tilde{\phi}_{33}-\tilde{\omega}_{13}(t_j)\tilde{\phi}_{32})\\
				&\quad=(\psi_{22}\tilde{\phi}_{33}-\psi_{23}\tilde{\phi}_{32})(\nu_{i,0}\nu_{i,1}+\nu_{i,0}\nu_{i,2}+\nu_{i,1}\nu_{i,2}-(\res_{t_j}(\tfrac{dz}{z-t_3}))^2)
			\end{split}
		\end{equation*}
		for any $j=1,2,3$. 
		Put
		\[
		\eta:=\psi_{22}\tilde{\omega}_{33}+\tilde{\phi}_{33}\tilde{\omega}_{22}-\psi_{23}\tilde{\omega}_{32}-\tilde{\phi}_{32}\tilde{\omega}_{23}.
		\]
		Since $\tilde{\omega}_{32}(t_i)\neq 0$ and $\tilde{\omega}_{33}(t_i)=0$, $\tilde{\omega}_{32}$ and $\tilde{\omega}_{33}$ generate $H^0(\Omega_{\pl_{A}/A}^1(D(\bm{t})_A))\cong A^{\oplus2}$ as $A$-module. In particular, $\eta$ can be written by the form $b_1\tilde{\omega}_{32}+b_2\tilde{\omega}_{33}$, where $b_1,b_2 \in A$.
		Since $\eta \mod I$ is zero by (\ref{rk1coe2}), we have $b_1,b_2 \in I$. Put $\tilde{\phi}_{22}=\psi_{22}-b_2, \tilde{\phi}_{23}=\psi_{23}+b_1$. Then we have
		\begin{equation}\label{rk1coe2tilde}
			\tilde{\phi}_{22}\tilde{\omega}_{33}+\tilde{\phi}_{33}\tilde{\omega}_{22}-\tilde{\phi}_{23}\tilde{\omega}_{32}-\tilde{\phi}_{32}\tilde{\omega}_{23}=0,
		\end{equation}
		\begin{equation}\label{rk1coe1tilde}
			\begin{split}
				&(\tilde{\omega}_{22}(t_j)\tilde{\omega}_{33}(t_j)-\tilde{\omega}_{23}(t_i)\tilde{\omega}_{32}(t_i))-\tilde{\omega}_{21}(t_i)(\tilde{\omega}_{12}(t_i)\tilde{\phi}_{33}-\tilde{\omega}_{13}(t_i)\tilde{\phi}_{32})\\
				&\quad=(\tilde{\phi}_{22}\tilde{\phi}_{33}-\tilde{\phi}_{23}\tilde{\phi}_{32})(\nu_{i,0}\nu_{i,1}+\nu_{i,0}\nu_{i,2}+\nu_{i,1}\nu_{i,2}-(\res_{t_i}(\tfrac{dz}{z-t_3}))^2),
			\end{split}
		\end{equation}
		\begin{equation}\label{rk1coe0tilde}
			\begin{split}
				&-\tilde{\omega}_{21}(t_j)(\tilde{\omega}_{12}(t_j)(\tilde{\omega}_{33}(t_j)+\tilde{\phi}_{33}\res_{t_j}(\tfrac{dz}{z-t_3}))-\tilde{\omega}_{13}(t_j)(\tilde{\omega}_{32}(t_j)+\tilde{\phi}_{32}\res_{t_j}(\tfrac{dz}{z-t_3})))\\
				&\quad=(\tilde{\phi}_{22}\tilde{\phi}_{33}-\tilde{\phi}_{23}\tilde{\phi}_{32})\nu_{j,0}\nu_{j,1}\nu_{j,2}
			\end{split}
		\end{equation}
		for any $j=1,2,3$ because $\mfm I=0$. Put
		\[
		\tilde{\phi}=
		\begin{pmatrix}
			1&0&0\\
			0&\tilde{\phi}_{22}&\tilde{\phi}_{23} \\
			0&\tilde{\phi}_{32}&\tilde{\phi}_{33}
		\end{pmatrix},
		\quad \tilde{\nabla}=\tilde{\phi} \otimes d+
		\begin{pmatrix}
			0&0&0\\
			0&\tilde{\phi}_{22}&\tilde{\phi}_{23} \\
			0&\tilde{\phi}_{32}&\tilde{\phi}_{33}
		\end{pmatrix}\frac{dz}{z-t_3}
		+
		\begin{pmatrix}
			0&\tilde{\omega}_{12}&\tilde{\omega}_{13}\\
			\tilde{\omega}_{21}&\tilde{\omega}_{22}&\tilde{\omega}_{23}\\
			0&\tilde{\omega}_{32}&\tilde{\omega}_{33}
		\end{pmatrix}, 
		\]
		\[
		\tilde{v}^{(1)}_{j,2}
		:=
		\begin{pmatrix}
			\tilde{\omega}_{22}(t_j)\tilde{\omega}_{33}(t_j)-\tilde{\omega}_{32}(t_j)\tilde{\omega}_{23}(t_j)+(\tilde{\phi}_{22}\tilde{\phi}_{33}-\tilde{\phi}_{23}\tilde{\phi}_{32})(\res_{t_j}(\tfrac{dz}{z-t_3})-\nu_{j,2})^2\\
			-\tilde{\omega}_{21}(t_j)(\tilde{\omega}_{33}(t_j)+\tilde{\phi}_{33}(\res_{t_j}(\tfrac{dz}{z-t_3})-\nu_{j,2}))\\
			\tilde{\omega}_{21}(t_j)(\tilde{\omega}_{32}(t_j)+\tilde{\phi}_{32}(\res_{t_j}(\tfrac{dz}{z-t_3})-\nu_{j,2}))
		\end{pmatrix},
		\]
		\[
		\tilde{v}^{(1)}_{j,1}:=
		\begin{pmatrix}
			-\nu_{j,0}(\tilde{\phi}_{22}\tilde{\omega}_{32}(t_j)-\tilde{\phi}_{32}\tilde{\omega}_{22}(t_j))+\tilde{\omega}_{21}(t_j)\tilde{\omega}_{12}(t_j)\tilde{\phi}_{32}\\
			(\tilde{\omega}_{32}(t_j)+\tilde{\phi}_{32}(\res_{t_i}(\tfrac{dz}{z-t_3})-\nu_{j,0}))\tilde{\omega}_{21}(t_j)\\
			0
		\end{pmatrix},
		\]
		\[
		\tilde{v}^{(2)}_{j,2}:=(\res_{t_i}\tilde{\nabla}-\nu_{j,1}\tilde{\phi}_{t_j})(\tilde{v}^{(1)}_{j,1}), \;
		\tilde{v}^{(2)}_{j,1}:=
		\begin{pmatrix}
			-\nu_{i,0}\\
			\tilde{\omega}_{21}(t_j)\\
			0
		\end{pmatrix}. 
		\]
		Let $\tilde{l}^{(m)}_{j,2}:=A\tilde{v}^{(m)}_{j,2} \subset A^{\oplus 3}$ and $ \tilde{l}^{(m)}_{j,1}=A\tilde{v}^{(m)}_{j,1}+A\tilde{v}^{(m)}_{j,2} \subset A^{\oplus 3}$ for $m=1,2$ and  $j=1,2,3$. Then we can see that $A^{\oplus 3}/l^{(m)}_{j,n}$ is flat over $A$ and $(\res_{t_j}\tilde{\nabla}-\nu_{j,n}\tilde{\phi}_{t_j})(l^{(1)}_{j,n})\subset l^{(2)}_{j,n+1}$ for any $j=1,2,3$ and $n=0,1,2$ by the way of taking lifts $\tilde{\omega}_{12},\tilde{\omega}_{13},\tilde{\omega}_{22},\tilde{\omega}_{32}$. So $\tilde{\phi}, \tilde{\nabla}, \tilde{l}^{(1)}_{i,j}$ and $ \tilde{l}^{(2)}_{i,j}$ are desire lifts.
	\end{proof}
	
	\section*{Acknowledgments}
	The author are very grateful to Michiaki Inaba,  Arata Komyo, Ryo Ohkawa, Masa-Hiko Saito, Yasuhiko Yamada, and  K\=ota Yoshioka for valuable comments, useful suggestions, and helpful discussions. He also would like to thank the anonymous referee for improving this paper. He is supported by Japan Society for the Promotion of Science KAKENHI Grant Numbers 22J10695. This work was supported by the Research Institute for Mathematical Sciences, an International Joint Usage/Research Center located in Kyoto University.
	
\end{document}